\theoremstyle{plain}
\newtheorem{theorem}{Theorem}[section]
\newtheorem{prop}[theorem]{Proposition}
\newtheorem{lemma}[theorem]{Lemma}
\newtheorem{coro}[theorem]{Corollary}
\newtheorem{fact}[theorem]{Fact}
\theoremstyle{definition}
\newtheorem{definition}[theorem]{Definition}
\newtheorem{remark}[theorem]{Remark}
\newcommand{\AAA}{\mathbb{A}}
\newcommand{\ZZ}{\mathbb{Z}\ts}
\newcommand{\QQ}{\mathbb{Q}}
\newcommand{\RR}{\mathbb{R}}
\newcommand{\NN}{\mathbb{N}}
\newcommand{\CC}{\mathbb{C}}
\newcommand{\EE}{\mathbb{E}}
\newcommand{\PP}{\mathbb{P}}
\newcommand{\TT}{\mathbb{T}}
\newcommand{\ii}{\mathrm{i}\ts}
\newcommand{\ee}{\operatorname{e}}
\newcommand{\eps}{\varepsilon}
\newcommand{\supp}{\mathrm{supp}}
\newcommand{\cT}{\mathcal{T}}
\newcommand{\cA}{\mathcal{A}}
\newcommand{\cD}{\mathcal{D}}
\newcommand{\cF}{\mathcal{F}}
\newcommand{\cK}{\mathcal{K}}
\newcommand{\cL}{\mathcal{L}}
\newcommand{\cM}{\mathcal{M}}
\newcommand{\cO}{\mathcal{O}}
\newcommand{\scO}{{\scriptstyle\mathcal{O}}}
\newcommand{\cR}{\mathcal{R}}
\newcommand{\cW}{\mathcal{W}}
\newcommand{\cX}{\mathcal{X}}
\newcommand{\vL}{\varLambda}
\newcommand{\vO}{\varOmega}
\newcommand{\myfrac}[2]{\frac{\raisebox{-2pt}{$#1$}}
      {\raisebox{0.5pt}{$#2$}}}
\newcommand{\bs}{\boldsymbol}
 \newcommand{\lm}{\ensuremath{\lambda\hspace*{-3.5pt}
  \text{\raisebox{1.9pt}{$\scriptstyle \ts\backslash$}}}}
\newcommand{\oplam}{\mbox{\Large $\curlywedge$}}
\newcommand{\smoplam}{\mbox{$\curlywedge$}}
\newcommand{\XX}{\mathbb{X}}
\newcommand{\YY}{\mathbb{Y}}
\newcommand{\dd}{\, \mathrm{d}}
\newcommand{\ts}{\hspace{0.5pt}}
\newcommand{\nts}{\hspace{-0.5pt}}
\newcommand{\exend}{\hfill $\Diamond$}
\DeclareMathOperator{\sinc}{sinc}
\DeclareMathOperator{\dens}{dens}
\DeclareMathOperator{\card}{card}
\DeclareMathOperator{\vol}{vol}
\DeclareMathOperator*{\Conv}{\text{\Huge \raisebox{-3.5pt}{$\ast$}}}
\begin{document}

\title[Diffraction of random substitutions]{Diffraction of 
compatible random substitutions\\[2mm]
 in one dimension}

\author{Michael Baake}

\author{Timo Spindeler}
\address{Fakult\"at f\"ur Mathematik, Universit\"at Bielefeld, \newline
\hspace*{\parindent}Postfach 100131, 33501 Bielefeld, Germany}
\email{$\{$mbaake,tspindel$\}$@math.uni-bielefeld.de}

\author{Nicolae Strungaru}
\address{Department of Mathematical Sciences, MacEwan University, \newline
\hspace*{\parindent}10700 \ts 104 Avenue, Edmonton, AB, Canada T5J 4S2}
\email{strungarun@macewan.ca}

\begin{abstract}
  As a guiding example, the diffraction measure of a random 
  local mixture of the two
  classic Fibonacci substitutions is determined and reanalysed via
  self-similar measures of Hutchinson type, defined by a finite family
  of contractions.  Our revised approach yields explicit formulas for
  the pure point and the absolutely continuous parts, as well as a
  proof for the absence of singular continuous components. This
  approach is then extended to the family of random noble means
  substitutions and, as an example with an underlying $2$-adic
  structure, to a locally randomised version of the period doubling
  chain. As a first step towards a more general approach, we interpret
  our findings in terms of a disintegration over the Kronecker factor,
  which is the maximal equicontinuous factor of a covering model set.
 \end{abstract}



\maketitle

\section{Introduction}
In general, the structure of systems with pure point diffraction is
rather well understood \cite{bm,Q}. Due to recent progress, see
\cite{bbm,bgg,BKM} and references therein, also the situation for
various systems with diffraction spectra of mixed type has
improved. Still, the understanding of mixed spectra in the presence of
entropy is only at its beginning and it is desirable to work out
further concrete examples.  Of particular interest, both theoretically
and for applications to crystallography and physics, are cases that
combine randomness with the presence of long-range aperiodic order,
which means we will be looking for systems with non-trivial point
spectrum in the presence of positive entropy.

In 1989, Godr\`eche and Luck \cite{gl1} introduced a (locally)
randomised extension of the two classic and well-studied Fibonacci
substitutions, which individually, as well as under global mixtures,
define the same hull. This is to say that any $S$-adic type sequence
in these two substitutions will not result in a change of the
hull it defines. In contrast, the \emph{local} mixture, which emerges by
deciding randomly, for each single letter, which of the two substitutions
to apply (see Definition~\ref{def:ransubst} for details),
 results in a much larger hull with positive entropy. Reference \cite{gl1}
contains first results on the topological entropy and the spectral
type of the diffraction measure for the associated point sets. More
precisely, the authors computed the set of Bragg peaks and argued that the
diffraction measure is of mixed type, consisting of a pure point
and an absolutely continuous part. Important steps towards a
proof were given in \cite{mo,mo2}, though the absence of singular
continuous components remained unproved.

The purpose of this paper is to present a closed expression for the
formula of the diffraction intensities, and to give a proof for the
fact that the diffraction measure is indeed of the expected mixed
type, without singular continuous part. Later, we will consider
various generalisations of this example, for instance by regarding the
so-called \emph{noble means families}, each consisting of finitely
many primitive substitution rules that individually all define the
same two-sided discrete dynamical hull, as does their global
mixture. These cases had previously been considered in \cite{mo2}. 
Here, we present a closed expression for the
entire diffraction measures of their \emph{locally}
randomised version. Clearly, several
results are already contained in \cite{mo,diss-T}, the proofs of which will
not be repeated here; see also \cite{mo2} as well as the brief
treatment in \cite[Ch.~11]{BG}. 

As an  interesting generalisation, we also consider a locally  randomised 
version of the \emph{period doubling chain}, which is built from 
a compatible pair of constant-length substitutions. While this is a
simpler situation in the deterministic setting, this is not so here: In fact,
its treatment requires a cut and project scheme with internal group 
$\ZZ_{2}$, the $2$-adic integers, and is thus treated separately.
The type of spectral result we obtain is nevertheless the same:
Almost surely, with respect to the ergodic patch frequency measure
on the stochastic hull, the diffraction measure is of mixed type,
with a pure point part that resembles the deterministic chain and
an absolutely continuous part, but no singular continuous one.
 \smallskip

The paper is organised as follows.  We begin with a brief review
of the deterministic Fibonacci tiling in Section~\ref{sec:fibo}, tailored
to our later needs. Here, we also recall some now classic notions
from the theory of aperiodic order, in particular its central tool, the
cut and project method. To avoid unnecessary repetitions, we
assume the interested reader to consult the recent monograph
\cite{BG} for details.
Section~\ref{sec:ran-fibo} is devoted to the construction and basic
properties of the random Fibonacci inflation, which is followed by an
extension to the family of random noble means inflations in
Section~\ref{sec:ran-nob}. Up to this point, our entire treatment
does not need any abstract tools from the theory of locally compact
Abelian groups.

This changes when we turn our attention to an example
from the class of constant-length substitutions, which we keep
separate in order not to overburden the exposition with a more
abstract setting at the start. Here, we first recall
the basic results for the deterministic period doubling chain in
Section~\ref{sec:pd}, then extending it to a randomised version in
Section~\ref{sec:ran-pd}.  Finally, looking back at the two types of
examples, we identify one common structure 
in Section~\ref{sec:decompose}, where we harvest an interesting
connection between random substitutions and the theory of
iterated functions systems, most notably Elton's ergodic theorem 
\cite{Elton}. As a result, for all random substitutions discussed in this
paper, the topological point spectrum is trivial, but one has a nice
disintegration formula over the Kronecker factor, the latter emerging
as the maximal equicontinuous factor of a covering model set.  Also,
the discontinuous eigenfunctions become continuous on a subset
of the hull of full measure. A brief
outlook concludes our exposition, and is followed by an appendix that
proves some tricky, but often needed, approximation results for the
autocorrelation.

\section{Deterministic Fibonacci tiling}\label{sec:fibo}

Before we investigate random substitutions, let us recall a
paradigmatic deterministic case; see \cite{BG,mo} for a detailed
exposition.  Consider the binary alphabet $\cA=\{a,b\}$ and the
Fibonacci substitution given by
$\zeta^{}_{\mathrm{F}\nts ,1} \! : \, a \mapsto ab$, $b\mapsto a$ or
its variant $\zeta^{}_{\mathrm{F}\nts ,0} \! : \, a \mapsto ba$,
$b\mapsto a$.  Both share the same substitution matrix and also define
the same symbolic hull $\XX$, see \cite[Ex.~4.6 and Rem.~4.6]{BG},
wherefore we call them \emph{compatible}.

Here, we are mainly interested in the \emph{geometric} counterpart
$\YY$ of $\XX$ that emerges as the hull of the Fibonacci tilings, 
where $a$ and $b$ are tiles (intervals in our case) with natural 
lengths.  As in \cite{BG}, we use length
$\tau= \bigl(1+ \mbox{\small $\sqrt{5}$}\,\bigr)/2$ for $a$ and $1$
for $b$.  This gives a topological dynamical system $(\YY,\RR$) under
the translation action of $\RR$ that is strictly ergodic, with pure
point spectrum.  The pure-pointedness equivalently applies to both the
diffraction and the dynamical spectrum \cite{LMS,BL1}. In terms of
diffraction, the spectral properties can be summarised as follows; see
\cite{Hof,BG} and references therein for proofs.

\begin{theorem}\label{theo:a} 
  Let\/ $\YY$ be the geometric hull of the Fibonacci tiling system,
  with prototiles of length\/ $\tau$ for type\/ $a$ and\/ $1$ for
  type\/ $b$. Then, the topological dynamical system\/ $(\YY, \RR)$ is
  strictly ergodic and has pure point dynamical spectrum.

  Now, fix some\/ $\cT \in \YY$ and let\/
  $\vL =\vL_a \ts \dot{\cup}\ts \vL_b$ be the corresponding set of
  left endpoints of the tiles in\/ $\cT$.  Then, the weighted Dirac
  comb\/  $\omega = u^{}_{a} \ts \delta^{}_{\!\vL_a} + \ts u^{}_{b} \ts
  \delta^{}_{\!\vL_b}$,
  with any fixed pair of weights\/ $u^{}_{a},u^{}_{b} \in \CC$, is
  pure point diffractive. Its autocorrelation is given by\/
\[
   \gamma \, = \sum_{z\in\vL-\vL}\, \sum_{\alpha,\beta \in \{a,b\}}
   \!\! \overline{u^{}_{\alpha}} \: \eta^{}_{\alpha \beta} (z) \ts\ts
   u^{}_{\beta} \; \delta_{z} \ts,
\]
where\/ $\eta^{}_{\alpha \beta} (z) = \dens \bigl( \vL^{}_{\alpha} 
\cap (\vL^{}_{\beta} - z)\bigr)$, and the diffraction measure reads
\[
    \widehat{\gamma} \, = \sum_{k\in\ZZ[\tau]/\sqrt 5} 
    I(k)\, \delta_k \ts ,
\] 
 where\/ $I(k) = \bigl| u^{}_a A^{}_{\!\vL_a}(k) + u^{}_b
 A^{}_{\!\vL_b}(k) \bigr|^2$, with the Fourier--Bohr 
 coefficients of\/ $t + \vL_{\alpha}$, for fixed\/ $t\in\RR$ and\/
 $\alpha \in \{ a, b\}$, being given by
\[
   A^{}_{\ts t + \vL_{\alpha}}(k)\, = \lim_{r\to\infty} \myfrac{1}{2r} 
   \sum_{x\in (t+\vL_{\alpha}) \cap [-r,r]} \! \ee^{-2\pi\ii kx}   .
\]
In particular, $\gamma$ and\/ $\vL-\vL$, as well as\/
$\widehat{\gamma}$ and\/ $I(k)$, are independent of\/ $\cT$, while the
Fourier--Bohr coefficients do depend on the chosen element, but
converge uniformly in\/ $t$.  \qed
\end{theorem}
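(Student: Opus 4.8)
The plan is to recognise this as a standard application of the cut-and-project (model set) description of the Fibonacci tiling, together with the general machinery for diffraction of regular model sets. First I would invoke the well-known fact (see \cite{BG}) that $\vL$, and each of $\vL_a$ and $\vL_b$, is a regular model set for a cut and project scheme with internal space $\RR$, physical lattice $\ZZ[\tau]$, and star map induced by the Galois conjugation $\tau \mapsto \tau' = (1-\sqrt 5)/2$; the windows are intervals whose endpoints lie in $\ZZ[\tau]$, so the boundaries have measure zero and the point sets are indeed regular. From this, strict ergodicity of $(\YY,\RR)$ and pure point dynamical spectrum follow from the standard theory \cite{bm,Q,LMS,BL1}, and the equivalence of pure point dynamical and diffraction spectrum is \cite{LMS,BL1} as cited.

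Next I would compute the autocorrelation. Since $\vL_\alpha$ is a regular model set, the cross-correlation coefficients $\eta^{}_{\alpha\beta}(z) = \dens\bigl(\vL^{}_\alpha \cap (\vL^{}_\beta - z)\bigr)$ exist for every $z$ by uniform patch frequencies (unique ergodicity), and they vanish unless $z \in \vL - \vL \subseteq \ZZ[\tau]$. Expanding $\omega = u^{}_a\,\delta^{}_{\!\vL_a} + u^{}_b\,\delta^{}_{\!\vL_b}$ bilinearly and using the volume-averaged convolution gives
\[
  \gamma \, = \sum_{z \in \vL - \vL}\ \sum_{\alpha,\beta \in \{a,b\}}
  \overline{u^{}_\alpha}\ \eta^{}_{\alpha\beta}(z)\ u^{}_\beta\ \delta_z \ts ,
\]
which is the claimed formula; the fact that $\gamma$ and $\vL - \vL$ do not depend on the chosen $\cT$ is exactly unique ergodicity. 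For the Fourier transform, I would apply the standard diffraction formula for regular model sets \cite{Hof,bm,BG}: $\widehat{\gamma}$ is pure point, supported on the Fourier module, which here is $\ZZ[\tau]/\sqrt5$ (the projection of the dual lattice), and the intensity at $k$ factorises through the Fourier--Bohr coefficients as $I(k) = \bigl| u^{}_a A^{}_{\!\vL_a}(k) + u^{}_b A^{}_{\!\vL_b}(k)\bigr|^2$. The Fourier--Bohr coefficient of a regular model set is computed from the window via the standard amplitude formula (a $\sinc$-type expression coming from the Fourier transform of the indicator of the window), and in particular $A^{}_{\!\vL_\alpha}(k)$ is independent of $\cT$ up to the phase $\ee^{-2\pi\ii k t}$ coming from an overall translation $t$; this yields the stated $t$-dependence of $A^{}_{t+\vL_\alpha}(k)$ while $I(k) = |A^{}_{\vL_a}(k)\,u_a + A^{}_{\vL_b}(k)\,u_b|^2$ stays $t$-independent.

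The last point to establish is that the limit defining $A^{}_{t+\vL_\alpha}(k)$ exists and converges \emph{uniformly} in $t$. For this I would use the uniform version of the Weyl/equidistribution argument for model sets: the partial sums $\frac{1}{2r}\sum_{x\in(t+\vL_\alpha)\cap[-r,r]} \ee^{-2\pi\ii kx}$ can be rewritten, via the cut-and-project scheme, as Weyl sums over a lattice orbit against a fixed test function built from the window, and equidistribution of the lattice orbit in the compact internal torus is uniform. Concretely, this is where the regularity (Riemann-integrability of the window indicator, i.e. measure-zero boundary) is used, and uniformity in $t$ comes from the fact that translating by $t$ only shifts the starting point of the orbit, against which equidistribution bounds are uniform; alternatively one cites \cite{Hof} or the appendix's approximation results. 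I expect this uniform-convergence step to be the only genuinely technical point — the existence of the limit is routine from unique ergodicity, but upgrading to uniformity in $t$ requires the equidistribution estimate to be stated with explicit control independent of the orbit's phase. Everything else is a direct transcription of the regular-model-set diffraction formalism.
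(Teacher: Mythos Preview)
Your proposal is correct and matches the paper's treatment: the paper does not give a proof of this theorem at all, but simply states it with a \qed and the remark ``see \cite{Hof,BG} and references therein for proofs.'' Your sketch is precisely the standard model-set argument one finds in those references, so there is nothing to compare beyond noting that you have spelled out what the paper leaves to citation.
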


\begin{remark}\label{rem:spec}
  The Fourier--Bohr coefficients in Theorem~\ref{theo:a} exist for all
  $k\in\RR$, but vanish unless
  $k\in\ZZ[\tau] /\mbox{\small $\sqrt{5}$}$. For any $k$ of the latter
  type, the map
  $\vL\mapsto A^{}_{\!\vL}(k) = A^{}_{\!\vL_a}(k) + A^{}_{\!\vL_b}(k)$
  defines a continuous eigenfunction \cite{Hof,Daniel} for
  $(\YY,\RR)$, with
\[
    A^{}_{t+\vL}(k) \, = \, \ee^{-2\pi\ii kt} A^{}_{\!\vL}(k),
\]  
provided $A^{}_{\!\vL}(k) \neq 0$. This connection can be used to show
that the dynamical spectrum is given by
$\ZZ[\tau]/\mbox{\small $\sqrt{5}$}$; see \cite[Sec.~9.4.1]{BG} for
details on extinctions. The latter precisely occur for $k = \ell \tau$
with $\ell \in \ZZ \setminus \{ 0 \}$ provided that
$u^{}_{a} = u^{}_{b}$. For a generic choice of the weights, no
extinctions are present.  \exend
\end{remark}

Via the projection method, compare \cite[Sec.~7]{BG}, the elements of
$\YY$ can be described as (translations of) regular \emph{model sets}
within the cut and project scheme, or CPS for short, $(\RR,\RR,\cL)$.
This is briefly summarised by the diagram
\begin{equation}\label{eq:CPS}
\renewcommand{\arraystretch}{1.2}\begin{array}{r@{}ccccc@{}l}
   & \RR & \xleftarrow{\,\;\;\pi\;\;\,} & \RR \times \RR & 
        \xrightarrow{\;\pi^{}_{\mathrm{int}\;}} & \RR & \\
  \raisebox{1pt}{\text{\footnotesize dense}\,} \hspace*{-1ex}
   & \cup & & \cup & & \cup & \hspace*{-1ex} 
   \raisebox{1pt}{\,\text{\footnotesize dense}} \\
   & \ZZ[\tau] & \xleftarrow{\; 1-1 \;} & \cL & 
        \xrightarrow{\; 1-1 \;} &\ZZ[\tau] & \\
   & \| & & & & \| & \\
   & L & \multicolumn{3}{c}{\xrightarrow{\qquad\qquad\;\,\star
       \,\;\qquad\qquad}} 
       &  {L_{}}^{\star\nts}  & \\
\end{array}\renewcommand{\arraystretch}{1}
\end{equation}
where the lattice, $\cL$, is given by
\[
   \cL \, := \, \bigl\{(x,x^{\star}) \mid x\in \ZZ[\tau] \bigr\}
\] 
and the $\star$-map is algebraic conjugation in
$\QQ(\mbox{\small $\sqrt{5}$} \,)$, as defined by the unique extension
of $\mbox{\small $\sqrt{5}$} \mapsto -\mbox{\small $\sqrt{5}$}$ to a
field automorphism of $\QQ(\mbox{\small $\sqrt{5}$} \,)$.  Concretely,
consider the fixed point of $\zeta_{\mathrm{F}\nts, 1}^2$ with legal
seed $b|a$. The corresponding tiling $\cT$ leads to
$\vL=\vL_a \ts \dot{\cup} \ts \vL_b$ together with
\begin{equation}\label{eq:model-1}
   \vL_a \, = \oplam\bigl([\tau-2,\tau-1 ) \bigr) \, , \quad
   \vL_b \, = \oplam\bigl([-1,\tau-2 ) \bigr)   
   \quad \text{and} \quad 
   \vL \, = \oplam\bigl( [-1,\tau-1) \bigr) ,
\end{equation}
where $\oplam(W) := \{ x \in \ZZ[\tau] \mid x^{\star} \in W \}$; see
\cite[Ex.~7.3]{BG} for details, in particular for the changes in the
windows if we would work with the other possible seed, $a|a$.  In the
formulation of Theorem~\ref{theo:a}, the Fourier--Bohr coefficients of
this particular $\vL$, for $k \in \ZZ [\tau]/\sqrt{5}$, read
\begin{equation}\label{eq:FB-1}
   A^{}_{\!\vL_{a}}(k) \, = \, \myfrac{1}{\mbox{\small $\sqrt{5}$}} 
   \int_{\tau-2}^{\tau-1} \ee^{2\pi\ii k^{\star} y} \dd y 
   \quad \ts \text{and} \quad
   A^{}_{\!\vL_{b}}(k) \, = \, \myfrac{1}{\mbox{\small $\sqrt 5$}} 
   \int_{-1}^{\tau-2} \ee^{2\pi\ii k^{\star}y} \dd y. 
\end{equation}
When working with the other inflation rule, based on
$\zeta_{\mathrm{F}\nts, 0}^2$, completely analogous formulas
can be derived; see Remark~\ref{rem:limit-cases} below for 
further comments and \cite{BG} for the general theory.

For the rule we selected here, similar expressions 
can also be derived for other elements of $\YY$; see
\cite{BG,mo} for more. In this setting, one can express the pair
correlation coefficients $\eta^{}_{\alpha\beta}(z)$ from
Theorem~\ref{theo:a} as
\[
   \eta^{}_{\alpha\beta}(z) \, = \, \dens (\vL) \,
   \frac{\vol \bigl(W^{}_{\!\alpha} \cap 
   (W^{}_{\!\nts \beta} - z^{\star}) \bigr)}{\vol (W)}
   \, = \, \myfrac{1}{\mbox{\small $\sqrt{5}$}} 
   \int_{\RR} 1^{}_{W^{}_{\!\alpha}} (y)
   \, 1^{}_{W^{}_{\!\nts \beta} - z^{\star}} (y) \dd y \ts ,
\]
where the $W^{}_{\!\alpha}$ with $\alpha\in\{a,b\}$ are the \emph{windows}
for the model set description from Eq.~\eqref{eq:model-1}.

Due to the Pisot nature of the golden ratio, $\tau$, one can go one step
further and consider a modified (or deformed) 
hull $\tilde{\YY}$ that emerges from $\XX$ by
taking $a$ and $b$ type intervals of lengths
\begin{equation}  \label{eq:tile-length}
   \ell_{a} \, = \, \tau + \rho \ts (1- \tau) \quad\text{and}\quad 
   \ell_{b} \, = \, 1+\rho \ts ,
\end{equation}
respectively, where $\rho \in (-1, \tau+1)$ is a real parameter. This
choice is made so that the average tile length, and hence also the
density of left endpoints, is the same for $\YY$ and $\tilde{\YY}$. Since the
elements of $\tilde{\YY}$ are always considered as tilings with two distinct
prototiles, even if they have the same length (as happens for
$\rho = \tau^{-2} \ts $), the dynamical systems $(\YY,\RR)$ and
$(\tilde{\YY},\RR)$ are topologically conjugate in this setting. This follows
from the description of the elements of $\tilde{\YY}$ as \emph{deformed model
  sets}, see \cite{BD,BL2}, and is in line with the general analysis
of \cite{CS}.  In terms of the diffraction, and in complete analogy to
\cite[Ex.~9.9]{BG}, the result is the following.

\begin{coro}
  Consider the dynamical system\/ $(\tilde{\YY},\RR)$ with parameter\/
  $\rho \in (-1, \tau+1)$ as above. Select any tiling\/
  $\cT^{\ts \prime} \in \tilde{\YY}$ and consider the corresponding point
  set\/ $\vL ' = {\vL}_{a} ' \ts \dot{\cup}\ts {\vL}_{b} '$ of left
  endpoints.  Then, the Dirac comb\/
  $\omega' = u^{}_{a} \ts \delta^{}_{\!\vL_{a}'} + u^{}_{b}\ts
  \delta^{}_{\!\vL_{b}'}$
  is pure point diffractive, with diffraction measure
\[
    \widehat{\,\gamma^{\ts\prime}\,} \, = \sum_{k\in\ZZ[\tau]/\sqrt 5} 
    I^{\ts\prime}(k)\, \delta_k \ts ,
\] 
where
$I^{\ts\prime}(k) = \bigl| u^{}_{a} A^{\prime}_{\!\vL_a}(k) + u^{}_{b}
A^{\prime}_{\!\vL_b}(k) \bigr|^2$,
and the Fourier--Bohr coefficients are defined in complete analogy to
Theorem~\emph{\ref{theo:a}}.  As before, the diffraction measure is
independent of\/ $\cT^{\ts \prime}$, while the Fourier--Bohr
coefficients do depend on it, but converge uniformly.  \qed
\end{coro}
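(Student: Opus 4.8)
The plan is to reduce this corollary to the model-set picture of the undeformed hull recorded in Eqs.~\eqref{eq:model-1}--\eqref{eq:FB-1}, by recognising the passage from $\cT\in\YY$ to the corresponding $\cT^{\ts\prime}\in\tilde\YY$ as a \emph{shear} of the lattice $\cL$ in the scheme~\eqref{eq:CPS}. Numbering the left endpoints of a fixed $\cT$ as $\dots<x^{}_{n}<x^{}_{n+1}<\dots$, the gap $x^{}_{n+1}-x^{}_{n}$ is $\tau$ or $1$ according to the type $\alpha^{}_{n}\in\{a,b\}$ of the $n$-th tile, and the parametrisation~\eqref{eq:tile-length} is tailored so that $\ell^{}_{a}-\tau=\rho\ts(1-\tau)=\rho\ts\tau^{\star}$ and $\ell^{}_{b}-1=\rho=\rho\ts 1^{\star}$, with $\star$ denoting algebraic conjugation in $\QQ(\sqrt{5}\,)$; that is, every gap changes by $\rho$ times the $\star$-image of the old gap. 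Telescoping these increments, the displacement $d^{}_{n}:=x^{\ts\prime}_{n}-x^{}_{n}$ obeys $d^{}_{n+1}-d^{}_{n}=\rho\ts(x^{\star}_{n+1}-x^{\star}_{n})$, so $x^{\ts\prime}_{n}=x^{}_{n}+\rho\ts x^{\star}_{n}+\nu$ for some fixed $\nu\in\RR$. Hence $\vL^{\ts\prime}_{\alpha}=\nu+\{\,x+\rho\ts x^{\star}\mid x\in\vL^{}_{\alpha}\}$, which presents $\tilde\YY$ as a family of (translated) regular model sets in the sheared scheme $(\RR,\RR,\tilde\cL)$, where $\tilde\cL$ is the image of $\cL$ under $(u,v)\mapsto(u+\rho\ts v,\ts v)$ and the windows stay the \emph{same} $W^{}_{a}=[\tau-2,\tau-1)$, $W^{}_{b}=[-1,\tau-2)$.

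Granting this, the remaining assertions are those of \cite[Ex.~9.9]{BG} transported along the shear, using \cite{BD,BL2,CS} for the deformed-model-set formalism and the conjugacy $(\YY,\RR)\cong(\tilde\YY,\RR)$ noted above. Since the shear has determinant one, $\tilde\cL$ keeps the covolume $\sqrt{5}$, and its dual arises from that of $\cL$ by the transpose-inverse shear $(u,v)\mapsto(u,\ts v-\rho\ts u)$, which fixes first components; therefore the Fourier module of $\tilde\YY$ is again $\ZZ[\tau]/\sqrt{5}$ and the internal coordinate attached to a Bragg position $k$ becomes $k^{\star}-\rho\ts k$ in place of $k^{\star}$. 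As regular model sets are pure point diffractive \cite{Hof,bm} and $(\tilde\YY,\RR)$ is strictly ergodic, I expect $\widehat{\,\gamma^{\ts\prime}\,}$ to be pure point, supported on $\ZZ[\tau]/\sqrt{5}$, with $I^{\ts\prime}(k)=\bigl|u^{}_{a}A^{\ts\prime}_{\!\vL_{a}}(k)+u^{}_{b}A^{\ts\prime}_{\!\vL_{b}}(k)\bigr|^{2}$ and, in complete analogy with~\eqref{eq:FB-1},
\[
  A^{\ts\prime}_{\ts t+\vL_{\alpha}}(k)\,=\,\ee^{-2\pi\ii\ts k(t+\nu)}\,
  \frac{1}{\sqrt{5}}\int_{W^{}_{\alpha}}\ee^{2\pi\ii\ts(k^{\star}-\rho\ts k)\ts y}\dd y\ts,
\]
where the prefactor $1/\sqrt{5}$ reflects that the deformation preserves the density of endpoints by design; at $\rho=0$ this is exactly~\eqref{eq:FB-1}. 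That $\widehat{\,\gamma^{\ts\prime}\,}$ does not depend on $\cT^{\ts\prime}$ follows from unique ergodicity, while the $A^{\ts\prime}_{\!\vL_{\alpha}}(k)$ do depend on it; translating $\cT^{\ts\prime}$ by $t$ multiplies each by the common phase $\ee^{-2\pi\ii\ts kt}$, which cancels in $I^{\ts\prime}(k)$.

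The step I expect to be the main obstacle is the \emph{uniform} convergence of the Fourier--Bohr coefficients in $t$; this should reduce to the undeformed case once one uses that the deformation is implemented by the globally defined affine (hence continuous) map $x^{\star}\mapsto\rho\ts x^{\star}$ on internal space, so that the almost-periodicity and uniform-equidistribution estimates underlying Theorem~\ref{theo:a} carry over --- the needed approximation results being those collected in the appendix and in \cite{BD}. A minor secondary point: for a countable set of parameters --- notably $\rho=\tau^{-2}$, where $\ell_{a}=\ell_{b}$ and all endpoints lie in a common lattice --- the image $\pi(\tilde\cL)$ is discrete rather than dense, so $(\RR,\RR,\tilde\cL)$ is a degenerate scheme; but one checks that the shear stays injective on $\vL$ throughout the open interval $(-1,\tau+1)$ (a failure would need some $z\in\ZZ[\tau]\setminus\{0\}$ with $z+\rho\ts z^{\star}=0$ and $|z^{\star}|<\tau$, which forces $\rho$ to an endpoint; for instance $z=\pm\tau$ gives $\rho=\tau+1$), so $\vL^{\ts\prime}_{a}$ and $\vL^{\ts\prime}_{b}$ remain regular model sets and pure point diffractivity is unaffected. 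Alternatively, for these $\rho$ one bypasses the issue via the conjugacy and the equivalence of pure point dynamical and diffraction spectra \cite{LMS,BL1}.
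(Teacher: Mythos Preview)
Your argument is correct and is essentially the route the paper itself indicates: the corollary is stated without proof, being justified by the general theory of deformed model sets \cite{BD,BL2} and the analogy with \cite[Ex.~9.9]{BG}, and the remark immediately following it says that ``the special cases at hand can alternatively be written as model sets with a new lattice that emerges from the original one by a shear in the physical direction'' --- exactly your computation $x\mapsto x+\rho\ts x^{\star}$, which then yields the substitution $k^{\star}\mapsto k^{\star}-\rho\ts k$ in the amplitudes and reproduces the explicit formulas the paper records right after the corollary.

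One small correction concerning the degenerate parameters: injectivity of the shear on $\vL$ (equivalently, on $\vL-\vL$) is \emph{not} what guarantees that $(\RR,\RR,\tilde\cL)$ is a genuine CPS --- for that one needs $\pi|_{\tilde\cL}$ to be one-to-one on the whole lattice, which fails precisely when $\ell_a/\ell_b\in\QQ$ (a countable set of $\rho$'s in the open interval, not just the endpoints). So your inference ``the shear is injective on $\vL$, hence $\vL'_\alpha$ remain regular model sets'' does not go through for those $\rho$. Your alternative via the topological conjugacy $(\YY,\RR)\cong(\tilde\YY,\RR)$ together with \cite{LMS,BL1} is the clean fix, and it is also how the paper frames the matter.
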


If we select $\cT^{\ts\prime}$ as the deformed version of the fixed
point tiling from Eq.~\eqref{eq:model-1}, we obtain an analogue of
Eq.~\eqref{eq:FB-1} for the Fourier--Bohr coefficients. With
$\sinc (x) := \frac{\sin (x)}{x}$, one finds
\begin{align*}
   A^{\prime}_{\!\vL_a}(k) \, & = \, 
   \myfrac{1}{\mbox{\small $\sqrt{5}$}} 
   \int_{\tau-2}^{\tau-1} \ee^{2\pi\ii ( k^{\star} - \rho \ts k) y} \dd y 
   \, = \, \myfrac{\ee^{\pi \ii (2\tau-3)(k^{\star}-\rho\ts k)}}
     {\mbox{\small $\sqrt{5}$}}\ts
   \sinc \bigl( \pi  (k^{\star} - \rho \ts k)\bigr) \\
\intertext{and}
   A^{\prime}_{\!\vL_b}(k) \, & =  \, 
   \myfrac{1}{\mbox{\small $\sqrt{5}$}} 
   \int_{-1}^{\tau-2} \ee^{2\pi\ii ( k^{\star} - \rho \ts k) y} \dd y 
   \, = \, \myfrac{\ee^{\pi \ii (\tau-3)(k^{\star}-\rho\ts k)}}
      {\tau \mbox{\small $\sqrt{5}$}} \ts
   \sinc \bigl( \pi (\tau - 1) (k^{\star} - \rho \ts k)\bigr) ,
\end{align*}
which sum to
\[
   A^{\prime}_{\!\vL} (k) \, = \, A^{\prime}_{\!\vL_a}(k)
   + A^{\prime}_{\!\vL_b}(k) \, =  \, \myfrac{\tau 
   \ee^{-\pi\ii \tau^{-2} (k^{\star} - \rho\ts k)}}
    {\mbox{\small $\sqrt{5}$}} \ts
   \sinc \bigl( \pi \tau (k^{\star} - \rho \ts k)\bigr) .
\]
For $\rho = 0$, this gives back the previous expressions.

\begin{remark}
  Our derivation employed the method of deformed model sets, see
  \cite{BD} for details, but the special cases at hand can
  alternatively be written as model sets with a new lattice that
  emerges from the original one by a shear in the physical direction.

  Moreover, due to our choice of tile lengths according to
  Eq.~\eqref{eq:tile-length}, the dynamical spectrum remains pure
  point and is always given by $\ZZ[\tau]/\mbox{\small $\sqrt{5}$}$,
  as in Remark~\ref{rem:spec}.  In particular, all deformed model sets
  in our class here define dynamical systems that are metrically
  isomorphic by the Halmos--von Neumann theorem, and are even
  topologically conjugate to each other as mentioned earlier.  \exend
\end{remark}

\section{Random Fibonacci tiling}\label{sec:ran-fibo}

The topological dynamical system $(\XX,S)$ from the previous section
has zero entropy, because the word complexity is linear.
Alternatively, this fact also follows from \cite{blr}. Let us now
generalise the Fibonacci substitution and construct random Fibonacci
sets with positive entropy that still show long-range order
\cite{gl1,Claudia,nil,mo2}.

To this end, let $p\in[0,1]$ be a fixed probability and set
$q=1-p$. If $\cA = \{ a, b \}$ is our binary alphabet as before, we
use $\cA^*$ to denote the set of finite words with letters from
$\cA$. This is a monoid under the concatenation of words as
multiplication. Now, the \emph{random Fibonacci substitution} is the
endomorphism $\zeta^{}_{\mathrm{F}} \! : \, \cA^*\to \cA^*$ defined by
\begin{equation}\label{eq:Fib-def}
  \zeta^{}_{\mathrm{F}}  : \; 
  \begin{cases}
      a\mapsto 
      \begin{cases}
      ba, & \text{with probability } p,  \\
      ab, & \text{with probability } q,
      \end{cases} \\
      b\mapsto a \ts .
  \end{cases} 
\end{equation}
Here, given a word $w \in \cA^*$, we \emph{independently} apply
$\zeta^{}_{\mathrm{F}}$ to each letter of $w$, which reflects the
endomorphism property.  Consequently, $\zeta^{}_{\mathrm{F}}(a)$ as
well as $\zeta^{}_{\mathrm{F}}(b)$, and hence
$\zeta^{}_{\mathrm{F}}(w)$, have to be considered as random variables,
each with finitely many possible realisations in our case. A more
general definition will be given in the next section, but is not
needed here.

Of course, the term `random' is only justified for $p\in (0,1)$, while
$p=0$ and $p=1$ correspond to the two deterministic limiting cases
from the previous section. The latter are primitive, with the same
substitution matrix, which is inherited by the random version.  To
proceed, we need to adjust some definitions from symbolic dynamics to
the stochastic situation, where we follow the approach of
\cite{mo,mo2} which extends earlier work \cite{gl1,Claudia}; see also
\cite{RS,PG}.

\begin{definition}\label{def:hull}
  Let $\zeta^{}_{\mathrm{F}}$ be the primitive random substitution
  from Eq.~\eqref{eq:Fib-def}.  Then, for any $v,w\in\cA^{*}$ and
  $k\in\NN$, we use $v\blacktriangleleft \zeta^{k}_{\mathrm{F}} (w)$
  to express that $v$ is a subword of at least one realisation of
  $\zeta^{k}_{\mathrm{F}} (w)$.  A word $w\in \cA^{*}$ is called
  \emph{$\zeta^{}_{\mathrm{F}}$-legal} if there is a $k\in\NN$ such
  that $w\blacktriangleleft \zeta^{k}_{\mathrm{F}} (a)$. The
  \emph{$\zeta^{}_{\mathrm{F}}$-dictionary} is defined as
\[
   \cD_{\zeta^{}_{\mathrm{F}}} \, := \, 
   \{w\in\cA^{*}\ |\ w \text{ is $\zeta^{}_{\mathrm{F}}$-legal} \} \ts ,
\]
while the \emph{two-sided discrete stochastic hull} of
$\zeta^{}_{\mathrm{F}}$ is denoted as
\[
   \XX_{\zeta^{}_{\mathrm{F}}} \, := \, \{ w\in\cA^{\ZZ} \mid
   \mathfrak{F}(\{w\}) \subseteq \cD_{\zeta^{}_{\mathrm{F}}}\} \ts ,
\]   
where $\mathfrak{F}(\{w\})$ is the set of all finite subwords of $w$.
\end{definition} 

Obviously, one can replace $\zeta^{}_{\mathrm{F}}$ by any other
primitive random substitution $\varrho$; see
Definition~\ref{def:ransubst} below.  Let us recall a result on the
entropy of the discrete stochastic hull, which illustrates a
fundamental difference from the deterministic case.

\begin{fact}[\cite{nil,mo}]\label{fact:fib-entropy}
  For\/ $\zeta^{}_{\mathrm{F}}$ from Eq.~\eqref{eq:Fib-def} and any\/
  $p\in (0,1)$, the topological entropy of the dynamical system\/
  $(\XX_{\zeta^{}_{\mathrm{F}}}, \ZZ)$ is given by\/
  $\, s=\sum_{\ell=2}^{\infty} \frac{\log(\ell)}{\tau^{\ell+2}} \approx
  0.444 \ts 398 \ts 725$.  \qed
\end{fact}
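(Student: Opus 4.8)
The plan is to compute the topological entropy of $(\XX_{\zeta^{}_{\mathrm{F}}}, \ZZ)$ by counting the growth rate of the number of legal words, and to organise this count via the inflation structure. The key observation is that every legal word of $\XX_{\zeta^{}_{\mathrm{F}}}$ occurs as a subword of some realisation of $\zeta^{k}_{\mathrm{F}}(a)$ for suitable $k$, and the realisations of $\zeta^{k}_{\mathrm{F}}(a)$ are parametrised by the choices $\{ba, ab\}$ made at each $a$ that appears in the course of the iteration. First I would record that, since the substitution matrix is the Fibonacci matrix $\left(\begin{smallmatrix}1&1\\1&0\end{smallmatrix}\right)$ with Perron eigenvalue $\tau$, the \emph{length} of any realisation of $\zeta^{k}_{\mathrm{F}}(a)$ is the deterministic Fibonacci number $F_{k+1} \sim \tau^{k}/\sqrt5$, independently of the random choices; only the ordering of letters is random. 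So the entropy arises purely from the combinatorial branching, not from any length fluctuation.

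The next step is to count realisations. A level-$k$ realisation of $\zeta_{\mathrm{F}}$ applied to a single $a$ involves, at the last step, one binary choice per $a$-letter in $\zeta^{k-1}_{\mathrm{F}}(a)$, but one must be careful: distinct sequences of choices can yield the same word. The cleaner route, following \cite{gl1,nil,mo}, is to set up a recursion for the number $c_{n}$ of distinct legal words of length $n$ (or, equivalently, a generating-function / transfer-operator argument keyed to the self-similar hierarchical decomposition of a word into level-one supertiles $\zeta_{\mathrm{F}}(a)\in\{ab,ba\}$ and $\zeta_{\mathrm{F}}(b)=a$). A legal bi-infinite word decomposes uniquely (up to the usual boundary ambiguity) into blocks $ab$, $ba$ and $a$ coming from the letters of a legal word one level down; the extra entropy per level is exactly the number of independent binary labels $ab$ vs.\ $ba$ that can be placed on the $a$-letters, and the frequency of the letter $a$ in the Fibonacci word is $\tau^{-1}$ while that of $b$ is $\tau^{-2}$. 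Passing to the limit, the entropy $s$ equals the asymptotic exponential growth rate; resolving the double counting (two different labellings that happen to produce the same word, which occurs precisely at configurations like $\dots a\,|\,a\dots$ straddling a block boundary) is what produces the refined series rather than the naive value $\tau^{-1}\log 2$.

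Concretely, I would count as follows. Work at level $k$: a realisation of $\zeta^{k}_{\mathrm{F}}(a)$ is determined by labelling each $a$ in $\zeta^{k-1}_{\mathrm{F}}(\cdot)$, down through all levels. Group the contributions by the level $\ell$ at which a given piece of ``freedom'' is resolved; an $a$ that is the image $b\mapsto a$ carries no choice, whereas an $a$ appearing inside an $ab$ or $ba$ block does. Counting how many genuinely distinct words of a given large length $N$ arise, one finds that a block of $\ell$ consecutive equal letters in the level-$(k-\ell)$ word can be completed to a legal word in $\ell$ essentially-distinct ways (not $2^{\ell-1}$, because of cancellations), and such blocks occur with the frequency dictated by the substitution, namely proportional to $\tau^{-(\ell+2)}$. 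Summing $\frac{\log \ell}{\tau^{\ell+2}}$ over $\ell\geq 2$ — the first genuine choice needs a block of length at least two — gives exactly $s=\sum_{\ell=2}^{\infty}\frac{\log(\ell)}{\tau^{\ell+2}}$. Finally, I would invoke primitivity to conclude that the word-complexity growth rate equals the topological entropy of the subshift (the number of length-$n$ subwords grows like $\ee^{sn}$, and for a primitive system this is the topological entropy), and note that this matches the numerical value $\approx 0.444\,398\,725$.

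The main obstacle is precisely the \emph{double-counting} bookkeeping: naively each $a$ gives a factor $2$, suggesting entropy $\tau^{-1}\log 2\approx 0.428$, but distinct label strings can collapse to the same bi-infinite word along runs of equal letters, and one must identify exactly which identifications occur and with what frequency. Handling this carefully — equivalently, finding the correct effective number of choices per run of length $\ell$ and the correct asymptotic density of such runs in the Fibonacci word — is the crux, and is where the series $\sum_{\ell\ge 2}\log(\ell)\,\tau^{-(\ell+2)}$ comes from; the rest (length rigidity, primitivity, passage from complexity to entropy) is standard. Since the cited references \cite{nil,mo} carry this out in full, I would reproduce only the combinatorial heart and refer to them for the routine estimates.
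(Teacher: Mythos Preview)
The paper does not supply a proof here; the fact is recorded with citations to \cite{nil,mo} and a \qed, so there is nothing in-paper to compare against beyond the analogous (and structurally simpler) argument for period doubling in Lemma~\ref{lem:pd-entropy}.

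Judged on its own, your sketch has a genuine gap at the combinatorial core. First, the narrative is internally inconsistent: you say the naive value is $\tau^{-1}\log 2\approx 0.428$ and that \emph{collapsing} of labellings refines it to $0.444$, but identifying distinct choice strings that yield the same word can only \emph{lower} a count, and $0.444>0.428$. (The honest naive upper bound, counting all binary choice sequences across all inflation levels, is $\log 2\approx 0.693$.) Second, and more seriously, you never actually derive the $\log(\ell)$. The assertion that ``a block of $\ell$ consecutive equal letters can be completed in $\ell$ essentially distinct ways, with frequency $\tau^{-(\ell+2)}$'' is neither justified nor compatible with the structure of the system---Fibonacci-type words do not contain arbitrarily long runs of equal letters, and no object in your outline is shown to have exactly $\ell$ realisations or density $\tau^{-(\ell+2)}$. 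Unlike the period doubling case, the pedigree graph here is \emph{not} a tree: already $\zeta_{\mathrm F}^{2}(a)$ has only $3$ distinct realisations rather than $4$, so one cannot read off the entropy from raw choice sequences. A tight count requires an explicit recursion for the number of distinct realisations (equivalently, for the complexity function), and it is that recursion which produces the series $\sum_{\ell\geqslant 2}\log(\ell)\,\tau^{-(\ell+2)}$ in \cite{nil}. Your proposal gestures toward it without supplying it.
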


\subsection{Tiling picture}
To continue, we need to consider the geometric counterpart of
$\XX_{\zeta^{}_{\mathrm{F}}}$.  In a first step, we again replace
letters by intervals of length $\tau$ (for $a$) and $1$ (for $b$), and
take the left endpoints of all intervals to obtain Delone sets. From
now on, we will identify the tiling picture with the Delone picture
that emerges from taking the left endpoints, possibly marked by the
tile type, as this gives a bijection between the two sets of objects.
This procedure turns each element $w\in\XX_{\zeta^{}_{\mathrm{F}}}$ 
into a point set $\vL_{w}$, where we agree to map the marker (origin) 
of $w$ to $0$. This means that each point set $\vL_{w}$ is a Delone set that
contains $0$, and the collection of these sets constitutes the
\emph{punctured continuous hull} $\YY_{\! 0, \zeta^{}_{\mathrm{F}}}$.

At this stage, there is no translation action of $\RR$ on this hull.
To construct a proper dynamical system, we need the
full continuous hull $\YY_{\!\zeta^{}_{\mathrm{F}}}$, which is
obtained as the smallest collection of Delone sets that contains
$\YY_{\! 0, \zeta^{}_{\mathrm{F}}}$ and is closed under the
(continuous) translation action of $\RR$. Then,
$(\YY_{\!\zeta^{}_{\mathrm{F}}}, \RR)$ is a topological dynamical 
system that is topologically conjugate to the suspension of the 
discrete system with a roof function that reflects the tile lengths. 
Clearly, one has $\YY_{\! 0, \zeta^{}_{\mathrm{F}}} = \{ y \in \YY_{\!
  \zeta^{}_{\mathrm{F}}}\mid 0 \in y \}$.

\begin{figure}
\begin{center}
  \includegraphics[width=0.84\textwidth]{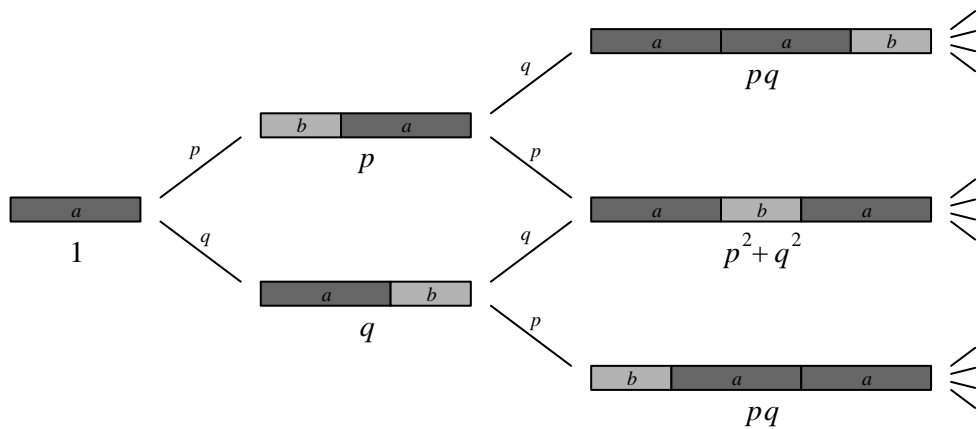}
\end{center}
\caption{\label{fig:infl} Illustration of the first steps of the
  random Fibonacci inflation rule with natural interval lengths. It
  shows the transition probabilities as well as the probabilities for
  the first exact inflation patches.  The probability measure induced
  by this process on the (one-sided) infinite inflation patches is
  compatible with the patch frequency measure obtained from
  Perron--Frobenius theory; compare \cite{mo,PG}. }
\end{figure}

The elements of $\YY^{}_{\!\zeta^{}_{\mathrm{F}}}$ can be described as
subsets of (translates of) model sets. It suffices to see this for the
elements of $\YY_{\! 0, \zeta^{}_{\mathrm{F}}}$. The idea is to use
the positions of the windows from the deterministic case relative to
each other to derive a covering window $W$ with $\vL\subset\oplam(W)$;
see Section~\ref{sec:decompose} for a detailed derivation of this
property.

\begin{remark}\label{rem:generating}
  Recall that, in the deterministic setting, the discrete hull of a
  primitive substitution can be constructed via the fixed point of the
  substitution. In the stochastic situation, there is no direct
  analogue of a fixed point. However, it is possible to modify this
  approach. To do so, define
\[
   X_{\zeta^{}_{\mathrm{F}}}  \, := \, \bigl\{ w\in\cA^{\ZZ} \mid w 
   \text{ is an accumulation point of } 
   \big( \zeta^{k}_{\mathrm{F}}(a |a)\big)_{k\in\NN_0} \bigr\}  ,
\]
where `accumulation point' is meant in the sense of one for any of the
possible realisations of the random substitution sequence.  Then, the
hull $\XX_{\zeta^{}_{\mathrm{F}}}$ from above is the smallest closed
and shift-invariant subset of $\cA^{\ZZ}$ with
$X_{\zeta^{}_{\mathrm{F}}} \subseteq \XX_{\zeta^{}_{\mathrm{F}}}$; see
\cite[Prop.~2.22]{mo}. The geometric realisations of elements of
$X_{\zeta^{}_{\mathrm{F}}}$ are called \emph{generating random
  Fibonacci sets}.  \exend
\end{remark}

\begin{prop}[{\cite[Prop.~5.21]{mo}}]\label{prop:cover}
  Let\/ $\vL$ be any of the generating random Fibonacci sets from 
  Remark~\textnormal{\ref{rem:generating}}. Then, one has\/ 
  $ \vL\subset\oplam(W)$ with
  covering window\/ $W=[-\tau,\tau]$, where\/
  $\dens(\oplam(W)) = \frac{2 \tau}{\sqrt{5}} = 2
  \dens(\vL)$. \qed
\end{prop}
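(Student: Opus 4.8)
The plan is to carry out the estimate inside the cut and project scheme $(\RR,\RR,\cL)$ of Eq.~\eqref{eq:CPS}, using that algebraic conjugation turns the random inflation into a graph-directed iterated function system on internal space. The geometric substitution has inflation factor $\tau$, so subdividing a tile multiplies the internal coordinate $x^{\star}$ of its left endpoint by $\tau^{\star}=1-\tau=-1/\tau$ (a contraction, since $\lvert\tau^{\star}\rvert=1/\tau<1$) and then adds the $\star$-image of the position of the new left endpoint \emph{relative} to the rescaled tile. For the rule in Eq.~\eqref{eq:Fib-def} these relative offsets can only be $0$, $\tau$ and $1$, with $\star$-images $0$, $\tau^{\star}$ and $1$, so every map that can occur is one of
\[
   f_{1}(t)=\tau^{\star}t ,\qquad f_{2}(t)=\tau^{\star}t+1 ,\qquad
   f_{3}(t)=\tau^{\star}t+\tau^{\star} .
\]
The key point is that the offsets $\tau$ and $1$ arise \emph{only} when an $a$-tile is subdivided (as $ab$, the new $b$ at offset $\tau$, or as $ba$, the new $a$ at offset $1$), whereas a $b$-tile is always replaced by a single $a$ at offset $0$. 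Hence one must keep track of $a$- and $b$-endpoints separately.

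Fixing a realisation, I would write $W^{(k)}_{a}$ and $W^{(k)}_{b}$ for the sets of internal coordinates of the left endpoints of the $a$- resp.\ $b$-tiles in the geometric realisation of $\zeta^{k}_{\mathrm{F}}(a|a)$. Reading off the inflation rule gives the recursions
\[
   W^{(k+1)}_{a}\subseteq f_{1}\!\bigl(W^{(k)}_{a}\bigr)\cup
   f_{2}\!\bigl(W^{(k)}_{a}\bigr)\cup f_{1}\!\bigl(W^{(k)}_{b}\bigr)
   \qquad\text{and}\qquad
   W^{(k+1)}_{b}\subseteq f_{1}\!\bigl(W^{(k)}_{a}\bigr)\cup
   f_{3}\!\bigl(W^{(k)}_{a}\bigr) ,
\]
and the core of the argument is the induction $W^{(k)}_{a}\subseteq[-1,\tau]$ and $W^{(k)}_{b}\subseteq[-\tau,\tau]$. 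The base case is immediate: the seed $a|a$ consists of two $a$-tiles with left endpoints $0$ and $-\tau$, so $W^{(0)}_{a}=\{0,\tau-1\}\subset[-1,\tau]$ and $W^{(0)}_{b}=\emptyset$. For the inductive step one uses $\tau^{\star}\tau=-1$: if $W^{(k)}_{a}\subseteq[-1,\tau]$ then $f_{1}(W^{(k)}_{a})\subseteq[-1,\tau-1]$, hence $f_{2}(W^{(k)}_{a})\subseteq[0,\tau]$ and $f_{3}(W^{(k)}_{a})\subseteq[-\tau,0]$, while $W^{(k)}_{b}\subseteq[-\tau,\tau]$ gives $f_{1}(W^{(k)}_{b})\subseteq[-1,1]$; taking unions yields $W^{(k+1)}_{a}\subseteq[-1,\tau]$ and $W^{(k+1)}_{b}\subseteq[-\tau,\tau-1]\subseteq[-\tau,\tau]$. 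Throughout, the orientation reversal caused by $\tau^{\star}<0$ must be tracked with care, but nothing deeper is involved.

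By Remark~\ref{rem:generating}, every generating random Fibonacci set $\vL$ is a local limit of geometric realisations of patches $\zeta^{k_{j}}_{\mathrm{F}}(a|a)$, so each point of $\vL$ lies in such a patch for all large $j$, with a fixed tile type; its internal coordinate therefore lies in $W^{(k_{j})}_{a}\cup W^{(k_{j})}_{b}\subseteq[-\tau,\tau]$, and since $[-\tau,\tau]$ is closed we conclude $\vL\subset\oplam([-\tau,\tau])$. For the density statement, $\oplam(W)$ with an interval window is a regular model set, so $\dens(\oplam(W))=\vol(W)/\operatorname{covol}(\cL)=2\tau/\sqrt5$ because $\operatorname{covol}(\cL)=\lvert1-2\tau\rvert=\sqrt5$; and since the random substitution has the same substitution matrix as the deterministic Fibonacci rule, $\vL$ has the same letter frequencies, hence the same point density $\dens(\vL)=\tau/\sqrt5$ as in Section~\ref{sec:fibo}, which gives the claimed factor $2$.

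I expect the main obstacle to be obtaining the \emph{sharp} window $[-\tau,\tau]$ rather than just some covering window: the smallest single interval invariant under all of $f_{1},f_{2},f_{3}$ is $[-2,\sqrt5]$, which is strictly larger and would destroy the clean identity $\dens(\oplam(W))=2\dens(\vL)$. Narrowing this down to $[-\tau,\tau]$ genuinely needs the $a/b$-splitting, which confines the ``$f_{2},f_{3}$-part'' of the dynamics to the smaller interval $[-1,\tau]$. The remaining subtlety is the passage to accumulation points, where one uses uniform discreteness of the $\vL_{w}$ together with the closedness of $[-\tau,\tau]$; everything else is the routine interval arithmetic above. (Equivalently, one may unfold the recursion into $x^{\star}=\sum_{i\ge0}(\tau^{\star})^{i}c_{i}$ with $c_{i}\in\{0,\tau^{\star},1\}$ subject to the constraint $c_{i}=\tau^{\star}\Rightarrow c_{i-1}=0$, and bound this geometric-type series directly to get $\lvert x^{\star}\rvert\le\tau$.)
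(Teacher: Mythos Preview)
Your argument is correct and is essentially the same as the paper's own derivation in Section~\ref{sec:decompose}: both set up the same graph-directed IFS in internal space with the $a/b$ splitting (your $f_{1},f_{2},f_{3}$ are exactly the maps in Eq.~\eqref{eq:IFS-1}, with $\sigma=\tau^{\star}$), and both identify $W_{a}=[-1,\tau]$, $W_{b}\subseteq[-\tau,\tau]$, hence $W=[-\tau,\tau]$. The only difference is implementation: the paper invokes Banach's contraction principle on $\cK\times\cK$ with the Hausdorff metric to obtain the \emph{unique} compact fixed point and then verifies it, whereas you run a direct induction on the finite-level sets $W^{(k)}_{\alpha}$; your route is slightly more elementary and yields exactly the containment needed for the proposition, while the paper's route additionally pins down the sharp $W_{b}=[-\tau,1/\tau]$ (which your step $W^{(k+1)}_{b}\subseteq[-\tau,\tau-1]$ in fact already captures).
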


Now, we want to determine the diffraction measure for the system
defined by $\zeta^{}_{\mathrm{F}}$, which means to determine the
average over the diffraction measures of the elements of
$\YY_{\!\zeta^{}_{\mathrm{F}}}$ with respect to an invariant measure
on it.  The latter is chosen as the \emph{patch frequency measure},
denoted by $\nu^{}_{\!\mathsf{pf}}$, with frequencies defined via a
van Hove sequence $(B_n)^{}_{n\in\NN}$ of growing intervals that are
centred at the origin, so $B_n \subset B_{n+1}$ for all $n$ and
$\vol (B_n) \xrightarrow{\, n\to\infty\,} \infty$. Note that
$\nu^{}_{\!\mathsf{pf}}$ is a completely natural choice, and is both
translation invariant and ergodic \cite{mo2}. In fact, it has the nice
feature that the unique measure induced by it on the subset
$X_{\zeta^{}_{\mathrm{F}}}$ via filtration is precisely the measure
defined directly on this subset by the random inflation procedure
according to Figure~\ref{fig:infl}; see \cite{PG} for details.

Consider an individual $\vL\in\YY_{\!\zeta^{}_{\mathrm{F}}}$. Its
diffraction measure (with respect to the same van Hove sequence),
which really is the diffraction measure of the Dirac comb
$\delta^{}_{\!\vL}$, is given by
\begin{equation} \label{eq:expsum}
  \widehat{\ts\gamma^{}_{\!\vL}\ts} \, = 
  \lim_{n\to\infty} \myfrac{1}{\vol (B_n)} 
  \, \Big| \! \sum_{x\in\vL_{n}} \ee^{-2\pi\ii kx} \Big|^2
  \,  = \lim_{n\to\infty} \myfrac{1}{\vol (B_n)} \, 
        \big| X_n(k) \big|^2,
\end{equation}
with $\vL_{n} = \vL \cap B_{n}$ and the exponential sums
$X_n(k):=\sum_{x\in\vL_{n}} \ee^{-2\pi\ii kx}$. The limit is taken in
the vague topology. It exists, and is the same, for
$\nu^{}_{\!\mathsf{pf}}$-almost all
$\vL \in \YY_{\!\zeta^{}_{\mathrm{F}}}$, both as a consequence of the
ergodic theorem.  Let us use $\widehat{\gamma}$ to denote this limit,
which is the diffraction measure of the measure-theoretic dynamical
system $(\YY_{\!\zeta^{}_{\mathrm{F}}},\RR,\nu^{}_{\!\mathsf{pf}})$.
As such, it can be determined as an $\nu^{}_{\mathsf{pf}}$-average
over the individual diffraction measures
$\widehat{\gamma^{}_{\!\vL} \ts }$.

Now, to calculate $\widehat{\gamma}$, we actually do not need to take
an average over (almost) all individual diffraction measures
$\widehat{\ts\gamma^{}_{\!\vL}\ts}$, but only over a suitable subset.
A good choice of the latter will then actually allow us to find the
corresponding \emph{Eberlein decomposition} as well.  A justification
of this step follows later, in Section~\ref{sec:decompose}.  Here, we
work with one-sided tilings (starting from the origin) and approach
them by the family of exact inflation patches, viewed as random
variables. Denoting the corresponding random exponential sums for $n$
inflation steps by $\cX_{n} (k)$, we can determine their probabilistic
weights by the corresponding \emph{random concatenation rule}; see
\cite[Eq.~5.9]{gl1}. This gives the recursion
\begin{equation}\label{eq:concat}
  \cX_n(k) \, = \,
  \begin{cases}
  \cX_{n-2}(k) + \ee^{-2\pi\ii k\tau^{n-2}}\cX_{n-1}(k) \ts , 
     & \text{with probability } p \ts , \\
  \cX_{n-1}(k) + \ee^{-2\pi\ii k\tau^{n-1}}\cX_{n-2}(k) \ts , 
     & \text{with probability } q \ts ,
  \end{cases}
\end{equation} 
together with $\cX_0(k)=1$ and $\cX_1(k)=1$. Note that
$\big(\cX_n(k)\big)_{n\in\NN}$ can be seen as the subsequence of
$\big(X_m (k)\big)_{m\in\NN}$ that corresponds to exact random
Fibonacci inflation patches.

\begin{remark} 
  Here and below, we work with the left endpoints of each tile.
  Consequently, the initial conditions $\cX_0$ and $\cX_1$ differ from
  the ones used in \cite{gl1}, where Godr\`eche and Luck chose the
  right endpoints of each tile instead. This does not affect the final
  result.  \exend
\end{remark}

Let us now use $\EE$ for the average over the exact inflation patches,
weighted with their appropriate probabilities. Then, we obtain
\begin{equation}\label{eq:fibre-mean}
  \EE \bigl(\widehat{\ts\gamma^{}_{\!\vL}\ts}\bigr) \, = \lim_{n\to\infty} 
  \frac{\lvert \EE(\cX_n) \rvert^2}{L_{n}}\, + \lim_{n\to\infty} 
  \frac{\operatorname{Var}(\cX_n)}{L_n}  \, =: \,
  \widehat{\ts\gamma^{}_1 \ts} + \widehat{\ts\gamma^{}_2 \ts},
\end{equation}
where $L_n = \tau^n$ is the length of the level-$n$ random inflation
tiling patch. It is the same for all realisations. The existence of
the two limits was constructively shown in \cite{gl1,mo,mo2}.  It was
also shown there that $\widehat{\ts\gamma^{}_2 \ts}$ is absolutely
continuous. One aim of this paper is to complete the spectral analysis
of this case by showing that $\widehat{\ts\gamma^{}_1 \ts}$ is pure
point, which implies the absence of singular continuous components.

To this end, let $\cM_n$ denote the (finite) random Dirac comb that
underlies $\cX_n$, so $\cX_n = \widehat{\cM_n}$. With initial
conditions $\cM^{}_0 = \cM^{}_1 = \delta^{}_0$, the counterpart to
Eq.~\eqref{eq:concat} reads
\begin{equation}\label{eq:concat-M}
  \cM_n \, = \,
  \begin{cases}
      \cM_{n-2} + \delta_{\tau^{n-2}} * \cM_{n-1} \ts , 
     & \text{with probability } p \ts , \\
       \cM_{n-1} + \delta_{\tau^{n-1}} * \cM_{n-2} \ts , 
     & \text{with probability } q \ts ,
  \end{cases}
\end{equation} 
and we know the following property from \cite{mo,mo2} and
Proposition~\ref{prop:cover}.

\begin{fact}
  For any\/ $n\in\NN$, all realisations of the random Dirac comb\/
  $\cM_n$ have support in the finite point set\/
  $\oplam (W) \cap [0, L_n)$, with\/ $L_n = \tau^n$ as before. \qed
\end{fact}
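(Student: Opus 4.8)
The statement splits into two containments, $\supp(\cM_n) \subseteq [0, L_n)$ and $\supp(\cM_n) \subseteq \oplam(W)$ with $W = [-\tau, \tau]$, and the plan is to prove both by induction on $n$ from the recursion \eqref{eq:concat-M}, the base cases being $\cM_0 = \cM_1 = \delta_0$, whose support $\{0\}$ clearly lies in $[0,1) \cap \oplam(W)$ since $0 \in \ZZ[\tau]$ and $0^{\star} = 0 \in W$.

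The length bound is immediate. Using only the golden-ratio identity $\tau^2 = \tau + 1$, one has $L_{n-2} + L_{n-1} = \tau^{n-2}(1+\tau) = \tau^{n} = L_n$. In the first branch of \eqref{eq:concat-M}, $\cM_{n-2}$ is supported in $[0, L_{n-2})$ by the inductive hypothesis, while $\delta_{\tau^{n-2}} * \cM_{n-1}$ is supported in $\tau^{n-2} + [0, L_{n-1}) = [L_{n-2}, L_n)$, so their sum is supported in $[0, L_n)$; the second branch is identical with the roles of $n-1$ and $n-2$ interchanged. The very same induction, together with $\tau^{n-1}, \tau^{n-2} \in \ZZ[\tau]$, gives $\supp(\cM_n) \subseteq \ZZ[\tau]$ as well, so only the control of the $\star$-images remains.

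Here a naive induction fails: applying the $\star$-map to \eqref{eq:concat-M} produces a translate by $(\tau^{n-2})^{\star} = (1-\tau)^{n-2}$ or by $(\tau^{n-1})^{\star} = (1-\tau)^{n-1}$, and adding such a number to a point of $\oplam(W)$ can leave $W$. I see two remedies. The cleaner one, and the route indicated in the text, is to invoke Proposition~\ref{prop:cover}: every realisation of $\cM_n$ is the Dirac comb supported on the left endpoints of an exact level-$n$ inflation word $w$, with leftmost endpoint placed at the origin; since $a|a$ is $\zeta^{}_{\mathrm{F}}$-legal and realisations of the random substitution compose, $w$ occurs, starting exactly at the marker, as a prefix of a suitable realisation of $\zeta^{k}_{\mathrm{F}}(a|a)$ for all large $k$, and passing to a limit produces an element of $X_{\zeta^{}_{\mathrm{F}}}$ whose generating set $\vL$ satisfies $\supp(\cM_n) \subseteq \vL \cap [0, L_n) \subseteq \oplam(W) \cap [0, L_n)$ by Proposition~\ref{prop:cover}. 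The self-contained alternative is to strengthen the inductive hypothesis to $\supp(\cM_n)^{\star} \subseteq [\ell_n, r_n]$, where the endpoints are defined recursively by $\ell_n := \min\bigl(\ell_{n-2}, \ell_{n-1}, (1-\tau)^{n-2} + \ell_{n-1}, (1-\tau)^{n-1} + \ell_{n-2}\bigr)$, the mirrored expression for $r_n$, and $\ell_0 = r_0 = \ell_1 = r_1 = 0$; because $|(1-\tau)^{m}| = \tau^{-m}$ decays geometrically, a telescoping estimate shows $[\ell_n, r_n] \subseteq [-1, \tau] \subseteq W$ for every $n$, which closes the induction. Combining the length and window containments gives the assertion $\supp(\cM_n) \subseteq \oplam(W) \cap [0, L_n)$.

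The only step with genuine content is the window bound. With Proposition~\ref{prop:cover} it is essentially bookkeeping about how the random exact inflation patches sit inside a generating set; without it, the work is in checking that the geometric corrections in the $[\ell_n, r_n]$-recursion never push an endpoint past $\pm\tau$, and I expect pinning down the indices and the telescoping constant there to be the one genuinely fiddly point.
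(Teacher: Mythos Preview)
Your primary route --- embedding each realisation of $\cM_n$ as the initial patch of a generating random Fibonacci set and then invoking Proposition~\ref{prop:cover} --- is correct and is precisely how the paper justifies the Fact (it is stated with a \textsc{qed} and attributed to that proposition and the references). The inductive length bound via $\tau^{n-2}+\tau^{n-1}=\tau^n$ and the observation $\supp(\cM_n)\subseteq\ZZ[\tau]$ are also fine.

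Your self-contained alternative, however, does not go through as written. The claimed inclusion $[\ell_n,r_n]\subseteq[-1,\tau]$ already fails at $n=3$: choosing the branch $\cM_3=\cM_1+\delta_\tau*\cM_2$ with $\cM_2=\delta_0+\delta_\tau$ (corresponding to the legal word $aab$) gives $\supp(\cM_3)=\{0,\tau,2\tau\}$, and $(2\tau)^\star=2(1-\tau)=2-2\tau\approx-1.236<-1$. The correct target is the full window $W=[-\tau,\tau]$, but even then neither the untyped induction nor a crude telescoping bound closes, since $\sum_{m\geqslant 0}\lvert\sigma\rvert^{m}=\tau^{2}>\tau$. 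What is actually needed is to track the type-$a$ and type-$b$ points separately: this is exactly the contractive IFS argument carried out later in Section~\ref{sec:decompose} (see Eq.~\eqref{eq:IFS-1}), whose unique compact fixed point is $(W_a,W_b)=([-1,\tau],[-\tau,\tau^{-1}])$, and which is monotone in the sense that starting from any pair inside $(W_a,W_b)$ keeps the iterates inside. So your alternative can be salvaged, but only after this type-refinement --- the ``fiddly'' step you anticipated is in fact a structural one.
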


\subsection{Averages and weight functions}
Define the uniformly discrete point set
\[
   \vL^{}_{\geqslant 0} \, := \, \oplam (W) \cap 
   \RR^{}_{\geqslant 0} \, = \,
   \{ x \in \oplam (W) : x \geqslant 0 \}
\]
and observe that we may identify $\cM_{n}$ with a random variable with
values in $\vO := \{ 0,1 \}^{\vL_{\geqslant 0}}$. Note that $\vO$ is a
compact space, equipped with the standard product topology.  In this
picture, the realisations of $\cM_n$ are sequences of the form
$(m_x )^{}_{x\in\vL_{\geqslant 0}}$ with $m_x \in \{ 0,1 \}$ and
$m_x=0$ for all $x \geqslant L_n$, the latter due to our setting with
left endpoints of the tiles as markers. Let $\vartheta_n$ denote the
corresponding (discrete) probability distribution on $\vO$.

\begin{lemma}\label{lem:conv-distr}
  Let\/ $p$ and\/ $q$ be the probabilities from
  Eq.~\eqref{eq:Fib-def}, and assume that\/ $0 < p <1$. Then, the
  sequence\/ $(\vartheta_n )^{}_{n\in\NN}$ of probability measures
  on\/ $\vO$ is weakly converging.
\end{lemma}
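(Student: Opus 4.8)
The plan is to reduce the asserted weak convergence to the convergence of finitely many window marginals, and then to recognise each of these as the solution of an elementary linear recursion on a finite-dimensional simplex. Since $\vL^{}_{\geqslant 0}$ is countable, $\vO=\{0,1\}^{\vL_{\geqslant 0}}$ is compact and metrisable, and the cluster functions $m\mapsto\prod_{x\in F}1^{}_{\{m^{}_x=\sigma^{}_x\}}$, ranging over finite $F\subset\vL^{}_{\geqslant 0}$ and $\sigma\in\{0,1\}^{F}$, span a dense unital subalgebra of $C(\vO)$ by the Stone--Weierstrass theorem. Hence $(\vartheta^{}_n)$ converges weakly if and only if, for every such $F$ and $\sigma$, the cylinder probabilities $\vartheta^{}_n(\{m:m|^{}_F=\sigma\})$ converge as $n\to\infty$. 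Fixing a bounded interval $[0,R]$ with $\vL^{}_{\geqslant 0}\cap[0,R]\supseteq F$, it thus suffices to prove, for every $R>0$, that the laws $\mu^{R}_n$ of the restricted configurations $\cM_n|^{}_{[0,R]}$ converge in the simplex of probability measures on the finite set $\{0,1\}^{\vL_{\geqslant 0}\cap[0,R]}$.

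Fix $R>0$ and let $n^{}_0$ be least with $\tau^{n_0-2}>R$. The key observation is that, by the length identity $\tau^{n-1}+\tau^{n-2}=\tau^{n}$ together with the support bound $\supp\cM^{}_n\subseteq\oplam(W)\cap[0,L^{}_n)$ recorded just above, for $n\geqslant n^{}_0$ the block appended in either branch of \eqref{eq:concat-M} is supported strictly to the right of $[0,R]$: in the $p$-branch, $\supp(\delta^{}_{\tau^{n-2}}*\cM^{}_{n-1})\subseteq[\tau^{n-2},\tau^{n})$ is disjoint both from $[0,R]$ and from $\supp\cM^{}_{n-2}\subseteq[0,\tau^{n-2})$, so the two summands constituting $\cM^{}_n$ have disjoint supports and $\cM_n|^{}_{[0,R]}=\cM_{n-2}|^{}_{[0,R]}$; likewise, in the $q$-branch one gets $\cM_n|^{}_{[0,R]}=\cM_{n-1}|^{}_{[0,R]}$. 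Since the $n$-th coin toss is independent of everything constructed up to step $n-1$, conditioning on it yields the affine recursion
\[
   \mu^{R}_n \; = \; p\,\mu^{R}_{n-2}\,+\,q\,\mu^{R}_{n-1},\qquad n\geqslant n^{}_0 .
\]

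Its characteristic polynomial factors as $x^2-qx-p=(x-1)(x+p)$, so the general solution has the form $\mu^{R}_n=A^{}_R+(-p)^{n}B^{}_R$, the coefficients $A^{}_R,B^{}_R$ being fixed by the two values $\mu^{R}_{n_0}$ and $\mu^{R}_{n_0+1}$. As $0<p<1$, this gives $\mu^{R}_n\to A^{}_R$ as $n\to\infty$; since $R$ was arbitrary, all window marginals converge, and hence $(\vartheta^{}_n)$ is weakly convergent.

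The step needing the most care is the geometric claim in the second paragraph: one must verify that within each branch of \eqref{eq:concat-M} the two summands have disjoint supports, so that restricting the resulting $\{0,1\}$-valued configuration to a bounded window returns one of the two predecessor configurations verbatim, with no overlaps or cancellations to track, and that this holds uniformly over all realisations once $n$ is large relative to $R$ --- which is precisely what the length identity and the uniform support bound provide. Everything else is routine: the passage from configurations to their laws uses only the independence of successive tosses, and the convergence of $(\mu^{R}_n)$ is the elementary fact about the displayed recursion. (The hypothesis $p\neq1$ is genuinely needed here: for $p=1$ the recursion would collapse to $\mu^{R}_n=\mu^{R}_{n-2}$, whose even- and odd-index subsequences need not coincide.)
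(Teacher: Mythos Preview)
Your argument is correct, and it takes a genuinely different and more economical route than the paper's. The paper argues via the \emph{inflation} (renormalisation) structure: it introduces type-split occupation probabilities $g^{(n)}_{\alpha}(x)$, derives the recursion \eqref{eq:occupation-prob} in which the arguments contract under $x\mapsto x/\tau$, establishes convergence first at $x=0$ via the primitive Markov matrix \eqref{eq:Markov}, and then propagates inductively along the positions of $\vL^{}_{\geqslant 0}$; multi-point cylinders require a separate prefix-probability argument with a second primitive Markov matrix. You instead exploit the \emph{concatenation} structure of \eqref{eq:concat-M} directly at the level of laws: once the window $[0,R]$ is short relative to $\tau^{n-2}$, the restricted law $\mu^{R}_{n}$ satisfies the scalar recursion $\mu^{R}_{n}=q\ts\mu^{R}_{n-1}+p\ts\mu^{R}_{n-2}$ in the finite-dimensional simplex, whose roots $1$ and $-p$ immediately give convergence. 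This handles all finite cylinders at once and avoids any induction over positions. One minor point of phrasing: the coin deciding the branch in \eqref{eq:concat-M} is really the \emph{first} substitution applied to the seed $a$, and the blocks $\cM_{n-2}$, $\cM_{n-1}$ appearing there are fresh independent realisations rather than the ones built in earlier steps of a single Markov chain; this does not affect your argument, since you only use the marginal laws. The trade-off is that the paper's longer route simultaneously sets up the renormalisation identities \eqref{eq:consistency} and the explicit limits $g^{}_{a}(0)=1/(1+p)$ needed later, whereas your proof of the lemma is self-contained but does not produce these by-products.
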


\begin{proof}
  The claimed convergence, by standard arguments, is equivalent to the
  convergence of $\bigl(\vartheta_n ( Z)\bigr)_{n\in\NN}$ for every
  cylinder set $Z$ that is specified at a \emph{finite} set of
  positions in $\vL^{}_{\geqslant 0}$. A simple inclusion-exclusion
  argument shows that this is equivalent to the convergence of
  $\vartheta_n \bigl( \{ m_x = 1 \text{ for } x\in F \} \bigr)$ for
  any \emph{finite} $F\subset \vL^{}_{\geqslant 0}$. Let
\begin{equation}\label{eq:occupation}
    g^{(n)} (x) \, = \, \vartheta_n \bigl( \{ m_x = 1 \} \bigr)
    \, = \, g^{(n)}_{a} (x) + g^{(n)}_{b} (x)
\end{equation}
be the occupation probability of position $x$ under $\vartheta_n$,
split into those for type $a$ and $b$. For any $p\in [0,1]$, the
random inflation \eqref{eq:Fib-def} now implies the recursion
\begin{equation}\label{eq:occupation-prob}
\begin{split}
  g^{(n+1)}_{a} (x) \, & = \, q \, g^{(n)}_{a}
    \bigl( \tfrac{x}{\tau} \bigr) + g^{(n)}_{b}
    \bigl( \tfrac{x}{\tau} \bigr) + p \, g^{(n)}_{a}
    \bigl( \tfrac{x-1}{\tau} \bigr) , \\[2mm]
  g^{(n+1)}_{b} (x) \, & = \, p \, g^{(n)}_{a}
    \bigl( \tfrac{x}{\tau} \bigr) + q \, g^{(n)}_{a}
    \bigl( \tfrac{x-\tau}{\tau} \bigr) ,
\end{split}
\end{equation}
subject to the initial condition
$g^{(0)}_{\alpha} (x) = \delta^{}_{\alpha, a} \, \delta^{}_{x,0}$ and
the general rule that, for any $\alpha \in \{ a,b\}$ and all $n$, we
have $g^{(n)}_{\alpha} (x) = 0$ whenever $x \notin \vL^{}_{\geqslant 0}$.
At $x=0$, this implies
\begin{equation}\label{eq:Markov}
   \begin{pmatrix} g^{(n+1)}_{a} (0) \\
   g^{(n+1)}_{b} (0) \end{pmatrix} \, = \,
   \begin{pmatrix} q & 1 \\ p & 0 \end{pmatrix}
   \begin{pmatrix} g^{(n)}_{a} (0) \\
   g^{(n)}_{b} (0) \end{pmatrix}
\end{equation}
where the matrix on the right-hand side is the (transpose of) a Markov
matrix. It is primitive when $0 < p < 1$, which implies the
convergence of the sequences
$\bigl( g^{(n)}_{\alpha} (0) \bigr)_{n\in\NN}$, with limits
$g^{}_{a} (0) = 1/(1+p)$ and $g^{}_{b} (0) = p/(1+p)$.

Now, writing $\vL^{}_{\geqslant 0} = \{ 0=x^{}_{0} < x^{}_{1} < 
x^{}_{2} < \ldots \}$,
the convergence of $\bigl( g^{(n)}_{\alpha} (x_i)\bigr)_{n\in\NN}$
can be shown inductively in $i$, because the structure of
Eq.~\eqref{eq:occupation-prob} is such that, when $x>0$,
the right-hand side only has arguments $y$ with 
$0\leqslant y < x$, where the functions are either known to
vanish or converge by induction. This shows the convergence
of the marginals for all cylinder sets that are specified 
at a single location.

A similar argument, based on the propagation of prefix probabilities,
also works for the cylinder sets specified on a finite set of
positions. Here, given any finite set
$\varnothing \ne F\subset \vL^{}_{\geqslant 0}$, one chooses an
integer $n$ such that the geometric realisation (as a patch) of any
legal word of length $n$ is longer than the largest element of $F$.
We consider this as a collection of prefix patches.  Taking all
possible inflations of these patches and sorting them according to the
same prefix collection, we derive a transition matrix for the prefix
collection under one inflation step, which is (the transpose of) a
Markov matrix by construction. When $0<p<1$, this matrix is
irreducible by standard arguments and cyclically primitive because the
corresponding graph must contain a loop, which implies primitivity of
the matrix.

Now, we equip the prefix patches with initial probabilities, for
instance via a set of exact inflation patches of sufficient size (we
know that this is possible because we only look at legal words of
length $n$). No matter what these initial probabilities are, a repeated
iteration of the primitive Markov matrix gives a converging sequence
of prefix probability vectors, with the limit being independent of the
initial choice. This finally implies the convergence of
$\vartheta_n \bigl( \{ m_x = 1 \text{ for } x \in F \} \bigr)$ as
$n\to\infty$, and our claim follows.
\end{proof}

\begin{remark}\label{rem:limit-cases}
  It is easy to check that, with our initial condition, the iteration
  of Eq.~\eqref{eq:Markov} also converges for $p=0$, but not for $p=1$,
  where the iteration alternates between the vectors $(1,0)^t$ and
  $(0,1)^t$.  These are the deterministic cases, where the inflation
  has a one-sided fixed point for $p=0$, but not for $p=1$.  In the
  latter case, one has two fixed points for the \emph{square} of the
  inflation instead. Here, one could `restore' convergence by starting
  from the modified initial condition
  $g^{(0)}_{a} (0) = g^{(0)}_{b} (0) = \frac{1}{2}$.

  Looking at the actual sequence of inflation words for $p=1$,
  however, reveals that only the first two positions alternate between
  $ab$ and $ba$, while all following positions are stable. This simply
  reflects the fact that we are working with a \emph{singular} element
  of the discrete hull here, which is not relevant in the random
  situation.
  \exend
\end{remark}

Let us continue with the random case, $0<p<1$, and set
$g^{}_{\alpha} (x) = \lim_{n\to\infty} g^{(n)}_{\alpha} (x)$. Then,
Eq.~\eqref{eq:occupation-prob} implies the exact renormalisation
identities
\begin{equation}\label{eq:consistency}
\begin{split}
  g^{}_{a} (x) \, & = \, q \, g^{}_{a}
    \bigl( \tfrac{x}{\tau} \bigr) + g^{}_{b}
    \bigl( \tfrac{x}{\tau} \bigr) + p \, g^{}_{a}
    \bigl( \tfrac{x-1}{\tau} \bigr) , \\[2mm]
  g^{}_{b} (x) \, & = \, p \, g^{}_{a}
    \bigl( \tfrac{x}{\tau} \bigr) + q \, g^{}_{a}
    \bigl( \tfrac{x-\tau}{\tau} \bigr) ,
\end{split}
\end{equation}
together with $g^{}_{\alpha} (x) = 0$ for $\alpha\in\{ a,b\}$ and
all $x\notin\vL^{}_{\geqslant 0}$. In fact, one has more.

\begin{prop}\label{prop:ren-unique}
  The renormalisation relations from Eq.~\eqref{eq:consistency},
  subject to the condition that\/ $g^{}_{\alpha} (x) = 0$ for\/
  $\alpha\in\{ a,b \}$ and all\/ $x\notin\vL^{}_{\geqslant 0}$, have a
  one-dimensional solution space, for any\/ $p\in [0,1]$. In
  particular, there is precisely one solution with\/
  $g^{}_{a} (0) = 1/(1+p)$.
  
  Moreover, for each\/ $n\in\NN_0$, the support of the
  function\/ $g^{(n)}$ from Eq.~\eqref{eq:occupation} is 
  contained in the finite set\/
  $\vL^{(n)} := \vL \cap [0,L_n)$, and the pointwise limits\/
   $ g(x) = \lim_{n\to\infty} \ts g^{(n)} (x)$ are uniform in the
   sense that\/ $\max_{x\in\vL^{(n)}} \bigl| g(x) - g^{(n)} (x)
   \bigr| \xrightarrow{n\to\infty\ts} 0$.  
\end{prop}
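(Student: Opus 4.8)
The plan is to split the statement into three independent parts: (i) the one-dimensionality of the solution space of Eq.~\eqref{eq:consistency}, which rests on the triangular structure of the relations; (ii) the support bound, which is a quick induction; and (iii) the uniform convergence, which is the only genuinely hard part.

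\smallskip
\emph{One-dimensional solution space.} Enumerate the uniformly discrete set as $\vL^{}_{\geqslant 0} = \{\, 0 = x^{}_0 < x^{}_1 < x^{}_2 < \cdots \,\}$, so that every $x^{}_i$ has only finitely many predecessors. At $x = 0$, the arguments $-1/\tau$ and $-1$ that appear on the right-hand side of Eq.~\eqref{eq:consistency} are negative, hence lie outside $\vL^{}_{\geqslant 0}$, so both relations collapse to the single constraint $g^{}_{b}(0) = p \, g^{}_{a}(0)$. For $x^{}_i$ with $i \geqslant 1$, each of the three arguments $x^{}_i/\tau$, $(x^{}_i - 1)/\tau$ and $(x^{}_i - \tau)/\tau$ is strictly smaller than $x^{}_i$ (because $\tau > 1$ and $x^{}_i > 0$), hence either equals some $x^{}_j$ with $j < i$ or lies outside $\vL^{}_{\geqslant 0}$, where $g^{}_{\alpha}$ vanishes by hypothesis; consequently Eq.~\eqref{eq:consistency} determines $g^{}_{a}(x^{}_i)$ and $g^{}_{b}(x^{}_i)$ from values already computed. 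By induction on $i$, the solution is fixed by the single parameter $g^{}_{a}(0)$, and conversely every choice of $g^{}_{a}(0)$ produces a consistent solution. Hence the solution space is one-dimensional, the normalisation $g^{}_{a}(0) = 1/(1+p)$ picks out a unique solution, and by Lemma~\ref{lem:conv-distr} this solution is exactly $g = \lim_{n} g^{(n)}$.

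\smallskip
\emph{Support.} For each $n$ the support of $g^{(n)}$ is, by definition, the set of positions occupied by some realisation of $\cM_n$, so the claim $\supp g^{(n)} \subseteq \vL^{(n)}$ is a restatement of the support property of the random combs $\cM_n$ recorded above (which in turn reflects that the level-$n$ inflation patch has length exactly $L_n$). It can also be seen directly by induction on $n$ from Eq.~\eqref{eq:occupation-prob}, starting from $\supp g^{(0)} = \{0\}$.

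\smallskip
\emph{Uniform convergence.} This is where the work lies. Writing $e^{(n)}_{\alpha} := g^{}_{\alpha} - g^{(n)}_{\alpha}$ and subtracting Eq.~\eqref{eq:consistency} from the instance of Eq.~\eqref{eq:occupation-prob} that produces $g^{(n)}$ from $g^{(n-1)}$, the error satisfies the same linear recursion, and for $x \in \vL^{(n)}$ its right-hand side involves $e^{(n-1)}$ only at arguments in $[0, L_{n-1})$ (elsewhere the error vanishes); thus one inflation level of error propagation corresponds to contracting the internal ($\star$-)coordinate by the factor $\tau^{-1}$. I would combine two inputs: first, on any \emph{fixed} finite subset $F \subset \vL^{}_{\geqslant 0}$ the convergence is geometric, $\max_{x\in F} |e^{(n)}(x)| \leqslant C^{}_{F}\, r^{n}$ with $r < 1$, which comes from the primitive Markov matrices built in the proof of Lemma~\ref{lem:conv-distr} — at $x = 0$ the matrix appearing in Eq.~\eqref{eq:Markov} has eigenvalues $1$ and $-p$, so the rate there is $p$; second, the a priori bound $0 \leqslant g^{(n)}_{\alpha} \leqslant 1$, which keeps $e^{(n)}$ uniformly bounded everywhere. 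The hard point is that a blunt triangle-inequality estimate amplifies $|e^{(n)}|$ by a factor $\tau$ at each level, while up to $n$ levels separate a point of $\vL^{(n)}$ from the origin, so the naive bound explodes; one must exploit cancellation rather than pass to moduli. The conceptually clean way to do this — and, I expect, the route the paper takes — is to read the whole renormalisation Eq.~\eqref{eq:consistency} on the internal group, where it becomes a \emph{contractive} self-similar (Hutchinson-type) fixed-point equation of ratio $\tau^{-1}$: its $n$-th iterate is then uniformly $O(\tau^{-n})$-close to the fixed point, and since the physical window $[0, L_n)$ corresponds under the $\star$-map to all of $W$ resolved at scale $\tau^{-n}$, this is precisely the assertion $\max_{x\in\vL^{(n)}} |e^{(n)}(x)| \to 0$. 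Upgrading this from weak convergence of measures to a sup-norm estimate over the ever-finer point sets $\vL^{(n)}$ — using the geometric rate near the origin together with the a priori bound to control the remainder — is the technical heart, and runs in parallel with the approximation lemmas of the appendix.
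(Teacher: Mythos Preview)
Your arguments for parts (i) and (ii) are correct and essentially match the paper's: the one-dimensionality comes from the eigenvector constraint $g^{}_b(0)=p\,g^{}_a(0)$ at $x=0$ together with the strictly-decreasing arguments in Eq.~\eqref{eq:consistency}, and the support bound is an induction on $n$.

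Part (iii), however, is not a proof. You correctly diagnose that the inflation recursion~\eqref{eq:occupation-prob} amplifies the sup-norm of the error by roughly~$\tau$ per step, and you propose to cure this by passing to internal space and invoking a Hutchinson contraction. But that contraction yields only \emph{weak} convergence of the measures $\mu^{(n)}_\alpha$ (this is exactly Lemma~\ref{lem:aw}); it does not by itself give pointwise, let alone uniform, control of $g^{(n)}(x)$ on the discrete sets~$\vL^{(n)}$. You acknowledge this gap yourself (``upgrading this \ldots\ is the technical heart''), and then leave it open. Your guess that the paper closes it via the internal-space picture is wrong.

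The paper instead abandons the inflation recursion for this step and uses the \emph{concatenation} structure of Eq.~\eqref{eq:concat-M}. Taking expectations there gives a three-term recursion in direct space,
\[
   g^{(n+1)}(x)\,=\,
   \begin{cases}
     q\,g^{(n)}(x)+p\,g^{(n-1)}(x), & x\in\vL^{(n-1)},\\[1mm]
     q\,g^{(n)}(x)+p\,g^{(n-1)}(x-\tau^{n-1}), & x\in\vL^{(n)}\setminus\vL^{(n-1)},\\[1mm]
     p\,g^{(n)}(x-\tau^{n-1})+q\,g^{(n-1)}(x-\tau^{n}), & x\in\vL^{(n+1)}\setminus\vL^{(n)}.
   \end{cases}
\]
The crucial feature is that every line is a \emph{convex combination} of earlier values (coefficients $p,q\geqslant 0$ with $p+q=1$), so the sup-norm never increases under one step --- exactly the opposite of what you observed for the inflation recursion. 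On the stable part $\vL^{(n-1)}$ the recursion is even at the \emph{same} point $x$, which immediately gives geometric convergence there; the shifted arguments on the new part $\vL^{(n+1)}\setminus\vL^{(n)}$ feed in values that are already controlled by earlier levels. An induction on $n$ based on this recursion (the paper defers the bookkeeping to \cite[Lemma~3.29]{diss-T}) then yields the uniform bound $\max_{x\in\vL^{(n)}}\bigl|g(x)-g^{(n)}(x)\bigr|\to 0$, in fact at rate $r^{n}$ with $r=\max\{p,q\}$. The missing idea in your attempt is precisely this switch from inflation to concatenation.
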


\begin{proof}
  The first claim really is a consequence of
  Lemma~\ref{lem:conv-distr} and its proof. Indeed, taking the limit
  in Eq.~\eqref{eq:Markov} gives an eigenvector equation with
  eigenvalue $1$. It is easy to check that the corresponding
  eigenspace is always one-dimensional, including the cases $p=0$ and
  $p=1$.  Since all other values $g^{}_{\alpha} (x)$ are determined
  recursively, our claim is obvious. The unique solution specified by
  the special value is the one we need for our further analysis.
  
  The supporting set of $g^{(n)} \nts = g^{(n)}_{a} \! + g^{(n)}_{b}$
  follows from the recursion relations in Eq.~\eqref{eq:Markov} by
  induction. The claim on the convergence is obvious for the two
  deterministic cases, $p=0$ and\/ $p=1$, because one then simply has
  $g^{(n)} (x) = g(x)$ on $\vL^{(n)}$.  In general, the concatenation
  structure implies the recursion
\[
   g^{(n+1)} (x) \, = \, \begin{cases}
    q \ts g^{(n)} (x) + p \ts g^{(n-1)} (x) , &
      x \in \vL^{(n-1)} , \\
    q \ts g^{(n)} (x) + p \ts g^{(n-1)} (x \! - \! \tau^{n-1}) , &
      x \in \vL^{(n)}\setminus \vL^{(n-1)} , \\
    p \ts g^{(n)} (x \! - \! \tau^{n-1}) + 
       q \ts g^{(n-1)} (x \! - \! \tau^n ) , &
      x \in \vL^{(n+1)}\setminus\vL^{(n)} . \end{cases}
\]
  This can now be used inductively to show that
\[
    \lim_{n\to\infty} \max_{x\in\vL^{(n)}}
    \bigl| g(x) - g^{(n)} (x) \bigr| \, = \, 0 \ts ,
\] 
the technical details of which are given in \cite[Lemma~3.29]{diss-T}.  
This implies our claim.
\end{proof}

\subsection{Lift into internal space}
Let $\vartheta$ be the probability measure on $\vO$ that is the
weak limit of the sequence $(\vartheta_n )^{}_{n\in\NN}$. This means
that there is a random variable $\cM$ with law $\vartheta$ such that
$(\cM_n)^{}_{n\in\NN}$ converges in distribution to $\cM$, which in
particular implies that
\[
    \EE (\cM) \, = \lim_{n\to\infty} \EE (\cM_n) \, = 
    \sum_{x\in \vL_{\geqslant 0}} \! \vartheta
    \bigl( \{ m_x = 1\} \bigr) \, \delta_x   \ts ,
\]
with
$\vartheta \bigl( \{ m_x = 1\} \bigr) = g^{}_{a} (x) + g^{}_{b} (x)$
as detailed above. Now, we want to determine the coefficients of
$\EE (\cM)$ more explicitly, for which we employ a representation
within our CPS.  To do so, we write $\EE (\cM_n)$ and $\EE (\cM)$ as
\[
    \EE (\cM_n) \; = \sum_{x\in \vL_{\geqslant 0}} 
     \! h^{(n)}(x^\star) \, \delta_{x} \ts 
    \quad \text{and} \quad
    \EE (\cM) \, = \sum_{x\in \vL_{\geqslant 0}}
     \! h (x^\star) \, \delta_{x} \ts .
\]
As suggested by our previous analysis, we write
\[
    h^{(n)} \, = \, h^{(n)}_{a} + h^{(n)}_{b}
\]
with obvious meaning. Define now
$\mu^{(n)}_{\alpha} = \frac{c}{F_n} \sum_{x\in\smoplam(W)}
h^{(n)}_{\alpha} (x^\star) \, \delta^{}_{\nts x^\star}$
for $\alpha\in\{a,b\}$, where $c$ is a positive constant that will be
fixed later and $F_n$ is the number of tiles in the patch that
underlies $\cM_n$, which is a level-$n$ Fibonacci number (with initial
conditions $F^{}_0 = F^{}_1 = 1$ in this case) and independent of the
realisation of $\cM_n$.

Note that the $\mu^{(n)}_{\alpha}$ are finite and positive pure point
measures on the internal space $H=\RR$, all with support in
$W=[-\tau,\tau]$. Moreover, we work in a setting where
$\frac{1}{c} \bigl(\mu^{(n)}_{a} + \mu^{(n)}_{b}\bigr)$ is a
probability measure for each $n\in\NN$. Now, we need a fundamental
property of these measures, which is a reformulation of earlier
results from \cite{Claudia,mo,mo2}. Let $g.\mu := \mu \circ g^{-1}$
denote the push-forward of a measure $\mu$ by a continuous (and, in
our case, always invertible) function $g$. Define affine functions
$g^{}_0$, $f^{}_0$ and $f^{}_{1}$ by $g^{}_0(x)=\sigma {x+1}$ and
$f^{}_j(x)=\sigma(x+j)$, where $\sigma:= \tau^\star = \tau' = 1-\tau$ is the
algebraic conjugate of\/ $\tau$. Then, an explicit calculation similar
to the one from \cite[Eq.~6.28]{mo} shows that we have the recursion
\begin{equation}\label{eq:mu-rec}
  \begin{split}
     \mu^{(n+1)}_a \, &= \, \frac{F_n}{F_{n+1}} \bigl(
       p \ts\ts (g^{}_0 .\mu^{(n)}_a) + q \ts (f^{}_0 .\mu^{(n)}_a)
        + (f^{}_0.\mu^{(n)}_b) \bigl),  \\[1mm]
     \mu^{(n+1)}_b \, &= \, \frac{F_n}{F_{n+1}} \bigl( 
       p \ts\ts (f^{}_0 .\mu^{(n)}_a) + q \ts 
       (f^{}_1 \nts .\mu^{(n)}_a) \bigr),  
  \end{split}
\end{equation}
with initial condition $\mu^{(1)}_{a} = \delta^{}_0$ and
$\mu^{(1)}_{b}=0$.

\begin{lemma}\label{lem:aw}
  The sequences\/ $\bigl(\mu^{(n)}_{\alpha}\bigr)_{n\in\NN}$ of finite
  measures are weakly converging. The limit measures\/
  $\mu^{}_{\alpha}$ are compactly supported and satisfy the system of
  rescaling equations
\[
  \begin{split}
     \mu^{}_a \, &= \, |\sigma|\bigl(
       p \ts\ts (g^{}_0 .\mu^{}_a) + q \ts (f^{}_0 .\mu^{}_a)
        + (f^{}_0.\mu^{}_b) \bigl),  \\[1mm]
     \mu^{}_b \, &= \,  |\sigma| \bigl( 
       p \ts\ts (f^{}_0 .\mu^{}_a) + q \ts (f^{}_1 \nts .\mu^{}_a) \bigr),  
  \end{split}
\]
with\/ $\sigma = \tau'$ and with the functions\/ $g^{}_0$ and\/
$f^{}_j$ as in Eq.~\eqref{eq:mu-rec}.
\end{lemma}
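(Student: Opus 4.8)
The plan is to establish convergence in two stages: first the weak convergence of the measures $\mu^{(n)}_{\alpha}$, then the passage to the limit in the recursion \eqref{eq:mu-rec} to obtain the rescaling system. For the convergence, I would first transport the information we already have. By Proposition~\ref{prop:ren-unique}, the occupation functions $g^{(n)}$ converge pointwise on $\vL^{}_{\geqslant 0}$, uniformly on each $\vL^{(n)}$. Since $h^{(n)}(x^\star) = g^{(n)}(x)$ by construction, and since the map $x \mapsto x^\star$ restricted to $\oplam(W)$ has image $\oplam(W)^\star$ dense in $W$, the numbers $h^{(n)}_{\alpha}(x^\star)$ stabilise. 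The measures $\mu^{(n)}_{\alpha}$ are supported in the fixed compact set $W=[-\tau,\tau]$ and have uniformly bounded total mass (indeed $\tfrac1c(\mu^{(n)}_a+\mu^{(n)}_b)$ is a probability measure), so by weak-$\ast$ compactness of the space of measures on $W$ with mass $\leqslant$ const, any subsequence has a convergent sub-subsequence.

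The second step is to pin down the limit uniquely. I would test against a continuous function $\varphi \in C(W)$: the integral $\int \varphi \dd \mu^{(n)}_{\alpha} = \frac{c}{F_n}\sum_x h^{(n)}_{\alpha}(x^\star)\,\varphi(x^\star)$ is a weighted exponential-type sum over the points $x^\star$ with $x\in\oplam(W)\cap[0,L_n)$. Here the key input is the equidistribution of the points $x^\star$ in $W$ together with the controlled weights $h^{(n)}_{\alpha}$: because $F_n\sim \tau^n/\sqrt5\,$ equals (up to the density factor) the number of lattice points contributing, and because the $\star$-images equidistribute with respect to Lebesgue measure on $W$ (this is the standard model-set equidistribution, compare \cite{BG}), the normalised sums converge. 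Combined with the uniform convergence $h^{(n)}_{\alpha}\to h^{}_{\alpha}$ from Proposition~\ref{prop:ren-unique}, this forces $\int\varphi\dd\mu^{(n)}_{\alpha} \to \frac{c}{\sqrt5}\int_W h^{}_{\alpha}(y)\,\varphi(y)\dd y$, i.e.\ every subsequential limit equals the absolutely continuous measure with density $\tfrac{c}{\sqrt5}\,h^{}_{\alpha}$ on $W$. Uniqueness of the limit then upgrades subsequential convergence to convergence of the full sequence; compact support is immediate since the limit lives on $W$.

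Finally, I would pass to the limit in \eqref{eq:mu-rec}. The push-forward operators $\varphi \mapsto \varphi\circ g_0$ and $\varphi\mapsto\varphi\circ f_j$ are continuous on $C(W')$ for a slightly enlarged compact $W'$ containing all the relevant affine images of $W$, so $g_0.\mu^{(n)}_{\alpha} \to g_0.\mu^{}_{\alpha}$ weakly, and likewise for the $f_j$. Using $\frac{F_n}{F_{n+1}} \to \tau^{-1} = |\sigma|$ (since $\sigma = 1-\tau = -\tau^{-1}$, so $|\sigma| = \tau^{-1}$) and taking $n\to\infty$ in both lines of \eqref{eq:mu-rec} yields exactly the claimed rescaling system. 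The main obstacle is the uniqueness argument in the second step: establishing that the weighted $\star$-image sums genuinely converge to a Lebesgue integral requires combining the equidistribution of model-set points in the window with the uniform (not merely pointwise) convergence of the weights $h^{(n)}_{\alpha}$, and a little care is needed because the weights and the point configuration both depend on $n$; once that interchange is justified the rest is soft functional analysis.
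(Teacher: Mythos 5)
Your architecture --- weak-$\ast$ compactness of the $\mu^{(n)}_{\alpha}$ on $W$, identification of the subsequential limits, then passage to the limit in Eq.~\eqref{eq:mu-rec} using $F_n/F_{n+1}\to\tau^{-1}=|\sigma|$ --- is genuinely different from the paper's, which settles the weak convergence in one stroke by taking Fourier transforms of the recursion and invoking L\'evy's continuity theorem, and then passes to the limit in the recursion exactly as you do. Your compactness step and your final limit passage are fine. The gap is in the identification step.

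There you assert that $\int\varphi\dd\mu^{(n)}_{\alpha}\to\frac{c}{\sqrt5}\int_W h^{}_{\alpha}(y)\ts\varphi(y)\dd y$ by combining equidistribution of the points $x^{\star}$ with $x\in\oplam(W)\cap[0,L_n)$ with the uniform convergence of the weights. Equidistribution of the $\star$-images in the window yields convergence of averages $\frac{1}{N_n}\sum_x \Phi(x^{\star})$ to $\frac{1}{\vol(W)}\int_W\Phi\dd y$ only for $\Phi$ that are Riemann integrable on $W$ (say, bounded with a null set of discontinuities). At this point in the paper the limit weight function $h^{}_{\alpha}$ is defined only on the countable dense set $(\vL^{}_{\geqslant 0})^{\star}$, as the pointwise limit of the $g^{(n)}_{\alpha}$; no regularity of any extension to $W$ is available. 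Indeed, the continuity of $h^{}_{a}$ and $h^{}_{b}$ on $W$ is established only later, in Proposition~\ref{prop:cont}, via the infinite-convolution representation~\eqref{eq:conv-rep}, which is itself derived \emph{from} Lemma~\ref{lem:aw}; the absolute continuity of the limit measures, which you also build into your identification, is likewise a downstream consequence, not an available input. Without such regularity the weighted sums need not converge to any Lebesgue integral, so your subsequential limits are not pinned down and convergence of the full sequence does not follow. Two ways to close the gap: either argue as the paper does, proving pointwise convergence of $\widehat{\mu^{(n)}_{\alpha}}(t)$ directly from the contractive Fourier-space recursion and applying L\'evy's theorem; or observe that the set of cluster points of the sequence is invariant under the limiting rescaling map, which is a contractive Hutchinson-type iterated function system with probabilities on measures of fixed total mass and hence has a unique fixed point, forcing the cluster set to be a singleton.
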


\begin{proof}
  Since the support of any of the measures $\mu^{(n)}_{\alpha}$ is a
  finite subset of the compact set $W\nts$,  the claimed weak
  convergence follows from standard arguments, for instance by taking
  Fourier transforms and applying Levy's continuity theorem; see
  \cite[Thm.~3.14]{BF}.  Clearly,
  the support of the limit measure must also be contained in $W$.

  With $F_n/F_{n+1} \xrightarrow{\,n\to\infty\,} \tau^{-1} =
  \lvert \sigma \rvert$, it also follows that the limit measures
  satisfy the limiting rescaling relations, which are the ones
  stated.
\end{proof}

Let us apply the Fourier transform to the relations of
Lemma~\ref{lem:aw}. Recall that the Fourier transform of a finite
measure is a uniformly continuous function. Moreover, for an affine
function $g$ defined by $g(x) = r x + s$ with $r\ne 0$, one obtains
$\widehat{g \nts .\mu} \ts (t) = \ee^{- 2 \pi \ii s t} \widehat{\mu}
(r t)$;
compare \cite[Cor.~6.24]{mo}. This leads to the contractive relations
\begin{align*}
     \widehat{\mu^{}_a}(t)\, &=\, |\sigma|\,
     \bigl( ( p  \ee^{-2\pi \ii t} + \, q   )
       \, \widehat{\mu^{}_a} (\sigma t)   +  
       \widehat{\ts\mu^{}_b \ts}(\sigma t) \bigr) , \\[1mm]
     \widehat{\ts\mu^{}_b \ts}(t)\, &=\,  |\sigma| \,
      (  p + q \ee^{-2\pi\ii \sigma t} ) \,
     \widehat{\mu^{}_a} (\sigma t) \ts .  
\end{align*}              
One can check that a solution of this system, which is unique up to
multiplication by a constant, is given by
\begin{equation}\label{eq:ft}
  \begin{split}
     \widehat{\mu^{}_a}(t)\, &=\, \tilde{c} \, \ee^{-\pi\ii t} 
     \sinc(\pi t)  \prod_{\ell\geqslant 1} \bigl( p + 
      q\ee^{-2\pi\ii \sigma^\ell t}\bigr),   \\          
     \widehat{\ts\mu^{}_b \ts}(t)\, &=\,  \tilde{c} \, |\sigma| 
     \ee^{-\pi\ii \sigma t} \sinc(\pi\sigma t)  
     \prod_{\ell\geqslant 1} \bigl(p + q\ee^{-2\pi\ii \sigma^\ell t}\bigr),     
  \end{split}                         
\end{equation}
where the constant $\tilde{c}$ is determined by our additional
condition that $\frac{1}{c} \bigl(\mu^{}_a \nts + \mu^{}_b \bigr)$ is
a probability measure, hence
$\frac{1}{c} \bigl( \widehat{\mu^{}_a}(0) + \widehat{\ts\mu^{}_b
  \ts}(0)\bigr) =1$,
which gives $\tilde{c} = c/\tau = c\ts\ts \lvert \sigma \rvert$. For
reasons that will become clear below in Eq.~\eqref{eq:conv-rep}, we
choose $c=\tau$ and thus get $\tilde{c}=1$. \smallskip

\subsection{Continuity}   
As $\widehat{\mu^{}_a} , \widehat{\ts\mu^{}_b\ts}\in L^2(\RR)$,
their (inverse) Fourier transforms are also elements of
$L^2 (\RR)$, and actually of $L^2 (W)$, as they have compact support
within $W$. But $L^2 (W) \subset L^1 (W)$ by H\"{o}lder's
inequality, so the 
measures $\mu^{}_a$ and $\mu^{}_b$ are absolutely continuous with
respect to  Lebesgue measure $\lm$, giving
$\mu^{}_\alpha = h^{}_\alpha \lm$ with Radon--Nikodym densities
$h^{}_\alpha$.  By
standard arguments, compare \cite[Sec.~1.3.4]{Rudin} or
\cite[Thm.~2.2]{Arga}, one has
\[
  \widehat{\ts h^{}_a \ts} \, = \, \widehat{\ts\mu^{}_a\ts} 
  \quad \text{ and } \quad 
  \widehat{\ts\ts h^{}_b\ts\ts} \, = \, \widehat{\ts\mu^{}_b \ts} \ts ,
\] 
where $h^{}_{\alpha}$ should \emph{not} be thought of as a limit of the
sequence $(h^{(n)}_{\alpha})^{}_{n\in\NN}$ in the sense of functions.
We will now show that the $L^{1}$-functions $h^{}_a$ and $h^{}_b$ are
actually represented by \emph{continuous} functions whenever $0<p<1$.
To this end, we employ an old idea of Jessen and Wintner
\cite{JW}. Let us first define
\begin{equation}\label{eq:def-mu}
  \mu \: := \, \Conv_{\ell=1}^{\infty} \bigl( p \, \delta^{}_0 
    + q \ts\ts \delta^{}_{\nts\nts \sigma^\ell} \bigr) \, = \,
    \Conv_{\ell=1}^{\infty} \mu^{}_{\ell} \ts ,
\end{equation}
via the probability measures
$\mu^{}_\ell = p \, \delta^{}_0 + q \ts\ts \delta^{}_{\nts\nts \sigma^\ell}$,
where $p\in [0,1]$ and $q=1-p$. Note that the limiting cases are
$\mu=\delta^{}_{0}$ for $p=1$ and
$\mu=\delta^{}_{-\sigma^2} =\delta^{}_{\tau - 2}$ for $q=1$.
 
\begin{lemma}
  The infinite convolution product for the measure\/ $\mu$ from
  Eq.~\eqref{eq:def-mu} is absolutely convergent to a probability
  measure in the weak topology, which is to say that it is weakly
  convergent to the same limit for any order of the terms.
\end{lemma}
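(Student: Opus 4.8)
The plan is to exploit that every factor $\mu^{}_{\ell} = p\,\delta^{}_0 + q\,\delta^{}_{\sigma^\ell}$ is a probability measure supported in the tiny interval $[-|\sigma|^\ell,|\sigma|^\ell]$, where $|\sigma| = \tau-1 = 1/\tau < 1$, so that convolving by $\mu^{}_\ell$ displaces mass by at most $|\sigma|^\ell$. Concretely, I would first record the elementary estimate that, if $\rho$ is a probability measure on $\RR$ with $\supp(\rho)\subseteq[-\delta,\delta]$ and $\nu$ is an arbitrary probability measure, then $\nu\ast\rho$ lies within distance $\delta$ of $\nu$ in the bounded-Lipschitz metric $d^{}_{\mathrm{BL}}$ (which metrises weak convergence): for a $1$-Lipschitz test function $f$ one has $\bigl|\int f\dd(\nu\ast\rho) - \int f\dd\nu\bigr| = \bigl|\int\!\!\int\bigl(f(x+y)-f(x)\bigr)\dd\nu(x)\dd\rho(y)\bigr| \le \delta$, irrespective of $\|f\|_{\infty}$.

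Next, set $\nu^{}_N := \Conv_{\ell=1}^{N}\mu^{}_\ell$ and let $K := \bigl[-\tfrac{|\sigma|}{1-|\sigma|},\tfrac{|\sigma|}{1-|\sigma|}\bigr]$. Each $\nu^{}_N$ is a probability measure supported in the fixed compact set $K$, since its support consists of finite sums $\sum_{\ell=1}^{N}\epsilon^{}_\ell\,\sigma^\ell$ with $\epsilon^{}_\ell\in\{0,1\}$. For $M<N$ we have $\nu^{}_N = \nu^{}_M \ast \bigl(\Conv_{\ell=M+1}^{N}\mu^{}_\ell\bigr)$, and the tail factor is a probability measure supported in $\bigl[-\sum_{\ell>M}|\sigma|^\ell,\sum_{\ell>M}|\sigma|^\ell\bigr]$; by the displacement estimate, $d^{}_{\mathrm{BL}}(\nu^{}_M,\nu^{}_N) \le \sum_{\ell>M}|\sigma|^\ell = \tfrac{|\sigma|^{M+1}}{1-|\sigma|}$, which tends to $0$ as $M\to\infty$, uniformly in $N$. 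Hence $(\nu^{}_N)^{}_{N\in\NN}$ is Cauchy in the complete space of probability measures on $K$, so it converges weakly to a probability measure, namely $\mu$. (A Fourier variant is equally short: $|\widehat{\mu^{}_\ell}(t)-1| \le 2\pi q\,|\sigma|^\ell|t|$ is summable locally uniformly in $t$, so $\prod_{\ell\ge1}\widehat{\mu^{}_\ell}$ converges locally uniformly and Lévy's continuity theorem applies, cf.\ the use made of it in Lemma~\ref{lem:aw}.)

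Finally, for the order-independence, let $\pi$ be any bijection of $\NN$ and put $\nu'_N := \Conv_{j=1}^{N}\mu^{}_{\pi(j)}$. Given $\eps>0$, choose $M$ with $\tfrac{|\sigma|^{M+1}}{1-|\sigma|}<\eps$ and then $N^{}_0$ so large that $\{\pi(1),\dots,\pi(N^{}_0)\}\supseteq\{1,\dots,M\}$. For $N\ge N^{}_0$ the index set $\{\pi(1),\dots,\pi(N)\}\setminus\{1,\dots,M\}$ contains only integers $>M$, so $\nu'_N = \nu^{}_M \ast \rho^{}_N$ with $\rho^{}_N$ a probability measure supported within $\sum_{\ell>M}|\sigma|^\ell$ of $0$; thus $d^{}_{\mathrm{BL}}(\nu'_N,\nu^{}_M)<\eps$, and combined with $d^{}_{\mathrm{BL}}(\nu^{}_M,\mu)\le\eps$ this gives $\nu'_N\to\mu$. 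Hence the infinite convolution is unconditionally (absolutely) convergent, with limit independent of the order of the factors.

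I do not expect a genuine obstacle here: this is the classical Jessen--Wintner mechanism, and the only points needing care are the bookkeeping for the supports of the partial products, the choice of a metric under which convolution by a measure supported near the origin is an $\eps$-perturbation, and the appeal to completeness of the space of probability measures on the fixed compact $K$. Should one go the Fourier route instead, the corresponding care is merely the local-uniform convergence of $\prod_{\ell\ge1}\widehat{\mu^{}_\ell}$; note that no non-vanishing argument for the product is required, since we only need the convergence of the product itself, not of its reciprocal.
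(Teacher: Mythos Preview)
Your proof is correct and complete; the displacement estimate in the bounded-Lipschitz metric, the Cauchy argument on the fixed compact $K$, and the permutation argument are all sound.

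The route, however, differs from the paper's. The paper does not argue directly with a metric but simply invokes \cite[Thm.~6]{JW}: it computes the moments $M_\kappa(\mu^{}_\ell)=q\ts\sigma^{\kappa\ell}$, observes that $\sum_\ell |M_1(\mu^{}_\ell)|<\infty$ and $\sum_\ell M_2(\mu^{}_\ell)<\infty$ by geometric series, and applies the Jessen--Wintner criterion to conclude absolute convergence. Your argument is more self-contained and yields an explicit convergence rate $d^{}_{\mathrm{BL}}(\nu^{}_M,\mu)\le|\sigma|^{M+1}/(1-|\sigma|)$, at the cost of spelling out what the cited theorem packages; the paper's version is shorter precisely because it outsources the mechanism to the reference. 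Your Fourier alternative via L\'evy's theorem is also valid and closer in spirit to how the paper handles the related convergence in Lemma~\ref{lem:aw}.
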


\begin{proof}
  With $M_\kappa (\nu) := \int_{\RR} x^\kappa \dd \nu (x)$ for
  $\kappa \geqslant 0$, we get $M^{}_{0} (\mu^{}_{\ell}) = 1$ and
  $M_\kappa (\mu^{}_{\ell}) = q \ts \sigma^{\kappa \ts \ell}$ for any
  $\kappa > 0$. In particular, the second moments of all 
  $\mu^{}_{\ell}$ clearly exist. Moreover, one has
\[
   \sum_{\ell=1}^{\infty} |M_1 (\mu^{}_\ell )| < \infty 
   \quad \text{and} \quad
   \sum_{\ell=1}^{\infty} M_2(\mu^{}_\ell ) <\infty 
\]
by a standard geometric series argument, because
$\lvert \sigma\rvert = \tau^{-1} < 1$.

Hence, by an application of \cite[Thm.~6]{JW}, the convolution product
is absolutely convergent as claimed. As all $\mu^{}_{\ell}$ are
probability measures, then so is the limit, $\mu$.
\end{proof}

As a consequence, we may rewrite $\mu$ as $ \mu=\mu^{}_j*\nu^{}_j$, 
for any $j\in\NN$, where 
\[
  \nu^{}_j \, := \, \mu^{}_1*\ldots*\mu^{}_{j-1}*\mu^{}_{j+1}*\ldots
  \, = \, \Conv_{i\ne j} \mu^{}_{i} \ts .
\] 
Together with the previous lemma, this has a rather strong
consequence. In fact, the special case $p=\frac{1}{2}$ can be found in
\cite[Sec.~6]{JW}.

\begin{lemma}\label{lem:cont}
  If\/ $0 < p < 1$, the probability measure\/ $\mu$ from
  Eq.~\eqref{eq:def-mu} is continuous.
\end{lemma}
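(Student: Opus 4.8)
The plan is to rule out atoms of $\mu$ directly, by exploiting the exact self-similarity of the infinite convolution product; this is the ``old idea of Jessen and Wintner'' alluded to above. First, I would record the renormalisation identity obtained by splitting off the first factor in Eq.~\eqref{eq:def-mu} and rescaling the remaining ones: writing $S_{\sigma}$ for the homeomorphism $x\mapsto\sigma x$ of $\RR$, one has
\[
  \mu \, = \, \bigl( p\, \delta^{}_0 + q\, \delta^{}_{\sigma} \bigr) * \bigl( S_{\sigma} . \mu \bigr) ,
\]
because $S_{\sigma}.\mu = \Conv_{\ell\geqslant 2}\mu^{}_{\ell} = \nu^{}_{1}$ in the notation introduced before the statement, push-forward commuting with convolution for the linear map $S_\sigma$. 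Since $S_{\sigma}$ is bijective, $\mu$ and $S_{\sigma}.\mu$ have the same atoms up to the bijection $x\mapsto x/\sigma$; in particular they share the same supremal atom mass.

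Next, I would argue by contradiction. Suppose $\mu$ has at least one atom and set $m:=\sup_{x\in\RR}\mu(\{x\})>0$. As $\mu$ is a probability measure, fewer than $2/m$ points carry mass exceeding $m/2$, so this supremum is attained; let $S:=\{x\in\RR:\mu(\{x\})=m\}$, a nonempty finite set with $\card(S)\leqslant 1/m$. Evaluating the displayed identity at any $c\in S$ gives
\[
  m \, = \, \mu(\{c\}) \, = \, p\,(S_{\sigma}.\mu)(\{c\}) \, + \, q\,(S_{\sigma}.\mu)(\{c-\sigma\}) ,
\]
and since both occupation numbers on the right are bounded by $m$ while $p+q=1$, equality forces $(S_{\sigma}.\mu)(\{c\})=(S_{\sigma}.\mu)(\{c-\sigma\})=m$. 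Translating back through the scaling, this says: if $c\in S$, then $c/\sigma\in S$ and $c/\sigma-1\in S$.

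Finally, I would iterate. The first implication yields $c/\sigma^{n}\in S$ for every $n\in\NN$; since $\lvert\sigma\rvert=\tau^{-1}<1$, these points are pairwise distinct and unbounded whenever $c\neq 0$, contradicting the finiteness of $S$. If $S=\{0\}$, the second implication produces the nonzero element $-1=0/\sigma-1\in S$, and the previous case applies. In either event we obtain a contradiction, so $\mu$ carries no atom and is therefore continuous. The only point that needs a little care is the bookkeeping that legitimises ``the maximal atom mass is attained'' and the passage from a single maximal atom to an infinite, unbounded orbit of maximal atoms; the convolution computation and the self-similarity it rests on are otherwise immediate from Eq.~\eqref{eq:def-mu}.
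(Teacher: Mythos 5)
Your proof is correct, but it takes a genuinely different route from the paper's. The paper never invokes the exact self-similarity $\mu = \mu^{}_{1} * (S_{\sigma}.\mu)$; instead it derives, for every $j$, the measure inequality $p\ts q\, \mu \leqslant \bigl( \delta^{}_{\sigma^{j}} + \delta^{}_{-\sigma^{j}} \bigr) * \mu$, so that a hypothetical atom at $x$ forces $\mu(\{x-\sigma^{j}\}) + \mu(\{x+\sigma^{j}\}) \geqslant p\ts q\, \mu(\{x\})$ for \emph{all} $j$; choosing $r$ distinct exponents with $r > 1/\bigl(p\ts q\,\mu(\{x\})\bigr)$ and noting that the resulting $2r$ points are pairwise distinct pushes the total mass above $1$. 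Your argument renormalises once and then runs an extremal argument: the convexity identity $m = p\ts a + q\ts b$ with $a,b \leqslant m$ and $p,q>0$ forces the finite set $S$ of maximal-mass atoms to be invariant under $x \mapsto x/\sigma$ and $x \mapsto x/\sigma - 1$, hence unbounded --- contradicting $\card(S) \leqslant 1/m$. Both arguments use $p\ts q>0$ at the same essential point (a Bernoulli factor with two genuinely occupied sites). The paper's version needs no attainment of the supremum and applies directly to an arbitrary atom; yours isolates the structural mechanism more cleanly (the classical ``pure type'' style argument on the set of maximal atoms), and it transfers verbatim to the noble means convolutions $\Conv_{\ell}\bigl(\sum_{n} p^{}_{n}\ts \delta^{}_{n(\lambda_m')^{\ell}}\bigr)$ of Section~\ref{sec:ran-nob} provided at least two of the $p^{}_{n}$ are positive. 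The bookkeeping steps you flag --- attainment of $m$ via finiteness of $\{x : \mu(\{x\}) > m/2\}$, and the distinctness and unboundedness of the orbit $c/\sigma^{n}$ for $c\neq 0$ together with the separate treatment of $c=0$ --- are all sound as written.
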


\begin{proof}
  Observe first that, for any $j\in\NN$ and with $\mu^{}_{j} = 
  p \, \delta^{}_{0} + q \ts\ts \delta^{}_{\nts\nts \sigma^{j}}$ as 
  above, one has
\[
  p\ts q \ts\ts \mu^{}_{j} \, \leqslant \, p \, \delta^{}_{\nts\nts \sigma^{j}}
  + q \ts\ts \delta^{}_{0} \, \leqslant \, p \, \delta^{}_{\nts\nts \sigma^{j}}
  + q \ts\ts \delta^{}_{2 \sigma^{j}} + q \ts\ts \delta^{}_{0} +
  p \, \delta^{}_{-\sigma^{j}}  \, = \, \bigl( \delta^{}_{\nts\nts \sigma^{j}}
  + \delta^{}_{-\sigma^{j}}\bigr) * \mu^{}_{j} \ts ,
\]  
which is to be understood as a relation between positive measures.
The convolution with $\nu^{}_{j}$ now leads to the general inequality
\begin{equation}\label{eq:jw-est}
   p \ts q\ts\ts \mu \, \leqslant \, \bigl( \delta^{}_{\nts\nts\sigma^{j}}
     + \delta^{}_{-\sigma^{j}} \bigr) * \mu \ts .
\end{equation}

Assume now, to the contrary of our claim, that there is an element
$x\in\RR$ with $\mu (\{x\}) > 0$.
Since $q=1-p$, we have $p,q\in (0,1)$ and thus $p \ts q > 0$.  Then,
Eq.~\eqref{eq:jw-est}  implies the estimate
\begin{equation}\label{eq:d}
    0 \, < \, p\ts q \, \mu(\{x\}) \, \leqslant \,
    \mu ( \{x-\sigma^j\} ) + \mu ( \{x+\sigma^j \} )\ts .
\end{equation} 
Next, choose $r\in\NN$ with $p \ts q \, \mu(\{x\}) > \frac{1}{r}$,
which is clearly possible, and select $r$ distinct integers,
$j^{}_1 < j^{}_2 < \ldots < j^{}_r$, say. Since $\mu$ is a probability
measure, we obtain
\[
\begin{split}
  1 \; & \geqslant \; \mu  \biggl( 
   \dot{\bigcup_{1\leqslant  s \leqslant  r}} \bigl(
     \big\{x-\sigma^{j_s} \big\}\, \dot{\cup}\,
     \big\{x+\sigma^{j_s} \big\}\bigr) \biggr) \\[1mm]
   & = \, \sum_{s=1}^r \Bigl( \mu \bigl(  
     \big\{x-\sigma^{j_s} \big\}\bigr) +
     \mu \bigl( \big\{x+\sigma^{j_s} \big\} \bigr)  
     \Bigr)    \\[1mm]
   &\overset{\eqref{eq:d}}{\geqslant} 
    \sum_{s=1}^{r} \,  p\ts q\, \mu \bigl(\{x\}\bigr) 
    \; > \: r \, \myfrac{1}{r} \; = \; 1 \ts .
\end{split}             
\]
This contradiction shows that $\mu$ is continuous. 
\end{proof}

\begin{remark}
  Let us mention that one can also show $\mu$ to be a purely singular
  continuous measure.  However, since we only need the continuity
  of $\mu$, we omit this extra step.  \exend
\end{remark}

Before we continue, let us state a classic result that, due to the
lack of a simple reference, we include here with a short proof.

\begin{fact}\label{fact:cont}
  Let\/ $a,b\in\RR$ with\/ $a<b$, and let\/ $\nu$ be a finite, regular
  Borel measure on\/ $\RR$ that is continuous, i.e.,
  $\nu(\{ x \}) = 0$ for all\/ $x\in\RR$. Now, let\/ $J$ be any of the
  intervals\/ $[a,b]$, $(a,b)$, $[a,b)$ or\/ $(a,b]$.  Then, the
  function\/ $1^{}_{\nts J} \nts * \nu $ is continuous on\/ $\RR$.
\end{fact}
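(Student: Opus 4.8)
The plan is to reduce the assertion to a routine application of dominated convergence. First I would unwind the definition of the convolution of a function with a measure and observe that, pointwise,
\[
   \bigl(1^{}_{\nts J} * \nu\bigr)(x) \, = \, \int_{\RR} 1^{}_{\nts J}(x-y)\dd\nu(y)
   \, = \, \nu\bigl(\{ y\in\RR : x-y\in J\}\bigr) \, = \, \nu(x-J) \ts ,
\]
where $x-J := \{x-t : t\in J\}$ is again an interval with endpoints $x-b$ and $x-a$. Since $\nu$ is continuous, the two endpoints carry no mass, so the number $\nu(x-J)$ is one and the same for all four choices of $J$; in particular $1^{}_{\nts J}*\nu$ is literally the same function in all four cases, and it suffices to treat $J=[a,b]$ and to show that $x\mapsto\nu\bigl([x-b,x-a]\bigr)$ is continuous on $\RR$.

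Next I would fix $x\in\RR$ together with an arbitrary sequence $x^{}_n\to x$, and check that $1^{}_{[x_n-b,\,x_n-a]}\to 1^{}_{[x-b,\,x-a]}$ pointwise at every $y\notin\{x-a,x-b\}$. Indeed, for such a $y$ one either has a strict one-sided inequality, $y<x-b$ or $y>x-a$, which persists with $x$ replaced by $x^{}_n$ for all large $n$ because $x^{}_n-b\to x-b$ and $x^{}_n-a\to x-a$, so both indicators vanish at $y$ eventually; or one has the strict two-sided inequality $x-b<y<x-a$, which likewise yields $x^{}_n-b<y<x^{}_n-a$ for large $n$, so both indicators equal $1$ at $y$ eventually. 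The exceptional set $\{x-a,x-b\}$ is $\nu$-null by continuity of $\nu$, and the constant function $1$ dominates all indicators involved and lies in $L^1(\nu)$ because $\nu$ is finite. Dominated convergence then gives $\nu\bigl([x^{}_n-b,x^{}_n-a]\bigr)\to\nu\bigl([x-b,x-a]\bigr)$, and since the sequence was arbitrary, $1^{}_{\nts J}*\nu$ is continuous at $x$, hence on all of $\RR$.

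I do not expect a genuine obstacle here; the one point not to overlook is that the pointwise convergence of the shifted indicators fails exactly at the two endpoints $x-a$ and $x-b$, so the \emph{continuity} of $\nu$ (and not merely its finiteness) is precisely what makes the dominated convergence step go through. Regularity of $\nu$ plays no role in this argument and is included in the hypotheses only for convenient compatibility with the later applications of the fact.
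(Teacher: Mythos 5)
Your proof is correct and rests on exactly the same point as the paper's, namely that the only obstruction to convergence sits at the two endpoints $x-a$ and $x-b$, which are $\nu$-null by the assumed continuity of $\nu$. The paper reaches the conclusion by directly writing the difference $\bigl(1^{}_{\nts J}*\nu\bigr)(x)-\bigl(1^{}_{\nts J}*\nu\bigr)(x^{}_n)$ as a difference of measures of two shrinking intervals, whereas you package the same observation as a dominated convergence argument; the two are interchangeable here.
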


\begin{proof}
  Consider the case of a closed interval first.  Fix $x\in\RR$ and
  choose a sequence $(x_n )^{}_{n\in\NN}$ with $x_n \searrow x$ as
  $n\to\infty$. Then, as $\nu$ is a continuous measure by assumption,
  we get
\[
\begin{split}
   \bigl| \bigl(1^{}_{\nts J} \nts * \nu \bigr) (x) -
    \bigl(1^{}_{\nts J} \nts * \nu \bigr) (x^{}_n ) \bigr|
    \, & = \, \Bigl| \int_{\RR} \bigl( 1^{}_{\nts J} (x-y)
    - 1^{}_{\nts J} (x^{}_{n} - y ) \bigr) \dd \nu (y) \Bigr| \\[1mm]
    & = \, \bigl| \nu \bigl( [x-b, x^{}_{n} -b]\bigr) -
     \nu\bigl( [x-a, x^{}_{n} - a] \bigr) \bigr|
    \, \xrightarrow{\, n \to \infty \,}  \, 0
\end{split}
\]
because $\nu(\{ x-b\}) = 0 = \nu(\{ x-a\})$. The analogous relation,
with the same limit, holds when $x_n \nearrow x$. Together, this
implies that the function $1^{}_{\nts J}\nts * \nu$ is continuous on
$\RR$.

Due to the assumed continuity of $\nu$, the same type of argument
applies for half-open or open intervals as well.
\end{proof}

\begin{prop}\label{prop:cont}
  If\/ $0<p<1$, the Radon--Nikodym densities\/ $h^{}_a$ and $h^{}_b$
  are continuous functions with compact support.
\end{prop}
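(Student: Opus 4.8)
The plan is to read off from the explicit formulas in Eq.~\eqref{eq:ft} that each of $\widehat{\mu^{}_a}$ and $\widehat{\mu^{}_b}$ is a product of the shape $\widehat{1^{}_{J}}\cdot\widehat{\mu}$, with $\mu$ the infinite convolution from Eq.~\eqref{eq:def-mu} and $J$ a bounded interval, and then to turn this — via injectivity of the Fourier transform — into a pointwise convolution identity to which Fact~\ref{fact:cont} and Lemma~\ref{lem:cont} apply directly.

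Concretely, with the normalisation $\widehat{\delta^{}_s}(t)=\ee^{-2\pi\ii st}$ used above, one has $\widehat{\mu}(t)=\prod_{\ell\geqslant 1}\bigl(p+q\,\ee^{-2\pi\ii\sigma^\ell t}\bigr)$, which is exactly the infinite product appearing in both lines of Eq.~\eqref{eq:ft}. A direct evaluation of $\widehat{1^{}_{[c,d]}}(t)=\int_c^d\ee^{-2\pi\ii xt}\dd x$ gives $\widehat{1^{}_{[0,1]}}(t)=\ee^{-\pi\ii t}\sinc(\pi t)$ and, recalling $\sigma=1-\tau<0$, also $\widehat{1^{}_{[\sigma,0]}}(t)=\lvert\sigma\rvert\,\ee^{-\pi\ii\sigma t}\sinc(\pi\sigma t)$. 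Since $\tilde c=1$ in Eq.~\eqref{eq:ft}, comparison then yields
\[
   \widehat{\mu^{}_a} \, = \, \widehat{1^{}_{[0,1]}}\cdot\widehat{\mu}
   \, = \, \widehat{\,1^{}_{[0,1]}*\mu\,} \ts ,
   \qquad
   \widehat{\mu^{}_b} \, = \, \widehat{1^{}_{[\sigma,0]}}\cdot\widehat{\mu}
   \, = \, \widehat{\,1^{}_{[\sigma,0]}*\mu\,} \ts ,
\]
where the last equality in each line is the convolution theorem for an $L^1$-function against the finite measure $\mu$.

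Both $h^{}_a,h^{}_b$ and the functions $1^{}_{[0,1]}*\mu$, $1^{}_{[\sigma,0]}*\mu$ lie in $L^1(\RR)$, and $\widehat{h^{}_\alpha}=\widehat{\mu^{}_\alpha}$ as established above; hence injectivity of the Fourier transform on $L^1(\RR)$ forces $h^{}_a=1^{}_{[0,1]}*\mu$ and $h^{}_b=1^{}_{[\sigma,0]}*\mu$ almost everywhere. For $0<p<1$, Lemma~\ref{lem:cont} tells us that $\mu$ is a continuous finite (regular Borel) measure, so Fact~\ref{fact:cont} applies and shows that $1^{}_{[0,1]}*\mu$ and $1^{}_{[\sigma,0]}*\mu$ are continuous on all of $\RR$; these are the asserted continuous representatives of $h^{}_a$ and $h^{}_b$. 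For the support, Lemma~\ref{lem:aw} gives $\supp\mu^{}_\alpha\subseteq W=[-\tau,\tau]$, so $h^{}_\alpha=0$ almost everywhere on the open set $\RR\setminus W$, whence its continuous representative vanishes identically there and $\supp h^{}_\alpha\subseteq W$ is compact (alternatively, $\supp(1^{}_{J}*\mu)\subseteq J+\supp\mu$ with $\supp\mu$ bounded).

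The only point that needs genuine care is the sign bookkeeping when matching the prefactor of $\widehat{\mu^{}_b}$: since $\sigma<0$, the correct interval is $[\sigma,0]$ rather than $[0,\lvert\sigma\rvert]$, so that the phase comes out as $\ee^{-\pi\ii\sigma t}$ and not $\ee^{+\pi\ii\sigma t}$. Everything else is a routine use of the convolution theorem, Fourier injectivity on $L^1$, and Fact~\ref{fact:cont}; in particular, as already noted, $h^{}_a$ and $h^{}_b$ need not — and should not — be obtained as limits of the sequences $(h^{(n)}_{\alpha})^{}_{n\in\NN}$.
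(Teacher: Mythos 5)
Your argument is correct and coincides with the paper's own proof: both read off the representations $h^{}_a = 1^{}_{[0,1)} * \mu$ and $h^{}_b = 1^{}_{[\sigma,0)} * \mu$ from Eq.~\eqref{eq:ft} via the convolution theorem and Fourier injectivity on $L^1$, and then invoke Lemma~\ref{lem:cont} together with Fact~\ref{fact:cont} for continuity and Lemma~\ref{lem:aw} for the compact support. Your explicit verification of the phase and $\sinc$ prefactors (including the sign of $\sigma$) merely fills in details the paper leaves implicit.
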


\begin{proof}
  Observe first that
  $\widehat{\mu} (t) = \prod_{\ell\geqslant 1} \bigl( p + q \ts
  \ee^{-2 \pi \ii \sigma^\ell t} \bigr)$
  by an application of the convolution theorem. Then, it follows from
  Eq.~\eqref{eq:ft} that the $L^{1}$-functions $h_a$ and $h_b$ are
  represented by
\begin{equation}\label{eq:conv-rep}
   h^{}_a (x) \, = \, \bigl( 1^{}_{[0,1)} * \mu \bigr)(x)  
   \quad \text{ and } \quad 
   h^{}_b (x) \, = \, \bigl( 1^{}_{[\sigma,0)}
         *\mu\bigr)(x) \ts ,
\end{equation}
with the probability measure $\mu$ from Eq.~\eqref{eq:def-mu}. Here,
we have taken the liberty to choose half-open intervals for the
characteristic functions to match the standard situation in the
limiting cases $p=1$ and $q=1$; compare the discussion in
\cite[Ex.~7.3]{BG}.

Fact~\ref{fact:cont} in conjunction with Lemma~\ref{lem:cont} then
implies the continuity of $h^{}_a$ and $h^{}_b$ for any $0<p<1$.
Moreover, as a consequence of Lemma~\ref{lem:aw}, both functions are
supported on a subset of $W$.
\end{proof}

\begin{remark}
  Let us note that Eq.~\eqref{eq:conv-rep} also holds for the limiting
  cases $p=1$ and $q=1$, where one obtains $h^{}_{a} = 1^{}_{[0,1)}$
  and $h^{}_{b} = 1^{}_{[1-\tau,0)}$ as well as
  $h^{}_{a} = 1^{}_{[\tau-2,\tau-1)}$ and
  $h^{}_{b} = 1^{}_{[-1,\tau-2)}$, respectively. This is in line with
  the description of these deterministic limiting cases as regular
  model sets; compare \cite[Rem.~4.6 and Ex.~7.3]{BG}.  \exend
\end{remark}

Now, for all $x \in \vL^{}_{\geqslant 0}$, we need to relate the
function values $h^{}_{\alpha} (x^\star)$ with the occupation
probabilities $g^{}_{\alpha} (x)$ from above, because the continuity
of the $h^{}_{\alpha}$ is a representation result in the Lebesgue
sense, but $(\vL^{}_{\geqslant 0})^{\star}$ is a null set.

\begin{lemma}
  If\/ $0<p<1$, one has\/ $g^{}_{\alpha} (x) = h^{}_{\alpha} (x^\star)$
  for\/ $\alpha\in\{ a,b\}$ and all\/ $x\in\vL^{}_{\geqslant 0}$.
\end{lemma}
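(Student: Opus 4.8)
The plan is to compare, over $\vL^{}_{\geqslant 0}$, the occupation probabilities $g^{}_\alpha$ with the function $\hat g^{}_\alpha(x) := h^{}_\alpha(x^\star)$, set to $0$ off $\vL^{}_{\geqslant 0}$, by showing that both pairs solve the renormalisation system of Eq.~\eqref{eq:consistency}, and then to invoke its uniqueness from Proposition~\ref{prop:ren-unique}.

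First I would turn the rescaling relations of Lemma~\ref{lem:aw} into pointwise identities for the densities. Since $0<p<1$, Proposition~\ref{prop:cont} gives $\mu^{}_\alpha = h^{}_\alpha\lm$ with $h^{}_\alpha$ continuous, and for an affine map $\varphi(x)=\sigma x+s$ the push-forward $\varphi.(h^{}_\alpha\lm)$ has density $t\mapsto |\sigma|^{-1} h^{}_\alpha\bigl((t-s)/\sigma\bigr)$. Inserting this into the two relations of Lemma~\ref{lem:aw} and cancelling $|\sigma|$ yields, for all $t\in\RR$,
\[
   h^{}_a(t) \, = \, p\, h^{}_a\bigl(\tfrac{t-1}{\sigma}\bigr)
   + q\, h^{}_a\bigl(\tfrac{t}{\sigma}\bigr) + h^{}_b\bigl(\tfrac{t}{\sigma}\bigr)
   \qquad \text{and} \qquad
   h^{}_b(t) \, = \, p\, h^{}_a\bigl(\tfrac{t}{\sigma}\bigr)
   + q\, h^{}_a\bigl(\tfrac{t}{\sigma}-1\bigr) .
\]
As $\star$ is the field automorphism with $\tau^\star=\sigma$ which fixes $\QQ$, one has $(x/\tau)^\star = x^\star/\sigma$, $((x-1)/\tau)^\star = (x^\star-1)/\sigma$ and $((x-\tau)/\tau)^\star = x^\star/\sigma - 1$, so evaluating the two displayed identities at $t = x^\star$ with $x\in\vL^{}_{\geqslant 0}$ reproduces verbatim the renormalisation relations of Eq.~\eqref{eq:consistency}, now for $(\hat g^{}_a,\hat g^{}_b)$.

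What remains for this step, and what I expect to be the main obstacle, is to verify the vanishing convention attached to Eq.~\eqref{eq:consistency}, namely $\hat g^{}_\alpha(y)=0$ for $y\notin\vL^{}_{\geqslant 0}$ --- this is precisely the subtlety flagged before the lemma, that $(\vL^{}_{\geqslant 0})^\star$ is Lebesgue-null while continuity of $h^{}_\alpha$ is an a.e.\ statement. If $y\geqslant 0$ this is immediate: either $y^\star\notin W$, so $h^{}_\alpha(y^\star)=0$ since $\supp(h^{}_\alpha)\subseteq W$, or $y^\star\in\partial W$, where $h^{}_\alpha$ vanishes by continuity together with $\supp(h^{}_\alpha)\subseteq W$. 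Among the arguments $y\in\{x/\tau,(x-1)/\tau,(x-\tau)/\tau\}$ occurring for $x\in\vL^{}_{\geqslant 0}$, a negative value can arise only for the latter two and only when $0\leqslant x<\tau$; a short computation in $\ZZ[\tau]$ (writing $x-x^\star=n\sqrt{5}$ with $n\in\ZZ$) shows that in every such case $y^\star\in\partial W$ or $y^\star=-1$. At $y^\star=-1$ one invokes $h^{}_a(-1)=\mu(\{-1\})=0$, which is exactly where the continuity of $\mu$ from Lemma~\ref{lem:cont} enters (using also $\supp(\mu)\subseteq[-1,\tau-1]$, a direct estimate on Eq.~\eqref{eq:def-mu}). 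Hence $(\hat g^{}_a,\hat g^{}_b)$ lies in the one-dimensional solution space of Proposition~\ref{prop:ren-unique}.

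Finally, $(g^{}_a,g^{}_b)$ is the solution singled out by $g^{}_a(0)=1/(1+p)$, so $g^{}_a(0)+g^{}_b(0)=1$ by Lemma~\ref{lem:conv-distr}. On the other hand, Eq.~\eqref{eq:conv-rep} gives $h^{}_a(0)+h^{}_b(0) = \bigl(1^{}_{[1-\tau,1)} * \mu\bigr)(0) = \mu\bigl((-1,\tau-1]\bigr) = 1$, again using $\supp(\mu)\subseteq[-1,\tau-1]$ and $\mu(\{-1\})=0$. Two proportional solutions of Eq.~\eqref{eq:consistency} with the same value of $g^{}_a+g^{}_b$ at $0$ must coincide, so $\hat g^{}_\alpha = g^{}_\alpha$, i.e.\ $g^{}_\alpha(x) = h^{}_\alpha(x^\star)$ for all $x\in\vL^{}_{\geqslant 0}$ and $\alpha\in\{a,b\}$. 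Everything outside the third paragraph is a routine transfer through the $\star$-map and the one-dimensionality of Proposition~\ref{prop:ren-unique}.
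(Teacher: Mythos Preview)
Your proof is correct and follows essentially the same route as the paper: derive the density recursions from Lemma~\ref{lem:aw}, pull them back through the $\star$-map to recover Eq.~\eqref{eq:consistency}, and invoke the one-dimensionality of Proposition~\ref{prop:ren-unique}. You are simply more explicit than the paper on two points it leaves implicit --- the verification of the vanishing convention for arguments outside $\vL^{}_{\geqslant 0}$, and the matching of the normalisation at $0$ (where you use $h^{}_a(0)+h^{}_b(0)=1$ rather than the paper's bare assertion $h^{}_a(0)=1/(1+p)$).
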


\begin{proof}
  When $0<p<1$, the functions $h_{\alpha}$ are continuous, and satisfy
  the recursions 
\[
\begin{split}
  h^{}_{a} (x^\star ) \, & = \, q \, h^{}_{a}
    \bigl( ( \tfrac{x}{\tau} )^\star \bigr) + h^{}_{b}
    \bigl( ( \tfrac{x}{\tau})^\star \bigr) + p \, h^{}_{a}
    \bigl( ( \tfrac{x-1}{\tau})^\star \bigr) , \\[2mm]
  h^{}_{b} (x^\star ) \, & = \, p \, h^{}_{a}
    \bigl( ( \tfrac{x}{\tau})^\star \bigr) + q \, h^{}_{a}
    \bigl( ( \tfrac{x-\tau}{\tau})^\star \bigr) ,
\end{split}
\]
as a consequence of Lemma~\ref{lem:aw}, rewritten in terms of the
densities. With the initial condition
$h^{}_{a} (0^\star) = 1/(1\nts\nts + \nts p)$, we see that we obtain
the same type of renormalisation equation as in
Eq.~\eqref{eq:consistency}, and an application of
Proposition~\ref{prop:ren-unique} implies our claim.
\end{proof}

\subsection{Diffraction measure of the random Fibonacci hull}
Let us finally come back to the measure $\widehat{\ts\gamma^{}_1\ts}$
from Eq.~\eqref{eq:fibre-mean}. We know from above that 
\[
  \widehat{\ts \gamma^{}_{1}\ts} \, =
  \lim_{n\to\infty} \myfrac{1}{\tau^n}\, \big| \EE(\cX_n) \big|^2 
   = \lim_{n\to\infty} \myfrac{1}{\tau^n}\, \big| 
   \widehat{\EE(\cM_n)} \big|^2  
   =  \lim_{n\to\infty} \myfrac{1}{\tau^n}\, \cF
      \bigl[\EE(\cM_n)*\widetilde{\EE(\cM_n)}\bigr] ,     
\]  
where $\cF$ denotes Fourier transform. By construction,
$\EE(\cM) := \lim_{n\to\infty} \EE(\cM_n)$ is a weighted Dirac comb on
$\oplam([-\tau,\, \tau]) \cap \RR^{}_{\geqslant 0}$, with weights
$g(x) = h (x^\star )$ and the interpretation given previously. 
With the second part of Proposition~\ref{prop:ren-unique}, 
one can now check that 
\[
     \widehat{\ts\gamma^{}_{1}\ts} \, = \,
     \cF \bigl( \EE (\cM) \circledast
     \widetilde{\EE (\cM)}\bigr),
\]     
with $\circledast$ denoting the volume-averaged or Eberlein
convolution of measures on $\RR_{\geqslant 0}$. This holds as a
consequence of the measure
$\bigl| \EE (\cM) |^{}_{[0,L_n)} \! - \EE (\cM_n ) \bigr| =
\sum_{x\in\vL^{(n)}} \ts \lvert g (x) - g^{(n)} (x) \rvert \,
\delta_x$
getting uniformly small on $\vL^{(n)}$ as $n\to\infty$. In fact, one
even has
$\max_{x \in \vL^{(n)}} \bigl| g(x) - g^{(n)} (x) \bigr| \leqslant r^n
$
with $r = \max \{ p,q\} <1$. This implies, as in our earlier case,
that
\[
   \lim_{n\to\infty} \max_{x\in\vL^{(n)}}
   \bigl| g(x) - g^{(n)} (x) \bigr| \, = \, 0 \ts ,
\]
this time with exponentially fast convergence; compare
\cite[Lemma~3.29]{diss-T}.

Next, define the two-sided measure
\begin{equation}\label{dirac}
     \omega^{}_{h} \, = \sum_{x\in\smoplam([-\tau,\, \tau])} 
      h (x^{\star})\, \delta_x \ts ,
\end{equation}
with $h=h^{}_a+h^{}_b$ as before. It is not difficult to check
that one then has
\[
    \widehat{\ts \gamma^{}_{1}\ts} \, = \, \widehat{
    \omega^{}_{h} \! \circledast 
    \widetilde{\omega^{}_{h}}  }
    \, = \, \widehat{ \omega^{}_{\! h * \widetilde{h}} }
\]
where $\circledast$ now denotes the  Eberlein convolution 
for measures on $\RR$; see \cite[Sec.~8.8]{BG}.  Since we have
$h^{}_a,h^{}_b\in C_\mathsf{c}(\RR)$ by Proposition~\ref{prop:cont},
we may now apply the following general result.

\begin{theorem}[{\cite[Thm.~9.5]{BG}\label{theo}}] 
  Consider the weighted Dirac comb
\[
    \omega_g = \sum_{x\in\vL} g(x^{\star})\, \delta_x
\]  
on a regular model set\/ $\vL=\oplam(W)$ with CPS\/ $(\RR^d,H,\cL)$
and compact window\/ $W=\overline{W^{\circ}}\subseteq H$, with a
function\/ $g \! : \, H\xrightarrow{\quad} \CC$ which is continuous
on\/ $W$ and vanishes on its complement. Then, $\omega_g$ has the
positive, translation bounded, pure point diffraction measure
\[
  \widehat{\ts\gamma^{}_{\omega_g}\ts} \: = \sum_{k\in L^{\circledast}} 
   |A(k)|^2\, \delta^{}_k 
   \quad \text{with} \quad 
  A(k) \, = \, \dens (\cL)\, \widehat{g} (-k^{\star}) \ts ,
\]  
where $L^{\circledast}=\pi(\cL^{\ast})$, with $\cL^\ast$ the 
annihilator $($or dual lattice$\ts\ts )$ of $\cL$,
is the corresponding Fourier module.  \qed
\end{theorem}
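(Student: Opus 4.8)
\emph{Proof proposal.} The plan is to follow the classical route for regular model sets, adapted to a continuous rather than indicator window function. \emph{First,} I would establish the existence and the explicit shape of the autocorrelation. Since $\vL=\oplam(W)$ is a regular model set, the internal coordinates $\{x^{\star} : x\in\vL\cap B_n\}$, counted with multiplicity, become uniformly distributed in $W$ with respect to normalised Haar measure on $H$, along any van Hove averaging sequence; see \cite{Hof,BG}. Applying this to the weights $g(x^{\star})\,\overline{g\bigl((x-z)^{\star}\bigr)}$ for a fixed $z\in\vL-\vL$, and using that $x,x-z\in\vL$ is equivalent to $x^{\star}\in W\cap(W+z^{\star})$, one finds that $\gamma^{}_{\omega_g}$ exists and equals
\[
  \gamma^{}_{\omega_g} \,=\, \dens(\cL) \sum_{z\in\vL-\vL} G(z^{\star})\,\delta_z
  \,=\, \dens(\cL) \sum_{(z,z^{\star})\in\cL} G(z^{\star})\,\delta_z ,
\]
where $G := g*\widetilde{g}$ with $\widetilde{g}(y)=\overline{g(-y)}$; the two sums coincide because $G$ is supported in $W-W$ and hence vanishes on $\pi(\cL)\setminus(\vL-\vL)$. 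Since $g$ is bounded with compact support, it lies in $L^{1}(H)\cap L^{2}(H)$, so $G$ is a convolution of two compactly supported $L^{2}$-functions and therefore belongs to $C_\mathsf{c}(H)$.

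\emph{Second,} I would Fourier transform this measure. Viewing $\gamma^{}_{\omega_g}$ as the image under the projection $\pi\colon\RR^{d}\times H\to\RR^{d}$ of the weighted lattice comb $\dens(\cL)\,(1\otimes G)\cdot\delta^{}_{\cL}$, I would invoke the Poisson summation formula $\widehat{\delta^{}_{\cL}}=\dens(\cL)\,\delta^{}_{\cL^{\ast}}$ together with the convolution identity $\widehat{F\cdot\mu}=\widehat{F}*\widehat{\mu}$ for $F=1\otimes G$, whose transform is the measure $\delta^{}_{0}\otimes\widehat{G}$ on $\widehat{\RR^{d}}\times\widehat{H}$. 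This yields
\[
  \widehat{(1\otimes G)\cdot\delta^{}_{\cL}} \,=\, \dens(\cL) \sum_{(k,k^{\star})\in\cL^{\ast}} \delta_k\otimes T_{k^{\star}}\widehat{G} ,
\]
with $T_a$ the translation by $a$. Since the Fourier transform turns the push-forward $\pi_{\ast}$ into the restriction of the transformed object to the horizontal subgroup $\widehat{\RR^{d}}\times\{0\}$, which here amounts to evaluating each translated copy of $\widehat{G}$ at the internal argument $0$, one obtains
\[
  \widehat{\gamma^{}_{\omega_g}} \,=\, \dens(\cL)^{2} \sum_{(k,k^{\star})\in\cL^{\ast}} \widehat{G}(-k^{\star})\,\delta_{k} .
\]

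\emph{Third,} using the convolution theorem once more, $\widehat{G}=\widehat{g*\widetilde{g}}=\widehat{g}\cdot\overline{\widehat{g}}=|\widehat{g}|^{2}\geqslant 0$, so that, writing $k^{\star}$ for the internal component of the unique $\cL^{\ast}$-point above $k\in L^{\circledast}=\pi(\cL^{\ast})$ (well defined since $\pi|_{\cL^{\ast}}$ is injective in a cut and project scheme),
\[
  \widehat{\gamma^{}_{\omega_g}} \,=\, \sum_{k\in L^{\circledast}} |A(k)|^{2}\,\delta_{k} ,
  \qquad A(k) \,=\, \dens(\cL)\,\widehat{g}(-k^{\star}) .
\]
This is manifestly a positive pure point measure, and its translation boundedness is automatic, because $\gamma^{}_{\omega_g}$ is a translation-bounded positive-definite measure and the Fourier transform of such a measure is again translation bounded, see \cite[Ch.~8]{BG} and \cite{Arga}.

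\emph{The main obstacle} is making the formal computation of the second step rigorous: the weight $1\otimes G$ on $\RR^{d}\times H$ is bounded but not integrable, so one is multiplying a merely Fourier-transformable lattice comb by a non-Schwartz function and transforming it term by term. I would handle this either by approximating $g$ uniformly on $W$ by smoother functions and passing to the limit, controlling the error via the translation boundedness of the associated autocorrelations, or by working entirely within the framework of Fourier-transformable measures on locally compact Abelian groups in the style of Argabright and de Lamadrid \cite{Arga}, where the manipulations above are licensed once one knows that $\gamma^{}_{\omega_g}$ is translation bounded, positive definite and Fourier transformable --- all consequences of $\omega_g$ being a model-set comb with uniformly bounded weights. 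A cleaner bypass of the distributional computation is to first note that $z\mapsto\dens(\cL)\,G(z^{\star})$ is a Bohr-almost-periodic sampling of a continuous function, whence $\gamma^{}_{\omega_g}$ is a strongly almost periodic measure and its Fourier transform is automatically pure point, and then to extract the atoms $\widehat{\gamma^{}_{\omega_g}}(\{k\})$ by a second, purely scalar application of Weyl equidistribution. The remaining ingredients --- uniform distribution of $\vL$ in $W$, bijectivity of $\pi|_{\cL^{\ast}}$ onto $L^{\circledast}$, and that $\sum_{k}|A(k)|^{2}\delta_k$ is a well-defined translation-bounded measure --- are standard facts from the theory of cut and project sets.
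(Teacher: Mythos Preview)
The paper does not prove this theorem; it is quoted verbatim from \cite[Thm.~9.5]{BG} and closed with a \qed, so there is no proof in the present manuscript to compare against. Your sketch is essentially the standard argument that one finds in \cite{BG}: compute the autocorrelation via uniform distribution of $\vL^{\star}$ in $W$, recognise the resulting pair-correlation weight as $G=g*\widetilde{g}\in C_{\mathsf{c}}(H)$, and then obtain the Fourier transform by lattice Poisson summation. Your identification of the delicate step --- the formal manipulation of $(1\otimes G)\cdot\delta_{\cL}$ and its transform --- is accurate, and the two remedies you list (approximation by nicer $g$, or the almost-periodic route via Weyl sums for the individual Fourier--Bohr coefficients) are precisely the ones used in the literature; the latter is in fact closer to how \cite{BG} organises the argument. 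One minor point: your claim that ``$G$ vanishes on $\pi(\cL)\setminus(\vL-\vL)$'' needs the observation that $z\in L$ with $z^{\star}\in W-W$ implies $z\in\vL-\vL$, which holds because $W$ is compact with $W=\overline{W^{\circ}}$ and $\vL$ is a regular model set, but is worth stating.
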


It follows that $\widehat{\ts\gamma^{}_1 \ts}$ is pure point, with
Fourier module $L^\circledast = \ZZ[\tau]/\mbox{\small $\sqrt{5}$}$
and amplitudes
\[
   A^{}_{\!\vL, p}(k) \, = 
     \, \dens (\cL)\, \widehat{h} (-k^{\star}) 
     \, = \,  \dens (\vL) \ee^{\pi\ii(1+\sigma)k^{\star}}
             \sinc\big(\pi(1-\sigma)k^{\star}\big) \prod_{\ell=1}^{\infty} 
             \bigl( p+q \ee^{2\pi\ii \sigma^\ell k^{\star}} \bigr),
\]   
hence $\widehat{\ts\gamma^{}_1 \ts} =  \sum_{k\in L^{\circledast}} 
I_{p}(k)\, \delta_k$ with $I_p (k) = \lvert A^{}_{\!\vL, p}(k) \rvert^2$,
so
\begin{equation} \label{eq:ran-fib-int}  
   I_{p}(k)\, = \,  \left( \myfrac{\tau}
    {\mbox{\small $\sqrt{5}$}}  \ts
    \sinc (\pi\tau k^{\star} ) \right)^2 
    \prod_{\ell =1}^{\infty} \big|  p + q  
     \ee^{2\pi\ii \sigma^\ell k^{\star}} \big|^2
     \, = \, I(k) \prod_{\ell=1}^{\infty} \big|  p +q  
     \ee^{2\pi\ii \sigma^\ell k^{\star}} \big|^2 .
\end{equation}
Here, $I(k)$ is the intensity function from the deterministic case of
Theorem~\ref{theo:a} for $u^{}_{a} = u^{}_{b} = 1$.  Summarising the
above derivations, we obtain the following result.  Due to the
mentioned compatibility of the measures on
$\YY_{\!\zeta^{}_{\mathrm{F}}}$ and on the subset
$Y_{\nts \zeta^{}_{\mathrm{F}}}$, we may formulate it right away for
the entire dynamical system
$(\YY_{\!\zeta^{}_{\mathrm{F}}}, \RR , \nu^{}_{\nts \mathsf{pf}})$.

\begin{theorem}\label{theo:c} 
  Fix some\/ $\cT \in \YY_{\!\zeta^{}_{\mathrm{F}}}$ and let\/
  $\vL =\vL_a \ts \dot{\cup}\ts \vL_b$ be the corresponding set of
  left endpoints of the tiles in\/ $\cT$.  Then, almost surely with
  respect to the ergodic patch frequency measure\/
  $\nu^{}_{\!\mathsf{pf}}$, the corresponding diffraction measure
  reads
\[
   \widehat{\gamma} \, = \,
   \EE \bigl( \widehat{\ts\gamma^{}_{\!\vL}\ts} \bigr) \, = \,
    \widehat{\gamma}^{}_{\mathsf{pp}}
   + \widehat{\gamma}^{}_{\mathsf{ac}} \, =
   \sum_{k\in\ZZ[\tau]/\sqrt 5} \!
    I_{p}(k) \, \delta_k \ts\ + \; \phi^{}_{p} \ts \lm \ts ,
\] 
where\/ $I_{p}(k)$ is given by Eq.~\eqref{eq:ran-fib-int} and\/
$\phi^{}_{p}$ is the Radon--Nikodym density of\/
$\widehat{\gamma}^{}_{\mathsf{ac}}$, as computed previously in
\cite[Prop.~6.18]{mo}. \qed
\end{theorem}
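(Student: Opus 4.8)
The plan is to assemble the statement from the ingredients built up above, together with the reduction step that the text postpones to Section~\ref{sec:decompose}. First, since $(\YY_{\!\zeta^{}_{\mathrm{F}}},\RR,\nu^{}_{\!\mathsf{pf}})$ is ergodic, the ergodic theorem (already invoked around Eq.~\eqref{eq:expsum}) shows that the vague limit defining the individual diffraction $\widehat{\ts\gamma^{}_{\!\vL}\ts}$ exists and is $\nu^{}_{\!\mathsf{pf}}$-almost surely independent of $\cT$, so that $\widehat{\gamma} = \EE\bigl(\widehat{\ts\gamma^{}_{\!\vL}\ts}\bigr)$ almost surely; here one uses that $\nu^{}_{\!\mathsf{pf}}$, restricted via filtration to $Y_{\nts\zeta^{}_{\mathrm{F}}}$, agrees with the measure induced by the random inflation process on exact patches (see \cite{PG} and Section~\ref{sec:decompose}), which is precisely what legitimises computing this average over the one-sided exact inflation patches. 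Combined with Eq.~\eqref{eq:fibre-mean}, the claim is thereby reduced to identifying $\widehat{\ts\gamma^{}_1\ts}$ and $\widehat{\ts\gamma^{}_2\ts}$ separately.

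For the pure point part I would run the chain of identities indicated in the text: using the second part of Proposition~\ref{prop:ren-unique} --- the uniform, in fact exponentially fast, approximation $\max_{x\in\vL^{(n)}}\lvert g(x)-g^{(n)}(x)\rvert\to 0$ --- one may pass the limit through the Fourier transform and the volume-averaged convolution to obtain $\widehat{\ts\gamma^{}_1\ts} = \cF\bigl(\EE(\cM)\circledast\widetilde{\EE(\cM)}\bigr)$, and then move to the two-sided comb $\omega^{}_{h}$ on $\oplam([-\tau,\tau])$, giving $\widehat{\ts\gamma^{}_1\ts} = \widehat{\omega^{}_{h}\circledast\widetilde{\omega^{}_{h}}} = \widehat{\omega^{}_{h*\widetilde h}}$. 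Since $h = h^{}_a + h^{}_b$ is continuous with support in $W$ by Proposition~\ref{prop:cont}, Theorem~\ref{theo} applies verbatim and yields that $\widehat{\ts\gamma^{}_1\ts}$ is pure point, supported on $\ZZ[\tau]/\sqrt{5}$, with amplitudes $\dens(\cL)\,\widehat{h}(-k^{\star})$. Substituting the explicit product formula for $\widehat{h}$ from Eq.~\eqref{eq:ft} (via Eq.~\eqref{eq:conv-rep} and the convolution theorem) and simplifying reproduces exactly the intensity $I_p(k)$ of Eq.~\eqref{eq:ran-fib-int}.

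For the remaining part, $\widehat{\ts\gamma^{}_2\ts} = \lim_n \operatorname{Var}(\cX_n)/\tau^n$ was already shown to be absolutely continuous in \cite{gl1,mo,mo2}; I would simply cite \cite[Prop.~6.18]{mo} for the closed form of its Radon--Nikodym density $\phi^{}_p$ and not repeat that computation. It then follows that $\widehat{\gamma}$ is the sum of a pure point measure and an absolutely continuous one; by uniqueness of the Lebesgue decomposition, its pure point part is precisely $\widehat{\ts\gamma^{}_1\ts} = \sum_k I_p(k)\,\delta_k$, its absolutely continuous part is $\widehat{\ts\gamma^{}_2\ts} = \phi^{}_p\,\lm$, and there is no singular continuous component --- which is the assertion of the theorem.

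The step I expect to be the main obstacle is not the algebra but the \emph{reduction} itself: proving rigorously that the $\nu^{}_{\!\mathsf{pf}}$-average of the individual diffraction measures over the full hull can be computed as the expectation over the one-sided random exact inflation patches, and that the Eberlein convolution of the one-sided comb $\EE(\cM)$ faithfully reproduces the autocorrelation (equivalently, that the half-line truncation produces no surviving boundary contribution). This is exactly what is deferred to Section~\ref{sec:decompose}, where it is handled through the disintegration over the Kronecker factor and Elton's ergodic theorem; granting that, the argument above is bookkeeping. A secondary technical point worth stating carefully is that the uniform approximation of Proposition~\ref{prop:ren-unique} is genuinely needed to justify interchanging $n\to\infty$ with the Fourier transform and the volume-averaged convolution, both when passing from $\EE(\cM_n)$ to $\EE(\cM)$ and when passing from the one-sided comb to the two-sided comb $\omega^{}_{h}$.
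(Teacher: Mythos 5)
Your proposal is correct and follows essentially the same route as the paper: the decomposition via Eq.~\eqref{eq:fibre-mean}, the identification of $\widehat{\ts\gamma^{}_1\ts}$ as $\widehat{\omega^{}_{h*\widetilde h}}$ through Proposition~\ref{prop:ren-unique} and the continuity result of Proposition~\ref{prop:cont} so that Theorem~\ref{theo} applies, the citation of \cite[Prop.~6.18]{mo} for the absolutely continuous part, and the deferral of the fibre/one-sided reduction to Section~\ref{sec:decompose} and the Appendix. You also correctly single out as the genuinely delicate point exactly what the paper itself postpones, namely the legitimacy of computing the $\nu^{}_{\!\mathsf{pf}}$-average via the one-sided exact inflation patches and the boundary-free passage to the Eberlein convolution.
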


\begin{remark}
  The original formula for $\phi^{}_{p}$ in \cite{mo,mo2}, which also
  appears in \cite{gl1}, can be made more explicit. Setting
  $\psi (k) = 1 - \cos \bigl( 2\pi \ts \frac{k}{\tau}\bigr)$, one
  finds
\[
    \phi^{}_{p} (k) \; = \; \frac{2\ts p q \ts \tau}
    {\mbox{\small $\sqrt{5}$}} \, \sum_{n=2}^{\infty}
    \, \frac{ \psi (k)}{\tau^{n}}  \,
    \prod_{\ell=1}^{n-2} \big| p + q \ts 
    \ee^{-2 \pi \ii \tau^{\ell} k} \big|^{2} ,
\]
with the understanding that an empty product is $1$. It is worthwhile
to note that $\phi^{}_{p}$ vanishes for the limiting cases $p=1$ and
$q=1$, where $p\ts q = 0$. This means that the formula for
$\widehat{\gamma}$ in Theorem~\ref{theo:c} remains valid, with the
correct result, for \emph{all} values $p\in [0,1]$.

One can also extend the explicit formulas to the case of arbitrary
weights $u^{}_a,u^{}_b \in \CC$ for the two types of points. The
intensities of the Bragg peaks are then the special case $m=1$ of the
formula in Theorem~\ref{thm:RNMS} below, while the above formula for
$\phi^{}_{p}$ remains true, this time with
$\psi(k) = \frac{1}{2} \ts\ts \bigl| (1-\ee^{-2 \pi \ii k})\ts  u^{}_a -
(1-\ee^{-2 \pi \ii \tau k}) \ts u^{}_{b} \bigr|^2$.  \exend
\end{remark}

One should notice that it is not immediately obvious why the
supporting set of the Bragg peaks in the stochastic case coincides
with that of the deterministic one. However, this ultimately
follows from the fact that the deterministic substitutions
$\zeta^{}_{\mathrm{F}\nts , 1}$ and $\zeta^{}_{\mathrm{F}\nts , 0}$
from Section~\ref{sec:fibo} give rise to the same hull, and that the
latter can be described within the same CPS as used
there. This CPS then also accommodates the generating random Fibonacci
sets, as described above.  An independent explicit argument for the
Fourier--Bohr spectrum via exponential sums can be found in
\cite{spi}; see also \cite{Nicu-book} for a general treatment in the
setting of almost periodic pure point measures.

\begin{remark}
  As before, we may also consider tiles with the modified lengths
  according to Eq.~\eqref{eq:tile-length}. In this case, one obtains
\begin{align*}
  A_{\vL,p}' (k) \, & = \, A_{\vL}'(k)  \prod_{\ell=1}^{\infty} 
    \bigl(  p +q  \ee^{2\pi\ii \sigma^\ell (k^{\star}-\rho k)} \bigr)
    \quad \text{and} \\
    I_{p}^{\ts\prime} (k) \, & = \, I^{\ts\prime}(k) \prod_{\ell=1}^{\infty} 
    \big|  p +q  \ee^{2\pi\ii \sigma^\ell (k^{\star}-\rho k)} \big|^2
\end{align*}
as generalisations of the previous expressions, which are covered
for $\rho = 0$.

Also, the density function $\phi^{}_{p}$ can be calculated for the
case with the modified interval lengths. With the new length function
$L^{}_{\ell} = \tau^{\ell} + \rho \ts\ts \sigma^{\ell}$ for the level-$\ell$
inflation words, one finds
\[
    \phi^{}_{p} (k) \; = \; \myfrac{2\ts p q \ts \tau}
    {\mbox{\small $\sqrt{5}$}} \, \sum_{n=2}^{\infty}
    \, \frac{ \psi (k)}{\tau^{n}}  \,
    \prod_{\ell=1}^{n-2} \big| p + q \ts 
    \ee^{-2 \pi \ii L^{}_{\ell} k} \big|^{2} ,
\]
now with $\psi (k) = 1 - \cos \bigl( 2 \pi (\rho\ts \tau +
 \sigma) k \bigr)$.    \exend
\end{remark}

\section{Random noble means substitutions}\label{sec:ran-nob}

Let us begin this section with a more general approach to the
concept of a random substitution; see \cite{RS} for some general
properties and results.

\begin{definition}\label{def:ransubst}
  Let a finite alphabet
  $\cA = \{ a^{}_{1} , a^{}_{2}, \ldots , a^{}_{n} \}$ be fixed.
  Then, an endomorphism
  \mbox{$\varrho \! : \, \cA^* \xrightarrow{\quad} \cA^*$} is called a
  \emph{random substitution} if there are $k_1,\ldots,k_n\in\NN$ and
  probability vectors
\[
  \big\{\bs{p}^{}_i=(p^{}_{i 1},\ldots,p^{}_{i k_i}) \mid 
   \bs{p}^{}_i\in[0,1]^{k_i}
   \text{ and } \sum_{j=1}^{k_i} p^{}_{ij}=1 \ts , \, 
   1\leqslant  i\leqslant  n\big \}
\] 
  such that
\[
    \varrho : \; a^{}_i\mapsto 
  \begin{cases} 
    w^{(i,1)}, & \text{with probability } p^{}_{i 1}, \\
    \quad \vdots  & \quad \quad \quad \quad \ \vdots \\
    w^{(i,k_i)}, & \text{with probability } p^{}_{i k_i},
\end{cases} 
\]  
for $1\leqslant i\leqslant n$, where each $w^{(i,j)}\in\cA^*$. 
Moreover, the average
\[
  M_{\varrho}\, := \Big(\sum_{q=1}^{k_j} p^{}_{jq} \,
   \card^{}_{a_i} \bigl(w^{(j,q)} \bigr) \Big)_{1\leqslant  i,j\leqslant  n}
   \in \mathrm{Mat}(n,\RR_{\geqslant 0})
\]
serves as the corresponding substitution matrix.
\end{definition}


\begin{remark}
  In principle, the integers $k_i$ may take the value $\infty$, but we
  do not consider such cases here.  As in the deterministic case, a
  random substitution $\varrho$ is \emph{primitive} if and only if
  $M_{\varrho}$ is a primitive matrix.  Various other notions can also be
  extended; compare \cite{RS}. 

  {}From a general point of view, each $\varrho(a_i)$ is a random
  variable, which means that our $M_\varrho$ is actually the
  \emph{expectation} of the substitution matrix, the latter also
  viewed as a random variable. We suppress such extensions, as we do
  not need them for our systems of compatible substitutions.  \exend
\end{remark}

The random Fibonacci substitution can be generalised as follows;
compare \cite{mo2} and references therein. Consider $\cA = \{ a,b \}$
as before, pick $m\in\NN$ and let $\bs{p}^{}_m=(p^{}_0,\ldots,p^{}_m)$
be a fixed probability vector. We shall usually assume that all $p_i>0$
unless specified otherwise. Define the deterministic substitutions
$\zeta^{}_{m,i}$ by $b \mapsto a \mapsto a^i b a^{m-i}$, for
$0\leqslant i \leqslant m$. These $m+1$ substitutions all define the
same hull, and share the substitution matrix
\[
      M^{(m)} \, = \, \begin{pmatrix} m & 1 \\ 1 & 0 \end{pmatrix}
\]
with PF eigenvalue
$\lambda^{}_{m} = \frac{1}{2} \bigl( m + \sqrt{m^2 + 4} \, \bigr)$,
which is a Pisot--Vijayaraghavan (PV) number and a unit.  Its 
algebraic conjugate,
$\lambda^{\prime}_{m} = \frac{1}{2} \bigl( m - \sqrt{m^2 + 4} \,
\bigr)$,
is the other eigenvalue of $M^{(m)}$. For the geometric realisation as
tilings with natural tile lengths, we choose $\lambda_m$ and $1$ for
the tiles (intervals) of type $a$ and $b$, respectively. One then
obtains the analogue of Theorem~\ref{theo:a}, this time with the
Fourier module
\begin{equation}\label{eq:FM}
   \cF_m \, = \, \myfrac{\ZZ[\lambda_m]}
   {\mbox{\small $\sqrt{m^2 + 4}$}} \ts ,
\end{equation}
which covers our previous case for $m=1$. 

Also, the description of $\vL =\vL_a \ts \dot{\cup}\ts \vL_b$ as model
sets is completely analogous. Here, the lattice is
$\cL_{m} = \{ (x,x^\star) \mid x\in\ZZ[\lambda_{m}] \}$, where
$x^\star = x'$ is algebraic conjugation in the quadratic field
$\QQ (\lambda_m) = \QQ \bigl(\nts \mbox{\small $\sqrt{m^2 + 4}$}
\ts\ts \bigr)$.
It turns out (see \cite{BG,mo} for background and details) that the
windows for $\zeta^{}_{m,j}$ with $0\leqslant j \leqslant m$ may be
chosen as
\[
    W^{(a)}_{m,j} \, = \, j \ts \tau^{}_m + [0,1)
    \quad \text{and} \quad
    W^{(b)}_{m,j} \, = \, j \ts \tau^{}_m + [\lambda_m', 0)
\]
with $\tau^{}_m = \frac{-1}{m} (\lambda_m'+1)$. The choice with the
half-open intervals is only relevant for $j=0$ and $j=m$, where the
fixed points are singular (as in the Fibonacci case); compare
\cite[Ex.~7.3]{BG} for a discussion of this point. Since the
hull defined by $\zeta^{}_{m,j}$ is independent of $j$, we get
the following result; see \cite{Q,BG} for background.

\begin{coro}\label{coro:detRNMS} 
  Fix\/ $m\in\NN$ and let\/ $\YY_{\! m}$ be the geometric hull of the 
  corresponding noble means tiling system,
  with prototiles of length\/ $\lambda_m$ for type\/ $a$ and\/ $1$ for
  type\/ $b$. Fix some\/ $\cT \in \YY_{\! m}$ and let\/
  $\vL =\vL_a \ts \dot{\cup}\ts \vL_b$ be the corresponding set of
  left endpoints of the tiles in\/ $\cT$.  Then, the weighted Dirac
  comb\/
  $\omega = u^{}_{a} \ts \delta^{}_{\!\vL_a} + \ts u^{}_{b} \ts
  \delta^{}_{\!\vL_b}$,
  with any fixed pair of weights\/ $u^{}_{a},u^{}_{b} \in \CC$, is
  pure point diffractive. Its autocorrelation is given by\/
\[
   \gamma \, = \sum_{z\in\vL-\vL}\, \sum_{\alpha,\beta \in \{a,b\}}
   \!\! \overline{u^{}_{\alpha}} \: \eta^{}_{\alpha \beta} (z)  \ts
   u^{}_{\beta} \; \delta_{z} \ts,
\]
where\/ $\eta^{}_{\alpha \beta} (z) = \dens \bigl( \vL^{}_{\alpha} \cap
(\vL^{}_{\beta} - z)\bigr)$, and the diffraction measure reads
$\widehat{\gamma} = \sum_{k\in \cF_m} I(k)\, \delta_k$, where\/ $\cF_m $
is the Fourier module from Eq.~\eqref{eq:FM}. As before, $I(k) = 
\bigl| u^{}_a A^{}_{\!\vL_a}(k) + u^{}_b A^{}_{\!\vL_b}(k) \bigr|^2$,
with the Fourier--Bohr coefficients from 
Theorem~\textnormal{\ref{theo:a}}.  In
particular, $\gamma$ and\/ $\vL-\vL$, as well as\/ $\widehat{\gamma}$
and\/ $I(k)$, are independent of\/ $\cT$.  \qed
\end{coro}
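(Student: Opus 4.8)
The plan is to reproduce the proof of Theorem~\ref{theo:a} almost verbatim, with $\tau$ replaced by $\lambda_m$ and $\sqrt5$ by $\sqrt{m^2+4}$. The only input that is genuinely specific to the noble means family is the model set description of the hull $\YY_{\! m}$; once this is available, the diffraction computation is purely formal and identical to the $m=1$ case.

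First I would fix the CPS. Since $\lambda_m$ is a root of $x^2 - mx - 1$, it is a PV unit; algebraic conjugation $x \mapsto x' = x^\star$ in $\QQ(\lambda_m) = \QQ(\sqrt{m^2+4})$ is the field automorphism fixing $\QQ$ and sending $\sqrt{m^2+4} \mapsto -\sqrt{m^2+4}$, and $\cL_m = \{(x,x^\star) \mid x\in\ZZ[\lambda_m]\}$ is a cocompact lattice in $\RR^2$, so $(\RR,\RR,\cL_m)$ is a CPS of exactly the shape of \eqref{eq:CPS}. Using that $M^{(m)}$ is primitive and $\lambda_m$ Pisot, the standard theory of Pisot inflation tilings (see \cite{Q,BG} and references therein) gives that, up to translation, every $\cT\in\YY_{\! m}$ is described by $\vL_a = \oplam(W^{(a)}_{m,j})$ and $\vL_b = \oplam(W^{(b)}_{m,j})$ for a suitable index $j\in\{0,\ldots,m\}$ encoded by $\cT$ (the half-open choices of the intervals mattering only for the singular elements $j\in\{0,m\}$), so that $\vL = \oplam(W_m)$ with $W_m = j\ts\tau^{}_m + [\lambda_m',1)$, an interval. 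Each window boundary has Lebesgue measure zero, so these are \emph{regular} model sets; hence $(\YY_{\! m},\RR)$ is strictly ergodic, and by the equivalence of dynamical and diffraction spectra for such systems \cite{LMS,BL1} together with the classical pure-pointedness results \cite{Hof,BG}, it has pure point diffraction (and dynamical) spectrum.

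Next I would compute the autocorrelation. By the standard uniform distribution property of regular model sets, the points $x^\star$ with $x\in\oplam(W)\cap[-r,r]$ equidistribute in any interval window $W$ as $r\to\infty$; applied to $W_\alpha$ and $W_\beta - z^\star$ this yields, for every $z\in\vL-\vL$,
\[
   \eta^{}_{\alpha\beta}(z) \, = \, \dens\bigl(\vL_\alpha \cap (\vL_\beta - z)\bigr)
   \, = \, \dens(\cL_m)\,\vol\bigl(W_\alpha \cap (W_\beta - z^\star)\bigr),
\]
which is continuous in $z^\star$ and independent of $\cT$; summing against the two Dirac combs produces the stated formula for $\gamma$. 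The diffraction then follows exactly as in Theorem~\ref{theo:a}: writing $\omega = \omega_g$ with the step function $g = u^{}_a\ts 1^{}_{W_a} + u^{}_b\ts 1^{}_{W_b}$ and applying the model set diffraction formula (see \cite{Hof,BG} and references therein), one gets
\[
   \widehat{\gamma} \, = \sum_{k\in L^\circledast} |A(k)|^2\,\delta_k \ts ,
   \qquad A(k) \, = \, \dens(\cL_m)\,\widehat{g}(-k^\star) \ts ,
\]
with $L^\circledast = \pi(\cL_m^\ast) = \ZZ[\lambda_m]/\sqrt{m^2+4} = \cF_m$. Since $\widehat{g} = u^{}_a\,\widehat{1^{}_{W_a}} + u^{}_b\,\widehat{1^{}_{W_b}}$ and $\dens(\cL_m)\,\widehat{1^{}_{W_\alpha}}(-k^\star) = A^{}_{\vL_\alpha}(k)$ by the same interval integral as in \eqref{eq:FB-1}, this is exactly $I(k) = |u^{}_a A^{}_{\vL_a}(k) + u^{}_b A^{}_{\vL_b}(k)|^2$ with the Fourier--Bohr coefficients of Theorem~\ref{theo:a}. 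Finally, $\gamma$, $\vL-\vL$, $\widehat{\gamma}$ and $I(k)$ are constant along the hull by unique ergodicity (minimality gives every element the same patches, hence the same difference vectors), whereas the $A^{}_{\vL_\alpha}(k)$ merely acquire a phase $\ee^{-2\pi\ii kt}$ under a translation by $t$ and hence cancel in $|A(k)|^2$.

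The main obstacle is the model set identification used above: one must verify both that the windows $W^{(a)}_{m,j}$ and $W^{(b)}_{m,j}$ are the correct ones and that $\YY_{\! m}$ is exactly the family of model sets they generate. For $m=1$ this is \cite[Ex.~7.3]{BG}; for general $m$ it reduces to a short interval computation confirming that these windows satisfy the set-valued inflation fixed-point equations induced by $\zeta^{}_{m,j}$, which together with primitivity and minimality pins $\YY_{\! m}$ down as the translation-orbit closure of an inflation fixed point. I would cite \cite{mo,BG} for this and add nothing else, since every step downstream of the model set picture is word-for-word the Fibonacci argument.
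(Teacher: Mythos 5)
Your proposal is correct and follows essentially the same route as the paper, which states this corollary without proof as a direct consequence of the model set description of $\YY_{\! m}$ in the CPS $(\RR,\RR,\cL_m)$ together with the general diffraction formula for regular model sets (Theorem~\ref{theo}); the window identification, the equidistribution computation of $\eta^{}_{\alpha\beta}$ and the amplitude formula you spell out are precisely what the citations to \cite{Q,BG,Hof} are meant to supply. One minor imprecision: a generic $\cT\in\YY_{\! m}$ is, up to translation, of the form $\oplam(-s+W)$ with the internal parameter $s$ ranging over a full torus coordinate, not merely over the finite set $\{\ts j\ts\tau^{}_m\}$ of fixed-point windows — but since $\gamma$, $\widehat{\gamma}$ and $I(k)$ are insensitive to the window translate (only the phases of the $A^{}_{\!\vL_\alpha}(k)$ change), this does not affect your argument.
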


Now, given $m\in\NN$, the \emph{random noble mean substitution}
$\zeta^{}_m \! : \, \cA^* \xrightarrow{\quad} \cA^*$ is defined by
\begin{equation}\label{eq:RNMS}
\zeta^{}_m : \; 
\begin{cases}
      a\mapsto 
      \begin{cases}
      \zeta^{}_{m,0}(a), & \text{with probability } p^{}_0,\\
      \zeta^{}_{m,1}(a), &  \text{with probability } p^{}_1, \\
      \quad\ \vdots & \quad \quad \quad \quad \ \vdots \\
      \zeta^{}_{m,m}(a), & \text{with probability } p^{}_m,
      \end{cases} \\
      b\mapsto a,
\end{cases} 
\end{equation}
and the one-parameter family $\cR = \{\zeta^{}_m\}^{}_{m\in\NN}$ is
called the family of \emph{random noble means substitutions} (RNMS).
In particular, one has $\zeta^{}_{1} = \zeta^{}_{\mathrm{F}} $.

From here on, we can continue in close analogy to the Fibonacci
case. Eq.~\eqref{eq:concat-M} is now to be replaced by $m+1$
equations. Explicitly, one has
\[
   \cM^{}_{n}  \, = \sum_{r=0}^{j-1} \bigl( 
        \delta^{}_{r \ts \lambda^{n-1}_{m}} * 
      \cM^{(r)}_{n-1}\bigr) \, + \, 
      \delta^{}_{\nts j \ts \lambda^{n-1}_{m}} 
    * \cM^{}_{n-2} \,
    + \sum_{r=j}^{m-1} \bigl( 
    \delta^{}_{\lambda^{n-2}_{m} + r \ts \lambda^{n-1}_{m}}
     * \cM^{(r)}_{n-1}\bigr)
\]
with probability $p^{}_{j}$, for $0\leqslant j \leqslant m$.  Here,
empty sums are $0$ as usual, and
$\cM^{(0)}_{n-1} , \ldots , \cM^{(m-1)}_{n-1}$ are $m$ independent and
identically distributed copies of the random variable $\cM^{}_{n-1}$,
which is an important point to observe in comparison to the previous
case, $m=1$.

In this case, one gets
$\EE (\cM_n) = \sum_{x\in\smoplam(W_m )\cap \ts\RR_{\geqslant 0}} h^{(n)}
(x^\star ) \, \delta_x $, now with covering window
$W^{}_{\! m} = \bigl[\lambda_{m}' - 1, 1 - \lambda_{m}' \bigr]$ and
model set
$\oplam (W_{\nts m} )= \{ x \in \ZZ[\lambda_m] \mid x^\star \in
W_{\nts m} \}$.
As before, one has $h^{(n)} = h^{(n)}_{a} + h^{(n)}_{b} \! $, and with the
analogous definition of the measures $\mu^{(n)}_\alpha$ one arrives
again at a set of recursion relations. They can be used to establish
the existence of the limiting measures $\mu^{}_\alpha$, which then
satisfy the rescaling relations
\begin{align}\label{ms}
\begin{split}
     \mu^{}_a\, &=\, |\lambda_m'| \, \sum_{n=0}^m p^{}_n 
      \biggl(\, \sum_{j=0}^{n-1} (f^{}_j.\mu^{}_a)      
        \, + \sum_{j=n}^{m-1} (g^{}_j.\mu^{}_a) 
        \, + \, (f^{}_0.\mu^{}_b) \biggr) , \\
      \mu^{}_b\, &=\,  |\lambda_m'| \, \sum_{n=0}^m p^{}_n\, 
        (f^{}_n \nts .\mu^{}_a) \ts , 
\end{split}
\end{align} 
where $f^{}_j(x)=\lambda_m'(x+j)$ and $g^{}_j (x)=\lambda_m' (x+j)+1$,
while empty sums are $0$ by convention.  The solutions are again
absolutely continuous measures. They can be represented by
$\mu^{}_a=h^{}_a\lm$ and $\mu^{}_b=h^{}_b\lm$ with
\[
  h^{}_a(x) \, = \, \big(1_{[0,1)} *\mu\big)(x) 
   \quad \text{ and } \quad 
  h^{}_b(x) \, = \, \big(1_{[\lambda_m' , 0)} *\mu\big)(x) \ts ,
\] 
where we now have
\[
    \mu \, = \, \Conv_{\ell=1}^{\infty} \biggl( \,
    \sum_{n=0}^{m} p^{}_n \, \delta^{}_{n(\lambda_m')^\ell } \biggr) .
\] 
When $\bs{p}$ is strictly positive, the $L^{1}$-functions $h^{}_a$ and
$h^{}_b$ are again represented by continuous functions with compact
support (by an analogous argument), hence Theorem~\ref{theo} can be
applied.  This gives the following result.

\begin{theorem}\label{thm:RNMS}
  Let\/ $m\in\NN$ be fixed and consider the random noble means
  substitution from Eq.~\eqref{eq:RNMS}. Let\/ $\YY_{\!\zeta}$ be the
  geometric tiling hull, with intervals of length\/ $\lambda_m$ and\/
  $1$ as prototiles, and consider the dynamical system\/
  $(\YY^{}_{\!\zeta},\RR, \nu^{}_{\!\mathsf{pf}})$, with the ergodic
  patch frequency measure\/ $\nu^{}_{\! \mathsf{pf}}$.

  Then, for\/ $\nu^{}_{\!\mathsf{pf}}$-almost every element\/
  $\cT \in\YY_{\!\zeta}$, with\/ $\vL = \vL^{}_{a} \ts \dot{\cup}
  \ts \vL^{}_{b}$ denoting the left endpoints of\/ $\cT\!$,
  the diffraction measure of the weighted Dirac comb\/
  $\omega = u^{}_a \,\delta^{}_{\! \vL_a} + u^{}_b \,
  \delta^{}_{\! \vL_b}$ is of the form
\[
    \widehat{\ts\gamma^{}_{\!\vL}\ts} \, = \, 
    \widehat{\gamma}^{}_{\mathsf{pp}} +  \widehat{\gamma}^{}_{\mathsf{ac}} 
    \, = \sum_{k\in \cF_m} I^{}_{\bs{p}}(k) \, \delta^{}_{k}
    \, + \, \phi^{}_{\bs{p}} \ts \lm \ts ,
\]
with the Fourier module\/ $\cF_m=\ZZ[\lambda_m]/\sqrt{m^2 + 4}$. 
The Bragg peak intensities are given by
\[
  I^{}_{\bs{p}}(k) \, = \, I (k)
   \prod_{\ell=1}^{\infty} \, \Big|  \sum_{n=0}^m p^{}_n  
    \ee^{2\pi\ii n \ts (\lambda_m')^{\ell} k^{\star}} \Big|^2,
\]
where\/ $I (k) = \bigl|  \dens (\vL^{}_m) \,
\sinc\big(\pi(1-\lambda_m')k^{\star}\big) \bigr|^{2} $ with\/
$\dens (\vL^{}_{m}) = \frac{1-\lambda^{\prime}_{m}}{\sqrt{m^2 + 4}} $
is the `deterministic' part. Finally, the Radon--Nikodym density of 
the absolutely continuous part reads
\[
   \phi^{}_{\bs{p}} (k) \, = \, \frac{\lambda_m}{\sqrt{m^2 + 4 \ts}}
   \sum_{\ell=2}^{\infty} \frac{\psi^{(\ell)}_{\bs{p}} (k)}
     {\lambda^{\ell}_{m}} \ts ,
\]
where\/ $\psi^{(\ell)}_{\bs{p}}$ are uniformly bounded, continuous
functions on\/ $\RR$.  
\end{theorem}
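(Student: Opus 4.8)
The plan is to follow the template established for the random Fibonacci chain in Section~\ref{sec:ran-fibo}, since everything preceding this theorem has been arranged so that only the random continuity input and a bookkeeping of the $(m+1)$-term rescaling relations remain. First I would realise the elements of $\YY_{\!\zeta}$ containing the origin as subsets of translates of the covering model set $\oplam(W_m)$ with $W_m=[\lambda_m'-1,\,1-\lambda_m']$, and work with the random exact inflation patches $\cM_n$ obeying the $(m+1)$-term recursion displayed just before the theorem. Taking expectations, linearity removes the dependence on the fact that $\cM^{(0)}_{n-1},\dots,\cM^{(m-1)}_{n-1}$ are i.i.d.\ copies, so $\EE(\cM_n)$ satisfies a closed recursion; lifting into internal space produces finite positive measures $\mu^{(n)}_a,\mu^{(n)}_b$ supported in $W_m$, whose rescaling system has limiting form Eq.~\eqref{ms}. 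Weak convergence to compactly supported limits $\mu_a,\mu_b$ then follows from L\'evy's continuity theorem exactly as in Lemma~\ref{lem:aw}.

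Next I would Fourier-transform Eq.~\eqref{ms}; as $f_j,g_j$ are the affine contractions $x\mapsto\lambda_m'(x+j)$ and $x\mapsto\lambda_m'(x+j)+1$, the transformed system is a contractive fixed-point equation whose unique (up to scale) solution is a sinc-type prefactor times the infinite product $\prod_{\ell\geqslant 1}\bigl(\sum_{n=0}^m p_n\ee^{-2\pi\ii n(\lambda_m')^\ell t}\bigr)$, i.e.\ $\widehat\mu$ for $\mu=\Conv_{\ell\geqslant 1}\bigl(\sum_n p_n\delta_{n(\lambda_m')^\ell}\bigr)$. Since $|\lambda_m'|<1$, this convolution is absolutely convergent to a probability measure by the Jessen--Wintner argument, $\widehat{\mu_a},\widehat{\mu_b}\in L^2(\RR)$ have compact support, and $\mu_a,\mu_b$ are absolutely continuous with $h_a=1_{[0,1)}*\mu$, $h_b=1_{[\lambda_m',0)}*\mu$. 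The crucial step --- the place I expect the real work --- is upgrading these $L^1$-densities to \emph{continuous} functions for strictly positive $\bs p$ (the standing assumption), which requires the generalisation of Lemma~\ref{lem:cont}, namely that $\mu$ is a continuous measure. Writing $\mu=\mu_\ell*\nu_\ell$ with $\mu_\ell=\sum_{n=0}^m p_n\delta_{nc_\ell}$ and $c_\ell=(\lambda_m')^\ell$, one checks the domination $\varepsilon\,\mu_\ell\leqslant(\delta_{c_\ell}+\delta_{-c_\ell})*\mu_\ell$ with $\varepsilon:=\min_{0\leqslant n\leqslant m}\{\,\ldots\,\}>0$, where comparing the atom at $nc_\ell$ on both sides uses $p_{n-1}+p_{n+1}\geqslant\varepsilon\,p_n$ for $1\leqslant n\leqslant m-1$ and $p_1\geqslant\varepsilon p_0$, $p_{m-1}\geqslant\varepsilon p_m$ at the ends; convolving with $\nu_\ell$ gives $\varepsilon\,\mu\leqslant(\delta_{c_\ell}+\delta_{-c_\ell})*\mu$ for every $\ell$. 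If $\mu(\{x\})>0$, pick $r$ with $\varepsilon\,\mu(\{x\})>1/r$ and $r$ distinct indices $\ell_1<\dots<\ell_r$; the $2r$ points $x\pm c_{\ell_s}$ are pairwise distinct because $|\lambda_m'|<1$, and summing the estimate over $s$ forces $\mu(\RR)>1$, a contradiction. Then Fact~\ref{fact:cont} yields continuity of $h_a,h_b$, and, as in the Fibonacci case, $g_\alpha(x)=h_\alpha(x^\star)$ on $\oplam(W_m)\cap\RR_{\geqslant 0}$ by the uniqueness part of the analogue of Proposition~\ref{prop:ren-unique}.

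Finally I would assemble the diffraction. By ergodicity of $\nu_{\!\mathsf{pf}}$, the individual diffraction $\widehat{\gamma_\vL}$ equals the ensemble average $\EE(\widehat{\gamma_\vL})$ for $\nu_{\!\mathsf{pf}}$-a.e.\ $\cT$, and the mean--variance splitting as in Eq.~\eqref{eq:fibre-mean} (justified in Section~\ref{sec:decompose}) writes this as $\widehat\gamma_1+\widehat\gamma_2$, the passage to exact inflation patches and the identification $\widehat\gamma_1=\widehat{\omega_h\circledast\widetilde{\omega_h}}$ being controlled by the exponentially fast uniform convergence $\max_{x\in\vL^{(n)}}|g(x)-g^{(n)}(x)|\to 0$. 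Since $h_a,h_b\in C_{\mathsf c}(\RR)$, Theorem~\ref{theo} applies to $\omega_h$ and gives that $\widehat\gamma_1=\widehat\gamma_{\mathsf{pp}}$ is pure point on $\cF_m=\ZZ[\lambda_m]/\sqrt{m^2+4}$ with $A(k)=\dens(\cL_m)\,\widehat h(-k^\star)$; substituting the product formula for $\widehat h=\widehat{\mu_a}+\widehat{\mu_b}$ and absorbing the sinc-prefactor into $I(k)$ yields the stated $I_{\bs p}(k)$. The absolutely continuous part $\widehat\gamma_2=\widehat\gamma_{\mathsf{ac}}$, together with the displayed formula for $\phi_{\bs p}$, is the variance term, computed as in \cite{mo,mo2,diss-T} by iterating the $(m+1)$-term recursion and using independence of the i.i.d.\ copies to drop cross-terms; the $\psi^{(\ell)}_{\bs p}$ are uniformly bounded and continuous because each is a fixed trigonometric polynomial in $k$ built from the finitely many phases $\ee^{-2\pi\ii n\lambda_m^{j}k}$. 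As $\widehat\gamma_1$ is pure point and $\widehat\gamma_2$ absolutely continuous, and together they exhaust $\widehat{\gamma_\vL}$, the singular continuous component vanishes, completing the proof.
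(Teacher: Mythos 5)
Your proposal is correct and follows essentially the same route as the paper, whose own proof is only a three-line sketch deferring to the Fibonacci machinery of Section~\ref{sec:ran-fibo}; your reconstruction matches that development step by step, and your explicit domination inequality $\varepsilon\,\mu_\ell\leqslant(\delta_{c_\ell}+\delta_{-c_\ell})*\mu_\ell$ with $\varepsilon=\min\bigl\{p_1/p_0,\,p_{m-1}/p_m,\,\min_{1\leqslant n\leqslant m-1}(p_{n-1}+p_{n+1})/p_n\bigr\}>0$ correctly supplies the multi-atom generalisation of Lemma~\ref{lem:cont} that the paper dismisses as ``an analogous argument'' (the $2r$ points $x\pm c_{\ell_s}$ are indeed pairwise distinct since the $|c_{\ell_s}|=|\lambda_m'|^{\ell_s}$ are distinct and nonzero). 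The only point in the paper's sketch that you do not address is the extension to probability vectors with some $p_i=0$, but under the standing assumption that $\bs{p}$ is strictly positive your argument is complete and faithful to the paper's.
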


\begin{proof}[Sketch of proof]
  As indicated, Theorem~\ref{theo} covers the case that $\bs{p}$ is
  strictly positive. It can then be shown that any of the remaining
  limiting cases where some of the $p^{}_i$ vanish is still covered by
  the same formula. Consequently, the result holds for all probability
  vectors and recovers the model set case with its pure point
  diffraction in the deterministic limits, where we have $p^{}_i=1$
  for a single index, in which case $\phi^{}_{\bs{p}}$ vanishes.
\end{proof}

Modified tile lengths can also be considered. Since the resulting
changes are structurally similar to those encountered in the
special case $m=1$, we omit further details, some of which will
be presented in \cite{diss-T}.

\section{Deterministic period doubling chain}\label{sec:pd}

In the next section, we are going to investigate a locally randomised
version of the \emph{period doubling} substitution. Therefore, let us
first recall what is known about the deterministic substitution, which
has constant length and is defined by
\[
  \rho^{}_{\mathrm{pd}}  : \quad
  a \mapsto ab \ts , \quad b \mapsto aa \ts ;
\]  
see \cite[Sec.~4.5.1, Ex.~7.4 and Sec.~9.4.4]{BG} for background. In
analogy to Theorem~\ref{theo:a}, one obtains the following result;
compare \cite[Ch.~V]{Q} and \cite[Sec.~9.4.4]{BG}.

\begin{theorem}\label{theo:b} 
  Let\/ $\YY_{\!\mathrm{pd}}$ be the geometric hull of the period
  doubling tiling system, with two distinct prototiles of length\/
  $1$, and\/
  $\YY_{\! 0} = \{ y \in \YY_{\! \mathrm{pd}} \mid 0 \in y \}$ its
  discrete counterpart. Then, the topological dynamical systems\/
  $(\YY_{\! 0},\ZZ)$ and\/ $(\YY_{\!\mathrm{pd}},\RR)$ are strictly
  ergodic, both with pure point dynamical spectrum.

  Now, fix some\/ $\cT \in \YY_{\!\mathrm{pd}}$ and let\/
  $\vL = \vL_a \ts \dot{\cup}\ts \vL_b$ be the corresponding set of
  left endpoints of the tiles in\/ $\cT$.  Then, the weighted Dirac
  comb\/ $\omega = u^{}_{a} \ts \delta^{}_{\!\vL_a} + u^{}_{b} \ts
  \delta^{}_{\!\vL_b}$
  with any fixed pair of weights\/ $u^{}_{a}, u^{}_{b} \in \CC$ is
  pure point diffractive. Its autocorrelation\/ $\gamma$ can be
  expressed as in Theorem~\emph{\ref{theo:a}}, while the diffraction
  measure reads
\[
    \widehat{\gamma} \, = \sum_{k\in\ZZ\left[\frac{1}{2}\right]}
     \! \! I(k)\, \delta_k \ts ,
\] 
with
$I(k) = \bigl| u^{}_{a} A^{}_{\!\vL_a}(k) + u^{}_{b} A^{}_{\!\vL_b}(k)
\bigr|^2$
and Fourier--Bohr coefficients defined as in
Theorem~\emph{\ref{theo:a}}.  In particular, $\gamma$ and\/ $\vL-\vL$,
as well as\/ $\widehat{\gamma}$ and\/ $I(k)$, are independent of\/
$\cT$, while the Fourier--Bohr coefficients do depend on the chosen
element, but converge uniformly.  \qed
\end{theorem}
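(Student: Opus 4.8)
The plan is to run the usual program for a primitive aperiodic substitution, with the twist that the internal space is non-archimedean. First, $\rho^{}_{\mathrm{pd}}$ is primitive, with substitution matrix $\bigl(\begin{smallmatrix}1&2\\1&0\end{smallmatrix}\bigr)$ and PF eigenvalue $2$, and it is aperiodic, so by the classical theory of substitution systems (see \cite[Ch.~V]{Q} and \cite[Sec.~4.2]{BG}) the subshift $(\YY_{\!0},\ZZ)$ is minimal and uniquely ergodic, hence strictly ergodic; the geometric suspension $(\YY_{\!\mathrm{pd}},\RR)$ inherits this. For the spectral part one uses that, with both prototiles of length $1$, the tiling has left-endpoint set $\vL=\ZZ$ and that $\vL_a,\vL_b$ are model sets inside the CPS $(\RR,\ZZ_{2},\cL)$ with $\ZZ_{2}$ the $2$-adic integers, $\cL=\{(n,n^\star)\mid n\in\ZZ\}$ a co-compact lattice whose $\star$-map is the canonical dense embedding $\ZZ\hookrightarrow\ZZ_{2}$, and windows $W_a,W_b\subset\ZZ_{2}$ of Haar measure $\tfrac{2}{3}$ and $\tfrac{1}{3}$; these are the explicit sets of \cite[Ex.~7.4]{BG} and are \emph{not} clopen, but they are regular with $\vol(\partial W_\alpha)=0$ (in fact the boundary is a single point). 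As regular model sets are pure point diffractive, and pure point diffraction is equivalent to pure point dynamical spectrum by \cite{LMS,BL1}, the first assertions follow. (Alternatively, pure point dynamical spectrum follows from Dekking's coincidence criterion, which $\rho^{}_{\mathrm{pd}}$ satisfies.)

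By unique ergodicity, the two-point frequencies $\eta^{}_{\alpha\beta}(z)=\dens\bigl(\vL_\alpha\cap(\vL_\beta-z)\bigr)$ exist uniformly and do not depend on $\cT$, whence $\vL-\vL$ and the autocorrelation $\gamma=\sum_{z\in\vL-\vL}\sum_{\alpha,\beta}\overline{u^{}_\alpha}\,\eta^{}_{\alpha\beta}(z)\,u^{}_\beta\,\delta_z$ of $\omega=u^{}_a\delta^{}_{\!\vL_a}+u^{}_b\delta^{}_{\!\vL_b}$ are well defined and hull-independent, exactly as in Theorem~\ref{theo:a}. In the model-set picture this becomes the closed form $\eta^{}_{\alpha\beta}(z)=\dens(\cL)\,\vol\bigl(W_\alpha\cap(W_\beta-z^\star)\bigr)$, with $\vol$ the normalised Haar measure on $\ZZ_{2}$ and $\dens(\cL)=1$, which is the non-archimedean analogue of the expression recorded after Eq.~\eqref{eq:model-1}.

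For the diffraction one Fourier transforms $\gamma$ and invokes the diffraction theorem for regular model sets (valid for any window with null boundary; Theorem~\ref{theo} above is a refinement allowing continuous weight functions, whereas here the relevant weights are only continuous off the null set $\partial W_\alpha$), applied to $\vL_a$ and $\vL_b$; the cross-terms then combine so that $\widehat{\gamma}=\sum_{k\in L^\circledast}I(k)\,\delta_k$ with $I(k)=\bigl|u^{}_a A^{}_{\!\vL_a}(k)+u^{}_b A^{}_{\!\vL_b}(k)\bigr|^2$ and $A^{}_{\!\vL_\alpha}(k)=\dens(\cL)\,\widehat{1^{}_{W_\alpha}}(-k^\star)$, the Fourier transform being taken over $\ZZ_{2}$ against its characters; this exact computation is carried out in \cite[Sec.~9.4.4]{BG}. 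The Fourier module is $L^\circledast=\pi(\cL^\ast)$, and since the Pontryagin dual of $\ZZ_{2}$ is the Pr\"ufer group $\ZZ[1/2]/\ZZ$, annihilating the diagonal copy of $\ZZ$ inside $\RR\times(\ZZ[1/2]/\ZZ)$ forces the $\RR$-component into $\ZZ[1/2]$, so $\pi(\cL^\ast)=\ZZ[1/2]$, the stated support of $\widehat{\gamma}$. Finally, the Fourier--Bohr coefficients $A^{}_{\!\vL_\alpha}(k)$ exist for every $\cT$ and converge uniformly in the translation parameter because $\vL$ is a model set, with transformation law $A^{}_{t+\vL}(k)=\ee^{-2\pi\ii kt}A^{}_{\!\vL}(k)$; thus they depend on $\cT$ only through a unimodular phase, and $I(k)$ — hence $\widehat{\gamma}$ and, by unique ergodicity, also $\gamma$ and $\vL-\vL$ — is independent of $\cT$, in full parallel to Remark~\ref{rem:spec}.

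The main obstacle is precisely the non-archimedean bookkeeping: pinning down the explicit windows $W_a,W_b\subset\ZZ_{2}$ and verifying regularity with null boundary (their failure to be clopen is why the continuity hypothesis of Theorem~\ref{theo} must be replaced by its a.e.-continuity variant), identifying $\cL^\ast$ inside $\RR\times(\ZZ[1/2]/\ZZ)$, and evaluating the characters of $\ZZ_{2}$ on those windows to turn $\widehat{1^{}_{W_\alpha}}(-k^\star)$ into closed expressions for the $A^{}_{\!\vL_\alpha}(k)$. Conceptually nothing new happens — this is the transcription of the CPS machinery of Section~\ref{sec:fibo} from the Pisot to the constant-length ($2$-adic) setting — which is why we only sketch it and refer to \cite[Ch.~V]{Q} and \cite[Sec.~9.4.4]{BG} for the explicit formulas; an alternative route that avoids the CPS entirely is to renormalise the pair-correlation coefficients $\eta^{}_{\alpha\beta}$ directly under $\rho^{}_{\mathrm{pd}}$ and to read off pure pointedness from the resulting fixed-point equation, as in \cite[Ch.~V]{Q}.
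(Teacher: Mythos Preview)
Your proposal is correct and aligns with the paper's treatment: the theorem is stated without proof (it ends with \qed and is introduced with ``compare \cite[Ch.~V]{Q} and \cite[Sec.~9.4.4]{BG}''), and the discussion immediately following it sets up precisely the CPS $(\RR,\ZZ_{2},\cL)$ with the canonical embedding as $\star$-map, identifies $L^{\circledast}=\ZZ\bigl[\tfrac{1}{2}\bigr]$, and records the explicit Fourier--Bohr coefficients --- exactly the ingredients you sketch. Your expansion is a faithful unpacking of those references; the one minor quibble is the parenthetical ``the boundary is a single point'', which you should double-check against \cite[Ex.~7.4]{BG} (the boundary is certainly a null set, which is all that is needed).
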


Again, the elements of $\YY_{\!\mathrm{pd}}$ can be understood as
(translates of) regular model sets. For this purpose, choose
$H=\ZZ_2$, the set of $2$-adic integers, as locally compact Abelian
group to obtain a CPS $(\RR,\ZZ_2,\cL)$ with lattice
\[
   \cL  \, = \, \{(x,\iota(x)) \mid  x\in \ZZ\} 
   \, \subset \, \RR\times \ZZ_2 \ts ,
\]   
where $\iota \! : \, \ZZ \hookrightarrow \ZZ_2$ is the canonical
embedding, which is also the $\star$-map in this case. In particular,
one can describe the fixed point under $\rho^{2}_{\mathrm{pd}}$ with
seed $a|a$ as a regular model set in this way; see \cite{bm} as well
as \cite[Ex.~7.4]{BG} for details.

It is well known that the diffraction measure
$\widehat{\gamma^{}_{\mathrm{pd}}}$ of the corresponding Dirac comb,
which is also the diffraction measure of the entire system
$\YY_{\!\mathrm{pd}}$, is pure point. For generic choices of the
weights $u^{}_{a}$ and $u^{}_{b}$, the set of Bragg peak positions is
a group, namely
\begin{equation}\label{eq:pb-module}
  L^{\circledast} \, = \, \ZZ\bigl[\tfrac{1}{2}\bigr]
  \, = \, \left\{ \tfrac{m}{2^r}\ts  \big| \ts
   (r=0,\, m\in\ZZ) \text{ or } (r\geqslant 1,\, m \text{ odd})
  \right\} \ts ,
\end{equation} 
which means that there are then no extinctions. Let us mention in
passing that $L^\circledast$ is also the dynamical spectrum for the
dynamical system under the continuous translation action of $\RR$,
while the restriction of $L^{\circledast}$ to the $1$-torus $\TT$,
here written as $[0,1)$ with addition modulo $1$, is the spectrum for
the discrete $\ZZ$-action by the shift.

In our parametrisation, with $k=\frac{m}{2^r}$, the Fourier--Bohr
coefficients (or amplitudes) of our particular Dirac comb are given by
\[
  A^{}_{\!\vL_a}(k) \, = \, \myfrac{2}{3}\, 
  \frac{(-1)^r}{2^r}\, \ee^{2\pi\ii k}
  \quad \text{ and }  \quad 
  A^{}_{\!\vL_b}(k) \, = \, \delta^{}_{r,0} - A^{}_{\!\vL_a}(k) \ts .
\]
Hence, the diffraction intensities for $k\in L^{\circledast}$ can be
calculated as
\[
  I(k) \, = \, \begin{cases}
  \frac{1}{9 \cdot 4^{r-1}} \, \lvert u^{}_{a}-u^{}_{b} 
     \rvert^2  , &  r \geqslant 1 ,  \\[1mm]
  \frac{1}{9}\, \lvert 2 u^{}_{a} + u^{}_{b} \rvert^2, & r=0 .
  \end{cases}
\]   
Let us note in passing that $u^{}_{a} = u^{}_{b}$ leads to $I (k) = 0$
for all $k = \frac{m}{2^{r}}$ with $m$ odd and $r \geqslant 1$ because
the Dirac comb then `degenerates' to $u^{}_{a} \delta^{}_{\ZZ}$,
wherefore $\widehat{\gamma}$ simply becomes
$\lvert u^{}_{a} \rvert^{2} \ts \delta^{}_{\ZZ}$ and is `blind' to the
aperiodicity that is present for $u^{}_{a} \ne u^{}_{b}$.

Now, let us consider the alternative substitution
\[
  \rho^{\ts\ts \prime}_{\mathrm{pd}} \! : \quad 
  a \mapsto ba \ts , \;  b \mapsto aa \ts ,
\]  
which is conjugate to $\rho^{}_{\mathrm{pd}}$ by an inner automorphism
of the free group with generators $a$ and $b$.  By
\cite[Prop.~4.6.]{BG}, the substitutions $\rho^{}_{\text{pd}}$ and
$\rho^{\ts\ts \prime}_{\text{pd}}$ define the same two-sided hull.
Due to the constant-length nature of these substitutions, the symbolic
and the geometric picture coincide canonically, for instance by
choosing (coloured) intervals of unit length as prototiles (as we did above). In
particular, we can then identify the hulls $\XX_{\mathrm{pd}}$ and
$\YY_{\! 0}$.  Moreover, the relation between the topological
dynamical systems $(\YY_{\! 0},\ZZ)$ and $(\YY_{\! \mathrm{pd}},\RR)$
is given by a simple suspension with constant roof function; see
\cite{BS} for background. We shall see more of this in
Section~\ref{sec:pd-eigen}.

\begin{remark}
  The analysis of the system under a change of the tile lengths is
  considerably more involved here in comparison to the Fibonacci
  case. This is due to the nature of $\ZZ_2$ as internal space.  In
  particular, it is no longer true that changing the tile length ratio
  leads to a deformed model set. This can also be seen as a
  consequence of topological obstructions identified in \cite{CS}.
  \exend
\end{remark}

Clearly, global mixtures of $\rho^{}_{\mathrm{pd}}$ and
$\rho^{\ts\ts \prime}_{\mathrm{pd}}$ do not lead to an extension of
the hull, and are thus compatible in this sense, as in our previous 
examples. Once again, this situation changes under \emph{local} 
mixtures, as we shall see next.

\section{Random period doubling chain}  \label{sec:ran-pd}

Now, fix $p\in [0,1]$, set $q=1-p$, and define the \emph{random
  period doubling substitution} by
\begin{equation}\label{eq:def-ran-pd}
  \rho  : \; \begin{cases}
      a\mapsto 
      \begin{cases}
      ab \ts , & \text{with probability } p,      \\
      ba \ts , & \text{with probability } q,
      \end{cases} \\ b\mapsto aa \ts , \end{cases} 
\end{equation}
which has substitution matrix
$M=\left(\begin{smallmatrix} 1 & 2 \\ 1 & 0 \end{smallmatrix}\right)$,
independently of $p$.  As before, the term `random' is only justified
for $p\in (0,1)$, while the limiting cases correspond to the
deterministic cases of the previous section.  Again, for $p\in (0,1)$,
we define the two-sided discrete stochastic hull $\XX_{\rho}$ as
\[ 
    \XX_{\rho} \, := \, \{w\in\cA^{\ZZ}\ |\ 
    \mathfrak{F}(\{w\})\subseteq  \cD_{\rho}\} \ts ,
\]   
with the notation from Definition~\ref{def:hull}. It is clear by
construction that $\XX_\rho$ contains the deterministic hull as
a proper subset. 

\subsection{Entropy}
For the stochastic hull, one has the following result.

\begin{lemma}\label{lem:pd-entropy}
  For\/ $p\in (0,1)$, the topological entropy of\/ $(\XX_{\rho}, \ZZ)$
  is\/ $s=\frac{2}{3}\ts \log(2) \approx 0.462$.
\end{lemma}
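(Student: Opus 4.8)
The plan is to compute the topological entropy of $(\XX_{\rho},\ZZ)$ by counting, up to exponential order, the number of legal words of a given length, exploiting the constant-length structure of $\rho$. Since $\rho$ has constant length $2$, the inflation structure organises a legal word $w$ of length $n$ into a coarser word of length roughly $n/2$ over the alphabet $\cA$, together with the \emph{choices} made at each letter of that coarser word. The key point is that $\rho(b)=aa$ is deterministic, so only the $a$'s in the preimage contribute a binary choice (either $ab$ or $ba$), and the number of $a$'s in a level-$k$ preimage is asymptotically proportional to $\vol$, governed by the Perron--Frobenius data of $M=\left(\begin{smallmatrix} 1 & 2 \\ 1 & 0\end{smallmatrix}\right)$, whose PF eigenvalue is $2$ with normalised left eigenvector giving frequencies $(\tfrac23,\tfrac13)$ for $(a,b)$.

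First I would set up the counting via one inflation step: if $N(n)$ denotes the number of legal words of length $n$ in $\cD_{\rho}$, then every legal word of length $n$ is a subword of $\rho(u)$ for some legal $u$ of length $\lceil n/2\rceil+O(1)$, and each realisation of $\rho(u)$ is determined by the independent binary choices at the $a$-letters of $u$; the number of $a$'s in $u$ is $\bigl(\tfrac23+o(1)\bigr)|u|$. This shows $\tfrac1n\log N(n)\to \tfrac12\cdot\tfrac23\cdot\log 2=\tfrac13\log 2$ as a first guess for the exponential growth \emph{per inflation}; but since each inflation step doubles the length, the entropy per unit length needs care. Cleaner is to iterate: a legal word of length $2^k$ comes (up to bounded boundary corrections) from a single letter inflated $k$ times, and the number of realisations of $\rho^k(a)$ is $2^{A_k}$, where $A_k$ is the number of $a$'s in $\rho^k(a)$. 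From $M^k\binom{1}{0}$ one gets $A_k=\tfrac23\cdot 2^k+O(1)$, hence $\log N(2^k)= A_k\log 2 + (\text{subexponential boundary terms})$, giving $\tfrac{1}{2^k}\log N(2^k)\to \tfrac23\log 2$. Passing to general $n$ by monotonicity and the submultiplicativity of word complexity then yields $s=\lim_n \tfrac1n\log N(n)=\tfrac23\log 2$.

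For the matching lower bound one must check that distinct choice-vectors really produce distinct legal words, i.e.\ that the inflation is (essentially) \emph{recognisable} so that no two different sequences of local substitution choices collapse to the same word; here I would invoke that the deterministic period doubling substitution is recognisable/bijective in the relevant sense and that the random choices are localised at disjoint inflation cells, so the choice pattern can be read off from the word up to a bounded ambiguity at the two ends. This reduces the overcounting to a subexponential factor, which does not affect the entropy. For the upper bound one notes that every legal word of length $n$ lies inside some $\rho^k(v)$ with $2^k\asymp n$ and $|v|=O(1)$, and the number of legal $v$ is bounded, so $N(n)\le C\cdot 2^{A_k}\cdot 2^k$ (the last factor for the choice of window position), which has the same exponential rate.

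The main obstacle I expect is the recognisability/injectivity step: one has to argue carefully that the map from (bounded-length legal seed, sequence of binary choices) to legal words is finite-to-one with a \emph{uniformly bounded} fibre cardinality, rather than merely finite-to-one, since only a uniform bound keeps the overcount subexponential; this is where the constant-length, compatible-pair structure of $\rho^{}_{\mathrm{pd}}$ and $\rho^{\ts\ts\prime}_{\mathrm{pd}}$ (Proposition~4.6 of \cite{BG}) does the real work. A secondary technical point is handling boundary effects when a length-$n$ word straddles several level-$k$ inflation blocks; this is routine but must be done to make the two asymptotic rates coincide. Once these are in place, the computation $A_k=\tfrac23\cdot 2^k+O(1)$ from the Perron--Frobenius eigenvector of $M$ immediately gives $s=\tfrac23\log 2$.
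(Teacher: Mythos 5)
Your overall strategy --- counting legal words through the inflation tree, observing that only the letters $a$ contribute a binary choice, and feeding in the Perron--Frobenius frequency $\frac{2}{3}$ of $a$'s --- is exactly the strategy of the paper, and your lower bound is essentially the paper's. Two remarks on that half. First, the injectivity you worry about is immediate and needs no recognisability machinery: because the substitution has constant length $2$, any realisation of $\rho(w)$ determines $w$ \emph{and} the choice made at every $a$-cell by reading off the $2$-blocks ($aa\mapsto b$, while $ab$ or $ba\mapsto a$ with the choice recorded); inductively, the pedigree graph of exact inflation words of $a$ is a tree, so the fibres are singletons, not merely uniformly bounded. Second, the number of realisations of $\rho^k(a)$ is $2^{\sum_{j=0}^{k-1}A_j}$, not $2^{A_k}$: the binary choices live at the $a$'s of the intermediate levels $0,\dots,k-1$, not at those of $\rho^k(a)$ itself. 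Fortuitously $\sum_{j<k}A_j$ and $A_k$ differ by at most $1$, so your rate $\frac{2}{3}\log 2$ is unaffected, but the identification should be corrected.

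The genuine gap is in your upper bound. You cover a legal word $w$ of length $n$ by a full superword $\rho^k(v)$ with $2^k\asymp n$ and $\lvert v\rvert=O(1)$, and bound $N(n)$ by the number of realisations of that superword times the number of window positions. But $\rho^k(v)$ has length $\lvert v\rvert\cdot 2^k$, which exceeds $n$ by a constant factor $>1$ whenever $w$ straddles a supertile boundary (so $\lvert v\rvert\geqslant 2$), and the number of realisations of $\rho^k(v)$ is $2^{\frac{2}{3}\lvert v\rvert 2^k(1+o(1))}$. With $\lvert v\rvert=2$ and $2^k<2n$ this yields only $s\leqslant\frac{8}{3}\log 2$: the overcount is exponential in $n$, not a boundary correction, so this is the main content of the upper bound rather than a secondary technical point. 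To repair your argument you must count only those choices whose inflation cells actually meet the window of length $n$; summing $\frac{2}{3}\bigl(n/2^{k-j}+O(1)\bigr)$ over the levels $j=0,\dots,k-1$ gives $\frac{2}{3}n+O(\log n)$ relevant choices, which does produce the sharp rate. The paper sidesteps the issue with a doubling recursion: every $w\in\cW_{2n}$ is either a realisation of $\rho(u)$ for some $u\in\cW_n$, or a realisation of $\rho(u')$ for some $u'\in\cW_{n-1}$ flanked by one letter on each side, whence $\lvert\cW_{2n}\rvert<5\cdot 2^{m_n}\lvert\cW_n\rvert$ with $m_n$ the maximal number of $a$'s in a word of $\cW_n$; iterating and using $\liminf_n m_n/n\leqslant\frac{2}{3}$ gives $s\leqslant\frac{2}{3}\log 2$ with no oversized superwords to control.
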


\begin{proof}
  As long as $0<p<1$, the dictionary of legal words is always the
  same, and the hull contains elements with dense orbits under the
  $\ZZ$-action of the shift. Any element of such an orbit contains all
  finite legal words, wherefore the topological entropy $s$ equals the
  patch counting entropy of such an element; compare
  \cite{HR}. Consequently, if $\cW_{n}$ is the set of legal words of
  length $n$, one has $s = \lim_{n\to\infty} \frac{1}{n} \log \ts \lvert 
  \cW_{n} \rvert$, where $\lvert .\rvert$ denotes the cardinality of
  a set.  Note that the sequence $\bigl(\lvert \cW_{n} 
  \rvert\bigr)_{n\in\NN}$ is subadditive, wherefore the limit exists 
  by Fekete's lemma \cite{Fek}.

  Consider the pedigree graph of successive exact substitution words
  that originate from $a$ as its seed (or level $0$).  Due to the
  constant-length nature of the random period doubling substitution,
  this graph has the property that the words on any given level
  (defined by graph distance from $a$) are distinct. In other words,
  the graph is a \emph{tree}, with root $a$. This tree contains words
  of length $2^r$ for any $r\in\NN_{0}$, but only a subset of
  $\cW_{2^r}$ on level $r$, for any $r \in \NN_{0}$.

  It is easy to check inductively that all exact substitution words of
  length $2^{r}$ in this tree contain
  $\frac{1}{3} \bigl(2^{r+1} + (-1)^{r} \bigr)$ letters $a$. Moreover,
  since each $a$ (multiplicatively) gives rise to two distinct words
  on the next level, the total number $\#^{}_{r}$ of exact
  substitution words of length $2^{r}$, by induction, is given by
\[
    \#^{}_{r} \, = \, 2^{(2^{r+2} - (-1)^{r} - 3)/6},
\]
with
$\lim_{r\to\infty} 2^{-r} \log (\#^{}_{r}) = \frac{2}{3}\ts \log(2)$,
so that this clearly is a lower bound for $s$.

Now, consider a legal word of length $2n$, with $n\geqslant 2$
say. This word must either emerge as the substitution of a legal word
of length $n$, or of one of length $n-1$, then completed with a prefix
and a suffix of one letter each. If $m_{n}$ denotes the maximal number
of $a$'s in any element of $\cW_{n}$, we thus have the estimate
\[
   \lvert \cW_{2n} \rvert \, \leqslant \,
   2^{m_{n}} \lvert \cW_{n} \rvert + 4 \cdot
   2^{m_{n-1}} \lvert \cW_{n-1} \rvert \, < \,
   5 \cdot 2^{m_{n}} \lvert \cW_{n} \rvert \ts ,
\]
which implies
$s \leqslant \log(2)\ts \liminf_{n\to\infty} \frac{m_{n}}{n}$ by
standard arguments.  Since $\liminf_{n\to\infty} \frac{m_{n}}{n}$ is
bounded from above by $\frac{2}{3}$, which is the frequency of $a$'s
according to standard Perron--Frobenius theory with the substitution
matrix $M$, our lower bound for $s$ is also its upper bound, and the
claim on the entropy follows.
\end{proof}

\begin{remark}
  Let us note that the last part of the proof of
  Lemma~\ref{lem:pd-entropy} actually also shows that
  $\liminf_{n\to\infty} \frac{m_{n}}{n} = \frac{2}{3}$ and hence
\[
     \lim_{n\to\infty} \frac{m_{n}}{n} \, = \, \myfrac{2}{3} \ts .
\]
This follows from the observation that
$\limsup_{n\to\infty} \frac{m_{n}}{n} = \frac{2}{3} + \eps$ with
$\eps > 0$ would imply the existence of a sequence of words
$\bigl(w_{n_i}\bigr)_{i\in\NN}$ with the frequency of $a$'s converging
to $\frac{2}{3} + \eps$. But then,
$\bigl(\rho^{}_{\mathrm{pd}} (w_{n_i})\bigr)_{i\in\NN}$ would define
another sequence of words with the frequencies of $a$'s converging to
$\frac{2}{3} - \frac{\eps}{2} < \frac{2}{3}$, which is impossible.

An analogous argument shows that the minimal number of $a$'s in the
legal words of length $n$ asymptotically grows like $\frac{2}{3} \ts
n$ as well, and not slower, so that \emph{each} element of our stochastic
hull $\XX_{\rho}$ has well-defined frequencies $\frac{2}{3}$ and
$\frac{1}{3}$ for the letters $a$ and $b$, respectively.
\exend
\end{remark}

\subsection{Diffraction}
The diffraction measure is given by the obvious modification of
Eq.~\eqref{eq:expsum} to this case, where we need to consider distinct
weights for the two types of points.  We want to proceed by the same
method as before. Therefore, using the corresponding concatenation
rule, we obtain
\begin{equation}\label{eq:concat-2}
  \cX^{}_n(k) \, = \, 
  \begin{cases}
  \cX^{}_{n-1}(k) + \ee^{-2\pi\ii k \cdot 2^{n-1}}\cX^{}_{n-2}(k) + 
  \ee^{-2\pi\ii k \cdot 3\cdot 2^{n-2}}\cX_{n-2}'(k),
              & \text{with prob. } p, \\
  \cX^{}_{n-2}(k) + \ee^{-2\pi\ii k \cdot 2^{n-2}}\cX_{n-2}'(k) + 
  \ee^{-2\pi\ii k \cdot 2^{n-1}}\cX^{}_{n-1}(k), 
              & \text{with prob. }  q,
  \end{cases}
\end{equation} 
together with $\cX^{}_0(k)=u^{}_{a}$ and 
\[
  \cX^{}_1(k) \, = \, 
  \begin{cases}
  u^{}_{a} + u^{}_{b} \ee^{-2\pi\ii k}, & \text{with probability } p, \\
  u^{}_{b} + u^{}_{a} \ee^{-2\pi\ii k}, & \text{with probability } q.
  \end{cases}
\] 
Note that, in Eq.~\eqref{eq:concat-2}, $\cX^{}_{n-2}$ and
$\cX^{\prime}_{n-2}$ are \emph{independent} random variables with the
same distribution, as in our previous RNMS case.
   
As before, $(\cX_n(k))_{n\in\NN}$ is a subsequence of
$(X_m (k))_{m\in\NN}$, and we obtain
\[
    \EE \bigl( \widehat{\ts \gamma^{}_{\!\vL} \ts } \bigr)
   \, =  \lim_{n\to\infty} 
    \myfrac{1}{2^n}\, \big| \EE(\cX_n) \big|^2 
    +\lim_{n\to\infty} \myfrac{1}{2^n} \operatorname{Var}(\cX_n) 
    \,  =: \,  \widehat{\ts\gamma^{}_{1} \ts} + 
    \widehat{\ts\gamma^{}_{2} \ts}.
\]  
We have 
\[
   \widehat{\ts\gamma^{}_{1} \ts} \, = \lim_{n\to\infty} 
  \myfrac{1}{2^n}\, \big| \widehat{\EE(\cM_n)} \big|^2                  
  \, =  \lim_{n\to\infty} \myfrac{1}{2^n}\, 
   \cF \big[\EE(\cM_n)*\widetilde{\EE(\cM_n)}\big]  ,        
\]   
where $\cM_n$ is the measure with $\widehat{\cM_n}=\cX_n$. By
construction, $\EE(\cM)= \lim_{n\to\infty} \EE(\cM_n)$ is a weighted
Dirac comb on $\NN_0$, where the weight at $x\in\NN_0$ is given by
\[
   u^{}_{a} \, \PP ( \text{type at } x \text{ is } a ) +
   u^{}_{b} \, \PP ( \text{type at } x \text{ is } b ) \ts .
\] 
In analogy to our treatment in Section~\ref{sec:ran-fibo}, one now finds
\[
     \widehat{\ts \gamma^{}_{1} \ts} \, = \,
     \cF \big[\EE(\cM)\circledast \widetilde{\EE(\cM)}\big] ,
\]
where we tacitly assume that the volume-averaging for $\circledast$ is 
taken with the appropriate weights for one-sided sequences. 
The underlying reason is that the positive pure point measure
$\bigl| \EE (\cM)|^{}_{[0,2^n)} \! - \EE (\cM_n) \bigr|$ gets uniformly
small on $\ZZ \cap [0,2^n)$ as $n\to\infty$; see \cite{diss-T} for an
explicit derivation of this fact, and the Appendix for a general approach
that gives the result we need from a weaker assumption.

\subsection{Averages and weight function}
Next, we observe that we get
\begin{equation}\label{dirac-pd}
  \EE(\cM) \, = \, u^{}_{b} \ts \delta^{}_{\NN_0} + (u^{}_{a}-u^{}_{b})
   \sum_{x\in\NN_0} \! a_x\, \delta_x
\end{equation}
with $a_x = \PP ( \text{type at } x \text{ is } a )$. Thus, we can
restrict our attention to the case $u^{}_{a}=1$ and $u^{}_{b}=0$,
which reduces Eq.~\eqref{dirac-pd} to
$\EE(\cM) = \sum_{x\in\NN_0} a_x\, \delta_x$.  It is not difficult to
employ the random substitution to derive the following recursive
structure of the probabilities $a^{}_{x}$.

\begin{fact}\label{fact:a-prob}
  For any\/ $n\in\NN_{0}$, one has
\[
   a^{}_{2 n} \, = \, 1 - q \ts a^{}_{n}
   \quad \text{and} \quad
   a^{}_{2 n+1} \, = \, 1 - p \ts\ts a^{}_{n} \ts .
\]
In particular, this gives $a^{}_{0} = \frac{1}{1+ \ts q}$. \qed
\end{fact}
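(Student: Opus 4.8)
The strategy is to read off both recursions by conditioning on the letter that sits one level down under the random substitution. Recall that, after the reduction to $u^{}_a = 1$, $u^{}_b = 0$ made above, the coefficient $a^{}_x$ of $\EE (\cM)$ is the probability that the site $x \in \NN_0$ carries the letter $a$ in the one-sided infinite random word $w = w^{}_0 w^{}_1 w^{}_2 \cdots$ whose law is the weak limit of the laws of the exact inflation patches. The construction of this limit (compare \cite{PG,mo} and the analogous reasoning for the Fibonacci case in Section~\ref{sec:ran-fibo}) shows that $w$ is \emph{$\rho$-invariant in distribution}: one has $w \stackrel{\mathrm{d}}{=} \rho (v)$ (equality in distribution), where $v$ has the same law as $w$ and the independent substitution choices applied to the individual letters of $v$ are independent of $v$ itself. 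In particular, $\PP (v^{}_n = a) = a^{}_n$ for every $n$.

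Now fix $n \in \NN_0$ and condition on $v^{}_n$. Since $\rho$ has constant length two, the sites $2n$ and $2n+1$ of $w = \rho (v)$ are exactly the two letters of the freshly randomised image of $v^{}_n$. If $v^{}_n = a$, this image is $ab$ with probability $p$ and $ba$ with probability $q$, so site $2n$ carries $a$ precisely in the first case and site $2n+1$ precisely in the second; if $v^{}_n = b$, the image is $aa$ and both sites carry $a$. Taking expectations over $v^{}_n$, and using $p + q = 1$, we obtain
\[
   a^{}_{2n} \, = \, p \ts a^{}_n + (1 - a^{}_n) \, = \, 1 - q \ts a^{}_n
   \qquad \text{and} \qquad
   a^{}_{2n+1} \, = \, q \ts a^{}_n + (1 - a^{}_n) \, = \, 1 - p \ts a^{}_n ,
\]
which are the claimed identities. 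Setting $n = 0$ in the first relation gives $a^{}_0 = 1 - q \ts a^{}_0$, hence $a^{}_0 = 1/(1+q)$.

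The only point requiring genuine care is the $\rho$-invariance of the limiting single-site distribution, i.e.\ that the passage from the finite random patches $\cM^{}_n$ to an infinite word admits the clean self-similar description used above; this is part of the construction of $\nu^{}_{\!\mathsf{pf}}$ and of its filtration to the one-sided hull (see \cite{PG,mo,diss-T}). Should a fully self-contained derivation be preferred, one can instead argue at finite level: writing $a^{(n)}_x$ for the probability that site $x$ carries $a$ in the level-$n$ exact inflation patch (well defined once $2^n > x$), the same one-step conditioning on the level-$(n-1)$ patch gives $a^{(n)}_{2m} = 1 - q \ts a^{(n-1)}_m$ and $a^{(n)}_{2m+1} = 1 - p \ts a^{(n-1)}_m$, and letting $n \to \infty$ — the limits $a^{}_x = \lim_{n} a^{(n)}_x$ existing by the arguments already in place — yields Fact~\ref{fact:a-prob}. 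Everything else is the elementary computation displayed above.
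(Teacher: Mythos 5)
Your proof is correct, and the one-step conditioning on the letter one inflation level down is precisely the argument the paper leaves to the reader (it states the Fact with only the remark that ``it is not difficult to employ the random substitution'' to derive it). The elementary computation, the special value $a^{}_{0}=1/(1+q)$, and your fallback via the finite-level recursion $a^{(n)}_{2m}=1-q\ts a^{(n-1)}_{m}$, $a^{(n)}_{2m+1}=1-p\ts a^{(n-1)}_{m}$ followed by $n\to\infty$ are all in order.
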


Next, we aim at defining a function
$h \! : \, \ZZ_{2} \xrightarrow{\quad} [0,1]$ in analogy to our
previous approach.  We begin by setting $h (n) = a^{}_{n}$ for any
$n\in\NN_0$, where we canonically identify $\NN_0$ with its image
$\iota (\NN_0)$ in the $2$-adic integers. To extend $h$, we will use a
uniform continuity argument, for which we first need an intermediate
result.

\begin{lemma} \label{lem:u-cont}
For arbitrary\/ $m,j,k \in\NN_0$, one has
\[
\begin{split}
  h(m\ts 2^j) \, & = \,  h(0) + \bigl( h(m) - h(0)
   \bigr)\ts (-q)^j \quad \text{and} \\
  h(m\ts 2^j+k) \, & = \, h(k) + x_j \ts , \quad \text{with }
    \, |x_j| \, \leqslant \,  (\max\{p,q\})^j .
\end{split}
\]  
\end{lemma}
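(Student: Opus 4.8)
The plan is to prove both identities in Lemma~\ref{lem:u-cont} by induction on $j$, using the recursion of Fact~\ref{fact:a-prob} rewritten in terms of $h$. First I would record the base cases: for $j=0$ both statements are trivial (the first reads $h(m)=h(0)+(h(m)-h(0))$, and the second reads $h(m+k)=h(k)+x_0$ with $x_0 = h(m+k)-h(k)$, where $|x_0|\leqslant 1 = (\max\{p,q\})^0$ since $h$ takes values in $[0,1]$). For the inductive step I would observe that Fact~\ref{fact:a-prob} gives, for all $n\in\NN_0$,
\[
  h(2n) \, = \, 1 - q\ts h(n) \ts , \qquad h(2n+1) \, = \, 1 - p\ts h(n) \ts ,
\]
and in particular $h(0) = 1 - q\ts h(0)$, i.e.\ $h(0) = \frac{1}{1+q}$ and $1 - h(0) = q\ts h(0)$. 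The key algebraic fact I would extract is that \emph{both} maps $t \mapsto 1 - q\ts t$ and $t\mapsto 1 - p\ts t$ fix the point $h(0)$ up to the same affine structure: $1 - q\ts t - h(0) = 1 - q\ts t - (1 - q\ts h(0)) = -q\,(t - h(0))$, and likewise $1 - p\ts t - h(0) = (1-p) + p\ts h(0) - p\ts t - (1 - h(0)) \cdot 0 \dots$; more cleanly, using $h(0) = 1 - q\ts h(0)$ one checks $1 - p\ts t - h(0) = (h(0) - p\ts t) - q\ts h(0) = \dots$. The cleanest route is just: $1 - p\ts t = h(2\cdot 0 + 1)$ is \emph{not} centered at $h(0)$, so for the \emph{first} identity I should restrict the doubling operation $n \mapsto 2n$, which is exactly what $m\ts 2^j$ does.

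For the first identity, write $m\ts 2^{j+1} = 2\,(m\ts 2^j)$, so $h(m\ts 2^{j+1}) = 1 - q\ts h(m\ts 2^j)$. Subtracting $h(0) = 1 - q\ts h(0)$ gives
\[
  h(m\ts 2^{j+1}) - h(0) \, = \, -q\,\bigl( h(m\ts 2^j) - h(0) \bigr)
  \, = \, -q\,\bigl( h(m) - h(0)\bigr)\ts (-q)^j \, = \, \bigl( h(m) - h(0)\bigr)\ts (-q)^{j+1} ,
\]
by the induction hypothesis; this closes the induction for the first part. For the second identity, with $k\in\NN_0$ arbitrary I would split into the two cases according to the parity of $k$. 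If $k = 2k'$, then $m\ts 2^{j+1} + k = 2\,(m\ts 2^j + k')$, so $h(m\ts 2^{j+1}+k) = 1 - q\ts h(m\ts 2^j + k') = 1 - q\,(h(k') + x_j)$ by induction, where $|x_j| \leqslant (\max\{p,q\})^j$; on the other hand $h(k) = h(2k') = 1 - q\ts h(k')$, so $h(m\ts 2^{j+1}+k) - h(k) = -q\ts x_j =: x_{j+1}$, and $|x_{j+1}| = q\,|x_j| \leqslant q\,(\max\{p,q\})^j \leqslant (\max\{p,q\})^{j+1}$. If $k = 2k'+1$, then $m\ts 2^{j+1}+k = 2\,(m\ts 2^j + k') + 1$, and the same computation with $1 - p\ts(\cdot)$ in place of $1 - q\ts(\cdot)$ gives $h(m\ts 2^{j+1}+k) - h(k) = -p\ts x_j =: x_{j+1}$ with $|x_{j+1}| \leqslant p\,(\max\{p,q\})^j \leqslant (\max\{p,q\})^{j+1}$. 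This completes the induction.

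The only genuine subtlety — and what I expect to be the main obstacle — is keeping the roles of the two affine maps straight: the doubling map $n\mapsto 2n$ carries the self-similarity that fixes $h(0)$ and contracts by $-q$, and the point of writing arguments in the form $m\ts 2^j$ (resp.\ $m\ts 2^j + k$) is precisely that one can peel off a factor of $2$ at each step, reducing $j$ by one while the low-order part $k$ (resp.\ $k'$) stays bounded and the error contracts by a factor $q$ or $p$. One must be careful that in the second identity the \emph{reference value} $h(k)$ transforms consistently with $h(m\ts 2^j + k)$ under the same doubling-or-doubling-plus-one operation, which is why the case split on the parity of $k$ (not of $m$) is the right one; once this bookkeeping is set up, every step is an elementary use of Fact~\ref{fact:a-prob} and the identity $h(0) = \frac{1}{1+q}$, and the estimate $\max\{q\cdot(\max\{p,q\})^j,\, p\cdot(\max\{p,q\})^j\} \leqslant (\max\{p,q\})^{j+1}$ is immediate.
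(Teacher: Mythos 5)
Your proof is correct and follows essentially the same route as the paper: both identities come from the recursion $a_{2n}=1-q\ts a_n$, $a_{2n+1}=1-p\ts a_n$ by peeling off factors of $2$, with the error contracting by $p$ or $q$ at each step. The only (cosmetic) difference is that the paper organises the second identity as an induction on $k$ (reducing $j$ by one at each stage), whereas you induct on $j$ with a parity split on $k$ --- your bookkeeping is, if anything, slightly cleaner.
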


\begin{proof} 
The first identity follows from a simple inductive calculation
(in $j$) with the recursion from Fact~\ref{fact:a-prob} for
$h (m\ts 2^j) = a^{}_{m\ts 2^j}$.

The second property can be shown by induction in $k$.  For $k=0$, and
any $m,j\in\NN_0$, the claim follows from the first identity, because
$\lvert h(m) - h(0) \rvert$ is bounded by $1$. Now, let the assertion
be true (for arbitrary $m,j\in\NN_0$) for $1,\ldots,k$, where we first
look at the case that $k=2r$ is even. Then, with $r\leqslant k$, one
gets
\[
\begin{split}
   h (m\ts 2^j+k+1)\,
   & = \, h \bigl( 2 (m\ts 2^{j-1}+r)+1 \bigr) 
    \, = \, 1 - p\, h (m\ts 2^{j-1} + r )  \\
   & = \, 1-p \ts \bigl( h (r)+x_{j-1} \bigr ) 
    \, = \, 1 - p \, h (r) - p\, x_{j-1} 
    \, = \, h (k+1) - x_{j} \ts ,
\end{split}
\]
where
$\lvert x_j \rvert \leqslant \max\{p,q\} \ts \lvert x_{j-1} \rvert$.
The case $k$ odd can be handled analogously.
\end{proof}

As before, in order to apply Theorem~\ref{theo}, the next aim is to
extend $h$ to a continuous function on all of $\ZZ_2$ such that
$a_n = h (n^{\star}) = h (\iota(n))$. We will see that it is enough to
consider the dense subset $\NN_0\subset\ZZ_2$, where the denseness of
$\NN_0$ is a consequence of that of $\ZZ$ together with the identity
$1+2+4+\ldots = -1$ in $\ZZ_2$. For $n\in\NN_0$, we have
$h (n^{\star}) = h (n)$ because the $\star$-map is the identity on
$\ZZ$, due to our canonical identification of $\ZZ$ with
$\iota (\ZZ)$.

\begin{lemma}\label{lem:h-cont}
  The function\/ $h$ can be extended to a uniformly continuous
  function on\/ $\ZZ_2$.
\end{lemma}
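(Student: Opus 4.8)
The plan is to exploit the $2$-adic metric structure of $\ZZ_2$ together with the two identities from Lemma~\ref{lem:u-cont}, which already encode quantitative control on how $h$ oscillates within a fixed residue class. Recall that, for $n,n'\in\NN_0$, being $2$-adically close means agreeing modulo a high power of $2$: concretely, $d(n,n')\leqslant 2^{-j}$ is equivalent to $n\equiv n'\pmod{2^{j}}$. So the task reduces to showing that $h$ is uniformly continuous on the dense subset $\NN_0\subset\ZZ_2$, and then invoking the standard extension theorem for uniformly continuous maps on a dense subset of a complete metric space (here $\ZZ_2$, which is even compact).

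First, I would fix $j\in\NN$ and, given $n\in\NN_0$, perform Euclidean division $n = m\ts 2^{j} + k$ with $0\leqslant k < 2^{j}$, so that $k$ is exactly the residue of $n$ modulo $2^{j}$. The second identity in Lemma~\ref{lem:u-cont} then gives $h(n) = h(k) + x_{j}$ with $\lvert x_{j}\rvert \leqslant r^{j}$, where $r := \max\{p,q\}$. Applying this to any two $n,n'\in\NN_0$ sharing the \emph{same} residue $k$ modulo $2^{j}$ yields
\[
  \lvert h(n) - h(n') \rvert \, = \, \lvert x_{j} - x'_{j} \rvert
  \, \leqslant \, 2\ts r^{j} .
\]
Since $0<p<1$ here, we have $r<1$, so $2\ts r^{j}\to 0$ as $j\to\infty$; this is precisely a modulus-of-continuity statement for $h|^{}_{\NN_0}$ with respect to the $2$-adic metric, and the first identity of Lemma~\ref{lem:u-cont} is the special case $k=0$.

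Second, I would record that $\NN_0$ is dense in $\ZZ_2$ — as already noted in the text, this follows from the denseness of $\ZZ$ together with $1+2+4+\cdots = -1$ in $\ZZ_2$ — and that $\ZZ_2$ is complete. A uniformly continuous map from a dense subset of a metric space into a complete metric space extends uniquely to a uniformly continuous map on the whole space; applied to $h\colon\NN_0\to[0,1]$ (with $[0,1]$ closed in $\RR$, so that the extension still takes values in $[0,1]$), this produces the desired uniformly continuous function $h\colon\ZZ_2\to[0,1]$. Since $h(n^{\star})=h(\iota(n))=h(n)$ for $n\in\NN_0$ under our identification of $\ZZ$ with $\iota(\ZZ)$, the extended function is automatically compatible with the $\star$-map, as needed for the later application of Theorem~\ref{theo}.

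I do not expect a serious obstacle: the genuine analytic content sits in Lemma~\ref{lem:u-cont}, and what remains is the bookkeeping of translating ``agree mod $2^{j}$'' into the decomposition $n=m\ts 2^{j}+k$ and then quoting the uniform-continuity extension theorem. The one point requiring a word of care is the standing assumption $0<p<1$, which guarantees $r=\max\{p,q\}<1$ and hence that $r^{j}$ is a genuine modulus of continuity; in the deterministic limits this estimate degenerates, but there $h$ is simply the indicator of a clopen window in $\ZZ_2$ and its continuity is immediate.
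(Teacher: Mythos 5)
Your proposal is correct and follows essentially the same route as the paper: both use the second identity of Lemma~\ref{lem:u-cont} to show that $2$-adic closeness ($n\equiv n'\pmod{2^{j}}$) forces $\lvert h(n)-h(n')\rvert$ to be of order $(\max\{p,q\})^{j}$, and then invoke the standard extension theorem for uniformly continuous functions on the dense subset $\NN_0\subset\ZZ_2$. The only cosmetic difference is that you pass through the common residue $k$ and a triangle inequality (yielding the constant $2$), whereas the paper applies the identity directly to $y=x+m\ts 2^{j}$; this changes nothing of substance.
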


\begin{proof}
  Let $\eps>0$ be fixed, choose $j\in\NN$ such that
  $(\max\{p,q\})^j<\eps$ and set $\delta:=2^{-j}$. Then, we have
  $|x-y|^{}_2\leqslant \delta$ with $x,y\in\NN_0$ if and only if $x-y$
  is divisible by $2^j$, so $y=x + m \ts 2^j$ for some $m\in\NN_0$.
  Hence, by Lemma~\ref{lem:u-cont}, we obtain
\[
   |f(x)-f(y)| \, = \, |f(x)-f(x+m\ts 2^j)| \, = \,
   |x_j | \, \leqslant \, (\max\{p,q\})^j \, < \, \eps \ts .
\]   
As $\NN_0 \subset \ZZ_2$ is dense and $h$ is uniformly continuous on
$\NN_0$, we know that $h$ can be extended to a uniformly continuous
function on $ \ZZ_2 $; see \cite[(3.15.6)]{Dieu}. By slight abuse of 
notation, this extension is still called $h$.
\end{proof}

\begin{remark}
  While the argument we used to prove the continuity of $h$ in this
  section looks different from our previous argument, the basic idea
  is the same.  Indeed, consider a CPS $(G,H,\cL)$,
  and denote by $L = \pi(\cL) \subset G$ the projection of the
  lattice.  Then, the image $L^\star$ of $L$ under the star mapping 
  is dense in $H$.

  In the previous sections, we showed that the function $h$ is
  uniformly continuous by constructing it as an infinite convolution
  and showing the continuity of the convolution, which was made easy
  by the simple structure of the group $H=\RR$. This approach would
  also work here, but the computations are more involved due to the
  $2$-adic structure of $H$. What we did instead was to pull back $h$
  through the star mapping to the function $g(x)=h(x^\star)$, and show
  that $g$ is uniformly continuous in the induced topology of the
  embedding $\star \! : \, L \hookrightarrow H$. Then, the $2$-adic
  structure of $H$ makes the induced topology easy to work with, while
  the induced topology would be harder to tackle in the CPS of the
  previous sections.  \exend
\end{remark}

\subsection{Diffraction: Pure point part}
With this preparation, we define the two-sided comb
$\omega^{}_{h} = \sum_{x\in\ZZ} \, h(x) \, \delta_x$, and we then have
$\widehat{\ts \gamma^{}_{1}\ts} = \left(\omega^{}_{h} \nts\nts
  \circledast \widetilde{\omega^{}_{h}}\right)\!\widehat{\phantom{I}}
= \widehat{\omega^{}_{h * \widetilde{h}}}$
in complete analogy to our previous cases.  This is once again a pure
point measure, by another application of Theorem~\ref{theo}.\smallskip

Let us assume for a moment that $\widehat{\ts\gamma^{}_2 \ts}$ is a
continuous measure (which we will prove below). In this case, we can
apply a variant of \cite[Thm.~3.2]{Hof} and \cite[Thm.~5 and
Cor.~5]{Daniel} to obtain
\begin{equation}  \label{intensities}
    \EE \bigl(\widehat{\ts\gamma^{}_{\!\vL}\ts}\bigr) (\{ k \}) \, = 
    \lim_{n\to\infty}\myfrac{1}{4^n}\big| \EE(\cX_n(k))\big|^2.
\end{equation}  
From Eq.~\eqref{eq:concat-2}, setting
$E_n:=\EE(\cX_n)$, we infer that
\begin{equation} \label{E-rec}
    E_n \, = \, (p+q\ee^{-2^n\pi\ii k})\ts E_{n-1} + 
     (q+ q\ee^{-2^{n-1}\pi\ii k} + \, p \ee^{-2^n\pi\ii k}
          + \, p \ee^{-3\cdot 2^{n-1}\pi\ii k})\ts E_{n-2}
\end{equation}   
together with $E^{}_{0}(k)=1$ and $E^{}_{1}(k)=p+q\ee^{-2\pi\ii k}$. In
particular, $E^{}_{1} (\ell ) = 1$ for all $\ell \in\ZZ$.  Recall
that every $k\in L^{\circledast} = \ZZ\bigl[ \frac{1}{2}\bigr]$ can be
written in the form $k=\frac{m}{2^r}$. If $n\geqslant r+2$, one has
\[
    E_n \, = \, E_{n-1} + 2 \ts E_{n-2} \ts .
\]  
With the initial conditions $E_r$ and $E_{r+1}$, this recurrence
relation has the unique solution
\[
   E_n \, = \, \myfrac{1}{3} \left( 2^{n-r} \ts
  (E_r+E_{r+1}) + (-1)^{n-r}\ts (2E_r-E_{r+1}) \right).
\]  
Combining this with Eq.~\eqref{intensities}, we obtain
\[
   \EE \bigl(\widehat{\ts\gamma^{}_{\!\vL} \ts}\bigr)
   \left(\left\{\tfrac{m}{2^r}\right\}\right) \, = \,
    \tfrac{1}{9\cdot 4^{r}}
        \left| E_r \left(\tfrac{m}{2^r}\right)
    +E_{r+1}\left(\tfrac{m}{2^r}\right)\right|^2.
\]
Applying Eq.~\eqref{E-rec}, it is not difficult to see that
$E_{r+1}\left(\frac{m}{2^r}\right) =2 \ts
E_r\left(\frac{m}{2^r}\right)$. Hence, we have
\[
   \EE \bigl(\widehat{\ts\gamma^{}_{\!\vL} \ts}\bigr)
   \left(\left\{\tfrac{m}{2^r}\right\}\right) \, = \,
    \tfrac{1}{9\cdot 4^{r-1}}
        \left| E_r \left(\tfrac{m}{2^r}\right)\right|^2.
\]
Moreover, if we apply Eq.~\eqref{E-rec} inductively, we get
\[
\begin{split}
  E_r\left(\tfrac{m}{2^r}\right) \,
     &= \, \left(E_{r-1}\left(\tfrac{m}{2^r}\right)-
       \big(1+\ee^{-2^{r-1}\pi\ii\frac{m}{2^r}}\! \big)E_{r-2}
      \left(\tfrac{m}{2^r}\right)\right) 
          \ts  \left(-q-p\ee^{-2^r\pi\ii \frac{m}{2^r}}\right) \\[2mm]
     &= \, \left( E_1\left(\tfrac{m}{2^r}\right)-
     \big(1+\ee^{-2\pi\ii \frac{m}{2^r}}\big)
       E_0\left(\tfrac{m}{2^r}\right) \right)
       \ts \prod_{\ell=2}^r \left( -q-p
         \ee^{-2^\ell \pi\ii \frac{m}{2^r}}\right)\\[-1mm]
     &= \, \prod_{\ell =1}^r \left( -q-p
        \ee^{-2^\ell \pi\ii \frac{m}{2^r}}\right). 
\end{split}
\]
Finally, we arrive at
\begin{equation}\label{eq:ran-pd-int}
   \EE \bigl(\widehat{\ts\gamma^{}_{\!\vL} \ts}\bigr)
   \left(\left\{\tfrac{m}{2^r}\right\}\right)  
   \, = \,  \frac{1}{9\cdot 4^{r-1}}
        \prod_{\ell=1}^r \left| q + p
         \ee^{-2^\ell \pi\ii \frac{m}{2^r}} \right|^2 
   \, = \, \widehat{\gamma^{}_{\! \vL,\text{det}}}  
       \left(\left\{\tfrac{m}{2^r}\right\}\right)  
       \ts  \prod_{\ell=1}^r\left|  q + p
          \ee^{-2^\ell \pi\ii \frac{m}{2^r}} 
        \right|^2. 
\end{equation}

\begin{remark}
  As in the case of the random Fibonacci substitution, we find that
  the Bragg peak intensities in the stochastic situation are given by
  the deterministic ones multiplied by some function that depends on
  the probabilities $p$ and $q$. For example, one obtains
\[
   \EE \bigl(\widehat{\ts\gamma^{}_{\! \vL}\ts}\bigr)
   \left(\bigl\{\tfrac{1}{2}\bigr\}\right)
   \, = \, (p-q)^2 \, \widehat{\gamma^{}_{\! \vL,\text{det}}}
   \left( \bigl\{\tfrac{1}{2}\bigr\}\right) ,
\]
while $ \widehat{\ts\gamma^{}_{\! \vL}\ts} (\{ 0 \}) =
\widehat{\gamma^{}_{\! \vL,\text{det}}} (\{ 0 \}) = \frac{4}{9}$,
independently of $p$ and for every $\vL \in \YY_{\!\mathrm{pd}}$.
\exend
\end{remark}

\subsection{Diffraction: Continuous part}
Let us now focus on $\widehat{\ts\gamma^{}_{2}\ts }$. Following the
same idea as in \cite[Sec.~6.2.3]{mo}, we obtain a recurrence relation
for $V_n:=\operatorname{Var}(\cX_n)$,
\[
   V_n \, = \, V_{n-1} + 2V_{n-1} + 
     2\ts p\ts q \ts\ts \psi_n, \quad \text{for } n\geqslant 2 \ts ,
\]
with $V_0\equiv0$ and $V_1(k)=2pq\big(1-\cos(2\pi k)\big)$, where the
functions $\psi_n$ are defined by
\[
  \psi_n(k) \, := \, \myfrac{1}{2} \ts \bigl| (1-\ee^{-2\pi\ii
  2^{n-1} k })E_{n-1}-(1+\ee^{-2\pi\ii 2^{n-2} k } -\ee^{-2\pi\ii
  2^{n-1} k }-\ee^{2\pi\ii 3\cdot 2^{n-2} k })E_{n-2} \bigr|^2.
\]
Via induction, the functions $\psi_n$ can be expressed more explicitly
as follows.

\begin{lemma}\label{lem:psi}
We have
\[
   \psi_n(k) \, = \, \big( 1-\cos(2^n\pi k)\big) 
   \prod_{\ell=1}^{n-1} \left| \ts q+p\ee^{-2^\ell \pi\ii k}\right|^2.
\]
In particular, we obtain\/ $\psi_n(k)\leqslant 2$ for all\/
$n\geqslant 2$.  \qed
\end{lemma}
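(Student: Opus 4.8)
The plan is to reduce the identity to a short telescoping computation for the expectations $E_m = \EE(\cX_m)$ governed by the recursion \eqref{E-rec}. The key structural observation is that both the polynomial weight appearing inside $\psi_n$ and the coefficients of \eqref{E-rec} carry the common factor $1 + \ee^{-2^{n-1}\pi\ii k}$. Concretely, writing $w = \ee^{-2^{n-1}\pi\ii k}$, so that $w^2 = \ee^{-2^n\pi\ii k}$ and $w^3 = \ee^{-3\cdot 2^{n-1}\pi\ii k}$, the coefficient of $E_{n-1}$ inside $\psi_n$ is $1 - w^2$, while the coefficient of $E_{n-2}$ is $1 + w - w^2 - w^3 = (1+w)(1-w^2)$. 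Hence I would rewrite
\[
   2\ts\psi_n(k) \, = \, \bigl| (1-w^2)\ts E_{n-1} - (1+w)(1-w^2)\ts E_{n-2}\bigr|^2
   \, = \, |1-w^2|^2 \, \bigl| D_{n-1}\bigr|^2 ,
\]
where $D_m := E_m - \bigl(1 + \ee^{-2^m\pi\ii k}\bigr) E_{m-1}$ for $m \geqslant 1$.

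The next step is to evaluate $D_m$ in closed form. The same kind of elementary factoring turns \eqref{E-rec} into $E_m = (p + q\ee^{-2^m\pi\ii k})\ts E_{m-1} + (1 + \ee^{-2^{m-1}\pi\ii k})(q + p\ee^{-2^m\pi\ii k})\ts E_{m-2}$ for $m \geqslant 2$; subtracting $(1 + \ee^{-2^m\pi\ii k})\ts E_{m-1}$ from both sides and using $p-1 = -q$ and $q-1 = -p$ collapses the $E_{m-1}$-coefficient to $-(q + p\ee^{-2^m\pi\ii k})$, which gives
\[
   D_m \, = \, -\bigl(q + p\ee^{-2^m\pi\ii k}\bigr)\,
   \Bigl( E_{m-1} - \bigl(1 + \ee^{-2^{m-1}\pi\ii k}\bigr) E_{m-2}\Bigr)
   \, = \, -\bigl(q + p\ee^{-2^m\pi\ii k}\bigr)\, D_{m-1} .
\]
Since $D_1 = E_1 - (1+\ee^{-2\pi\ii k}) E_0 = -(q + p\ee^{-2\pi\ii k})$ from the initial data $E_0 = 1$, $E_1 = p + q\ee^{-2\pi\ii k}$, an induction on $m$ yields $D_m = (-1)^m \prod_{\ell=1}^{m} \bigl(q + p\ee^{-2^\ell\pi\ii k}\bigr)$.

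Plugging $D_{n-1}$ into the first display and using $|1-w^2|^2 = |1 - \ee^{-2^n\pi\ii k}|^2 = 2\bigl(1 - \cos(2^n\pi k)\bigr)$ then gives
\[
   \psi_n(k) \, = \, \bigl(1 - \cos(2^n\pi k)\bigr)
   \prod_{\ell=1}^{n-1} \bigl| q + p\ee^{-2^\ell\pi\ii k}\bigr|^2 ,
\]
which is the asserted identity, valid for all $n \geqslant 2$. For the bound one simply notes that $|q + p\ee^{-2^\ell\pi\ii k}| \leqslant q + p = 1$ by the triangle inequality (recall $p, q \geqslant 0$), so the product is at most $1$, while $1 - \cos(2^n\pi k) \leqslant 2$; hence $\psi_n \leqslant 2$.

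I do not expect a genuine obstacle here: the proof is pure algebra. The only thing one has to spot is the auxiliary sequence $D_m$ and the two parallel factorizations — essentially that the $E_{n-2}$-term in both $\psi_n$ and \eqref{E-rec} contains $1 + \ee^{-2^{n-1}\pi\ii k}$ as a factor — after which the telescoping is automatic. The mildly error-prone part is keeping the exponents $2^{n-2}$, $2^{n-1}$, $2^n$ and $3\cdot 2^{n-2}$ straight while carrying out the factoring.
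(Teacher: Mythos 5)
Your proof is correct and is essentially the telescoping induction the paper intends (the proof of the lemma is omitted there), resting on the same factorisation $E_m - (1+\ee^{-2^m\pi\ii k})E_{m-1} = -(q+p\ee^{-2^m\pi\ii k})\bigl(E_{m-1}-(1+\ee^{-2^{m-1}\pi\ii k})E_{m-2}\bigr)$ that the paper applies explicitly in its computation of $E_r\bigl(\tfrac{m}{2^r}\bigr)$ just before Eq.~\eqref{eq:ran-pd-int}. Note that you have tacitly corrected a sign typo in the displayed definition of $\psi_n$, whose last exponential should read $\ee^{-2\pi\ii\ts 3\cdot 2^{n-2}k}$ (as dictated by Eq.~\eqref{eq:concat-2}); with that reading every step, including the bound $\psi_n\leqslant 2$, checks out.
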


This leads to the following observation.

\begin{prop} \label{prop:dens-ran-pd}
  The measure\/ $\widehat{\ts\gamma^{}_{2}\ts}$ is absolutely continuous
  with respect to\/ $\lm$. Its Radon--Nikodym density is given by the
  continuous function
\[
  \phi^{}_{p} (k) 
     \, = \, \myfrac{1}{3} V_1 + \frac{4p\ts q}{3} 
          \sum_{j=2}^{\infty} 2^{-j} \psi_j(k)     \\
     \, = \, \frac{4p\ts q}{3}\sum_{n=1}^{\infty} 
      \frac{1-\cos(2^n\pi k)}{2^n} 
            \prod_{j=1}^{n-1} \left|\ts q+p\ee^{-2^j\pi\ii k} \right|^2.
\]
\end{prop}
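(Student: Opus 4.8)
The plan is to solve the recursion for $V_n=\operatorname{Var}(\cX_n)$ in closed form, divide by $2^n$, and pass to the limit; the uniform bound $\psi_n\leqslant 2$ from Lemma~\ref{lem:psi} will do all the analytic work. First, I would read
\[
   V_n \,=\, V_{n-1}+2V_{n-2}+2pq\,\psi_n\ts,\qquad n\geqslant 2\ts,
\]
as an inhomogeneous second-order linear recursion with forcing term $s_n:=2pq\,\psi_n$. Its homogeneous part has characteristic polynomial $x^2-x-2=(x-2)(x+1)$, so the impulse response determined by $G_0=0$, $G_1=1$ and $G_m=G_{m-1}+2G_{m-2}$ is $G_m=\frac13\bigl(2^m-(-1)^m\bigr)$. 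Duhamel's principle, together with $V_0=0$, then gives the closed form
\[
   V_n \,=\, \frac{V_1}{3}\bigl(2^n-(-1)^n\bigr)
   \,+\,\sum_{j=2}^{n}\frac{2^{n-j+1}-(-1)^{n-j+1}}{3}\,s_j\ts,
\]
which is checked directly by induction on $n$ and also reproduces $V_0$ and $V_1$ correctly.

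Dividing by $2^n$, I would split the result as
\[
   \frac{V_n}{2^n}\,=\,\frac{V_1}{3}-\frac{(-1)^n V_1}{3\cdot 2^n}
   +\frac{4pq}{3}\sum_{j=2}^{n}\frac{\psi_j}{2^{j}}
   -\frac{1}{3\cdot 2^n}\sum_{j=2}^{n}(-1)^{n-j+1}s_j\ts.
\]
The second term tends to $0$, and the last term is bounded in modulus by $\frac{1}{3\cdot 2^n}\sum_{j=2}^{n}\lvert s_j\rvert\leqslant\frac{4pq(n-1)}{3\cdot 2^n}$, hence also tends to $0$; both bounds are uniform in $k\in\RR$ because $\psi_j\leqslant 2$ for every $j\geqslant 2$. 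For the same reason the series $\sum_{j\geqslant 2}2^{-j}\psi_j(k)$ converges absolutely and uniformly in $k$, being dominated by $2\sum_{j\geqslant 2}2^{-j}=1$. Hence $2^{-n}V_n\to\phi^{}_p$ uniformly on $\RR$, where $\phi^{}_p:=\frac13 V_1+\frac{4pq}{3}\sum_{j\geqslant 2}2^{-j}\psi_j$. Since each $\psi_j$ is continuous (a finite product of continuous factors times $1-\cos(2^j\pi k)$), $\phi^{}_p$ is a uniform limit of continuous functions, hence continuous; it is moreover $1$-periodic and thus bounded.

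To conclude, I would identify $\phi^{}_p$ with the density of $\widehat{\gamma^{}_2}$ and reconcile the two displayed formulas. By the construction recalled before the statement --- passage to exact inflation patches, the autocorrelation approximation estimates (see the Appendix and \cite{diss-T}) and the ergodic theorem --- the measure $\widehat{\gamma^{}_2}$ is the vague limit of the measures $2^{-n}V_n\,\lm$. Since $2^{-n}V_n\to\phi^{}_p$ uniformly on $\RR$, one has $\int_{\RR}\varphi\,(2^{-n}V_n)\dd k\to\int_{\RR}\varphi\,\phi^{}_p\dd k$ for every $\varphi\in C_{\mathsf{c}}(\RR)$, so $\widehat{\gamma^{}_2}=\phi^{}_p\,\lm$ is absolutely continuous with continuous density $\phi^{}_p$. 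Finally, inserting the explicit form of $\psi_n$ from Lemma~\ref{lem:psi} turns the $n$-th term of $\frac{4pq}{3}\sum_{n\geqslant 2}2^{-n}\psi_n$ into $\frac{4pq}{3}\cdot\frac{1-\cos(2^n\pi k)}{2^n}\prod_{j=1}^{n-1}\lvert q+p\ee^{-2^j\pi\ii k}\rvert^2$, while $\frac13 V_1=\frac{4pq}{3}\cdot\frac{1-\cos(2\pi k)}{2}$ is exactly the $n=1$ term of the same series (with the empty product read as $1$); this yields the second expression for $\phi^{}_p$.

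The one genuinely delicate point is the identification of $\widehat{\gamma^{}_2}$ as the vague limit of $2^{-n}V_n\,\lm$ in a topology that lets one read off Radon--Nikodym densities; this is exactly where the uniform --- rather than merely pointwise --- control of $2^{-n}V_n$ is needed, together with the patch-approximation results for the autocorrelation. Everything else is routine bookkeeping for a linear recursion.
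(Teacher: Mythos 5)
Your proposal is correct and follows essentially the same route as the paper: the paper likewise solves the recursion in closed form with the coefficients $\alpha_m=\frac{1}{3}\bigl(2^m-(-1)^m\bigr)$ (your impulse response $G_m$), divides by $2^n$, and invokes the uniform bound $\psi_n\leqslant 2$ from Lemma~\ref{lem:psi} to get uniform convergence of $2^{-n}V_n$ to $\phi^{}_{p}$, with the second displayed formula then following from the explicit form of $\psi_n$. You correctly read the recursion as $V_n=V_{n-1}+2V_{n-2}+2pq\,\psi_n$ (the paper's displayed $2V_{n-1}$ is a typo), and you merely spell out in more detail the error-term estimates and the identification of the uniform limit as the Radon--Nikodym density, which the paper leaves implicit.
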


\begin{proof}
  Define $\alpha_n=\frac{1}{3}\, (2^n-(-1)^n)$. One can easily show
  via induction that
\[
    V_n \, = \, \alpha_n V_1 + 2\ts p \ts q 
      \sum_{j=2}^n \alpha_{n+1-j} \, \psi_j,
\]
which implies
\[
  \frac{V_n}{2^n} \, = \, \myfrac{1}{3}\, \frac{2^n-(-1)^n}{2^n}\,
   V_1 +  2\ts p\ts q \sum_{j=2}^n \myfrac{1}{3}\, 
  \frac{2^{n+1-j}-(-1)^{n+1-j}}{2^n}\, \psi_j.
\]
Now, it is not difficult to see that $\frac{V_n(k)}{2^n}$ converges
uniformly to $\phi^{}_{p}(k)$. The second equality then follows from
Lemma~\ref{lem:psi}.
\end{proof}

If we collect our findings, we obtain the following result.

\begin{theorem}\label{theo:d} 
  Fix some\/ $\cT \in \YY_{\!\rho}$ and let\/
  $\vL =\vL_a \ts \dot{\cup}\ts \vL_b$ be the corresponding set of
  left endpoints of the tiles in\/ $\cT$.  Then, almost surely with
  respect to the patch frequency measure\/ $\nu^{}_{\! \mathsf{pf}}$
  of the random period doubling substitution, the corresponding
  diffraction measure reads
\[
    \widehat{\gamma} \, = \,
    \widehat{\gamma}^{}_{\mathsf{pp}} +  
    \widehat{\gamma}^{}_{\mathsf{ac}} \, =
    \sum_{k\in\ZZ[1/2]} \!
    I_{p}(k) \, \delta_k \ts\ + \; \phi^{}_{p}\ts \lm \ts ,
\] 
where\/ $I_{p} (k)$ is given by Eq.~\eqref{eq:ran-pd-int} and\/
$\phi^{}_{p}$ is the Radon--Nikodym density of\/
$\widehat{\gamma}^{}_{\mathsf{ac}}$, as stated in
Proposition~\textnormal{\ref{prop:dens-ran-pd}}. \qed
\end{theorem}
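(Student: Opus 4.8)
The plan is to collect the results established throughout this section into a single statement. First, since $\nu^{}_{\!\mathsf{pf}}$ is ergodic, the ergodic theorem shows, exactly as in Section~\ref{sec:ran-fibo}, that the diffraction measure $\widehat{\ts\gamma^{}_{\!\vL}\ts}$ exists for $\nu^{}_{\!\mathsf{pf}}$-almost every $\vL$, is $\nu^{}_{\!\mathsf{pf}}$-almost surely independent of $\vL$, and equals the $\nu^{}_{\!\mathsf{pf}}$-average, which we denote by $\widehat{\gamma}$. The splitting $\widehat{\gamma} = \widehat{\ts\gamma^{}_1\ts} + \widehat{\ts\gamma^{}_2\ts}$ into the contribution of $\EE(\cX_n)$ and that of $\operatorname{Var}(\cX_n)$ is then legitimate, the passage from $\lim_{n\to\infty}\EE(\cM_n)$ to $\EE(\cM)$ (and its analogue for the second moments) being justified because the positive pure point measure $\bigl| \EE(\cM)|^{}_{[0,2^n)} - \EE(\cM_n)\bigr|$ is uniformly small on $\ZZ\cap[0,2^n)$ as $n\to\infty$; this uniform smallness follows from Lemma~\ref{lem:u-cont} together with the approximation results of \cite{diss-T} and the Appendix. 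For arbitrary weights, one first reduces to the case $u^{}_a = 1$, $u^{}_b = 0$ by means of Eq.~\eqref{dirac-pd}.

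Next, I would invoke Proposition~\ref{prop:dens-ran-pd}: the measure $\widehat{\ts\gamma^{}_2\ts}$ is absolutely continuous with respect to $\lm$, with the continuous Radon--Nikodym density $\phi^{}_{p}$ stated there, so that $\widehat{\ts\gamma^{}_2\ts}$ carries no atoms. This is precisely the hypothesis needed for the variant of \cite[Thm.~3.2]{Hof} and \cite[Thm.~5 and Cor.~5]{Daniel} invoked before Eq.~\eqref{intensities}, which yields $\widehat{\gamma}(\{k\}) = \lim_{n\to\infty} \frac{1}{4^n}\, |\EE(\cX_n(k))|^2$ for every $k\in\RR$. Iterating the recursion \eqref{E-rec} for $E_n = \EE(\cX_n)$ with the stated initial data then produces, for $k = m/2^r \in \ZZ[1/2]$, the closed form $E_r(m/2^r) = \prod_{\ell=1}^{r}\bigl(-q - p\,\ee^{-2^\ell \pi\ii\ts m/2^r}\bigr)$, hence the Bragg intensities in Eq.~\eqref{eq:ran-pd-int}; for $k\notin\ZZ[1/2]$ the same recursion gives a vanishing limit, so the pure point part of $\widehat{\gamma}$ is supported on $\ZZ[1/2]$.

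It remains to identify $\widehat{\ts\gamma^{}_1\ts}$ with the pure point part. The two-sided comb $\omega^{}_{h} = \sum_{x\in\ZZ} h(x)\,\delta^{}_x$, with $h$ the uniformly continuous function on $\ZZ_2$ provided by Lemma~\ref{lem:h-cont}, satisfies the hypotheses of Theorem~\ref{theo} for the CPS $(\RR,\ZZ_2,\cL)$ with window $W = \ZZ_2$; hence $\widehat{\ts\gamma^{}_1\ts} = \widehat{\omega^{}_{h * \widetilde{h}}}$ is a pure point measure with Fourier module $\ZZ[1/2]$. Since $\widehat{\ts\gamma^{}_2\ts}$ is purely absolutely continuous, the Eberlein decomposition of $\widehat{\gamma}$ forces $\widehat{\gamma}^{}_{\mathsf{pp}} = \widehat{\ts\gamma^{}_1\ts}$, $\widehat{\gamma}^{}_{\mathsf{ac}} = \widehat{\ts\gamma^{}_2\ts}$, and $\widehat{\gamma}^{}_{\mathsf{sc}} = 0$; substituting the intensities from Eq.~\eqref{eq:ran-pd-int} and the density from Proposition~\ref{prop:dens-ran-pd} gives the asserted formula.

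There is no genuine obstacle left, since all the substantive input --- the continuity of $h$ on $\ZZ_2$, the absolute continuity and explicit density of $\widehat{\ts\gamma^{}_2\ts}$, and the solvability of the $E_n$-recursion --- is already in place. The one point demanding care is the logical order: the atom formula \eqref{intensities} is valid only once $\widehat{\ts\gamma^{}_2\ts}$ is known to be continuous, so Proposition~\ref{prop:dens-ran-pd} should be established before the Bragg intensities are read off, rather than merely stated afterwards as the exposition above might suggest.
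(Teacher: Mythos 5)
Your proposal is correct and follows essentially the same route as the paper, which proves Theorem~\ref{theo:d} simply by collecting the results of Section~\ref{sec:ran-pd}: the splitting into $\widehat{\ts\gamma^{}_1\ts}+\widehat{\ts\gamma^{}_2\ts}$, the application of Theorem~\ref{theo} to $\omega^{}_{h}$ with the continuous $h$ on $\ZZ_2$ for pure-pointedness, the closed form \eqref{eq:ran-pd-int} from the $E_n$-recursion, and Proposition~\ref{prop:dens-ran-pd} for the absolutely continuous part. Your remark on the logical ordering (continuity of $\widehat{\ts\gamma^{}_2\ts}$ must precede the use of the atom formula \eqref{intensities}) is a fair and correct reading of the intended argument.
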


\section{Eigenfunctions and Kronecker factor}\label{sec:decompose}

It is a common feature of the above examples that their pure point
part could be determined in closed form, and deeply resembled the
formulas known from weighted model sets, though the systems themselves
do certainly not belong to this class. Moreover, in view of positive
topological entropy and the structure of the hulls, one cannot expect
all eigenfunctions to be continuous. Consequently, the maximal
equicontinuous factor (MEF) will not be the right tool to proceed.

Instead, one has to identify the Kronecker factor, which emerges as
the maximal pure point factor under a measurable map, where we are
allowed to work up to a null set of the hull. In fact, this is where
the covering model set will enter, and the well-defined MEF of this
model set will be the Kronecker factor of our compatible random
inflation systems.

\subsection{Covers and eigenfunctions}

To explain what happens, we will first discuss the random Fibonacci
inflation from Section~\ref{sec:ran-fibo}, which means that we
consider the dynamical system $(\YY, \RR, \nu)$ with
$\YY = \YY^{}_{\!\zeta^{}_{\mathrm{F}}}$ and
$\nu = \nu^{}_{\nts\mathsf{pf}}$. Recall that $a$ and $b$ stand for
intervals of length $\tau$ and $1$, respectively, both with a
reference point on their left end. Consider all possible infinite
one-sided tilings (to the right of the origin, that is) that emerge as
a realisation of an infinite inflation process from a single tile, $a$
say, with its reference point at $0$. In this situation, if we code a
point by a pair $(\alpha, x)$ with $\alpha \in \{ a, b\}$, one
inflation step, on the level of individual points, means
\[
     (b,x) \mapsto (a, \tau x)  \quad \text{and} \quad
     (a,x) \mapsto \begin{cases}
     \{ (b, \tau x), (a, \tau x + 1) \} , & \text{with prob.\ $p$} , \\
     \{ (a, \tau x), (b, \tau x + \tau) \} , & \text{with prob.\ $q$} . 
     \end{cases}
\]
Let us assume $pq\ne 0$, which excludes the two deterministic cases,
and let $\vL_\alpha$ be the union of all type-$\alpha$ positions of
\emph{all} realisations, and consider the sets
$W_{\alpha} := \overline{\vL_{\alpha}}$ in internal space. It is clear
that these sets must satisfy the system of equations given by
\begin{equation}\label{eq:IFS-1}
\begin{split}
   W_a \, & = \, \sigma W_a \cup \sigma W_b \cup
       (\sigma W_a +1) \ts , \\[1mm]
   W_b \, & = \,  \sigma W_a \cup (\sigma W_a + \sigma)  \ts ,
\end{split}
\end{equation}
where $\sigma = \tau'$ as before. Since $\lvert \sigma \rvert < 1$,
\eqref{eq:IFS-1} defines a contractive iterated function system on
$\cK \times \cK$, where $\cK$ is the set of compact subsets of $\RR$
equipped with the Hausdorff metric; see \cite{BM-self,Wicks} for
background.

\begin{fact}
  The compact sets\/ $W_a = [-1,\tau]$ and\/ $W_b = [-\tau,1/\tau]$
  are the unique solution to Eq.~\eqref{eq:IFS-1} within\/
  $\cK \times \cK$, and one has\/ $W = W_a \cup W_b = [-\tau, \tau]$.
\end{fact}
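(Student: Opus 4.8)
The plan is to verify that the claimed pair $(W_a, W_b) = ([-1,\tau], [-\tau, 1/\tau])$ solves the system \eqref{eq:IFS-1}, and then invoke the contraction mapping principle (Banach's fixed point theorem) to conclude uniqueness. Since $\sigma = \tau' = 1-\tau < 0$ with $\lvert\sigma\rvert = 1/\tau < 1$, the maps $A \mapsto \sigma A$, $A\mapsto \sigma A + 1$ and $A \mapsto \sigma A + \sigma$ are contractions on $(\cK, d_H)$ with ratio $1/\tau$, so the induced map on $\cK\times\cK$ (with, say, the max of the two Hausdorff distances) is a contraction; hence it has a unique fixed point, and exhibiting one suffices.

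First I would record that multiplication by $\sigma$ reverses orientation: for an interval $[c,d]$ one has $\sigma[c,d] = [\sigma d, \sigma c]$. With $\sigma = 1-\tau$ and the identities $\tau^2 = \tau+1$, $\sigma\tau = \sigma+1 = 2-\tau = -\sigma^2\cdot\tau^{-1}$ wait—rather I would just use $\sigma = 1-\tau \approx -0.618$ and $\tau \approx 1.618$ numerically as a sanity check and then confirm algebraically. For the second equation: $\sigma W_a = \sigma[-1,\tau] = [\sigma\tau, -\sigma] = [1-\tau-\tau^2+\tau,\ \tau-1]$; since $\sigma\tau = \tau - \tau^2 = \tau - \tau - 1 = -1$, this is $[-1, \tau-1]$. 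And $\sigma W_a + \sigma = [-1+\sigma, \tau-1+\sigma] = [-\tau, 0]$, using $-1+\sigma = -1+1-\tau = -\tau$ and $\tau - 1 + \sigma = 0$. The union $[-1,\tau-1]\cup[-\tau,0] = [-\tau, \tau-1]$; but $\tau - 1 = 1/\tau$, so this equals $[-\tau, 1/\tau] = W_b$, as claimed. For the first equation: $\sigma W_a = [-1,\tau-1]$ as above; $\sigma W_b = \sigma[-\tau, 1/\tau] = [\sigma/\tau, \sigma\tau] = [-\sigma^2\tau^{-1}\cdot\ldots$—more cleanly, $\sigma\cdot(1/\tau) = \sigma\tau^{-1} = (1-\tau)\tau^{-1} = \tau^{-1} - 1 = -\sigma = \tau - 1$, and $\sigma\cdot(-\tau) = -\sigma\tau = 1$, so $\sigma W_b = [\tau-1, 1]$; then $\sigma W_a + 1 = [0, \tau]$. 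The union $[-1,\tau-1]\cup[\tau-1,1]\cup[0,\tau] = [-1,\tau] = W_a$, again as claimed. (One checks the three intervals overlap so the union is genuinely the full interval, which is where the specific numerical values of the endpoints matter.) Finally $W_a \cup W_b = [-1,\tau]\cup[-\tau,1/\tau] = [-\tau,\tau] = W$, since $-\tau < -1$ and $1/\tau < \tau$, and the two intervals overlap on $[-1, 1/\tau]$.

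I do not expect a serious obstacle here: the entire content is the two fixed-point identities plus the observation that the IFS is contractive, which is already stated in the text preceding the claim. The only mild care needed is (a) tracking the orientation reversal under $\sigma < 0$ when computing images of intervals, and (b) confirming that the unions of the three (resp.\ two) sub-intervals are connected, i.e.\ that consecutive pieces actually overlap rather than leaving a gap — this is exactly what pins down the specific endpoints $-1$, $\tau$, $-\tau$, $1/\tau$ as opposed to some larger compact set. Uniqueness within $\cK\times\cK$ is then immediate from Banach's theorem applied to the complete metric space $(\cK\times\cK, \max(d_H, d_H))$, referencing \cite{BM-self,Wicks} as already cited.
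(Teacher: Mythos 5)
Your proposal takes exactly the paper's route: Banach's contraction principle on $(\cK\times\cK, d_H)$ for uniqueness, plus direct verification of the claimed fixed point, which the paper dismisses as ``a simple calculation'' and you carry out explicitly. One arithmetic slip in that calculation: $\sigma\cdot\tau^{-1} = \tau^{-1}-1 = \tau-2$ (not $\tau-1$), so $\sigma W_b = [\tau-2,\,1]$ rather than $[\tau-1,\,1]$; this is harmless, since $\sigma W_b$ is in either case contained in $\sigma W_a \cup (\sigma W_a + 1) = [-1,\tau-1]\cup[0,\tau] = [-1,\tau]$, and the verification of the first equation goes through unchanged.
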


\begin{proof}
  Since $\cK \times \cK$ with the Hausdorff metric is a complete
  metric space, and the iteration defined by the right-hand side of
  \eqref{eq:IFS-1} is a contraction in this space, with contraction
  constant $\lvert \sigma \rvert$, Banach's contraction principle
  guarantees a unique fixed point, which then is the solution of
  \eqref{eq:IFS-1}.  One can now check by a simple calculation that
  the intervals $W_a$ and $W_b$ as stated solve this equation.
\end{proof}

One immediate consequence is that, for each possible realisation, the
points of type $\alpha$ are a subset of the regular model set
$\oplam (W_{\alpha})$ in the CPS of the Fibonacci chain from
Eq.~\eqref{eq:CPS}. The question now is to what extent these windows
are determined by a single realisation.  For an answer, we employ the
so-called `chaos game' \cite[Sec.~4.2]{Palmer} and Elton's ergodic
theorem \cite{Elton}; see also \cite[Thm.~10]{Barnsley} or
\cite[p.~127]{Palmer}.

Consider the single{\ts}-point iteration in internal space, as defined by
$p(0)= (a,0)$ together with $p(n+1) = \Theta (p(n))$ for
$n\geqslant 0$, where $\Theta$ is a random mapping in internal space,
defined by
\[
     (b,y) \mapsto (a, \sigma y)
     \quad \text{and} \quad
     (a, y) \mapsto \begin{cases}
     (a, \sigma y + 1) , & \text{with prob.\ $p^{}_{1} $}  , \\
     (a, \sigma y) , & \text{with prob.\ $p^{}_{2} $}  , \\
     (b, \sigma y) , & \text{with prob.\ $p^{}_{3} $}  , \\
     (b, \sigma y + \sigma) , & \text{with prob.\ $p^{}_{4} $}  , 
     \end{cases}
\]
where $p_i >0$ and $\sum_i p_i =1$. Now, Elton's theorem asserts that,
almost surely, the corresponding (infinite) random point sequences lie
dense in the attractor of the IFS, as long as all $p_i>0$.

In direct space, each such sequence is an (exponentially thin) subset
of a possible realisation, and the previous argument shows that
already this thin subset, almost surely, has a dense lift into the two
windows. This establishes the following result.

\begin{prop}\label{prop:fix-window}
  Almost every realisation of the one-sided random Fibonacci inflation
  tiling completely determines the windows of the covering
  two-component model set, in the sense that the lift of the positions
  of type\/ $\alpha$ via the\/ $\star$-map lies dense in the compact
  set\/ $W_{\alpha}$. In particular, the lift of all left endpoints
  together is a dense subset of\/ $W=[-\tau,\tau]$.  \qed
\end{prop}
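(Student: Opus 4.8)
The plan is to combine the graph-directed iterated function system \eqref{eq:IFS-1} with Elton's ergodic theorem for the associated single-point iteration, and then to transfer the resulting almost sure density statement from the ``chaos game'' to one full realisation of the inflation by a Fubini argument.

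First I would record that the random map $\Theta$ is assembled precisely from the affine contractions occurring in \eqref{eq:IFS-1}: it acts on the $b$-fibre deterministically by $y\mapsto\sigma y$, and on the $a$-fibre by the four maps $y\mapsto\sigma y+1$ and $y\mapsto\sigma y$ (keeping type $a$) and $y\mapsto\sigma y$ and $y\mapsto\sigma y+\sigma$ (switching to type $b$). Since $\lvert\sigma\rvert<1$, this is a place-dependent Markov chain on $\{a,b\}\times\RR$ whose type-component is an irreducible two-state chain and whose underlying graph-directed system has, by the preceding Fact, the attractor
\[
   A \; = \; \bigl(\{a\}\times W_{a}\bigr)\,\dot{\cup}\,\bigl(\{b\}\times W_{b}\bigr) ,
   \qquad W_{a}=[-1,\tau],\;\; W_{b}=[-\tau,1/\tau] .
\]
Because all place-dependent probabilities are strictly positive whenever $0<p<1$, and the maps are affine contractions with the common ratio $\lvert\sigma\rvert$, Elton's ergodic theorem \cite{Elton} applies (see also the chaos game discussion in \cite[Sec.~4.2]{Palmer}), and it follows that, almost surely with respect to the law of the chain, the orbit $\{p(n):n\geqslant 0\}$ is dense in $A$; equivalently, its type-$\alpha$ points are dense in $W_{\alpha}$ for each $\alpha\in\{a,b\}$.

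Next I would couple the chain to a single one-sided realisation $\omega$, distributed according to the random inflation measure, which is compatible with $\nu^{}_{\!\mathsf{pf}}$ via the discussion around Figure~\ref{fig:infl}. One selects a random infinite branch $\beta$ through the descendant tree of $\omega$ by choosing, independently at each level, one of the two children produced by $\omega$ at the currently selected $a$-tile (a $b$-tile has a unique child). A short check shows that, started at $(a,0)$, the sequence obtained by recording at each level the type and the $\star$-image of the position of the selected tile is exactly a realisation of the $\Theta$-chain with the strictly positive weights $p^{}_{1}=p^{}_{3}=\tfrac{p}{2}$ and $p^{}_{2}=p^{}_{4}=\tfrac{q}{2}$, and that the $\star$-lifts of the level-$n$ points along $\beta$ form a subset of the $\star$-lift of the level-$n$ patch of $\omega$. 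Applying Elton's theorem to the product of the inflation measure with the branch-selection measure, the branch sequence is almost surely dense in $A$; by Fubini, for almost every $\omega$ it is dense for almost every $\beta$. Fixing one such $\beta$, the $\star$-lift of the type-$\alpha$ positions along it, and a fortiori the $\star$-lift of \emph{all} type-$\alpha$ positions of $\omega$, is dense in $W_{\alpha}$. Since the type-$\alpha$ positions of $\omega$ lie in $\oplam(W_{\alpha})$ and hence $\star$-lift into $W_{\alpha}$, and $W_{\alpha}$ is closed, the closure of that lift is exactly $W_{\alpha}$, which is the claim; the union over $\alpha\in\{a,b\}$ then yields density of all left endpoints in $W=W_{a}\cup W_{b}=[-\tau,\tau]$.

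I expect the one genuinely delicate point to be the coupling bookkeeping: checking that the uniform child-selection really produces \emph{strictly positive} weights $p^{}_{i}$, and that the sequence read along a branch has the $\Theta$-chain law with no residual correlations coming from sibling choices in $\omega$. Once this is set up, Elton's theorem does the work, since its contractivity and positivity hypotheses are immediate for an affine system with constant probabilities, and the density conclusion passes for free to the larger point set.
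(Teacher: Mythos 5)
Your proposal is correct and follows essentially the same route as the paper: the paper also proves this by running the single-point ``chaos game'' iteration $\Theta$ in internal space, invoking Elton's ergodic theorem to get almost sure density of the orbit in the attractor of the IFS \eqref{eq:IFS-1}, and then observing that this orbit is an (exponentially thin) subset of a realisation, so the full point set is a fortiori dense in the windows. The coupling bookkeeping you flag (uniform child selection giving the strictly positive weights $p/2,q/2,p/2,q/2$ and the Fubini transfer) is exactly the implicit content of the paper's remark that ``each such sequence is a subset of a possible realisation'', and your check of it is sound.
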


The corresponding result applies to one-sided tilings that extend
to the left, for instance when starting from $(b,-1)$ or from
$(a,-\tau)$. By intersecting two sets of full measure, one obtains
the following consequence.

\begin{coro}\label{coro:two}  
  Almost every realisation of the two-sided random Fibonacci inflation
  tiling that emerges from one of the central seeds\/ $a|a$, $a|b$,
  $b|a$ or\/ $b|b$ completely determines the window of the covering
  model set, as in Proposition~\textnormal{\ref{prop:fix-window}}. \qed
\end{coro}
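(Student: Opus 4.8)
The plan is to reduce Corollary~\ref{coro:two} to Proposition~\ref{prop:fix-window} together with its left-extending counterpart, by exploiting the independence of the random inflation on the two sides of a central seed. First I would fix a central seed, say $a|a$, and note that any two-sided realisation $\vL$ emerging from it decomposes as $\vL = \vL^{-} \ts \dot{\cup}\ts \vL^{+}$, where $\vL^{+} \subset \RR_{\geqslant 0}$ is the one-sided realisation generated by the right tile of the seed (reference point $0$, i.e.\ $(a,0)$) and $\vL^{-}$ the one generated by the left tile (reference point $-\tau$, i.e.\ $(a,-\tau)$). The key structural point, which I would read off from Eq.~\eqref{eq:Fib-def}, is that in the random inflation of a configuration the substitution is applied \emph{independently} to each letter; hence, at every level, the random choices producing $\vL^{+}$ and those producing $\vL^{-}$ are built from disjoint families of independent coin flips, so $\vL^{+}$ and $\vL^{-}$ are independent random variables. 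The same decomposition applies to the seeds $a|b$, $b|a$, $b|b$, with the relevant tile being $b$ instead of $a$; since $b \mapsto a$ deterministically, a $b$-seed reduces after one inflation step to the corresponding $a$-seed, so the one-sided density statements hold for either tile type.

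Next I would invoke Proposition~\ref{prop:fix-window} for the right half: the event $E^{+}$ that $(\vL^{+}_{\alpha})^{\star}$ is dense in $W_{\alpha}$ for both $\alpha \in \{a,b\}$ has probability $1$. For the left half I would use the left-extending analogue of Proposition~\ref{prop:fix-window} announced in the paragraph preceding the corollary: running the single-point chaos game in internal space for the inflation that builds tilings to the left of the origin, Elton's theorem \cite{Elton} again yields that, almost surely, the lift of the type-$\alpha$ positions is dense in the attractor of the corresponding iterated function system. Here I would check that this left IFS has the \emph{same} fixed point $W_{a} = [-1,\tau]$, $W_{b} = [-\tau,1/\tau]$ as in Eq.~\eqref{eq:IFS-1} --- up to the relabelling of the affine maps forced by the orientation reversal --- so that the left half, almost surely, lifts densely into the same windows. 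Call this full-measure event $E^{-}$.

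Finally, since $\vL^{+}$ and $\vL^{-}$ are independent, $\PP(E^{+} \cap E^{-}) = \PP(E^{+})\,\PP(E^{-}) = 1$; and in any case the intersection of two sets of full measure has full measure, so this step is essentially free. On $E^{+} \cap E^{-}$ one has $(\vL_{\alpha})^{\star} \supseteq (\vL^{+}_{\alpha})^{\star}$ dense in $W_{\alpha}$ for $\alpha \in \{a,b\}$, and therefore $\vL^{\star}$ is dense in $W = W_{a} \cup W_{b} = [-\tau,\tau]$, which is exactly the assertion of the corollary. The one genuine obstacle I anticipate is the bookkeeping for the left-extending process: one must set up the IFS that governs left-growing realisations carefully, identify its attractor with the windows $W_{\alpha}$ (confirming that the orientation reversal changes only the roles of the affine maps, not the compact limit sets), and verify that the patch-frequency measure restricts, on the left half, to the measure generated by this one-sided inflation, so that ``almost surely'' carries the same meaning on both sides. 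Once that is in place, the remainder is the one-line intersection argument above.
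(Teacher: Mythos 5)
Your proposal is correct and follows essentially the same route as the paper, which disposes of the corollary in one remark: Proposition~\ref{prop:fix-window} applies verbatim to left-extending one-sided tilings (e.g.\ those started from $(b,-1)$ or $(a,-\tau)$), and one then intersects the two resulting sets of full measure. The independence of the two halves that you invoke is true but, as you note yourself, not needed — the intersection argument already does the job — and your extra care about identifying the attractor of the left-extending IFS with the same windows $W_a$, $W_b$ is a detail the paper leaves implicit.
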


Let $Y_0$ denote the set (or fibre) of all two-sided realisations
according to Corollary~\ref{coro:two}.  Of course, there are
realisations in $Y_0$ that do \emph{not} fix the window, such as the
ones that give perfect Fibonacci chains, but all such cases together
are only a null set. Here, as mentioned earlier, the relevant measure
on $Y_0$ is the one induced by $\nu$ on it via filtration, and agrees
with the one defined by the inflation process according to
Figure~\ref{fig:infl}; see \cite{PG}.  This also means that some
elements of $Y_0$ are thinnings not just of one model set, but of
many, and this applies analogously to all translates $Y_t = t +
Y_0$.
This is the origin of the discontinuity of non-trivial eigenfunctions,
as we analyse next.

Recall that $0 \ne f\in L^{2} (\YY, \nu)$ is called an
\emph{eigenfunction} of $(\YY, \RR, \nu)$ if there exists a $k\in\RR$
such that
\begin{equation}\label{eq:def-ef}
       f( t + y) \, = \, \ee^{2 \pi \ii k t} f(y)
\end{equation}
holds for all $t\in\RR$ and $\nu$-a.e.\ $y\in\YY$. Moreover, $f$ is
called \emph{continuous} if there is a continuous function on $\YY$
such that \eqref{eq:def-ef} holds for all $y\in\YY$. In this case, $k$
is called a \emph{topological eigenvalue} (in additive notation).

\begin{prop}\label{prop:fibo-cont}
  The topological point spectrum of\/ $(\YY,\RR,\nu)$ is trivial,
  which is to say that the only continuous eigenfunction is the
  constant one.
\end{prop}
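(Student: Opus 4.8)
The plan is to prove directly that every continuous eigenfunction of $(\YY,\RR,\nu)$ is a constant; triviality of the topological point spectrum then follows at once. So suppose $0\ne f\in C(\YY)$ satisfies \eqref{eq:def-ef} for some $k\in\RR$ and all $y\in\YY$, and aim to show $k=0$. First I would note that $|f|$ is $\RR$-invariant and continuous. Since the primitive random substitution $\zeta^{}_{\mathrm F}$ with $0<p<1$ has a rich dictionary, the hull $\YY$ is topologically transitive --- one can build a bi-infinite legal word containing every legal word, and its orbit is dense --- so $|f|$ is constant, say $|f|\equiv c$, with $c>0$ because $f\ne 0$. Replacing $f$ by $f/c$, one may assume $|f|\equiv 1$, so that $f\! :\,\YY\to\TT$ is a continuous, $\RR$-equivariant map.

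The heart of the argument will be a regional proximality statement: for any two elements $y,y'\in\YY$ there are sequences $y^{}_n\to y$ and $y'_n\to y'$ in $\YY$, together with translations $t^{}_n\in\RR$, such that $d\bigl(t^{}_n + y^{}_n,\, t^{}_n + y'_n\bigr)\xrightarrow{\,n\to\infty\,}0$. To construct these, I would fix $n$, let $v^{}_n$ and $v'_n$ be the patches of $y$ and of $y'$ covering $[-n,n]$ (both legal), and use primitivity of $\zeta^{}_{\mathrm F}$ --- which, for $0<p<1$, guarantees that any legal patch occurs again, with any prescribed legal patch to its left and at arbitrarily large distance --- to assemble a bi-infinite word $y^{}_n$ whose central patch over $[-n,n]$ is $v^{}_n$ and which also contains a copy of $v'_n$ with left endpoint at some position $R^{}_n$, where $R^{}_n\to\infty$; the junction patches between the prescribed blocks can be chosen legal in the usual way for primitive substitutions. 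By Definition~\ref{def:hull}, any bi-infinite word all of whose finite subwords are legal lies in the hull, so $y^{}_n\in\YY$. In the same fashion I would build $y'_n\in\YY$ whose central patch over $[-n,n]$ is $v'_n$ and which also contains a copy of $v'_n$ with left endpoint at the \emph{same} position $R^{}_n$. Setting $t^{}_n:=-R^{}_n$, the tilings $t^{}_n + y^{}_n$ and $t^{}_n + y'_n$ then both display $v'_n$ over $[-n,n]$, whence $d\bigl(t^{}_n+y^{}_n,\, t^{}_n+y'_n\bigr)\to 0$, while $y^{}_n\to y$ and $y'_n\to y'$ by construction.

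Granting this, the conclusion is immediate. Since $\YY$ is compact, $f$ is uniformly continuous, so $\bigl|f(t^{}_n+y^{}_n)-f(t^{}_n+y'_n)\bigr|\to 0$. By \eqref{eq:def-ef} one has $f(t^{}_n+y^{}_n)=\ee^{2\pi\ii k t^{}_n}f(y^{}_n)$ and $f(t^{}_n+y'_n)=\ee^{2\pi\ii k t^{}_n}f(y'_n)$, and the common phase has modulus one, hence $\bigl|f(y^{}_n)-f(y'_n)\bigr|\to 0$. Letting $n\to\infty$ and using continuity of $f$ gives $f(y)=f(y')$. As $y,y'\in\YY$ were arbitrary, $f$ is constant, so $k=0$, and the only continuous eigenfunctions are the constants.

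I expect the delicate point to be the glueing step in the middle paragraph: one must verify that the bi-infinite words $y^{}_n$ and $y'_n$ assembled from the prescribed blocks, fillers and tails really have all finite subwords in $\cD_{\zeta^{}_{\mathrm F}}$, which is exactly where primitivity of the random substitution and the structure of its legal words must be invoked. This is also precisely the place where the hypothesis $0<p<1$ enters: in the deterministic Fibonacci case the available extensions of a central patch are far too rigid to route $v'_n$ to a common far position in both $y^{}_n$ and $y'_n$, consistent with that system having an irrational rotation as its (non-trivial) maximal equicontinuous factor.
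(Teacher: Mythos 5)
Your overall strategy --- reduce to $\lvert f \rvert \equiv 1$ and then kill $f$ by exhibiting, for arbitrary $y,y'$, approximating sequences that become proximal after a common translation --- is a legitimate route and differs from the paper's, which instead shows that $f$ is constant on the special fibre $Y_0$ (via forward/backward asymptotic pairs obtained by splicing half-tilings at the origin) and then uses two explicit return vectors, $\tau$ and $1$, to force $\ee^{2\pi\ii k\tau}=\ee^{2\pi\ii k}=1$ and hence $k=0$. However, your middle paragraph contains a genuine gap. The construction requires legal connector words $w^{}_n$ and $w'_n$, inserted after the central patches $v^{}_n$ and $v'_n$ respectively, whose \emph{geometric} lengths are exactly $R^{}_n - s^{}_n$ and $R^{}_n - s'_n$ (with $s^{}_n, s'_n$ the right endpoints of the central patches), so that the far copies of $v'_n$ begin at the \emph{same} real number $R^{}_n$ in both tilings. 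Lengths of legal words lie in $\{\ts j\tau + k : j,k\in\NN_0\}$ with $j/k$ constrained near $\tau$ by the balancedness of the language (all realisations of $\zeta^{\ell}_{\mathrm F}(\alpha)$ share one abelianisation, so exact inflation words give \emph{no} length freedom at all), and you never show that two legal connectors with the required length difference $s'_n - s^{}_n \in \ZZ[\tau]$ exist, let alone that the resulting concatenations have all subwords in $\cD_{\zeta^{}_{\mathrm F}}$. Primitivity, which is all you invoke, cannot supply this: it yields occurrence of every legal word with bounded gaps, but no control over the exact positions of those occurrences. Indeed, the deterministic Fibonacci substitution is equally primitive, and there your construction must fail because non-constant continuous eigenfunctions exist; so whatever makes the argument work has to come from a quantitative statement about the \emph{extra} freedom in $\cD_{\zeta^{}_{\mathrm F}}$ for $0<p<1$, which is precisely the part you have deferred rather than proved.

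A second, smaller point: even granting legality of the splices, you should say why the infinite words assembled from central patch, filler, far block and tails lie in $\YY$ at all; Definition~\ref{def:hull} makes this a condition on \emph{all} finite subwords, including those straddling several junctions, so the fillers cannot be chosen independently of the tails. If you want to salvage the approach, the cheapest repair is to imitate the paper: first get constancy of $f$ on the fibre $Y_0$ using only asymptotic pairs (no length matching needed, since the two spliced tilings literally coincide on a half-line), and then produce two elements of $Y_0$ whose translates by $\tau$ and by $1$ remain in $Y_0$; the irrationality of $\tau$ then gives $k=0$ without any connector-word combinatorics.
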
    

\begin{proof}
  Let $f$ be a continuous eigenfunction. Since $(\YY, \RR, \nu)$ is
  ergodic, $\lvert f \rvert$ is a constant, which we may choose to be
  $1$.  Let $y\in Y_0$ be fixed, and set $c=f(y)$. Due to the
  structure of the fibre $Y_0$, the tiling $y$ is of the form
  $y^{}_{\mathrm{L}} | \ts y^{}_{\mathrm{R}}$, which is to say that it
  consists of two infinite half-tilings that are glued together at
  $0$. Let $y^{\ts \prime}\in Y_0$ be any other element, which is then
  of the form
  $y^{\ts\prime}_{\mathrm{L}} | \ts y^{\ts\prime}_{\mathrm{R}}$.  Now,
  $Y_0$ clearly also contains the element
  $y^{}_{\mathrm{L}} | \ts y^{\ts\prime}_{\mathrm{R}}$, and one has
\[
    f (y^{\ts\prime}) \, = \, f ( y^{\ts\prime}_{\mathrm{L}} | \ts
    y^{\ts\prime}_{\mathrm{R}} ) \, = \, f(y^{}_{\mathrm{L}} |
    \ts y^{\ts\prime}_{\mathrm{R}}) \, = \, f( y^{}_{\mathrm{L}} |
    \ts y^{}_{\mathrm{R}} ) \, = \, f(y) \, = \, c \ts ,
\]
because continuous eigenfunctions cannot distinguish between two
right-asymptotic or between two left-asymptotic
elements. Consequently, $f$ is constant on $Y_0$.

Now, consider the two inflation tilings that correspond to the two
fixed points of the square of the Fibonacci inflation
$b \mapsto a \mapsto ab$, with seeds $a | a $ and $b | a$, called
$y^{}_{1}$ and $y^{}_{2}$.  By construction, both are elements of
$Y_0$. At the same time, the left endpoints (of the tiles of types $a$
and $b$) are regular model sets, given by
\[
    \oplam^{\! (a)} \big( [\tau-2,\tau-1) \bigr) \ts , \; 
    \oplam^{\! (b)} \bigl( [-1,\tau-2)\bigr)  \quad \text{and} \quad
    \oplam^{\! (a)} \bigl( (\tau-2,\tau-1] \bigr) \ts , \;
    \oplam^{\! (b)} \bigl( (-1,\tau-2] \bigr) ,
\]
respectively.  A comparison with the windows for $\vL_a$ and $\vL_b$
now shows that also $\tau + y^{}_{1}$ and $1 + y^{}_{2}$ are in $Y_0$,
wherefore we may conclude that $f( y^{}_{1}) = f(\tau + y^{}_{1}) =
\ee^{2 \pi \ii \tau k} f (y^{}_{1})$ and $f( y^{}_{2}) = f (1 + y^{}_{2}) =
\ee^{2 \pi \ii k} f ( y^{}_{2})$. This implies $k=0$ and $f$ is thus the 
constant function as claimed.
\end{proof}

This shows why we cannot work with the MEF of $(\YY,\RR,\nu)$,
which is trivial, but need to consider its Kronecker factor instead.

\subsection{Torus parametrisation and Kronecker factor}

It will be instrumental to employ the MEF of another dynamical system
as follows. Let $\vL = \oplam (W)$ with $W=[-\tau,\tau]$ be the
covering model set from above, in the CPS $(\RR, H, \cL)$ from
\eqref{eq:CPS}, where we use $H=\RR$ to explicitly distinguish direct
and internal space in our following arguments. It follows from the
standard theory of model sets via dynamical systems \cite{BLM} that
$\vL$ defines a strictly ergodic dynamical system that is a.e.\ one to
one over its MEF. The latter is $\AAA = (\RR \nts \times \! H)/\cL$
together with the induced translation action of $\RR$ on it.  Here,
$\AAA$ is a $2$-torus, and a translation by $t\in \RR$ is represented
as a translation by $(t,0)$ modulo $\cL$ on $\AAA$. Moreover, we also
have the classic \emph{torus parametrisation} at hand, where we assume
that $\vL$, which is a singular element, is the union of all elements
in the fibre over $(0,0)\in\AAA$.

The connection now works as follows. The fibre $Y_0$ is linked to
$\vL$ itself and hence mapped to $(0,0)$. Since a.e.\ element in the
fibre determines the window of $\vL$ uniquely by
Corollary~\ref{coro:two}, we can unambiguously map these elements to
$(0,0)$. To extend this to a mapping from $\nu$-a.e.\ element of $\YY$
to $\AAA$, we first select a generic element $y^{}_{0} \in
Y^{}_{0}$. Now, for any $y\in\YY$, there is a sequence 
$(t_n)^{}_{n\in\NN}$ of translations such that
\begin{equation}\label{eq:limit}
     y \, = \lim_{n\to\infty} (t^{}_{n} + y^{}_{0}) \ts .
\end{equation} 
It clearly suffices to consider the transversal of $\YY$, which is to
say that we may assume
$y\in\YY^{}_{0} := \{ u \in \YY \mid 0 \in u \}$ without loss of
generality. The advantage is that we now always have
$y\subset \ZZ[\tau]$, so all $t_n$ in \eqref{eq:limit} lie in
$\ZZ[\tau]$ as well, and the $\star$-map is well defined. Also, the
convergence then simply means that we may choose $t_n$ such that
$y\cap [-n,n] = (t^{}_n + y^{}_{0}) \cap [-n,n]$ holds, because our
point sets have finite local complexity.

\begin{lemma}\label{lem:torus}
  If\/ $(r,s)$ is a cluster point of\/ $(t_n , 0)^{}_{n\in\NN}$ in\/
  $\AAA$, with the translations\/ $t_n$ from Eq.~\eqref{eq:limit}, we
  have\/ $\, -r + y \ts \subseteq \nts \oplam (-s + W)$.
\end{lemma}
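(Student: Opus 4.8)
The plan is to exploit that, modulo the lattice $\cL$, every point $(t_n,0)$ has a representative whose internal component stays in the fixed compact window $W=[-\tau,\tau]$. This pins down the cluster point $(r,s)$ explicitly and lets one transfer the window membership from $y^{}_{0}$ to $-r+y$ by a closedness argument.

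First I would record the elementary facts about the $t_n$. Since $y\in\YY^{}_{0}$, we have $0\in y$, and for every $n$ the matching $y\cap[-n,n]=(t_n+y^{}_{0})\cap[-n,n]$ forces $-t_n\in y^{}_{0}\subseteq\oplam(W)$. Hence $t_n\in\ZZ[\tau]$ and $-t_n^{\star}=(-t_n)^{\star}\in W$, so $(t_n,t_n^{\star})\in\cL$ and therefore
\[
   (t_n,0)\;=\;(t_n,t_n^{\star})+(0,-t_n^{\star})\;\equiv\;(0,-t_n^{\star})\pmod{\cL},\qquad -t_n^{\star}\in W .
\]
In other words, the image of $(t_n,0)$ in $\AAA$ lies, for every $n$, in the compact set $\{(0,-w)+\cL : w\in W\}$, which is the continuous image of the compact $W$.

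Next I would identify the cluster point. Pick a subsequence $(n_j)$ with $(t_{n_j},0)\to(r,s)$ in $\AAA$ and then, using compactness of $W$, pass to a further subsequence (still written $(n_j)$) along which $t_{n_j}^{\star}\to s'$ for some $s'\in W$. By continuity of the quotient map $\RR\times H\to\AAA$ together with the displayed congruence, $(t_{n_j},0)\equiv(0,-t_{n_j}^{\star})\to(0,-s')$ in $\AAA$; uniqueness of limits then gives $(r,s)=(0,-s')$ in $\AAA$, i.e.\ $r\in\ZZ[\tau]$ and $s=r^{\star}-s'$. Finally I would check the inclusion pointwise: let $x\in -r+y$, so $x+r\in y$; since $r\in\ZZ[\tau]$ and $y\subseteq\ZZ[\tau]$ this gives $x\in\ZZ[\tau]$, hence $x^{\star}$ is defined. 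For all $j$ large enough that $\lvert x+r\rvert\leqslant n_j$, we have $x+r\in y\cap[-n_j,n_j]=(t_{n_j}+y^{}_{0})\cap[-n_j,n_j]$, so $x+r-t_{n_j}\in y^{}_{0}\subseteq\oplam(W)$ and thus $x^{\star}+r^{\star}-t_{n_j}^{\star}=(x+r-t_{n_j})^{\star}\in W$. Letting $j\to\infty$ and using that $W$ is closed yields $x^{\star}+r^{\star}-s'\in W$, i.e.\ $x^{\star}\in -(r^{\star}-s')+W=-s+W$, so $x\in\oplam(-s+W)$. This is exactly $-r+y\subseteq\oplam(-s+W)$.

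The only mildly delicate point is the two-step extraction of subsequences (first realising the chosen cluster point in $\AAA$, then forcing $t_{n_j}^{\star}$ to converge in $W$), but this is routine. There is no real obstacle here: the argument amounts to the observation that $(t_n,0)$ is trapped, modulo $\cL$, in a fixed compact subset of $\AAA$ coming from the compact window $W$, combined with the finite-local-complexity matching $y\cap[-n,n]=(t_n+y^{}_{0})\cap[-n,n]$, which propagates the window condition $(\,\cdot\,)^{\star}\in W$ from the generic seed $y^{}_{0}$ to the limit tiling $y$.
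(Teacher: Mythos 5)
Your proof is correct. You establish the inclusion by a genuinely different mechanism than the paper does. The paper works entirely inside $\AAA$: it takes arbitrary neighbourhoods $U\times V$ of the cluster point, deduces $y \subseteq r + U + \oplam(-s+W+V)$ for every such pair, and then needs a separate closing argument to show that intersecting over all $V$ yields exactly $\oplam(-s+W)$ and not something larger (this occupies the second half of the paper's proof). You instead lift the cluster point to a concrete representative: from $0\in y$ you get $-t_n\in y^{}_{0}\subseteq\oplam(W)$, hence $-t_n^{\star}$ is trapped in the compact window $W$, which lets you extract a convergent subsequence $t_{n_j}^{\star}\to s'$ in $H$ and identify $(r,s)\equiv(0,-s')$ modulo $\cL$. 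The inclusion then reduces to a one-line pointwise limit using only the closedness of $W$, with no intersection over neighbourhoods needed. Your identification of the representative also yields $r\in\ZZ[\tau]$ for free, which makes the $\star$-map manipulations legitimate, and the conclusion is invariant under changing the representative by a lattice vector, so no generality is lost. What the paper's neighbourhood argument buys is independence from the specific trapping of the $t_n^{\star}$ in a fixed compact set; what yours buys is a shorter and more transparent proof that makes the cluster point explicit — the same structural fact ($y^{}_{0}\subseteq\oplam(W)$ with $W$ compact) is doing the work in both.
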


\begin{proof}
  Let $U$ and $V$ be open, relatively compact neighbourhoods of
  $0\in\RR$ and $0\in H$, respectively, and assume $V=-V$. Then, our
  assumption implies that there is a subsequence $(n_j)^{}_{j\in\NN}$ 
  of integers such that
\begin{equation}\label{eq:inset}
   (t^{}_{n_j}, 0) \in (r,s) + U\!\times\nts V \nts + \cL
\end{equation}
holds for all sufficiently large $j$, say $j>N$. For any such $j$, we
have $n_j > j$ and thus
\[
   y \cap [-j,j] \, = \, (t^{}_{n_j} + y^{}_{0} ) \cap [-j,j]
   \, \subseteq \, t^{}_{n_j} + \ts y^{}_{0} \, \subseteq \,
   t^{}_{n_j} + \oplam (W) \ts .
\]
By \eqref{eq:inset}, we have
$(t^{}_{n_j} , 0) = (r,s) + (u,v) + (x, x^\star)$ for some $u\in U$,
$v\in V$ and $(x,x^\star )\in\cL$, hence $t^{}_{n_j} = r + u + x$ and
$s + v + x^\star = 0$. Consequently, we have
$y \cap [-j,j] \subseteq r + u + x + \! \oplam (W)$, where
$x + \! \oplam (W) = \oplam(x^\star + W)$ because $(x,x^\star ) \in \cL$.
This implies
\[
   y \cap [-j,j] \, \subseteq \, r + u + \! \oplam (-s - v + W)
   \, \subseteq \, r + U \nts + \nts \nts \oplam (-s + W \! - V) \, = \,
   r + U \nts + \nts \nts \oplam (-s + W \! + V) \ts ,
\]
which holds for all $j > N$ and thus implies
\[
    y \, =  \bigcup_{j>N} y \cap [-j,j] \, \subseteq \,
    r + U \nts + \nts \nts \oplam (-s +W \! +V) \ts .
\]
Since this holds for any open neighbourhood $U$ of $0$, and
since $\oplam (-s+W \! +V)$ is a Delone set due to the relative
compactness of $V$, we get
\[
    \bigcap_{ 0 \in U \,\text{open} } \!\!
    U + \bigl( r + \oplam(-s + W \! + V) \bigr) \; = \;
    r + \oplam (-s + W \! + V)
\]
so that $y \subseteq r + \oplam (-s +W \! +V)$ and hence also
\[
    y \, \subseteq \bigcap_{\substack{0\in V = -V \\ 
    V \, \text{open}} } \!\!
    r + \! \oplam (-s + W \! + V) \, \supseteq \,
    r + \! \oplam (-s + W) \ts .
\]
Now, our claim follows if we show that the last inclusion actually
is an equality. 

To do so, we may assume $r=0$ without loss of generality.  Let
$x\in L \setminus \oplam (-s +W)$, where $L = \pi (\cL)$ from the CPS
\eqref{eq:CPS}, so $x^\star \notin - s +W$. Then, there is an open
neighbourhood $V$ of $0\in H$ with $V=-V$ such that
$(x^\star + V) \cap (-s +W) = \varnothing$, which implies that
$x^\star \notin -s+W \! +V$ and thus $x \notin \oplam (-s+W \!
+V)$. Consequently, $y \in r + \! \oplam (-s+W)$ as claimed.
\end{proof}

Next, in order to define a proper mapping from (a subset of) $\YY$
to the torus, we need to get rid of the subsequences from the previous
lemma.

\begin{coro}\label{coro:torus-2}
  If\/ $y \in \YY$ is generic, there exists a unique\/
  $(r,s)  \in \AAA$ such that
\[  -r+y  \, \subseteq \oplam (-s +W) \ts . \]
\end{coro}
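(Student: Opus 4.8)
\emph{Strategy.} The plan is to get existence straight from Lemma~\ref{lem:torus} together with the compactness of $\AAA$, and to get uniqueness from the fact that, for a generic element, the $\star$-image of its point set is dense in the \emph{entire} covering window, so that the window translate $-s + W$ is recovered from $y$ and $r$ alone.

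\emph{Existence.} Fix a generic $y^{}_{0} \in Y^{}_{0}$ and, for the given generic $y \in \YY$, choose translations $t^{}_{n}$ with $y = \lim_{n\to\infty} (t^{}_{n} + y^{}_{0})$ as in Eq.~\eqref{eq:limit}. Passing to the transversal $\YY^{}_{0}$, we may assume $y \subseteq \ZZ[\tau]$ and $t^{}_{n} \in \ZZ[\tau]$ for all $n$, so that the $\star$-map is available. Since $\AAA = (\RR \nts\times\! H)/\cL$ is compact, the sequence $(t^{}_{n}, 0)^{}_{n\in\NN}$ has at least one cluster point $(r,s)$ in $\AAA$, and Lemma~\ref{lem:torus} then yields $-r + y \subseteq \oplam (-s + W)$. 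This produces a pair with the required property. Note also that the condition $-r + y \subseteq \oplam(-s + W)$ depends only on the class of $(r,s)$ in $\AAA$: replacing $(r,s)$ by $(r+x, s+x^{\star})$ with $(x,x^{\star}) \in \cL$ shifts $-r+y$ by $-x$ and $\oplam(-s+W)$ by $-x$ as well, using $x + \oplam(-s+W) = \oplam(x^{\star} - s + W)$.

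\emph{Uniqueness.} Suppose $(r^{}_{1}, s^{}_{1})$ and $(r^{}_{2}, s^{}_{2})$ in $\AAA$ both satisfy $-r^{}_{i} + y \subseteq \oplam(-s^{}_{i} + W)$, and pick representatives in $\RR\times H$. As $y \ne \varnothing$ and both sets $-r^{}_{i} + y$ lie inside $\ZZ[\tau]$, the difference $\rho := r^{}_{2} - r^{}_{1}$ lies in $L = \ZZ[\tau]$, whence $\rho^{\star}$ is defined and $(-r^{}_{1} + y)^{\star} = (-r^{}_{2} + y)^{\star} + \rho^{\star}$. By the genericity of $y$, which via Corollary~\ref{coro:two} transports to $\nu$-a.e.\ element of $\YY$ (each such element being, up to a translation, a generic member of the fibre $Y^{}_{0}$, whose $\star$-image is dense in $W$), the closures $\overline{(-r^{}_{i} + y)^{\star}}$ are full translates of $W$; being contained in $-s^{}_{i} + W$, which is itself a translate of $W = [-\tau, \tau]$, they must equal $-s^{}_{i} + W$. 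Taking closures in the displayed identity gives
\[
   -s^{}_{1} + W \, = \, \overline{(-r^{}_{1} + y)^{\star}} \, = \,
   \overline{(-r^{}_{2} + y)^{\star}} + \rho^{\star} \, = \, -s^{}_{2} + W + \rho^{\star} .
\]
Since two translates of the bounded interval $W$ agree only if the translation vectors do, we conclude $\rho^{\star} = s^{}_{2} - s^{}_{1}$, hence $(r^{}_{2}, s^{}_{2}) - (r^{}_{1}, s^{}_{1}) = (\rho, \rho^{\star}) \in \cL$, i.e.\ $(r^{}_{1}, s^{}_{1}) = (r^{}_{2}, s^{}_{2})$ in $\AAA$.

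\emph{Main obstacle.} The only non-formal ingredient is the genericity input used in the uniqueness step: one needs to know that, for $\nu$-a.e.\ $y \in \YY$, the lift of $y$ (after recentering) is dense in the covering window $-s+W$, rather than merely in some proper sub-interval, which is exactly what fails for the exceptional elements such as perfect Fibonacci chains. This is obtained by combining Corollary~\ref{coro:two} with the suspension structure linking $\YY$ to $Y^{}_{0}$: the relevant density property is translation invariant, holds on the generic part of $Y^{}_{0}$, and therefore on a set of full $\nu$-measure. The remaining manipulations are routine bookkeeping within the CPS~\eqref{eq:CPS} and with the $\star$-map.
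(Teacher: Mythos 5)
Your proposal is correct and follows essentially the same route as the paper: existence via compactness of $\AAA$ and Lemma~\ref{lem:torus}, and uniqueness by using that a generic element determines the full window, so that $\overline{(-r_i+y)^{\star}} = -s_i + W$, whence $(r_1 - r_2, s_1 - s_2) \in \cL$. The only additions beyond the paper's argument are harmless bookkeeping (well-definedness on $\AAA$ and the observation that $r_2 - r_1 \in \ZZ[\tau]$), which the paper leaves implicit.
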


\begin{proof}
  Since $\AAA$ is compact, any sequence $(t_n , 0)^{}_{n\in\NN}$ in 
  $\AAA$ has at least one cluster point. Therefore, Lemma~\ref{lem:torus} 
  gives the existence, and it remains  to show uniqueness. 

  Let $-r_i + y \subseteq \! \oplam (-s_i +W)$ for $i\in\{ 1,2 \}$, hence
  also the inclusions $(-r_i + y)^\star \subseteq -s_i + W $ and thus
  $s_i + (-r_i + y)^\star \subseteq W$, which means that the sets
  $-r_i + y$ are translates of elements in our special fibre,
  $Y_0$. When $y$ is generic (in the measure-theoretic sense), the
  window is uniquely determined, which is to say that
\[
     \overline{s_i + (-r_i +y)^\star} \, = \, 
     s_i + \overline{(-r_i + y)^\star} \, = \, W ,
     \quad \text{for  $\, i \in \{ 1,2 \}$}  \ts .
\]
But this implies
\[
\begin{split}
   -s^{}_2 + W   & = \, \overline{(-r^{}_{2} + y)^\star} \, = \,
   \overline{(-r^{}_{2} + r^{}_{1} - r^{}_{1} + y)^\star} \\
   & = \,   (r^{}_{1} - r^{}_{2})^\star + \overline{(-r^{}_{1} + y)^\star}
   \, = \, (r^{}_{1} - r^{}_{2})^\star - s^{}_{1} + W .
\end{split}
\]
Since $v + W = W$ is only possible for $v=0$, we conclude that
$s^{}_{1} - s^{}_{2} = (r^{}_{1} - r^{}_{2})^\star$, which means
nothing but $(r^{}_{1} - r^{}_{2} , s^{}_{1} - s^{}_{2}) \in \cL$
and our claim follows.
\end{proof}

At this point, we can define
\[
    \YY' \, := \, \{ y \in \YY :  \text{there is a unique }
    (r,s) \in \AAA \text{ with } -r + y
    \subseteq \oplam (-s + W) \} \ts ,
\]
and we then have a well-defined mapping $\psi \! : \,
\YY' \xrightarrow{\quad} \AAA$. By Corollary~\ref{coro:torus-2},
$\YY'$ contains all generic elements and thus has full measure.

\begin{prop}
   The mapping\/ $\psi \! : \, \YY' \xrightarrow{\quad} \AAA$
   is continuous.
\end{prop}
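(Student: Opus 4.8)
The plan is to prove that $\psi$ is continuous by verifying it along sequences: given $y_n\to y$ in $\YY'$, it is enough to show that every cluster point $\chi$ of the sequence $\bigl(\psi(y_n)\bigr)_{n\in\NN}$ in the compact torus $\AAA$ equals $\psi(y)$, since then $\psi(y_n)\to\psi(y)$. So I would pass to a subsequence along which $\psi(y_n)\to\chi$ and argue $\chi=\psi(y)$. By the uniqueness clause in the definition of $\YY'$ and $\psi$, for this it suffices to exhibit \emph{one} representative $(r,s)\in\RR\times H$ of $\chi$ with $-r+y\subseteq\oplam(-s+W)$.

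To produce such a representative, I would first lift the convergence $\psi(y_n)\to\chi$: since the quotient map $\RR\times H\to\AAA$ is a covering map, one may choose representatives $(r_n,s_n)$ of $\psi(y_n)$ and $(r,s)$ of $\chi$ with $(r_n,s_n)\to(r,s)$ in $\RR\times H$. Now fix $x\in y$. From $y_n\to y$ in the local topology and the finite local complexity of our point sets, one gets points $x_n\in y_n$ with $x_n\to x$ in $\RR$, so that $-r_n+x_n\to -r+x$. Choosing $\eta>0$ and then $n$ large enough that $|s_n-s|<\eta$, we have $-s_n+W\subseteq -s+W+[-\eta,\eta]$, hence $-r_n+x_n\in -r_n+y_n\subseteq\oplam(-s_n+W)\subseteq D:=\oplam\bigl(-s+W+[-\eta,\eta]\bigr)$. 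The set $D$ is a model set with a bounded window and is therefore uniformly discrete (recall that $\lvert\ell\ell^{\star}\rvert\geqslant 1$ for every $\ell\in\ZZ[\tau]\setminus\{0\}$, which bounds $|\ell|$ below once $\ell^{\star}$ ranges over a bounded set). A convergent sequence inside a uniformly discrete set is eventually constant, so there is $z\in D\subset\ZZ[\tau]$ with $-r_n+x_n=z$ for all large $n$, whence $z=-r+x$. In particular $-r+x\in\ZZ[\tau]$, so $(-r+x)^{\star}=z^{\star}$ is defined; and since $z^{\star}\in -s_n+W$ for all large $n$ and $W$ is closed, letting $n\to\infty$ gives $(-r+x)^{\star}=z^{\star}\in -s+W$, i.e.\ $-r+x\in\oplam(-s+W)$. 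As $x\in y$ was arbitrary, $-r+y\subseteq\oplam(-s+W)$, and uniqueness yields $\chi=\psi(y)$.

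The one step that really carries the argument is forcing $-r_n+x_n$ to stabilise. Everything hinges on the observation that once $s_n$ is close to $s$, \emph{all} the recentred sets $-r_n+y_n$ lie inside a single uniformly discrete model set $D$, so that a convergent sequence of their points can only converge to a point of $D$; thereafter the badly discontinuous $\star$-map causes no trouble, because it is evaluated along an eventually constant sequence, and closedness of $W$ closes the estimate. The remaining ingredients — lifting the convergence through the covering map, extracting the approximants $x_n$ by local matching, and the concluding compactness argument on $\AAA$ — are routine, and mirror the reasoning already used for Lemma~\ref{lem:torus} and Corollary~\ref{coro:torus-2}.
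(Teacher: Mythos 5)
Your argument is correct and follows the same overall strategy as the paper's proof: sequential continuity, compactness of $\AAA$ to reduce to cluster points, establishing the inclusion $-r+y\subseteq\oplam(-s+W)$ for a cluster point, and then invoking the uniqueness clause built into the definition of $\YY'$. The only divergence is in how that inclusion is obtained --- the paper shrinks neighbourhoods $U,V$ and reuses the intersection identity from the end of the proof of Lemma~\ref{lem:torus}, whereas you stabilise each recentred point inside the uniformly discrete enlarged model set $\oplam\bigl(-s+W+[-\eta,\eta]\bigr)$ (via the norm bound on $\ZZ[\tau]$) and then use closedness of $W$; both routes are sound.
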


\begin{proof}
Since $\RR$ is metrisable, the same property holds for $\YY$ and
$\AAA$, and we may work with sequences. Let $y^{}_{n} \in \YY'$
with $n\in\NN$
be chosen so that $y^{}_{n} \!\xrightarrow{\quad} y$ in $\YY'$ 
as $n\to\infty$. We then need to show that $\psi (y^{}_{n} ) 
\xrightarrow{\quad} \psi (y)$.

Let $\psi (y^{}_{n}) = (r^{}_{n}, s^{}_{n})$ and $\psi (y) = (r,s)$.
Since $\AAA$ is compact, it suffices to show that any cluster
point of $(r_n, s_n)$ equals $(r,s)$ modulo $\cL$. Let $(r', s')$ be
a cluster point of the sequence, so $(r^{}_{k_n}, s^{}_{k_n}) 
\xrightarrow{\quad} (r', s')$ modulo $\cL$ for a suitable 
subsequence $(k_n)^{}_{n\in\NN}$.

By Lemma~\ref{lem:torus}, we have $ y^{}_{k_n} \subseteq 
 r^{}_{k_n} +  \oplam ( -s ^{}_{k_n} + W) $.
Hence, for all open neighbourhoods $U$ of $0$ in $G$
and $V$ of $0$ in $H$, there is some $N_1$ so that
$(r^{}_{k_n}, s^{}_{k_n}) + \cL \in (r' + U, s' + V) + \cL$
holds for all $n > N_1$, and thus
\[
    y^{}_{k_n} \, \subseteq \, r' + U + \oplam (-s' - V + W) \ts .
\]
Now, let $A>0$. Then, as $y^{}_{k_n}\! \xrightarrow{\quad} y$, 
there is some $N_2$ such that
\[
    y \cap [-A, A] \, = \, y^{}_{k_n} \cap [-A,A] \, \subseteq \,
    y^{}_{k_n} \, \subseteq \, r' + U + \oplam (-s' - V + W)
\]
holds for all $n > N_2$. Since this applies to all $A>0$, we get
$y \subseteq r' + U + \oplam (-s' - V + W)$.

Now, since we have this for all open neighbourhoods $U,V$ 
 as specified, we have
\[
   y \, \subseteq \, \bigcap_{U,V} r' + U + \oplam (-s' -V +W)
   \, = \, r' + \oplam (-s' +W)
\]
which shows that $-r' + y \subseteq \oplam (-s' +W)$. By the
uniqueness of the parameter $(r,s)$ attached to $y \in \YY'$, we
get $(r', s') = (r,s)$ modulo $\cL$ as desired.
\end{proof}

At this point, for each character
$\chi \!  : \, \AAA \xrightarrow{\quad} \CC$, the mapping
$\chi \circ \psi$ defines an eigenfunction of $(\YY, \RR, \nu)$ that
is continuous on $\YY'$. This complements the statement of
Proposition~\ref{prop:fibo-cont}.  We can now formulate the main
result of this section as follows.

\begin{theorem}\label{thm:MEF}
  The Kronecker factor of the dynamical system\/ $(\YY, \RR, \nu)$ can
  be identified with the MEF of the dynamical system obtained from the
  covering model set. It is explicitly given by\/
  $\AAA = (\RR \nts \times \! H) /\cL$ within the CPS~\eqref{eq:CPS},
  with\/ $H=\RR$.
\end{theorem}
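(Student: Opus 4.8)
The plan is to prove the theorem by establishing two inclusions between $\AAA$ and the Kronecker factor $\mathbb{K}$ of $(\YY,\RR,\nu)$: first, that $(\AAA,\psi_*\nu,\RR)$ is a measure-theoretic pure point factor of $(\YY,\RR,\nu)$ via the map $\psi$ from Corollary~\ref{coro:torus-2}, hence a factor of $\mathbb{K}$; and second, that conversely every eigenfunction of $(\YY,\RR,\nu)$ is measurable with respect to the $\sigma$-algebra pulled back by $\psi$, so that $\mathbb{K}$ is a factor of $\AAA$. Together these force $\mathbb{K}=\AAA$. The same argument then applies verbatim to the random noble means and random period doubling systems, with the CPS of \eqref{eq:CPS} replaced by the corresponding one.

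For the first inclusion I would begin by checking that $\psi$ is $\RR$-equivariant: if $-r+y\subseteq\oplam(-s+W)$, then $-(r+t)+(t+y)\subseteq\oplam(-s+W)$ and uniqueness transfers, so $t+y\in\YY'$ and $\psi(t+y)=\psi(y)+(t,0)$ modulo $\cL$; in particular $\YY'$ is $\RR$-invariant and of full measure. Since the translation flow $t\mapsto(t,0)+\cL$ on the torus $\AAA=(\RR\times H)/\cL$ is minimal and uniquely ergodic — because $\pi(\cL)$ is dense in $\RR$ and its image under the $\star$-map is dense in $H$, compare \cite{BLM} and \cite[Sec.~7]{BG} — the pushforward $\psi_*\nu$ is an $\RR$-invariant Borel probability measure on $\AAA$ and therefore equals Haar measure. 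Thus $(\AAA,\psi_*\nu,\RR)$ is a measure-theoretic factor of $(\YY,\RR,\nu)$, and being a rotation on a compact Abelian group it has pure point dynamical spectrum; by the universal property of the Kronecker factor, $\AAA$ is a factor of $\mathbb{K}$.

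For the reverse inclusion the task is to show that $(\YY,\RR,\nu)$ admits no eigenvalues beyond the eigenvalue group of the torus rotation on $\AAA$, which by the standard description of the MEF of a regular model set is precisely the Fourier module $\ZZ[\tau]/\sqrt{5}$ attached to \eqref{eq:CPS}. Here I would invoke the diffraction computation of Theorem~\ref{theo:c}: the pure point part of the diffraction of $\delta_{\vL}$ is supported exactly on $\ZZ[\tau]/\sqrt{5}$, and via the consistency between diffraction and dynamical spectra (\cite{LMS,BL1,Daniel}), together with the fact that the Dirac-comb (pair-correlation) observable is cyclic for the pure point component of the Koopman representation, the dynamical point spectrum of $(\YY,\RR,\nu)$ is exactly $\ZZ[\tau]/\sqrt{5}$. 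Since $\nu$ is ergodic, every eigenvalue is simple, and for $k\in\ZZ[\tau]/\sqrt{5}$ the corresponding one-dimensional eigenspace is spanned by $\chi_k\circ\psi$, where $\chi_k$ is the character of $\AAA$ realising $k$ (these are eigenfunctions by the discussion preceding the theorem). Hence every eigenfunction of $(\YY,\RR,\nu)$ is, up to a scalar, of the form $\chi\circ\psi$ and thus $\psi$-measurable, so $\mathbb{K}\subseteq\psi^{-1}(\mathcal{B}_{\AAA})$; combined with the first inclusion this gives $\mathbb{K}=\AAA$.

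The main obstacle is the upper bound in the reverse inclusion, i.e.\ excluding eigenvalues outside $\ZZ[\tau]/\sqrt{5}$: the lower bound $\ZZ[\tau]/\sqrt{5}\subseteq$ (eigenvalues) is the easy Dworkin-type direction, already witnessed by the eigenfunctions $\chi\circ\psi$, but the upper bound relies on the Dirac-comb observable detecting the \emph{entire} pure point dynamical spectrum. If one prefers to avoid citing spectral completeness, the alternative is a direct fibre argument, in the spirit of the proof of Proposition~\ref{prop:fibo-cont}: one shows that a measurable eigenfunction cannot, modulo a $\nu$-null set, separate two generic one-sided thinnings that lift densely into the same covering window, using the asymptotic gluing structure of the fibre $Y_0$ together with a martingale/Lusin approximation to pass from the continuous statement to an almost-everywhere one. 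Either route isolates exactly this point as the place where the work lies.
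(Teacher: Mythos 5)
Your proposal is correct and follows essentially the same route as the paper: $\psi$ realises $\AAA$ as a measure-theoretic pure point factor, and maximality is deduced from Theorem~\ref{theo:c} because the dual group of $\AAA$ coincides with the Fourier module of the pure point spectrum, so that the characters $\chi\circ\psi$ exhaust the eigenfunctions. You merely spell out the equivariance and unique-ergodicity details and explicitly flag the one nontrivial step---that the Dirac-comb observable detects the \emph{entire} dynamical point spectrum---which the paper's proof also relies on but leaves implicit.
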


\begin{proof}
  The mapping $\psi \! : \, \YY' \xrightarrow{\quad} \AAA$ from above
  is the measure-theoretic factor map onto $\AAA$. The maximality of
  this factor is a consequence of Theorem~\ref{theo:c}, as the dual
  group of $\AAA$ is precisely the Fourier module of the pure point
  spectrum, which is tantamount to saying that the mappings
  $\chi\circ \psi$ on $\YY'$ account for \emph{all} eigenfunctions of
  our system.
\end{proof}

Another approach, via a different view on the projection method,
was recently suggested by Keller and Richard \cite{KR}. In this
setting, as detailed in \cite[Def.~2.2]{Keller}, the notion of an `almost' 
MEF appears naturally, and is called a \emph{maximal equicontinuous generic
factor}, or MEGF for short. It is defined as
\[
     \YY'' \, := \, \{  y \in \YY : \overline{\RR + y} = \YY \} \ts ,
\]
which is closely related to the set $\YY'$ defined earlier. Now, due
to Corollary~\ref{coro:torus-2}, we can apply \cite[Thm.~2.4
(iii)]{Keller} to
$\psi \! : \, \YY' \cap \YY'' \xrightarrow{\quad} \AAA$ to derive that
$\AAA$ is a factor of the MEGF, and that $\psi$ extends to a
continuous mapping $\psi \! : \, \YY'' \xrightarrow{\quad}
\AAA$.
Since we already know that $\AAA$ is the Kronecker factor, being a
factor of $\YY''$ means that it is metrically isomorphic to the MEGF.

In fact, one can see that $\YY'' \subseteq \YY'$ in this situation. If
$y \in \YY''$ and $y^{}_{0} \in Y_0$, we can find a sequence
$(t_n)^{}_{n\in\NN}$ so that $t_n + y \xrightarrow{\quad}
y^{}_{0}$ as $n\to\infty$. Since $y^{}_{0}$ determines the
window $W$ by Proposition~\ref{prop:fix-window}, a 
computation similar to the ones at the end of 
Lemma~\ref{lem:torus} and  Corollary~\ref{coro:torus-2} 
shows that $y \in \YY'$. Putting the pieces together gives the 
following alternative view to our constructive approach.

\begin{coro}
  In the setting and notation of 
  Theorem~\textnormal{\ref{thm:MEF}}, the 
  following assertions hold.
\begin{enumerate}\itemsep=2pt
 \item  $\AAA$ is the MEGF
      of\/ $\YY$, and\/ $\psi \! : \, \YY' \xrightarrow{\quad} \AAA$
      is continuous.
  \item  The eigenfunctions of\/ $(\YY, \RR, \nu)$ are
      continuous on\/ $\YY'$. 
  \item $(\YY, \RR, \nu)$ is not weakly mixing.
 \qed
\end{enumerate}  
\end{coro}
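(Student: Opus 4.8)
The plan is to read off all three assertions from the discussion immediately preceding the corollary, so that the proof amounts to bookkeeping with the torus parametrisation and the identification of the Kronecker factor; no new computation is required.

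For part (1), the continuity of $\psi \! : \, \YY' \xrightarrow{\quad} \AAA$ has already been established in the Proposition preceding Theorem~\ref{thm:MEF}, so the only genuinely new point is the identification of $\AAA$ with the MEGF. I would combine three facts already in hand. First, by Corollary~\ref{coro:torus-2} the restriction $\psi|_{\YY' \cap \YY''}$ is a continuous map onto $\AAA$ satisfying the hypotheses of \cite[Thm.~2.4(iii)]{Keller}, which shows that $\AAA$ is a factor of the MEGF and that $\psi$ extends to a continuous map on $\YY''$. Second, the MEGF is equicontinuous and hence, as a measure-theoretic factor, factors through the Kronecker factor, which by Theorem~\ref{thm:MEF} is exactly $\AAA$. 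Third, one has the inclusion $\YY'' \subseteq \YY'$, obtained from the window-determination argument of Proposition~\ref{prop:fix-window} run as at the end of the proof of Corollary~\ref{coro:torus-2}. The first two facts force $\AAA$ and the MEGF to be metrically isomorphic, and the third shows that the continuous map $\psi|_{\YY''}$ realises this isomorphism, so $\AAA$ \emph{is} the MEGF, with $\psi$ continuous on $\YY' \supseteq \YY''$.

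For part (2), I would use that $\AAA$ is the Kronecker factor via $\psi$, as established in Theorem~\ref{thm:MEF}. Every eigenvalue $k$ of $(\YY, \RR, \nu)$ lies in the dual group $\widehat{\AAA}$, and the corresponding eigenfunction agrees $\nu$-a.e. with $c \cdot (\chi \circ \psi)$, where $\chi \in \widehat{\AAA}$ is the character dual to $k$ and $c$ is a nonzero constant. Since $\psi$ is continuous on $\YY'$ and $\chi$ is continuous on the torus $\AAA$, the composition $\chi \circ \psi$ is continuous on $\YY'$, which supplies the claimed continuous representative. For part (3), note that $\AAA = (\RR \nts \times \! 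H)/\cL$ is a $2$-torus and hence a nontrivial equicontinuous system; its characters pull back under $\psi$ to nonconstant eigenfunctions of $(\YY, \RR, \nu)$. A measure-preserving flow possessing a nonconstant eigenfunction is not weakly mixing, which is the assertion. Equivalently, Theorem~\ref{theo:c} exhibits a nonzero pure point component $\widehat{\gamma}^{}_{\mathsf{pp}}$ of the diffraction supported on $\ZZ[\tau]/\sqrt 5$, so the dynamical spectrum carries nontrivial eigenvalues.

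The main obstacle is not in the corollary itself but in the inputs it draws on: the delicate point, already carried out above, is that topological genericity ($\overline{\RR + y} = \YY$) forces the covering window to be uniquely determined, so that $\YY'' \subseteq \YY'$ and \cite[Thm.~2.4(iii)]{Keller} applies. Once that and the continuity of $\psi$ on $\YY'$ are granted, everything else here is formal.
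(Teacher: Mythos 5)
Your proposal is correct and follows essentially the same route as the paper: the corollary is read off from the preceding discussion, namely the application of Keller's result to $\psi$ on $\YY'\cap\YY''$, the identification of $\AAA$ as the Kronecker factor from Theorem~\ref{thm:MEF} (which forces the metric isomorphism with the MEGF), the inclusion $\YY''\subseteq\YY'$ via the window-determination argument, and the standard facts that eigenfunctions are a.e.\ multiples of $\chi\circ\psi$ and that a nonconstant eigenfunction rules out weak mixing. No discrepancy with the paper's argument.
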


\subsection{Interpretation via disintegration}

Now, consider the regular model set $\vL = \oplam (W)$ and the
dynamical system obtained as the orbit closure under the
$\RR$-action. This is a uniquely ergodic system with pure point
spectrum, and it is a.e.\ $1:1$ over its MEF, which is a $2$-torus in
our case at hand. This one also acts as the Kronecker factor for our
system $(\YY,\RR,\nu)$, where the map is only defined for $\nu$-almost
every element of $\YY$ by first identifying the unique covering model
set and then projecting down to the MEF.

The MEF of the covering model set is the compact Abelian group $\AAA$,
which is the \mbox{$2$-torus} equipped with Lebesgue measure as its
Haar measure. Here, the translation action is represented by a group
addition with dense range, as mentioned earlier. Now, over every
$a\in\AAA$, we have a fibre $Y_a \subset \YY$ together with a
probability measure $\nu^{}_{a}$ on it. For $a=0$, this is just our
special fibre $Y_0$ from above. These fibre measures are compatible
with the (normalised) Haar measure on $\AAA$ as needed for a
disintegration formula. For our dynamical system $(\YY, \RR, \nu)$ and
any $f \in L^1 (\YY, \nu)$, we then have
\begin{equation}\label{eq:disintegration}
     \EE (f) \, =  \int_{\YY} f (y) \dd \nu (y) 
     \, = \int_{\AAA} \int_{Y_{a}}
     f (y) \dd \nu^{}_{a} (y)  \dd a  \, = \int_{\AAA}
     \EE (f | Y_a) \dd a \ts ,
\end{equation}
in line with the general theory; see \cite[Ch.~5.4]{Fuerst}.

Analogous expressions hold for measure-valued quantities.  When the
inner integral is translation invariant on $\AAA$, as is the case for
the autocorrelation and diffraction measures encountered earlier, the
expectation can be obtained from the conditional expectation over one
fibre, say $Y_0$, which is precisely the approach taken in the
previous sections.

\begin{remark}
  The analogous procedure also works, step by step, for the random
  noble means inflations from Section~\ref{sec:ran-nob}. The
  topological point spectrum is once again trivial. Here, the proof
  uses the existence of elements $y^{}_{1}$ and $y^{}_{2}$ in the
  special fibre $Y_0$ such that $1 + y^{}_{1}$ as well as
  $\lambda^{}_{m} + y^{}_{2}$ also lie in $Y_0$, with the same
  conclusion as before; see \cite{diss-T} for details.  Also,
  non-trivial eigenfunctions are only discontinuous on a null set.
  
  Moreover, the entire structure with the covering model set and its
  dynamical system carries over, thus establishing the MEF of the
  model set as the Kronecker factor of $(\YY, \RR, \nu)$, and as its 
  MEGF. The disintegration then works the same way as in the 
  Fibonacci case.
  \exend
\end{remark}

\subsection{Random period doubling chain}\label{sec:pd-eigen}

Here, the situation is slightly different for two reasons. First, the
substitution \eqref{eq:def-ran-pd} is of constant length, which means
that we can identify the discrete and the tiling picture and work with
the $\ZZ$-action of the shift.  Second, the connection to a model set
requires a $2$-adic internal space, so that handling windows is more
complicated.

Let $\XX^{}_0$ denote the discrete hull, and $(\XX^{}_0, \ZZ, \nu)$
the corresponding dynamical system, with $\nu$ denoting the patch 
frequency measure, which is ergodic. As before, $\XX^{}_0$ contains 
a special fibre, denoted $X_0$, which contains all elements that are
realisations in the form of two level-$\infty$ supertiles (or
superwords) meeting at $0$. Here, the eigenfunction
equation takes the form
\[
      f (n + x) \, = \, \ee^{2 \pi \ii k n} f (x)
\]
for some $k \in \TT$, the dual group of $\ZZ$, and then all $n\in\ZZ$.
We represent $\TT$ as the half-open interval $[0,1)$ with addition
modulo $1$.

\begin{prop}\label{prop:cont-pd}
  The topological point spectrum of\/ $(\XX^{}_0, \ZZ, \nu)$ is
  trivial, which is to say that the only continuous eigenfunction is
  the constant one.
\end{prop}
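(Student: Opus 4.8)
The plan is to adapt the proof of Proposition~\ref{prop:fibo-cont} to the constant-length, $2$-adic setting. Let $f$ be a continuous eigenfunction of $(\XX^{}_0,\ZZ,\nu)$ with eigenvalue $k\in\TT=[0,1)$. By ergodicity (and continuity of $f$), $\lvert f\rvert$ is constant, and we normalise it to $1$. It will suffice to exhibit a single $x\in X_0$ with $1+x\in X_0$ and to know that $f$ is constant on the fibre $X_0$: then $f(x)=f(1+x)=\ee^{2\pi\ii k}f(x)$, so $\ee^{2\pi\ii k}=1$, and since $k\in[0,1)$ this forces $k=0$, whence $f$ is constant.

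\emph{Constancy of $f$ on $X_0$.} As in the Fibonacci case, a continuous eigenfunction cannot separate two left-asymptotic (nor two right-asymptotic) elements of $\XX^{}_0$: if $y,z\in\XX^{}_0$ agree on a left half-line, then $-j+y$ and $-j+z$ agree on ever larger windows as $j\to\infty$; passing to a subsequence along which both converge to a common limit $w$ and along which $\ee^{-2\pi\ii kj}\to\zeta\neq 0$ yields $\zeta f(y)=f(w)=\zeta f(z)$, hence $f(y)=f(z)$. Now, for $x,x'\in X_0$, split both at the origin and form the cut-and-paste $x^{}_{\mathrm L}\,|\,x'_{\mathrm R}$. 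It again lies in $X_0$: the nested super-tile structure is inherited from $x^{}_{\mathrm L}$ on the left and from $x'_{\mathrm R}$ on the right, and the seam, sitting at a super-tile boundary of every level, merely joins two legal super-tiles $\rho^r(c)$ and $\rho^r(c')$ with $cc'$ a legal two-letter word (note that $bb$, unlike $bbb$, is a legal word for $\rho$). Since $x$ and $x^{}_{\mathrm L}\,|\,x'_{\mathrm R}$ are left-asymptotic while $x'$ and $x^{}_{\mathrm L}\,|\,x'_{\mathrm R}$ are right-asymptotic, we get $f(x)=f(x^{}_{\mathrm L}\,|\,x'_{\mathrm R})=f(x')$.

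\emph{The element.} Take $x=x^{}_{1}$, the two-sided period doubling word (the $\rho^{}_{\mathrm{pd}}$-fixed point with seed $a|a$); clearly $x^{}_{1}\in X_0$. Although $\rho^{}_{\mathrm{pd}}$ is recognisable, the \emph{random} substitution $\rho$ is not, and $x^{}_{1}$ carries a second super-tile structure, offset by one position. Indeed, every even position of $x^{}_{1}$ carries an $a$, so the two-letter block of $x^{}_{1}$ at positions $2j+1,2j+2$ equals $ba$ or $aa$, and both are $\rho$-images — of $a$ via the rule $a\mapsto ba$, respectively of $b$. Desubstituting $x^{}_{1}$ along these odd-aligned blocks returns $x^{}_{1}$ again (this is just the period doubling recursion: the block is $ba$ precisely when $(x^{}_{1})^{}_j=a$), so the odd-aligned block decomposition is self-consistent through all levels and equips $x^{}_{1}$ with a second level-$\infty$ point, at position $1$. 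Reading $x^{}_{1}$ as two level-$\infty$ super-words meeting at $1$ then shows that $1+x^{}_{1}\in X_0$, which completes the argument.

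The main obstacle is this last step: one must check that the shifted, odd-aligned super-tile decomposition is coherent at \emph{every} level of the random inflation, not merely the first few — precisely the place where the extra rule $a\mapsto ba$ (unavailable for the deterministic period doubling chain, whose topological point spectrum is of course nontrivial) is doing the work. The constancy of $f$ on $X_0$ is routine once the absence of $bbb$ among the legal words is noted.
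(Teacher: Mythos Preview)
Your overall strategy matches the paper's: show $f$ is constant on the fibre $X_0$ and then exhibit an element of $X_0$ whose shift by $1$ also lies in $X_0$. The constancy argument is fine (and your observation that \emph{every} two-letter word, including $bb$, is $\rho$-legal is exactly what makes the cut-and-paste stay in $X_0$).

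The gap is in your ``element'' step. Your $x_1$ is a two-sided fixed point of $\rho_{\mathrm{pd}}^{2}$, not of $\rho_{\mathrm{pd}}$ --- there is no two-sided $\rho_{\mathrm{pd}}$-fixed point --- and the self-recursion $(x_1)_{2j+1}=\overline{(x_1)_j}$ you invoke fails on the left half: at $j=-1$ it would force $(x_1)_{-1}=\overline{(x_1)_{-1}}$. Concretely, the block $(x_{-1},x_0)=(a,a)$ desubstitutes to $b$, whereas $(x_1)_{-1}=a$. The odd-aligned desubstitution of $x_1$ therefore does \emph{not} return $x_1$; it returns the other $\rho_{\mathrm{pd}}^{2}$-fixed point $z$ (seed $b|a$). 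Your ``self-consistent through all levels'' claim, as justified, is false. The conclusion $1+x_1\in X_0$ is nonetheless true, but one has to argue via the $2$-cycle $x_1\leftrightarrow z$: odd-aligned desubstitution sends $x_1$ to $z$, even-aligned desubstitution of $z$ (which has its standard boundary at $0$) sends $z$ back to $x_1$, and iterating this alternation yields the required nested super-tile boundary at position $1$ at every level.

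For comparison, the paper avoids this subtlety entirely by using the \emph{periodic} element $\overline{aab}\in\XX_0$. One checks directly that it admits level-$1$ super-tile decompositions aligned at each of its three phases (the paper displays them), and desubstituting the period-$3$ sequence under the length-$2$ random rule again yields a period-$3$ sequence, so the decomposition propagates to all levels at each phase. Hence $\overline{aab}$ and its two non-trivial shifts all lie in $X_0$, and $f(\overline{aab})=f(S\,\overline{aab})=\ee^{2\pi\ii k}f(\overline{aab})$ immediately gives $k=0$.
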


\begin{proof}
  Let $f$ be a continuous eigenfunction. As in the proof of
  Proposition~\ref{prop:fibo-cont}, $\lvert f \rvert$ is continuous
  and invariant and hence constant (as $(\XX_0, \ZZ, \nu)$ is
  ergodic). Moreover, $f$ is again constant on the special fibre
  $X_0$.
  
  Next, observe that $\XX_0$ contains a periodic element, namely the
  one obtained by periodic repetition of the $3$-letter word $aab$.
  What is more, it is contained in the fibre $X_0$ in three different
  ways, as is apparent from
\[
   \begin{split}
   \cdots \underline{b\ts a} \, \underline{ab} \, \underline{aa} \, 
   \underline{b\ts a} \, \underline{ab} \ts & | \ts 
   \underline{aa} \, \underline{b\ts a} \, \underline{ab} \,
   \underline{aa} \, \underline{ba} \cdots \\
   \cdots \underline{ab} \, \underline{aa} \, \underline{ba} \,
   \underline{ab} \, \underline{aa} \ts & | \ts 
   \underline{b\ts a} \, \underline{ab} \, \underline{aa} \,
   \underline{b\ts a} \, \underline{ab} \ts \cdots \\
   \cdots \underline{aa} \, \underline{ba} \, \underline{ab} \, 
   \underline{aa} \, \underline{b\ts a} \ts & | \ts 
   \underline{ab} \, \underline{aa} \,  \underline{b\ts a} \, 
   \underline{ab} \, \underline{aa}  \cdots \ts .
   \end{split}
\]  
Since $f$ takes the same value on all three, which are translates
of one another, we get
\[  
       \ee^{2 \pi \ii k} \, = \, 1  \quad \text{with } \, k \in \TT ,
\]
which implies $k=0$. Thus, $f$ must be the constant eigenfunction 
as claimed.
\end{proof}

As we can already see from our diffraction analysis in
Section~\ref{sec:ran-pd}, the measure-theoretic point spectrum of
$(\XX_0, \ZZ, \nu)$ is given by
$\TT \cap \ZZ\bigl[ \frac{1}{2} \bigr]$. As before, $\XX^{}_0$
contains an open set of full measure, $\XX^{\prime}_{0}$ say, with the
property that all eigenfunctions are continuous on it. The
discontinuity is thus once again caused by a null set in the hull.

\begin{remark}
  Via a suspension with a constant roof function, the discrete
  dynamical system $(\XX_0, \ZZ, \nu)$ can be embedded into a flow,
  written as $(\XX, \RR,\nu^{}_{\RR})$ with $\nu^{}_{\RR}$ being the
  standard extension of $\nu$ to an invariant probability measure on
  $\XX$.  This system is also ergodic, and the topological point
  spectrum becomes $\ZZ$, while the measure-theoretic point spectrum
  is all of $\ZZ \bigl[ \frac{1}{2}\bigr]$. The additional continuous
  eigenfunctions in comparison to Proposition~\ref{prop:cont-pd}
  trivially emerge from the suspension. In terms of the approach via
  the Fourier--Bohr coefficients, this can be seen by adding a complex
  weight of the form $\ee^{2 \pi \ii n k}$ with a fixed $n\in\ZZ$,
  which results in a phase change for the continuous flow, but remains
  invisible for the discrete shift.  \exend
\end{remark}

{}From here, the remainder of the argument is similar to before.  We
get a covering two-component model set, and its MEF as the Kronecker
factor of $(\XX, \RR, \nu^{}_{\RR} )$. Moreover, there exists a continuous
mapping from the set $\XX'$ to the Kronecker factor, and $\XX'$
contains all elements with dense orbit. In particular, the Kronecker
factor is also the MEGF, and all eigenfunctions are continuous on
$\XX'$. As a consequence, we also have
the disintegration as in Eq.~\eqref{eq:disintegration}, which explains
the nice formulas we were able to obtain in Section~\ref{sec:ran-pd}.
At this point, we leave further details to the interested reader.

\section{Outlook}

The focus of this article was on compatible substitutions or
inflations that are ultimately related to a regular model set via an
implicit thinning process. This made the spectral structure fully
accessible. In general, the situation will be more complex, in
particular as far as the relation between the topological point
spectrum and the measure-theoretic one is concerned. Thus, an approach
in several steps seems most promising.

First, one could consider semi-compatible inflations that still share
the same substitution matrix, but do no longer define the same hull;
for one concrete example, where the complete determination of the
diffraction measure is still possible, we refer to \cite{PG}. Next,
one could relax the connection to model sets, and consider local
mixtures of substitutions with singular continuous spectrum. As long
as they still share the left PF eigenvector of the substitution
matrix, concrete results should still be possible because a consistent
geometric realisation exists under this condition.

Whether more general mixtures (such as one between the Fibonacci and
the \mbox{Thue{\ts}--Morse} substitution, which is sometimes called
`Fib-Morse') will lead to reasonable results is presently unclear, but
somewhat doubtful. Prior to such an attempt, a better understanding of
the general structure of random substitutions and their hulls is
needed, where we refer to \cite{RS} for some first systematic steps.

\section*{Appendix}

In several places in the main manuscript, we have used approximation
results for the autocorrelation of a translation bounded pure point
measure.  Our concrete justification for these steps was based on a
certain type of uniform convergence of the approximating measures on
finite regions of growing size (in fact, we have used a one-sided
version of such an approximation). Clearly, the concrete criteria were
sufficient, but certainly not necessary. In this appendix, we look at
this situation in a slightly more systematic way. We do this first for
an approximation by unbounded (but still translation bounded)
measures, and then for the case that one uses \emph{finite} measures
as approximations, which is the typical scenario in inflation-based
systems.

Below, we formulate our results for measures in $\RR^d$, and refer to
\cite{future} for a more general setting. Note that there is also a
complementary selection of results on the Fourier side of the coin,
that is, there are several approximation results for the diffraction
measure of a given translation bounded measure
$\omega \in \cM^{\infty} (\RR^d)$. Though this is both interesting and
relevant in its own right, we concentrate on the autocorrelation
measures here, and refer to \cite{BG,future} for results on their
Fourier transforms.

\subsection{Approximation by unbounded measures}

Consider a sequence $(\mu^{}_n )^{}_{n\in\NN}$ of translation bounded
measure such that $\mu^{}_{n} \xrightarrow{\,n\to\infty\,} \mu$ in the
vague topology. Assume that a van Hove sequence
$\cA = (A^{}_{m})^{}_{m\in\NN}$ is given such that the
autocorrelations
$\gamma^{}_{n} = \mu^{}_{n} \circledast \widetilde{\mu^{}_{n}}$ and
$\gamma = \mu \circledast \widetilde{\mu}$ all exist along $\cA$. When
is it true that also $\gamma = \lim_{n\to\infty} \gamma^{}_{n} $
holds?  That this must fail in general can easily be seen from an
example, such as
$\mu^{}_{n} = \delta^{}_{2\ZZ} + \delta^{}_{(2\ZZ +1) \cap [-n,n]}$.
Here, one has $\gamma = \mu = \delta^{}_{\ZZ}$, but
$\gamma^{}_{n} = \frac{1}{2}\ts \delta^{}_{2\ZZ}$, for all $n\in\NN$.
It is thus clear that one needs some other relation between $\mu$ and
$\mu^{}_{n}$. In particular, as we shall see, we do \emph{not} need
vague convergence, while convergence in a different topology is what
counts. This is common in the diffraction context, as nicely outlined
in \cite{Bob-tale}.\smallskip

Let a van Hove sequence $\cA$ be fixed, with all $A_m$ compact.  We
assume $A_m \subset A_{m+1}$, together with the usual condition that
$\vol (\partial^K \! A_m) = \scO \ts (\vol (A_m))$ as $m\to\infty$ for
any compact $K\subset \RR^d$, where $\partial^K \nts S$ is the
$K$-boundary of the set $S$; see \cite{BG} for details. There are
various possible generalisations of this setting, which we omit here.
We need $\cA$ for averages of various kinds, such as the
volume-averaged or \emph{Eberlein convolution} $f\circledast g$ of two
(locally integrable) functions $f$ and $g$, as (pointwise) given by
\begin{equation}\label{eq:fun-Eberlein}
       \bigl( f \circledast g \bigr) (x) \, := \lim_{m\to\infty}
       \frac{\bigl(f |^{}_{A_m} \! * g |^{}_{A_m} \bigr) (x)}
       {\vol (A_m)} \ts ,
\end{equation}
whenever this limit exists. Here, $f |^{}_{A_m}$ denotes the
restriction of $f$ to $A_m$ and $*$ the ordinary convolution of
functions. Operations of this type are needed when dealing with almost
periodic functions and related objects. Note that
$f \circledast g = 0$ if $f$ or $g$ has compact support.

Let $C_{\mathsf{u}} (\RR^d)$ denote the space of uniformly continuous
and bounded functions. The \emph{mean}  of
$ f \in C_{\mathsf{u}} (\RR^d)$ relative to $\cA$ is defined as
\[
     M (f) \, := \lim_{m\to\infty} \myfrac{1}{\vol (A_m)}
     \int_{A_m}  \! f (x) \dd x  \ts ,
\]
provided the limit exists. This is certainly the case for all weakly
almost periodic functions, but not for all
$ f \in C_{\mathsf{u}} (\RR^d)$. In contrast, one can define the
\emph{upper absolute mean} along $\cA$ as
\begin{equation}\label{eq:abs-mean}
     \overline{M} (f) \, := \, \limsup_{m\to\infty} \,
     \myfrac{1}{\vol (A_m)}
     \int_{A_m} \! \lvert f (x) \rvert \dd x \ts ,
\end{equation}
which clearly exists for all $ f \in C_{\mathsf{u}} (\RR^d)$ and
satisfies
$\overline{M} (f) = \overline{M} \bigl( \lvert f \rvert \bigr)$.  This
way, $\overline{M} (.)$ defines a semi-norm on
$C_{\mathsf{u}} (\RR^d)$. There are obvious variants of this
definition, but we only need this simple version below.  Here, a
sequence $(f^{}_{n})^{}_{n\in\NN}$ of functions from
$C_{\mathsf{u}} (\RR^d)$ is \emph{mean convergent} to
$g\in C_{\mathsf{u}} (\RR^d)$ if
$\overline{M} (f^{}_{n}\nts - g) \xrightarrow{\, n\to\infty\,} 0$. We
use $f^{}_{n} \rightsquigarrow g$ to denote this type of convergence.

Now, let $f,g,h \in C_{\mathsf{u}} (\RR^d)$ and assume that both
$f\circledast h$ and $g\circledast h$ exist relative to $\cA$, as
defined by Eq.~\eqref{eq:fun-Eberlein}. It is elementary to verify 
the estimate
\begin{equation}\label{eq:mean-est}
   \| f \circledast h - g \circledast h \|^{}_{\infty} \, 
   \leqslant \, \overline{M} (f-g) \, \| h \|^{}_{\infty} \ts ,
\end{equation}
which has the following important consequence.

\begin{fact}
  Let\/ $f^{}_{n}, g, h \in C_{\mathsf{u}} (\RR^d)$, where\/
  $n\in\NN$. Assume that\/ $f^{}_{n} \rightsquigarrow g$ as\/
  $n\to\infty$ and that\/ $g \circledast h$ as well as\/
  $f^{}_{n}\nts \circledast h$ exists for all\/ $n\in\NN$ with respect
  to a given van Hove sequence\/ $\cA$.  Then, one has\/
  $\,\lim_{n\to\infty} \| f^{}_{n} \nts \circledast h - g \circledast
  h \|^{}_{\infty} = 0$.  \qed
\end{fact}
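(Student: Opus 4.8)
The plan is to read the assertion off the estimate in Eq.~\eqref{eq:mean-est}, which was stated for arbitrary $f,g,h\in C_{\mathsf{u}}(\RR^d)$ whose Eberlein convolutions $f\circledast h$ and $g\circledast h$ exist along $\cA$. These are precisely our hypotheses, applied with $f=f^{}_{n}$ for each $n\in\NN$. Hence
\[
   \bigl\| f^{}_{n} \circledast h - g \circledast h \bigr\|^{}_{\infty}
   \, \leqslant \, \overline{M} (f^{}_{n} - g)\, \| h \|^{}_{\infty}
   \qquad \text{for all } n\in\NN \ts .
\]
Since $h$ is bounded, one has $\| h \|^{}_{\infty} < \infty$, and since $f^{}_{n} \rightsquigarrow g$ is by definition the statement that $\overline{M}(f^{}_{n} - g) \xrightarrow{\,n\to\infty\,} 0$, the right-hand side tends to $0$. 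This yields $\| f^{}_{n}\circledast h - g\circledast h \|^{}_{\infty} \to 0$, as claimed.

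For a self-contained account one would, in addition, verify Eq.~\eqref{eq:mean-est} itself, which is the only substantive point. This is carried out pointwise: for fixed $x\in\RR^d$, the triangle inequality applied inside the restricted convolution of Eq.~\eqref{eq:fun-Eberlein} gives $\bigl| ( (f-g)|^{}_{A_m} * h|^{}_{A_m})(x) \bigr| \leqslant \| h \|^{}_{\infty} \int_{A_m} \lvert (f-g)(y) \rvert \dd y$; dividing by $\vol(A_m)$, letting the left-hand side converge (which it does, by the assumed existence of both Eberlein convolutions) and bounding the right-hand side by its $\limsup$ yields $\lvert (f\circledast h)(x) - (g\circledast h)(x) \rvert \leqslant \| h \|^{}_{\infty}\, \overline{M}(f-g)$; taking the supremum over $x$ then gives Eq.~\eqref{eq:mean-est}.

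There is no genuine obstacle here; the only point worth a moment's attention is the order of operations. Because the Eberlein convolution is defined as a pointwise limit, one must perform the estimate at a fixed $x$ and take the supremum over $x$ only afterwards, rather than attempting to pass to the limit inside the $\|\cdot\|^{}_{\infty}$-norm directly. The van Hove property of $\cA$ enters only implicitly, through the fact that $\overline{M}(\cdot)$ from Eq.~\eqref{eq:abs-mean} is the well-defined semi-norm set up above and that the averaged convolutions behave as expected.
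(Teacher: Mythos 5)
Your argument is correct and is exactly the paper's intended one: the Fact is stated as an immediate consequence of Eq.~\eqref{eq:mean-est}, applied with $f=f^{}_{n}$, together with $\| h \|^{}_{\infty}<\infty$ and the definition of $f^{}_{n}\rightsquigarrow g$. Your supplementary pointwise verification of Eq.~\eqref{eq:mean-est} (estimate at fixed $x$, pass to the limit along $\cA$, then take the supremum) is the ``elementary'' check the paper leaves to the reader, and it is carried out correctly.
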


The crucial observation here is that the convergence in mean for the
$f^{}_{n}$ implies a much stronger type of convergence after Eberlein
convolution. It is a known phenomenon that the Eberlein convolution
usually has nicer properties than the original functions.  Indeed, the
Eberlein convolution of two functions is weakly almost periodic, and
the Eberlein convolution of two weakly almost periodic functions
becomes uniformly (or Bohr) almost periodic.  \smallskip

Let $\cM^{\infty} (\RR^d)$ denote the space of translation bounded
Radon measures on $\RR^d$, which we primarily see as continuous linear
functionals over $C_{\mathsf{c}} (\RR^d)$, the space of continuous
functions with compact support, but we also identify Radon measures
with regular measures over the Borel $\sigma$-algebra by the general
Riesz--Markov representation theorem for this case \cite{Dieu}. Now,
given a sequence $(\mu^{}_{n})^{}_{n\in\NN}$ of measures from
$\cM^{\infty} (\RR^d)$, we need convergence in different
topologies. The standard one is \emph{vague convergence}, denoted by
$\mu^{}_{n} \rightarrow \mu$, which means
$\lim_{n\to\infty} \mu^{}_{n} (f) = \mu(f)$ for all
$f\in C_{\mathsf{c}} (\RR^d)$.  Since
$\mu (f) = \bigl(\mu * f_{\text{-}} \bigr) (0)$, where
$f_{\text{-}} (x) = f(-x)$, one can equivalently characterise vague
convergence via
$\lim_{n\to\infty} \bigl(\mu^{}_{n} \nts * g) (0) = \bigl( \mu *
g\bigr) (0)$
for all $g\in C_{\mathsf{c}} (\RR^d)$.  It is easy to see that this is
actually equivalent to the seemingly stronger
$\mu^{}_{n} \nts *g \xrightarrow{\, n\to\infty\,} \mu*g$ pointwise on
$\RR^d$, for all $g \in C_{\mathsf{c}} (\RR^d)$.

Next, we speak of \emph{norm convergence} to a measure
$\mu\in\cM^{\infty} (\RR^d)$, denoted by $\mu^{}_{n} \Rightarrow \mu$,
if
$\| \mu^{}_{n} \nts - \mu \|^{}_{K} \xrightarrow{\, n\to\infty\,} 0$
for some (fixed) compact set $\varnothing \ne K\subset \RR^d$ that is
the closure of its interior, where
$\| \mu \|^{}_{K} := \sup_{t\in\RR^d} \lvert \mu \rvert (t+K)$.  Note
that any $K$ with $\varnothing \ne K=\overline{K^{\circ}}$ defines the
same topology.  Next, we speak of \emph{convergence in the product
  topology}, denoted by
$\mu^{}_{n} \stackrel{\pi\,}{\rightarrow} \mu$, if one has
$\| (\mu^{}_{n} \nts - \mu) * g \|^{}_{\infty} \xrightarrow{\,
  n\to\infty\,} 0$
for all $g\in C_{\mathsf{c}} (\RR^d)$; and of \emph{mean convergence},
as before denoted by $\mu^{}_{n} \rightsquigarrow \mu$, if
$\mu^{}_{n} \! * g \rightsquigarrow \mu*g$ as $n\to\infty$ holds for
all $g\in C_{\mathsf{c}} (\RR^d)$.  The topology induced by mean
convergence can also be induced by the family of semi-norms given by
$\lvert \mu \rvert^{}_{g} := \overline{M} (\mu * g )$ with
$g\in C_{\mathsf{c}} (\RR^d)$. It is not a Hausdorff topology.

Via standard estimates, one can now verify the following relations. 

\begin{lemma}\label{lem:topologies}
  For translation bounded measures\/ $\mu^{}_{n}$, with\/ $n\in\NN$,
  and\/ $\mu$, one has the  implications
\[
     \bigl( \mu^{}_{n} \Rightarrow \ts \mu \bigr)
     \;\Longrightarrow \;
     \bigl( \mu^{}_{n} \stackrel{\pi\,}{\rightarrow} \ts \mu \bigr)
     \;\Longrightarrow \; \begin{cases}
     \bigl( \mu^{}_{n} \rightsquigarrow \ts \mu \bigr) \\
     \bigl( \mu^{}_{n} \rightarrow \ts \mu \bigr)   
     \end{cases}  
\]   
none of which is reversible in general. Moreover, considering mean
versus vague convergence, one has that neither of them implies the
other. \qed
\end{lemma}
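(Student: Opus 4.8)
The plan is to establish the chain of implications by a sequence of elementary estimates, and then to settle the non-reversibility claims by explicit counterexamples (most of which are variations of the comb examples already used in the appendix).

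\emph{Step 1: $\bigl(\mu^{}_{n} \Rightarrow \mu\bigr) \Longrightarrow \bigl(\mu^{}_{n} \stackrel{\pi\,}{\rightarrow} \mu\bigr)$.} Fix $g\in C_{\mathsf{c}}(\RR^d)$ and pick a compact $K=\overline{K^{\circ}}$ with $\supp(g) \subseteq K$. For any $x\in\RR^d$, one has the pointwise bound
\[
   \bigl| \bigl( (\mu^{}_{n} - \mu) * g\bigr)(x) \bigr|
   \, \leqslant \, \| g \|^{}_{\infty} \,
   \lvert \mu^{}_{n} - \mu \rvert \bigl( x - \supp(g)\bigr)
   \, \leqslant \, \| g \|^{}_{\infty} \,
   \| \mu^{}_{n} - \mu \|^{}_{K-x} \ts ,
\]
and since the seminorms $\| \cdot \|^{}_{K'}$ for different compact $K'=\overline{(K')^{\circ}}$ are mutually equivalent (a standard covering argument, noting $x - \supp(g)$ lies in a translate of $K$), the right-hand side is bounded by $C \| g \|^{}_{\infty} \| \mu^{}_{n} - \mu \|^{}_{K_0}$ for a fixed reference set $K_0$, uniformly in $x$. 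Taking the supremum over $x$ and then $n\to\infty$ gives $\| (\mu^{}_{n} - \mu)*g \|^{}_{\infty} \to 0$.

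\emph{Step 2: $\bigl(\mu^{}_{n} \stackrel{\pi\,}{\rightarrow} \mu\bigr) \Longrightarrow \bigl(\mu^{}_{n} \rightarrow \mu\bigr)$ and $\bigl(\mu^{}_{n} \rightsquigarrow \mu\bigr)$.} The vague implication is immediate: evaluating at $x=0$ gives $\bigl((\mu^{}_n - \mu)*g\bigr)(0)\to 0$ for all $g\in C_{\mathsf{c}}(\RR^d)$, which is exactly vague convergence by the reformulation recalled just before the lemma. For mean convergence, one uses the trivial estimate $\overline{M}(h) \leqslant \| h \|^{}_{\infty}$, valid for every $h\in C_{\mathsf{u}}(\RR^d)$ and directly from the definition \eqref{eq:abs-mean}. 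Applying it with $h = (\mu^{}_n - \mu)*g$ yields $\lvert \mu^{}_n - \mu \rvert^{}_{g} = \overline{M}\bigl((\mu^{}_n-\mu)*g\bigr) \leqslant \| (\mu^{}_n-\mu)*g \|^{}_{\infty} \to 0$ for each $g\in C_{\mathsf{c}}(\RR^d)$, i.e.\ $\mu^{}_n \rightsquigarrow \mu$. (Here one should check that $(\mu^{}_n-\mu)*g$ is indeed uniformly continuous and bounded, which it is because $\mu^{}_n,\mu$ are translation bounded and $g\in C_{\mathsf{c}}(\RR^d)\subset C_{\mathsf{u}}(\RR^d)$.)

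\emph{Step 3: irreversibility.} For each arrow one exhibits a counterexample in $\RR$. For $\stackrel{\pi}{\rightarrow}$ not implying $\Rightarrow$: take $\mu^{}_n = \delta^{}_{n}$ and $\mu = 0$; then $\mu^{}_n * g \to 0$ uniformly for compactly supported $g$ (the bump moves off to infinity), so $\mu^{}_n \stackrel{\pi}{\rightarrow} 0$, yet $\| \mu^{}_n \|^{}_{K} = 1 \not\to 0$ for any fixed compact $K$. For vague not implying mean: the example $\mu^{}_{n} = \delta^{}_{2\ZZ} + \delta^{}_{(2\ZZ+1)\cap[-n,n]}$ from the start of the appendix has $\mu^{}_n \to \delta^{}_{\ZZ}$ vaguely, but $(\mu^{}_n - \delta^{}_{\ZZ})*g$ is, for large $n$, a shifted copy of $-\delta^{}_{2\ZZ}*g$ far out, whose upper absolute mean is a strictly positive constant independent of $n$, so $\mu^{}_n \not\rightsquigarrow \delta^{}_{\ZZ}$. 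For mean not implying vague: take $\mu^{}_n = \delta^{}_0$ for all $n$ and the sequence \emph{reindexed against} $\mu = \delta^{}_{1/n}$ — more cleanly, let $\mu^{}_{n} = \delta^{}_{x_n}$ with $x_n\to x$ but $x_n\ne x$; wait, that converges vaguely too. A cleaner choice: let $\mu^{}_n$ be a single unit point mass at a point $y_n$ with $y_n \to \infty$, so $\mu^{}_n \rightarrow 0$ vaguely and $\mu^{}_n \rightsquigarrow 0$ (both hold); this does not separate them. To get mean $\not\Rightarrow$ vague, use that mean convergence is insensitive to compactly supported perturbations: take $\mu^{}_n = \delta^{}_{[-n,n]\cap\ZZ}$; then $\mu^{}_n * g$ differs from $\delta^{}_{\ZZ}*g$ only near $\pm n$, so $\overline{M}\bigl((\mu^{}_n - \delta^{}_{\ZZ})*g\bigr)=0$ for all $n$, giving $\mu^{}_n \rightsquigarrow \delta^{}_{\ZZ}$; but also $\mu^{}_n \rightarrow \delta^{}_{\ZZ}$ vaguely, so again no separation. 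The genuine separating example is $\mu^{}_n \equiv \delta^{}_{\ZZ}$ versus $\mu = \delta^{}_{\ZZ + \alpha}$ for $\alpha\notin\ZZ$: these are two \emph{distinct} translation bounded measures with $(\delta^{}_{\ZZ} - \delta^{}_{\ZZ+\alpha})*g$ a nonzero periodic function of positive absolute mean, so $\delta^{}_{\ZZ}\not\rightsquigarrow\delta^{}_{\ZZ+\alpha}$; but the point is rather that mean convergence, being non-Hausdorff, identifies e.g.\ $\delta^{}_{\ZZ}$ with $\delta^{}_{\ZZ} + \delta^{}_{0}$, which differ vaguely. Conversely, vague $\not\Rightarrow$ mean is the $\delta^{}_{2\ZZ}+\delta^{}_{(2\ZZ+1)\cap[-n,n]}$ example above.

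\emph{Main obstacle.} The nontrivial implications themselves are routine once the right seminorm estimates are written down; the genuine care is needed in Step~3, specifically in producing a clean pair of examples showing that mean and vague convergence are \emph{mutually} incomparable — one must exploit in one direction the non-Hausdorff nature of the mean topology (so that a vaguely separated pair can be mean-equivalent), and in the other direction the van Hove averaging (so that a mass escaping to infinity along the van Hove sequence contributes a nonvanishing absolute mean while disappearing vaguely). Getting these two examples simultaneously correct, and verifying the autocorrelation-relevant hypotheses (translation boundedness, existence of the relevant Eberlein limits) in each, is the part that requires attention; everything else is a short chain of $\| \cdot \|^{}_{\infty}$ and $\overline{M}(\cdot)$ bounds.
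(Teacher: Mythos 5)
Your Steps 1 and 2 are correct, and they are exactly the ``standard estimates'' that the paper leaves to the reader (the paper states this lemma without proof and only supplies illustrating counterexamples afterwards). The problem is in Step~3. Your counterexample for the first non-reversibility is wrong: for $\mu^{}_{n} = \delta^{}_{n}$ one has $\bigl(\delta^{}_{n} \nts * g\bigr)(x) = g(x-n)$, so $\| \delta^{}_{n} \nts * g \|^{}_{\infty} = \| g \|^{}_{\infty}$ for every $n$ --- the bump moving off to infinity does not shrink the supremum taken over all of $\RR^{d}$. Hence $\delta^{}_{n}$ does \emph{not} converge to $0$ in the product topology, and your example fails to show that product convergence does not imply norm convergence. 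The correct example (the one the paper gives) is $\delta^{}_{1/n}$ versus $\delta^{}_{0}$: here $(\delta^{}_{1/n} - \delta^{}_{0})*g = g(\cdot - \tfrac{1}{n}) - g$ tends to $0$ uniformly by the uniform continuity of $g$, while $\| \delta^{}_{1/n} - \delta^{}_{0} \|^{}_{K} = 2$ for all large $n$. Ironically, your $\delta^{}_{n}$ is precisely the example needed for the \emph{other} non-reversibility, which you never address: it converges to $0$ both vaguely and in mean, but not in the product topology, so the second arrow of the lemma cannot be reversed either. As it stands, your Step~3 establishes neither of the two non-reversibility claims about the displayed implications.

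On the mutual incomparability of mean and vague convergence, your treatment is ultimately sound: the comb $\mu^{}_{n} = \delta^{}_{2\ZZ} + \delta^{}_{(2\ZZ+1)\cap[-n,n]}$ correctly shows that vague convergence does not imply mean convergence, and after several abandoned attempts you do land on a valid example in the other direction (the constant sequence $\mu^{}_{n} \equiv \delta^{}_{\ZZ}$ converges in mean to $\delta^{}_{\ZZ} + \delta^{}_{0}$, since the difference is compactly supported and thus has vanishing upper absolute mean, but it does not converge vaguely to that measure). Note that the paper's single comb example does both jobs at once, converging vaguely to $\delta^{}_{\ZZ}$ but in mean to $\delta^{}_{2\ZZ}$. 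Please remove the false starts; the passage currently reads like a search log, and the one definitive error above must be fixed before the lemma can be considered proved.
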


Let us give a few examples to illustrate
Lemma~\ref{lem:topologies}. As $n\to\infty$, one has
$\delta^{}_{1/n} \stackrel{\pi\,}{\rightarrow} \delta^{}_0$, but no
convergence in the norm topology. Likewise, $\delta_{n} \rightarrow 0$
and $\delta_{n} \rightsquigarrow 0$, but no convergence in the product
topology. The measures
$\mu^{}_{n} = \delta^{}_{2 \ZZ} + \delta^{}_{(2\ZZ+1)\cap[-n,n]}$ from
above satisfy $\mu^{}_{n} \rightarrow \delta^{}_{\ZZ}$ and
$\mu^{}_{n} \rightsquigarrow \delta^{}_{2\ZZ}$. Note that the last
relation could change if we replace Eq.~\eqref{eq:abs-mean} by some of
the possible variants. In this sense, one has to be careful with the
possible notions of mean convergence \cite{future}.

\begin{remark}
  There are important further relations between the above topologies
  if one restricts them to suitable subclasses of translation bounded
  measures. Let us simply mention some of them, and refer to
  \cite{future} for proofs and further details, in the more general
  setting of locally compact Abelian groups. Here, let 
  $\vL \subset \RR^d$ be a uniformly discrete point set, and select 
  an open neighbourhood $U\subset \RR^d$ of $0$ such that, for any
  $x,y \in \vL$ with $x\ne y$, the sets $x+U$ and $y+U$ are
  disjoint. Further, fix a compact set $K\subset U$ with non-empty
  interior that also satisfies $K=-K$, which is clearly possible.
   
  Now, consider pure point measures
  $\mu , \mu^{}_{n} \in\cM^{\infty} (\RR^d)$, with $n\in\NN$, which
  are all supported in $\vL$. For any $f\in C_{\mathsf{c}} (\RR^d)$
  with $\supp (f) \subset U$, we then have
  $\lvert (\mu^{}_{n} \nts - \mu) * f \rvert = \lvert \mu^{}_{n} \nts
  - \mu \rvert * \lvert f \rvert$
  by \cite[Lemma~5.8.3]{Nicu-book}. If $f$ is chosen such that
  $1^{}_{K} \leqslant f \leqslant 1^{}_{U}$, one can show by standard
  estimates that
  $\| \mu^{}_{n} \nts - \mu \|^{}_{K} \leqslant \| (\mu^{}_{n} \nts -
  \mu)*f \|^{}_{\infty}$.
  Together with Lemma~\ref{lem:topologies}, this implies the
  equivalence of norm convergence and convergence in the product
  topology for such measures, which can be extremely useful in the
  diffraction context, because the spectral type is preserved under
  norm convergence \cite[Thm.~8.4]{BG}.
   
  Next, one has
  $\| \mu^{}_{n} \nts - \mu \|^{}_{K} = \sup_{x\in\vL} \lvert
  \mu^{}_{n} \nts - \mu \rvert (\{ x \})$
  under the same assumptions on the measures $\mu$ and $\mu^{}_{n}$,
  and norm convergence, and hence product convergence, is equivalent
  to \emph{uniform} convergence
  $\mu^{}_{n} (\{ x \}) \xrightarrow{\, n\to\infty\,} \mu (\{ x \})$
  for $x\in\vL$. In contrast, in this setting, simple pointwise
  convergence for $x\in\vL$ is only equivalent to vague convergence,
  $\mu^{}_{n} \rightarrow \mu$.
   
  Finally, for $f\in C_{\mathsf{c}} (\RR^d)$, one has
  $\overline{M} \bigl( (\mu^{}_{n} \nts - \mu) * f\bigr) \leqslant \|
  f \|^{}_{1} \, \overline{M} (\mu^{}_{n} \nts - \mu)$, where
\[
     \overline{M} (\mu) \, := \, \limsup_{n\to\infty} \frac{\lvert \mu 
       \rvert (A_n)}{\vol (A_n)} \ts .
\]
If $\supp (f) \subset U$, one gets equality in the previous estimate,
which shows that $\mu^{}_{n} \rightsquigarrow \mu$ is equivalent with
$\overline{M} (\mu^{}_{n} \nts - \mu) \rightarrow 0$.  \exend
\end{remark}

Behind all these relations, maybe somewhat implicitly, is the
observation that the structure of the autocorrelation imposes a
natural topology on the dynamical system itself, as explained
in more detail in \cite{MS,Bob-tale,bm}. The key result of this 
section can now be stated as follows. 

\begin{theorem}
  Let\/ $\mu^{}_{n}$ with\/ $n\in\NN$ and\/ $\mu$ be equi-translation
  bounded measures, and assume that their autocorrelations\/
  $\gamma^{}_{n} = \mu^{}_{n} \nts \circledast \widetilde{\mu^{}_{n}}$
  and\/ $\gamma = \mu \circledast \widetilde{\mu}$ exist for all\/
  $n\in\NN$.  Assume further that also the Eberlein convolutions\/
  $\mu^{}_{n} \nts \circledast \mu$ exist.
  
  Then, if\/ $\mu^{}_{n} \nts \rightsquigarrow \mu$, one also has\/
  $\gamma^{}_{n} \nts \stackrel{\pi\,}{\rightarrow} \gamma$. In
  particular, one then has\/
  $\,\lim_{n\to\infty} \gamma^{}_{n} = \ts \gamma$ in the vague
  topology.
\end{theorem}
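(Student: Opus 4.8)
The plan is to verify product-topology convergence directly from the definition, i.e., to show that $\| (\gamma^{}_{n} - \gamma) * g \|^{}_{\infty} \xrightarrow{\, n\to\infty\,} 0$ holds for every $g \in C_{\mathsf{c}}(\RR^d)$; the asserted vague convergence then follows at once from Lemma~\ref{lem:topologies}. Since the autocorrelation of a translation bounded measure is again translation bounded, with a bound depending only on the translation bound (see \cite{BG}), the family $(\gamma^{}_{n})^{}_{n\in\NN}$ is equi-translation bounded, so the maps $g \mapsto \| (\gamma^{}_{n} - \gamma) * g \|^{}_{\infty}$ are, uniformly in $n$, bounded by a constant times $\| g \|^{}_{1}$ on all $g$ with support in any fixed compact set. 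As every $g \in C_{\mathsf{c}}(\RR^d)$ is approximated in $L^1$-norm by functions of the form $g^{}_{1} * g^{}_{2}$ with $g^{}_{1},g^{}_{2} \in C_{\mathsf{c}}(\RR^d)$ (convolve $g$ with an approximate identity), with supports confined to a fixed neighbourhood of $\supp g$, it suffices to prove the claim for $g = g^{}_{1} * g^{}_{2}$.

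For such $g$, I would use the smoothing identity $(\alpha \circledast \beta) * g^{}_{1} * g^{}_{2} = (\alpha * g^{}_{2}) \circledast (\beta * g^{}_{1})$, valid for translation bounded $\alpha,\beta$ whenever $\alpha \circledast \beta$ exists along the given van Hove sequence, because replacing $\beta|^{}_{A_m} \! * g^{}_{1}$ by $(\beta * g^{}_{1})|^{}_{A_m}$ (and likewise for $\alpha$) alters things only on a set of volume $\scO \ts (\vol (A_m))$ near $\partial A_m$. Applied with $\alpha = \mu^{}_{n}$, $\beta = \widetilde{\mu^{}_{n}}$ and using $\widetilde{\alpha * \beta} = \widetilde{\alpha} * \widetilde{\beta}$, this gives $\gamma^{}_{n} * g = h^{}_{n} \circledast \widetilde{k^{}_{n}}$ with $h^{}_{n} := \mu^{}_{n} * g^{}_{2}$ and $k^{}_{n} := \mu^{}_{n} * \widetilde{g^{}_{1}}$, and likewise $\gamma * g = h \circledast \widetilde{k}$ with $h := \mu * g^{}_{2}$ and $k := \mu * \widetilde{g^{}_{1}}$. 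All four functions lie in $C_{\mathsf{u}}(\RR^d)$ and are uniformly bounded by equi-translation boundedness, and $\mu^{}_{n} \rightsquigarrow \mu$ yields $h^{}_{n} \rightsquigarrow h$ and $k^{}_{n} \rightsquigarrow k$ directly from the definition of mean convergence of measures.

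It then remains to telescope, $h^{}_{n} \circledast \widetilde{k^{}_{n}} - h \circledast \widetilde{k} = (h^{}_{n} - h) \circledast \widetilde{k^{}_{n}} + h \circledast \widetilde{(k^{}_{n} - k)}$, and to apply the function-level estimate~\eqref{eq:mean-est} to each summand, giving
\[
  \bigl\| (h^{}_{n} - h) \circledast \widetilde{k^{}_{n}} \bigr\|^{}_{\infty}
   \leqslant \overline{M}(h^{}_{n} - h) \, \| k^{}_{n} \|^{}_{\infty}
  \qquad \text{and} \qquad
  \bigl\| h \circledast \widetilde{(k^{}_{n} - k)} \bigr\|^{}_{\infty}
   \leqslant \overline{M}(k^{}_{n} - k) \, \| h \|^{}_{\infty} ,
\]
both of which tend to $0$, since the mean terms do and the sup-norms stay bounded (here one uses $\overline{M}(\widetilde{\phi}) = \overline{M}(\phi)$, immediate for the centred, symmetric van Hove sequences relevant to us). This is where the extra hypothesis enters: forming the two cross terms $(h^{}_{n} - h) \circledast \widetilde{k^{}_{n}}$ and $h \circledast \widetilde{(k^{}_{n} - k)}$ requires the mixed Eberlein convolution $\mu \circledast \widetilde{\mu^{}_{n}}$ (equivalently $\mu^{}_{n} \circledast \widetilde{\mu}$, by reflection) to exist, which is exactly the assumed existence of $\mu^{}_{n} \circledast \mu$; alternatively one bounds $\| (\gamma^{}_{n} - \gamma) * g \|^{}_{\infty}$ directly by a $\limsup$ of the two pieces and never needs their individual limits. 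Either way, $\| (\gamma^{}_{n} - \gamma) * g \|^{}_{\infty} \to 0$ for $g = g^{}_{1} * g^{}_{2}$, hence for all $g \in C_{\mathsf{c}}(\RR^d)$ by the first paragraph, so $\gamma^{}_{n} \stackrel{\pi\,}{\rightarrow} \gamma$, and the vague convergence follows from Lemma~\ref{lem:topologies}. The main obstacle is the first step: reducing product-topology convergence to a statement about Eberlein convolutions of \emph{functions} (so that \eqref{eq:mean-est} applies), which forces the passage to test functions of the special form $g^{}_{1} * g^{}_{2}$ and the attendant bookkeeping of conjugations and reflections; once everything is phrased in terms of $h^{}_{n}$ and $k^{}_{n}$, the estimates are routine.
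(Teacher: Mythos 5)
Your proposal is correct and follows essentially the same route as the paper's proof: smooth the autocorrelations against test functions of the form $f*\widetilde{g}$, rewrite $\gamma^{}_{n}*f*\widetilde{g}$ as an Eberlein convolution of the continuous functions $\mu^{}_{n}*f$ and $\widetilde{\mu^{}_{n}*g}$, telescope through the cross term, and apply the estimate~\eqref{eq:mean-est} together with equi-translation boundedness and the density of such products in $C_{\mathsf{c}}(\RR^d)$. Your version merely spells out a few details the paper leaves implicit (the $L^1$-reduction to $g=g^{}_1*g^{}_2$ and where the existence of $\mu^{}_{n}\circledast\mu$ is actually needed).
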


\begin{proof}
  Let $f,g\in C_{\mathsf{c}} (\RR^d)$ be arbitrary, but fixed. Due to
  the assumed equi-translation boundedness, there is a constant $c$,
  which may depend on $f$ and $g$, so that
  $\| \mu * \nts f \|^{}_{\infty} \leqslant c$ together with
  $\| \mu^{}_{n} \nts * g \|^{}_{\infty} \leqslant c$ for all
  $n\in\NN$. 

Now, we can estimate as follows,
\[
\begin{split}
  \big\| \gamma * f * \widetilde{g} - \gamma^{}_{n} \nts 
     * f * \widetilde{g} \ts\ts \big\|_{\infty} \, & = \,
  \big\| (\mu*\nts f) \circledast \widetilde{(\mu*g)} -
    (\mu^{}_{n} \nts * \nts f) \circledast \widetilde{(\mu^{}_{n} 
    \nts * g)} \big\|_{\infty} \\[2mm]
  & \leqslant \,
   \big\| (\mu*\nts f) \circledast \widetilde{(\mu*g)} -
    (\mu * \nts f) \circledast \widetilde{(\mu^{}_{n} 
    \nts * g)} \big\|_{\infty} \\[1mm]
  & \quad\; + \big\| (\mu*\nts f) \circledast 
       \widetilde{(\mu^{}_{n} \nts * g)} -
    (\mu^{}_{n} \nts * \nts f) \circledast 
   \widetilde{(\mu^{}_{n}  \nts * g)} \big\|_{\infty} \\[2mm]
 & \leqslant \, \overline{M} \bigl( \widetilde{\mu * g} - 
    \widetilde{\mu^{}_{n} \nts * g} \bigr)
   \, \big\| \mu * \nts f \ts \big\|_{\infty} +
   \overline{M} \bigl( \mu * \nts f - 
       \mu^{}_{n} \nts * \nts f\bigr) \,
   \big\| \mu^{}_{n} \nts * g \ts \big\|_{\infty}\\[2mm]
  & \leqslant \, c \bigl(\ts
   \overline{M} ( (\mu - \mu^{}_{n}) * g) +
   \overline{M} ( (\mu - \mu^{}_{n}) * \nts f)\bigr),
\end{split}
\]
where Eq.~\eqref{eq:mean-est} was used in the 
penultimate step.

This way, we get
$\gamma^{}_{n} \nts \stackrel{\pi\,}{\rightarrow} \gamma$,
and hence $\gamma^{}_{n} \rightarrow \gamma$, from
$\mu^{}_{n} \nts \rightsquigarrow \mu$ in conjunction with the 
well-known fact
that linear combinations of functions of the form
$f \nts * \widetilde{g}$, with $f,g \in C_{\mathsf{c}} (\RR^d)$, are
dense in $C_{\mathsf{c}} (\RR^d)$.
\end{proof}

In particular, we see that we do not need vague convergence of
$\mu^{}_{n}$ to $\mu$, but rather convergence in the mean, which is
both weaker and stronger in some sense.

\subsection{Approximation by bounded measures}

Now, let us consider a sequence $(\mu^{}_n )^{}_{n\in\NN}$ of
\emph{finite} measures such that
$\lim_{n\to\infty} \mu^{}_{n} = \omega$ holds in the vague topology,
where $\omega$ is some fixed translation bounded measure, with
autocorrelation
$\gamma^{}_{\omega} = \omega \circledast \widetilde{\omega}$ relative
to a given van Hove averaging sequence $\cA$ as before. Let us assume
that the sets $A_n$ are chosen such that they can also serve as
supporting sets for the $\mu^{}_{n}$. Now, our previous question can
be rephrased as follows: When is it true that
\[
    \gamma^{}_{\omega} \, =
    \lim_{n\to\infty} \frac{\mu^{}_{n}  \nts *
       \widetilde{\mu^{}_{n}}}{\vol (A_n)} 
\]
holds? The difference to before is that we cannot define
autocorrelations for the finite measures $\mu^{}_{n}$ along
$\cA$. However, under our assumptions, we may compare
$\mu^{}_{n} \nts * \widetilde{\mu^{}_{n}}$ with
$\omega^{}_{n} \nts * \widetilde{\omega^{}_{n}}$, where
$\omega^{}_{n} := \omega |^{}_{A_n}$ is the restriction of $\omega$ to
the set $A_n$. Note that, in general, $\mu^{}_{n} \ne \omega^{}_{n}$,
and our task is to control the difference
 \[
     \myfrac{1}{\vol (A_n)} \ts 
      \bigl( \mu^{}_{n} \nts * \widetilde{\mu^{}_{n}}
     - \omega^{}_{n} \nts * \widetilde{\omega^{}_{n}} \bigr) \, = \,
     \myfrac{1}{\vol (A_n)} \ts \bigl( \mu^{}_{n} \nts * 
     ( \widetilde{\mu^{}_{n}} - \widetilde{\omega^{}_{n}})
     + (\mu^{}_{n} - \omega^{}_{n}) * \widetilde{\omega^{}_{n}} \bigr) .
 \]

 Here, we need a modified concept of mean convergence. Assume for
 simplicity that the $A_m$ are nice compact sets, say convex, such
 that $\mu^{}_{n}$ agrees with its restriction to $A_m$ for all
 $m\geqslant n$, (but not for smaller $m$). Then, with
 $\nu^{}_{n} = \mu^{}_{n} \nts - \omega^{}_{n}$, we can say that
 $\nu^{}_{n}$, with $\supp (\nu^{}_{n}) \subseteq A_n$, converges
 \emph{in mean} to $0$, denoted by $\nu^{}_{n} \rightsquigarrow 0$ in
 analogy to above, if
\begin{equation}\label{eq:finite-mean-conv}
      \limsup_{n\to\infty} \,  
    \myfrac{1}  {\vol (A_n)} \int_{K+A_n} \big|
    \nu^{}_{n} \nts * g  \big|
    (x) \dd x   \, = \, 0
\end{equation}
holds for every $g\in C_{\mathsf{c}} (\RR^d)$, with $K = K_g$ denoting
the compact support of $g$. Note that this definition makes sense
because, for any given $g$ (and hence $K$), the van Hove property of
$\cA$ implies that $\vol (K+A_n) = \cO \bigl( \vol (A_n) \bigr)$ for
large $n$. Also, one could equally well use $\lim_{n\to\infty}$
instead of $\limsup_{n\to\infty}$ in Eq.~\eqref{eq:finite-mean-conv}.

\begin{remark}
Via a standard estimate in conjunction with a
Fubini-type argument, one can see that, for any
$g\in C_{\mathsf{c}} (\RR^d)$, Eq.~\eqref{eq:finite-mean-conv} follows
from
\begin{equation}\label{eq:alt}
   \lim_{n\to\infty} 
   \frac{\lvert \nu^{}_{n} \rvert (A_n)}{\vol (A_n)}
   \, = \, 0 \ts ,
\end{equation}
which looks perhaps like a more natural way to define mean 
convergence to $0$.  However, this is a generally stronger notion, 
wherefore we prefer to use the above version.

When the measures $\nu^{}_{n}$ are equi-translation bounded, one
can show that the condition in \eqref{eq:finite-mean-conv} is
equivalent to
\[
     \lim_{n\to\infty} \myfrac{1}{\vol (A_n)} \int_{A_n}
     \big\lvert \nu^{}_{n} \nts * g \big\rvert (x) \dd x \, = \, 0 \ts ,
\]
which can be proved on the basis of the van Hove property of $\cA$.
If, in addition, all $\nu^{}_{n}$ are supported in a uniformly
discrete set $\vL \subset \RR^d$, the conditions from
Eq.~\eqref{eq:finite-mean-conv} become equivalent to
Eq.~\eqref{eq:alt}; for details, see \cite{future}.  \exend
\end{remark}

The crucial point for the approach with \emph{finite} measures is
the mutual adjustment of the supports of the approximating measures
with the elements of the van Hove sequence $\cA$. In practice, this
is usually done by selecting $\cA$ according to the approximating
measures, which often originate naturally, for instance from an
inflation rule or a similar process.

The main result of this section now reads as follows.

\begin{theorem}
  Let\/ $\omega\in\cM^{\infty} (\RR^d)$, and assume that its
  autocorrelation, $\gamma^{}_{\omega}$, exists for a given van Hove
  sequence\/ $\cA$, so
\[
   \gamma^{}_{\omega} \, = \, \omega \circledast
   \widetilde{\omega} \, = \lim_{n\to\infty}
   \frac{\omega^{}_{n} \nts * \ts \widetilde{\omega^{}_{n}}}{\vol (A_n)} 
\]
with\/ $\omega^{}_{n} := \omega |^{}_{A_n}$.  Let\/
$(\mu^{}_{n})^{}_{n\in\NN}$ be a sequence of finite measures with\/
$\supp (\mu^{}_{n}) \subseteq A_n$, and assume that the\/ $\mu^{}_{n}$
are equi-translation bounded and satisfy\/
$(\mu^{}_{n} \nts - \omega^{}_{n}) \rightsquigarrow 0$ as\/
$n\to\infty$.

Then,
$\frac{1}{\vol (A_n)} \bigl( \mu^{}_{n} \nts * \widetilde{\mu^{}_{n}}
- \omega^{}_{n} \nts * \widetilde{\omega^{}_{n}} \ts \bigr)
\stackrel{\pi\,}{\rightarrow} 0$
as\/ $n\to\infty$.  In particular, one has
\[
   \lim_{n\to\infty} \frac{\mu^{}_{n} \nts * 
  \widetilde{\mu^{}_{n}} }{\vol (A_n)}
   \, = \, \gamma^{}_{\omega}
\]
 in the vague topology. 
\end{theorem}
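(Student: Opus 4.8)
The plan is to mirror the argument used for the approximation by unbounded measures, now keeping track of the normalisation by $\vol(A_n)$ and of the support constraint $\supp(\mu^{}_{n})\subseteq A_n$. First I would set $\nu^{}_{n} := \mu^{}_{n} - \omega^{}_{n}$ and use the algebraic identity already displayed in the text, namely
\[
  \mu^{}_{n} * \widetilde{\mu^{}_{n}} - \omega^{}_{n} * \widetilde{\omega^{}_{n}}
  \, = \, \mu^{}_{n} * \widetilde{\nu^{}_{n}}
     + \nu^{}_{n} * \widetilde{\omega^{}_{n}} \ts ,
\]
so that it suffices to show that each of the two terms, divided by $\vol(A_n)$, converges to $0$ in the product topology. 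Since $\gamma^{}_{\omega} = \lim_{n\to\infty} \omega^{}_{n}*\widetilde{\omega^{}_{n}}/\vol(A_n)$ holds by assumption, and since product convergence implies vague convergence by Lemma~\ref{lem:topologies}, the stated conclusion $\lim_{n\to\infty} \mu^{}_{n}*\widetilde{\mu^{}_{n}}/\vol(A_n) = \gamma^{}_{\omega}$ then follows at once.

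To establish the product convergence, I would reduce to testing against functions of the special form $g = f * \widetilde{h}$ with $f,h\in C_{\mathsf{c}}(\RR^d)$: linear combinations of such functions are dense in $C_{\mathsf{c}}(\RR^d)$, and the family of measures $\mu^{}_{n}*\widetilde{\mu^{}_{n}}/\vol(A_n)$ together with $\omega^{}_{n}*\widetilde{\omega^{}_{n}}/\vol(A_n)$ is equi-translation bounded (this is where the van Hove property of $\cA$ together with the equi-translation boundedness of the $\mu^{}_{n}$ enters), so convergence on a dense set transfers to all of $C_{\mathsf{c}}(\RR^d)$. For such $g$ one has $(\mu^{}_{n}*\widetilde{\mu^{}_{n}})*f*\widetilde{h} = (\mu^{}_{n}*f)*\widetilde{(\mu^{}_{n}*h)}$, and after splitting $\mu^{}_{n}*f = \omega^{}_{n}*f + \nu^{}_{n}*f$ and $\mu^{}_{n}*h = \omega^{}_{n}*h + \nu^{}_{n}*h$, the difference with $(\omega^{}_{n}*f)*\widetilde{(\omega^{}_{n}*h)}$ reduces to the two cross terms $(\omega^{}_{n}*f)*\widetilde{(\nu^{}_{n}*h)}$ and $(\nu^{}_{n}*f)*\widetilde{(\mu^{}_{n}*h)}$.

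Each cross term I would control by the elementary pointwise bound $\|\phi * \widetilde{\psi}\|^{}_{\infty} \leqslant \|\phi\|^{}_{\infty}\,\|\psi\|^{}_{1}$. For the first term this gives $\|(\omega^{}_{n}*f)*\widetilde{(\nu^{}_{n}*h)}\|^{}_{\infty} \leqslant \|\omega^{}_{n}*f\|^{}_{\infty}\,\|\nu^{}_{n}*h\|^{}_{1}$, where $\|\omega^{}_{n}*f\|^{}_{\infty}\leqslant \big\|\,|\omega|*|f|\,\big\|^{}_{\infty}$ is bounded uniformly in $n$ by the translation boundedness of $\omega$, while $\nu^{}_{n}*h$ is supported in $A_n + \supp(h)$, so that $\|\nu^{}_{n}*h\|^{}_{1}/\vol(A_n)$ tends to $0$ directly by the definition of $\nu^{}_{n}\rightsquigarrow 0$ in Eq.~\eqref{eq:finite-mean-conv}. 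For the second term, the same bound with the roles of the two factors swapped yields $\|\mu^{}_{n}*h\|^{}_{\infty}\,\|\nu^{}_{n}*f\|^{}_{1}$, where now $\|\mu^{}_{n}*h\|^{}_{\infty}$ is bounded uniformly by the assumed equi-translation boundedness of the $\mu^{}_{n}$, and $\|\nu^{}_{n}*f\|^{}_{1}/\vol(A_n)\to 0$ again by Eq.~\eqref{eq:finite-mean-conv}. Dividing by $\vol(A_n)$ and letting $n\to\infty$ then kills both terms, which is precisely convergence to $0$ in the product topology.

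The main obstacle is not any single estimate but the bookkeeping around supports and normalisation: one must be sure that restricting $\omega$ to $A_n$ and dividing the convolutions by $\vol(A_n)$ still produces an equi-translation-bounded family, so that the density reduction to test functions $f*\widetilde{h}$ is legitimate, and one must be careful to invoke the mean-convergence hypothesis $\nu^{}_{n}\rightsquigarrow 0$ in the support-adjusted form \eqref{eq:finite-mean-conv} rather than the stronger-looking \eqref{eq:alt}, as these differ in general. Both points rely on the van Hove property of $\cA$, in particular on $\vol(K + A_n) = \scO\,(\vol(A_n))$ for compact $K$; the remaining steps are routine estimates, and the detailed verification would follow the pattern of \cite{future}.
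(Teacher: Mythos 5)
Your proposal is correct and follows essentially the same route as the paper: the same cross-term decomposition of $\mu^{}_{n} \nts * \widetilde{\mu^{}_{n}} - \omega^{}_{n} \nts * \widetilde{\omega^{}_{n}}$ after convolving with $f * \widetilde{g}$, the same $\|\cdot\|^{}_{\infty}\cdot\|\cdot\|^{}_{1}$ bound on each term using equi-translation boundedness for the sup factor and the support-adjusted mean convergence \eqref{eq:finite-mean-conv} for the $L^1$ factor, and the same density argument for functions of the form $f * \widetilde{g}$ to conclude product convergence. Your explicit attention to the support bookkeeping ($\supp(\nu^{}_{n} \nts * h) \subseteq A_n + \supp(h)$, which is exactly what makes \eqref{eq:finite-mean-conv} applicable) matches what the paper does implicitly.
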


\begin{proof}
  Since $\omega$ is translation bounded, the finite measures
  $\omega^{}_{n}$ are equi-translation bounded. Now, given
  $f,g \in C_{\mathsf{c}} (\RR^d)$, the assumed equi-translation
  boundedness of the measures $\mu^{}_n$ implies the existence of a
  constant $c$, which may depend on $f$ and $g$, such that
  $\| \mu^{}_{n} \nts * f \|^{}_{\infty} \leqslant c$ and
  $\| \omega^{}_{n} \nts * g \|^{}_{\infty} \leqslant c$ for all
  $n\in\NN$.

  Now, with $\| \widetilde{h}\ts \|^{}_{\infty} = \| h \|^{}_{\infty}$
  for any $h\in C_{\mathsf{c}} (\RR^d)$, we get
\begin{align*}
 \lefteqn{ \big\| \mu^{}_{n} \nts * \widetilde{\mu^{}_{n}} * 
   \nts f \nts * \widetilde{g}
  -  \omega^{}_{n} \nts * \widetilde{\omega^{}_{n}} * f 
  * \widetilde{g} \ts \big\|_{\infty} } \\[3mm]
  & \quad \leqslant \, \big\| (\mu^{}_{n} \nts * \nts f) *
    \bigl( \widetilde{\mu^{}_{n}} \nts  - 
    \widetilde{\omega^{}_{n}} \bigr) * \widetilde{g} \ts \big\|_{\infty} +
   \big\| \bigl(\mu^{}_{n} \nts\nts - \omega^{}_{n} \bigr) 
   * \nts f \nts * \bigl( \widetilde{ \omega^{}_{n} \nts * g} 
   \bigr) \big\|_{\infty}  \\[3mm]
  & \quad \leqslant \, \big\| \mu^{}_{n} \nts * f \big\|_{\infty} 
   \int_{\RR^d} \bigl| \bigl( \widetilde{\mu^{}_{n}} \nts  - 
    \widetilde{\omega^{}_{n}} \bigr) * \widetilde{g}\ts \bigr|
    (x) \dd x \, + \, \big\|\ts \omega^{}_{n} \nts * g \big\|_{\infty} 
  \int_{\RR^d} \bigl| \bigl( {\mu^{}_{n}} \nts  - 
    {\omega^{}_{n}} \bigr) * \nts f \ts \bigr| (x) \dd x \\[1mm]
  & \quad \leqslant \, c \int_{K + A_n}
    \bigl| \bigl( {\mu^{}_{n}} \nts  - 
    {\omega^{}_{n}} \bigr) * \nts f \ts \bigr| (x) +
    \bigl| \bigl( {\mu^{}_{n}} \nts  - 
    {\omega^{}_{n}} \bigr) * \nts g \ts \bigr| (x) \dd x \ts ,
\end{align*}
where the compact set $K$ is chosen such that it contains the supports
of $f$ and $g$. Dividing both sides by $\vol (A_n)$, the claim follows
from the assumption of mean convergence and
Eq.~\eqref{eq:finite-mean-conv}, in complete analogy to our previous
result.
\end{proof}

The main difference to the arguments used in the main text is the
replacement of a uniform condition by a condition in mean, which
should be useful under more general circumstances.

\section*{Acknowledgements} 

It is a pleasure to thank Philipp Gohlke, Uwe Grimm, Gerhard Keller
 and Dan Rust for
helpful discussions. We thank an anonymous reviewer for a number of
suggestions that helped to improve the manuscript.
This work was supported by the German Research
Council (DFG), within the CRC 1283, and by Natural Sciences and
Engineering Council of Canada (NSERC), via grant 03762-2014. Moreover,
a research stay of N.S.\ was partially supported by the Simons
Foundation and by the Oberwolfach Research Institute for Mathematics
(MFO).


\begin{thebibliography}{xxxx}

\bibitem{Arga}
  L.~Argabright and J.~Gil de Lamadrid,
  Fourier analysis of unbounded measures on locally
  compact Abelian groups, \textit{Memoirs AMS}
  no.\ 145, AMS, Providence, RI (1974).

\bibitem{bbm}
   M.~Baake, M.~Birkner and R.V.~Moody, 
   Diffraction of stochastic point sets:\ Explicitly computable examples, 
   \textit{Commun.\ Math.\ Phys.} \textbf{293} (2010), 611--660;
   \texttt{arXiv:0803.1266}.

\bibitem{bgg}
   M.~Baake, F.~G\"ahler and U.~Grimm, 
   Spectral and topological properties of a family of generalised 
   Thue--Morse sequences,
   \textit{J.\ Math.\ Phys.} \textbf{53} (2012), 032701, 1--24;
   \texttt{arXiv:1201.1423}.

\bibitem{BG}
   M.~Baake and U.~Grimm,
   \textit{Aperiodic Order.\ Vol.\ 1:\ A Mathematical Invitation},
   Cambridge University Press, Cambridge (2013).

\bibitem{BKM}
   M.~Baake, H.~K\"{o}sters and R.V.~Moody,
   Diffraction theory of point processes:\ Systems with 
   clumping and repulsion,
   \textit{J.\ Stat.\ Phys.} \textbf{159} (2015), 915--936;
   \texttt{arXiv:1405.4255}.

\bibitem{BL1}
   M.~Baake and D.~Lenz,
   Dynamical systems on translation bounded measures:\ 
   Pure point dynamical and diffraction spectra,
   \textit{Ergod.\ Th.\ \& Dynam.\ Syst.} \textbf{24}
   (2005), 1867--1893; \texttt{arXiv:math.DS/0302061}.

\bibitem{BL2}
   M.~Baake and D.~Lenz,
   Deformation of Delone dynamical systems and pure point diffraction,
   \textit{J.\ Fourier Anal.\ Appl.} \textbf{11} (2005), 125--150;
   \texttt{arXiv:math.DS/0404155}.

\bibitem{BLM}
   M.~Baake, D.~Lenz and R.V.~Moody,
   Characterization of model sets by dynamical systems, 
   \textit{Ergodic Th.\ \& Dynam.\ Syst.} 
   \textbf{27} (2007), 341--382;
   \texttt{arXiv:math.DS/0511648}.

\bibitem{blr}
   M.~Baake, D.~Lenz and C.~Richard,
   Pure point diffraction implies zero entropy for Delone 
   sets with uniform cluster frequencies, 
   \textit{Lett.\ Math.\ Phys.} \textbf{82} (2007), 61--77;
   \texttt{arXiv:0706.1677}.
   
\bibitem{BM-self}
  M.~Baake and R.V.~Moody,
  Self-similar measures for quasicrystals, in
  \textit{Directions in Mathematical Quasicrystals},
  eds.\ M.~Baake and R.V.~Moody, CRM Monograph Series,
  vol.~13, AMS, Providence, RI (2000),  pp.~1--42.   

\bibitem{bm} 
  M.~Baake and R.V.~Moody,
  Weighted Dirac combs with pure point diffraction,
  \textit{J.\ reine angew.\ Math.\ (Crelle)} \textbf{573} (2004), 61--94; 
  \texttt{arXiv:math.MG/0203030}.
  
\bibitem{future}
  M.~Baake, T.~Spindeler and N.~Strungaru,
  Approximation and decomposition of  autocorrelation 
  measures, in preparation.

\bibitem{Barnsley}
  M.F.~Barnsley,
  Lecture notes on iterated function systems, in
  \textit{Chaos and Fractals}, eds.\ R.L.~Devaney and L.~Keen,
  Proc.\ Symp.\ Appl.\ Math.\ vol.~39, AMS, Providence, RI
  (1989), pp.~127--144.

\bibitem{BD}
  G.~Bernuau and M.~Duneau,
  Fourier analysis of deformed model sets, in
  \textit{Directions in Mathematical Quasicrystals},
  eds.\ M.\ Baake and R.V.\ Moody, CRM Monograph Series, vol.\ 13,
  AMS, Providence, RI (2000), pp.~43--60.

\bibitem{BF}
  C.~Berg and G.~Forst,
  \textit{Potential Theory on Locally Compact Abelian Groups},
  Springer,  Berlin (1975).

\bibitem{BS}
  M.~Brin and G.~Stuck,
  \textit{Introduction to Dynamical Systems},
  Cambridge University Press, Cambridge (2002).

\bibitem{CS}
  A.~Clark and L.~Sadun,
  When size matters:\ Subshifts and their related tiling spaces,
  \textit{Ergodic Th.\ \& Dynam.\ Syst.} \textbf{23} (2003), 1043--57; 
  \texttt{arXiv:math.DS/0201152}.

\bibitem{Dieu}
   J.~Dieudonn\'e,
   \textit{Foundations of Modern Analysis}, vol. I, 
   Academic Press, New York (1969).

\bibitem{Elton}
  J.H.~Elton,
  An ergodic theorem for iterated maps,
  \textit{Ergodic Th.\ \& Dynam.\ Syst.} \textbf{7} (1987), 481--488. 

\bibitem{Fek}
  M.~Fekete,
  \"{U}ber die Verteilung der Wurzeln bei gewissen algebraischen 
  Gleichungen mit ganzzahligen Koeffizienten,
  \textit{Math.\ Z.} \textbf{17} (1923), 228--249.
   
\bibitem{Fuerst}
  H.~Furstenberg,
  \textit{Recurrence in Ergodic Theory and Combinatorial Number 
  Theory}, Princeton Univ.\ Press, Princeton, NJ (1981).

\bibitem{gl1} 
  C.~Godr\`{e}che and J.M.~Luck,
  Quasiperiodicity and randomness in tilings of the plane,
  \textit{J.\ Stat.\ Phys.} \textbf{55} (1989), 1--28.   
  
\bibitem{PG}
  P.~Gohlke,
  \textit{On a Family of Semi-Compatible Random Substitutions},
  Masters Thesis, Bielefeld University (2017).  

\bibitem{Hof}
   A.~Hof,
   On diffraction by aperiodic structures, 
   \textit{Commun.\ Math.\ Phys.} \textbf{169} (1995), 25--43.

\bibitem{HR}
   C.~Huck and C.~Richard,
   On pattern entropy of weak model sets,
   \textit{Discr.\ Comput.\ Geom.}  \textbf{54} (2015), 741--757;
   \texttt{arXiv:1412.6307}.

\bibitem{JW}
   B.~Jessen and A.~Wintner,
   Distribution functions and the Riemann zeta function,
   \textit{Trans.\ Amer.\ Math.\ Soc.} \textbf{38} (1935), 48--88.
   
\bibitem{KR}
   G.~Keller and C.~Richard,
   Dynamics on the graph of the torus parametrisation,
   \textit{Ergodic Th.\ \& Dynam.\ Syst.} \textbf{38} (2018)
   1048--1085;    \texttt{arXiv:1511.06137}.   
   
\bibitem{Keller}
   G.~Keller,
   Maximal equicontinuous generic factors and weak model sets,
   \textit{preprint} \texttt{arXiv:1610.03998}.   

\bibitem{LMS}
   J.-Y.~Lee, R.V.~Moody and B.~Solomyak,
   Pure point dynamical and diffraction spectra,
   \textit{Ann.\ H.\ Poincar\'{e}} \textbf{3} (2002), 1003--1018;
   \texttt{arXiv:0910.4809}. 

\bibitem{Daniel}
   D.~Lenz,
   Continuity of eigenfunctions of uniquely ergodic dynamical systems 
   and intensity of Bragg peaks,
   \textit{Commun.\ Math.\ Phys.} \textbf{287} (2009) 225--258;
   \texttt{arXiv:math-ph/0608026}.

\bibitem{Claudia}
  C.~L\"{u}tkeh\"{o}lter,
  \textit{Diffraktion stochastischer Fibonacci-Mengen},
  Diploma thesis, Univ.\ Bielefeld (2010).

\bibitem{mo} 
  M.~Moll,
  \textit{On a Family of Random Noble Means Substitutions},
  PhD thesis, Univ.\ Bielefeld (2013); available at
  \texttt{https://pub.uni-bielefeld.de/publication/2637807}.

\bibitem{mo2} 
  M.~Moll,
  Diffraction of random noble means words,
  \textit{J.\ Stat.\ Phys.} \textbf{156} (2014), 1221--1236; \newline
  \texttt{arXiv:1404.7411}.

\bibitem{Bob-tale}
  R.V.~Moody,
  Mathematical quasicrystals: A tale of two topologies, in
  \textit{XIVth International Congress on Mathematical Physics}, 
  ed.\ J.-C.\ Zambrini, World Scientific,
  Singapore (2006), pp.~68-77.

\bibitem{MS}
  R.V.~Moody and N.~Strungaru,
  Point sets and dynamical systems in the autocorrelation
  topology, \textit{Bull.\ Can.\ Math.\ Soc.}
  \textbf{47} (2004), 82--99.

\bibitem{nil}
  J.~Nilsson,
  On the entropy of a family of random substitutions,
  \textit{Monatsh.\ Math.} \textbf{166} (2012), 1--15;
  \texttt{arXiv:1103.4777}.

\bibitem{Palmer}
  K.J.~Palmer,
  Bifurcations, chaos and fractals, in
  \textit{Nonlinear Dynamics and Chaos},
  eds.\ R.L.~Dewar and B.I.~Henry,
  World Scientific, Singapore (1992), pp.~91--133.
 
\bibitem{Q}
   M.~Queff\'elec,
   \textit{Substitution Dynamical Systems --- Spectral Analysis},
   LNM 1294, 2nd ed., Springer, Berlin (2010).

\bibitem{Rudin}
  W.~Rudin,
  \textit{Fourier Analysis on Groups},
  Wiley, New York (1962).

\bibitem{RS}
  D.~Rust and T.~Spindeler,
  Dynamical systems arising from random substitutions,
  \textit{Indag.\ Math.}, to appear; 
  \texttt{arXiv:1707.09836}.

\bibitem{spi}
  T.~Spindeler,
  Diffraction intensities of a class of binary Pisot substitutions via
  exponential sums,
  \textit{Monatsh.\ Math.} \textbf{182} (2017), 143--153;
  \texttt{arXiv:1608.01969}

\bibitem{diss-T}
   T.~Spindeler,
   \textit{Spectral Theory of Random Inflation Systems},
   PhD thesis, Univ.\ Bielefeld (2018); available at
   \texttt{https://pub.uni-bielefeld.de/publication/2917383}.
   
\bibitem{Nicu-book}
   N.~Strungaru,
   Almost periodic pure point measures, in
   \textit{Aperiodic Order. Vol.~2: Crystallography and
   Almost Periodicity}, eds.\ M.~Baake and U.~Grimm,
   Cambridge University Press, Cambridge (2017), pp.~271--342.
   
\bibitem{Wicks}
  K.R.~Wicks, 
  \textit{Fractals and Hyperspaces}, LNM 1492, 
  Springer, Berlin (1991).    

\end{thebibliography}
\end{document}